\xpretocmd{\proof}{\setlength{\parindent}{0pt}}{}{}
\theoremstyle{definition}
\newtheorem{defi}{Definition}[section]
\theoremstyle{plain}
\newtheorem{thm}[defi]{Theorem}
\newtheorem*{thm*}{Theorem}
\newcommand{\thistheoremname}{}
\newtheorem{genericthm}[defi]{\thistheoremname}
\newtheorem*{genericthm*}{\thistheoremname}
\newenvironment{namedthm*}[1]{\renewcommand{\thistheoremname}{#1}%
	\begin{genericthm*}}{\end{genericthm*}}
\newtheorem{lem}[defi]{Lemma}
\newtheorem*{lem*}{Lemma}
\newtheorem{cor}[defi]{Corollary}
\newtheorem{prop}[defi]{Proposition}
\newtheorem{conj}[defi]{Conjecture}
\theoremstyle{remark}
\newtheorem{rem}[defi]{Remark}
\newcommand{\thisremname}{}
\newtheorem{genericrem}[defi]{\thisremname}
\newcommand{\mcal}{\mathcal}
\DeclareMathOperator{\cov}{Cov}
\DeclareMathOperator{\var}{Var}
\DeclareMathOperator{\dom}{dom }
\DeclareMathOperator{\opL}{L}
\DeclareMathOperator{\Fint}{I}
\DeclareMathOperator{\MalD}{D}
\DeclareMathOperator{\sgn}{sgn}
\DeclareMathOperator{\diam}{diam}
\DeclareMathOperator{\dist}{dist}
\DeclarePairedDelimiter\ceil{\lceil}{\rceil}
\DeclarePairedDelimiter\floor{\lfloor}{\rfloor}
\newcommand{\R}{\mathbb{R}}
\newcommand{\N}{\mathbb{N}}
\newcommand{\p}{\mathbb{P}}
\newcommand{\E}{\mathbb{E}}
\newcommand{\X}{\mathbb{X}}
\newcommand{\Y}{\mathbb{Y}}
\newcommand{\W}{\mathbb{W}}
\newcommand{\1}{\mathbbm{1}}
\newcommand{\Fa}{F^{(\alpha)}}
\newcommand{\fat}{F^{(\alpha)}_t}
\newcommand{\fad}{F^{(d/2)}_t}
\newcommand{\lat}{L^{(\alpha)}_t}
\newcommand{\lhat}{\hat{L}^{(\alpha)}_t}
\newcommand{\DD}{\MalD^{(2)}}
\newcommand{\asp}{\p\text{-a.s.}}
\newcommand{\yint}{\int_\Y}
\renewcommand{\geq}{\geqslant}
\renewcommand{\leq}{\leqslant}
\renewcommand{\[}{\begin{equation}}
\renewcommand{\]}{\end{equation}}
\newcommand{\ind}[1]{\mathbbm{1}_{\{#1\}}}
\newcommand{\norm}[1]{\lVert #1 \rVert}
\newcommand{\overbar}[1]{\mkern 1.5mu\overline{\mkern-1.5mu#1\mkern-1.5mu}\mkern 1.5mu}
\newcommand{\bb}{\overbar{B}}
\newcounter{savedenum}
\numberwithin{equation}{section}
\title{Quantitative CLTs on the Poisson space\\
	via Skorohod estimates and $p$-Poincaré inequalities}
\author{Tara Trauthwein\thanks{Department of Mathematics, University of Luxembourg, Luxembourg, tara.trauthwein@uni.lu. The author was supported by the Luxembourg National Research Fund (PRIDE17/1224660/GPS).}}
\date{}
\begin{document}

\maketitle

\begin{abstract}
	We establish new explicit bounds on the Gaussian approximation of Poisson functionals based on novel estimates of moments of Skorohod integrals. Combining these with the Malliavin-Stein method, we derive bounds in the Wasserstein and Kolmogorov distances whose application requires minimal moment assumptions on add-one cost operators -- thereby extending the results from (Last, Peccati and Schulte, 2016). Our applications include a CLT for the Online Nearest Neighbour graph, whose validity was conjectured in (Wade, 2009; Penrose and Wade, 2009). We also apply our techniques to derive quantitative CLTs for edge functionals of the Gilbert graph, of the $k$-Nearest Neighbour graph and of the Radial Spanning Tree, both in cases where qualitative CLTs are known and unknown.
\end{abstract}

\noindent\textbf{Keywords:} CENTRAL LIMIT THEOREM; GILBERT GRAPH; KOLMOGOROV DISTANCE; MALLIAVIN CALCULUS; NEAREST NEIGHBOUR GRAPH; ONLINE NEAREST NEIGHBOUR GRAPH; POINCARÉ INEQUALITY; POISSON PROCESS; SKOROHOD INTEGRAL; STEIN'S METHOD; STOCHASTIC GEOMETRY; RADIAL SPANNING TREE; WASSERSTEIN DISTANCE.\\

\noindent\textbf{Mathematics Subject Classification (2020)}: 60F05, 60H07, 60G55, 60D05, 60G57

\tableofcontents

\section{Introduction}\label{secIntro}

\subsection{Overview}\label{secIntroOverview}

The aim of this paper is to establish a new collection of probabilistic inequalities, yielding quantitative CLTs for sequences of Poisson functionals under minimal moment assumptions. We will see below that our findings substantially extend and refine the second order Poincaré inequalities proved in \cite{LPS14} (see also \cite{LRSY19,SY19,SY21}), and heavily rely on new moment inequalities for Skorohod integrals (see Theorem~\ref{thmGeneralpPoin}) that we believe to be of independent interest. As demonstrated in Section~\ref{secAppl}, our findings are specifically tailored to deriving quantitative CLTs for functionals of spatial random graphs based on Poisson inputs, in critical or near-critical regimes.

One prominent example dealt with in the present work is the \textbf{Online Nearest Neighbour Graph} (ONNG), devised by Berger et al. in \cite{BBBCR07} as a simplified version of the FKP model of the internet graph (see \cite{FKP02}). The set-up is the following: Take a Poisson measure on $\R^d \times [0,1]$, in such a way that each point of the measure has a spatial coordinate in $\R^d$ and an arrival time in $[0,1]$. Within a bounded observation window, connect each point to its nearest neighbour in space which has smaller arrival time. The resulting ONNG is a tree growing in time -- a simple model for an expanding network. Other graphs of interest include the \textbf{Gilbert graph}, where two points are connected if they are close enough to one another, or nearest-neighbour type graphs like the $k$\textbf{-Nearest Neighbour graph} and the \textbf{Radial Spanning Tree} (see Section~\ref{secAppl}).

For graphs like these, one quantity of interest is the \textbf{total edge-length}, or more generally, the $\alpha$\textbf{-power weighted total edge-length}:
\begin{equation}\label{eqIntroF}
F = \sum_{e \text{ edge}} |e|^\alpha.
\end{equation}
The main question is to understand how such a sum fluctuates as the graph expands. For the ONNG, convergence to the normal law was shown by Penrose \cite{Pen05} in the exponent range $\alpha \in \left(0,\frac{d}{4}\right)$ and conjectured in \cite{Wade2009,PW09} for the range $\alpha \in \left[\frac{d}{4},\frac{d}{2}\right]$. This conjecture has remained open until now: as an application of our abstract bounds, we settle it in this article by providing a quantitative central limit theorem for the centred and rescaled sum of power-weighted edge-lengths with powers $\alpha \in \left.\left(0,\frac{d}{2}\right.\right]$ (see Theorem~\ref{thmONNG}).

\subsection{Main contributions}
As announced in Section~\ref{secIntroOverview}, our theoretical findings refine the main bounds in \cite{LPS14}, where the authors proved second order Poincaré inequalities on configuration spaces -- thus extending the Gaussian second order Poincaré estimates established in \cite{Chatterjee2009,PNR2009,Vid20,ERTZ21} to the Poisson case. The results of \cite{LPS14} are based on the combination of \textbf{Stein's method} \cite{CGS11,PeccNourd,PeccReitz} and \textbf{Malliavin Calculus} on configuration spaces \cite{LastPoiss,LPenLectures}.

The starting point of Stein's method is the fact that a real-valued random variable $N$ is standard Gaussian if and only if
\[
\E f(N)N = \E f'(N)
\]
for a suitable collection of functions $f:\R \rightarrow \R$. This allows one to represent the probability distance between a random variable $F$ and a standard normal $N$ as
\begin{equation}\label{eqIntrod}
d(F,N) = \sup_{h \in \mathcal{H}} |\E f_h(F)F - \E f_h'(F)|,
\end{equation}
where $\mathcal{H}$ is a suitable collection of functions depending on the choice of distance and the function $f_h$ is the canonical solution to the differential equation
\[
f'_h(x) = xf_h(x) + h(x) - \E h(N).
\]
The crucial idea behind the results of \cite{LPS14} is that, for random variables $F=F(\eta)$ depending on a Poisson measure $\eta$ on a space $\X$, one can control quantities such as \eqref{eqIntrod} by using integration by parts formulae involving the \textbf{add-one cost operator}
\[
\MalD_x F(\eta) := F(\eta + \delta_x) - F(\eta),
\]
where $x \in \X$ and $\delta_x$ is the Dirac measure in $x$, as well as its iteration $\DD_{x,y} = \MalD_x \MalD_y F$. As demonstrated in \cite{LPS14,LRSY19,SY19,LachPeccYang,SY21, BSP22}) such an approach leads to flexible bounds in the Kolmogorov and Wasserstein distances, bounds that are particularly adapted for dealing with functionals displaying a form of \textbf{geometric stabilisation} -- see e.g. \cite{PY01,Pen05,PY05,LRSY19,LachPeccYang} for a discussion of this concept, as well as \cite{KL96} for the first seminal contribution on the topic.

One of the shortcomings of the bounds established in \cite{LPS14,LRSY19,SY19,LachPeccYang,BSP22}) is that their use in concrete applications typically requires one to uniformly bound over $\X$ the moments of order $(4+\epsilon)$ (with $\epsilon>0$) of $\MalD F$ and $\DD F$. Such a uniform bound is not achievable in many relevant applications, e.g. for edge functionals of the ONNG in the exponent range $\alpha \in \left[\frac{d}{4},\frac{d}{2}\right]$, the range where the central limit theorem was conjectured to hold. We substantially extend the main results from \cite{LPS14} in two ways:

\begin{enumerate}
	\item In Theorem~\ref{thmWasserNon-Cond.} we establish explicit bounds in the Kolmogorov and Wasserstein distances, whose use only requires one to uniformly bound the moments of order $2+\epsilon$ of add-one cost operators. (See also \cite{Tri19} for qualitative CLTs requiring both bounds on the moments of order $2+\epsilon$ and weak stabilisation). To motivate the reader, we will now give a simple example of a consequence of Theorem~\ref{thmWasserNon-Cond.}.
	
	Let $\nu$ be a centred probability measure on $\R$ such that
	\[
	c:=\int_\R |u|^{2+\epsilon} \nu(du) < \infty
	\]
	for some $\epsilon > 0$. Define $\sigma^2 := \int_\R |u|^{2} \nu(du)$ and let $\eta$ be a Poisson measure on $\R \times [0,\infty)$ with intensity $\nu(du) \otimes ds$. Let $T > 0$ and define
	\[
	F_T := \int_0^T \int_\R u\ \eta(du,ds).
	\]
	Then $F_T$ is equal in law to $\sum_{i=1}^{N(T)} X_i$, where $X_1,X_2,...$ are i.i.d random variables distributed according to $\nu$ and independent of $N(T)$, a Poisson distributed random variable with parameter $T$.
	
	One can compare this to the random variable
	\[
	G_n := \sum_{i=1}^{n} X_i,
	\]
	where the number of points is deterministically given by $n$.
	By an extension of the classical Berry-Esseen theorem given by Petrov in \cite[Theorem~6, p.115]{Petrov}, one has the following bound on the Kolmogorov distance between the laws of $(n\sigma^2)^{-1/2}G_n$ and a standard Gaussian $N$:
	\[
	d_K\left(\frac{G_n}{\sqrt{n}\sigma},N\right) \leq \frac{c}{\sigma^{2+\epsilon}} n^{-\epsilon/2}.
	\]
	For the functional $F_T$, our Theorem~\ref{thmWasserNon-Cond.} implies
	\begin{align}
		d_W\left(\frac{F_T}{\sqrt{T}\sigma},N\right) \leq \frac{2c}{\sigma^{2+\epsilon}} T^{-\epsilon/2}
		\intertext{and}
		d_K\left(\frac{F_T}{\sqrt{T}\sigma},N\right) \leq \frac{(4c)^{\frac{1}{1+\epsilon/2}}}{\sigma^{2}} T^{-\frac{\epsilon/2}{1+\epsilon/2}}
	\end{align}
	for the Wasserstein and Kolmogorov distances respectively. The speed of convergence in Wasserstein distance corresponds exactly to the one given by Petrov. For the Kolmogorov distance we find a slightly slower speed, which is however still converging much faster than the square root of the Wasserstein distance, which is implied by the classic estimate $d_K(.,N) \leq 2 \sqrt{d_W(.,N)}$ (see e.g. \cite[Remark~C.2.2]{PeccNourd}).
	
	In Section~\ref{secAppl}, we apply Theorem~\ref{thmWasserNon-Cond.} to edge-statistics of the form \eqref{eqIntroF} of the Online Nearest Neighbour Graph in the exponent range $\alpha \in \left(0,\frac{d}{2}\right)$ and of the Gilbert graph for exponents $\alpha \in \left(-\frac{d}{2},\infty\right)$ (extending existing results from \cite{TSRGilbert}); we also deal with the $k$-Nearest Neighbour Graph and the Radial Spanning Tree for a general class of decreasing functions $\phi:(0,\infty) \rightarrow (0,\infty)$ applied to the edge-lengths. In all our applications, the speeds we find are the same in the Wasserstein and Kolmogorov distances. Roughly speaking, a $2p$-moment bound leads to a speed of convergence of $t^{d(1/p-1)}$, where $t^d$ is the order of the variance and $p \in (1,2]$. If $p=2$, we recover the speed of order `square root of the variance', which is often presumed to be optimal and has in some contexts been shown to be optimal. If however $p<2$, the resulting speed is slower. Comparing with the above example and setting $2p=2+\epsilon$, it corresponds to $t^{-d\frac{\epsilon/2}{1+\epsilon/2}}$ in both Wasserstein and Kolmogorov distances. Whether or not this speed is optimal remains an open question.
	
	%---------------------------------------------------------------
	
	\item The case of an edge functional of the type \eqref{eqIntroF} with $\alpha = \frac{d}{2}$ for the ONNG is of a different nature. The variance contains an additional logarithmic factor (as conjectured and partially shown in \cite{Wade2009}, and fully established in Theorem~\ref{thmONNG}) and a $2+\epsilon$ moment bound proves too strong a condition. To deal with this particular case, we develop in Theorem~\ref{thmWasserCond} an estimate of the Wasserstein distance that depends on a time parameter. Taking $\eta$ to be a Poisson measure on a space $\X \times [0,1]$ with intensity $\lambda \otimes ds$, the estimate contains moments of $\E \big[\MalD_{(x,s)}F \big| \eta_{|\X \times [0,s)}\big]$ and $\E \big[\MalD^{(2)}_{(x,s),(y,u)}F \big| \eta_{|\X \times [0,s \vee u)}\big]$ instead of $\MalD_{(x,s)}F$ and $\MalD^{(2)}_{(x,s),(y,u)}F$. This distinction is crucial and leads to a quantitative central limit theorem in the critical case $\alpha = \frac{d}{2}$, stated in Theorem~\ref{thmONNG}.
\end{enumerate}

The underlying result making these improved estimates possible is a new inequality, stated in Theorem~\ref{thmGeneralpPoin}, providing a bound on the $p$th moment of a Skorohod integral $\delta(h)$ (see Section~\ref{secFrame} for a definition), with $p \in [1,2]$. Even more generally, a bound is provided for quantities of the type $|\E \phi(\delta(h))|$, where $\phi:\R \rightarrow \R$ is differentiable with $(p-1)$-Hölder continuous derivative for some $p \in (1,2]$. To the best of our knowledge, no comparable inequality exists to date and this result is of independent interest.
In particular, Theorem~\ref{thmGeneralpPoin} implies generalisations of the classical \textbf{Poincaré inequality} according to which for a Poisson functional $F \in L^2(\p_\eta)$, it holds that
\[
\E \left[F^2\right] - \E [F]^2 \leq \E \int \left(\MalD_x F\right)^2 \lambda(dx).
\]
As shown in Corollary~\ref{corpPoinGen} and Remark~\ref{remPPoin}, the Poincaré inequality remains true (up to a multiplying constant) if the exponent $2$ is replaced by $p$, thus for $p \in [1,2]$:
\[
\E |F|^p - |\E F|^p \leq 2^{2-p} \E \int_\X \left|\MalD_{x}F\right|^p \lambda(dx).
\]
If the functional $F$ is non-negative or centred, this inequality follows from modified log-Sobolev type inequalities shown in \cite{Chafai} and also \cite{AdamPola}. We stress, however, that the bound in Theorem~\ref{thmGeneralpPoin} is much more general and not directly deducible from \cite{Chafai,AdamPola}.

If we work over the time-augmented space $\X \times [0,1]$, it has been shown by Last and Penrose in \cite[Thm.~1.5]{LPen1} that
\[
\var(F) = \E \int_\X \int_0^1 \E [\MalD_{(x,t)}F | \eta_{|\X \times [0,t)}]^2 \lambda(dx)dt.
\]
Correspondingly, the inequality in Corollary~\ref{corpPoinGen} can be refined even further by conditioning on the information up to time $t$:
\[
\E |F|^p - |\E F|^p \leq 2^{2-p} \E \int_\X \int_0^1 \left|\E [\MalD_{(x,t)}F | \eta_{|\X \times [0,t)} ]\right|^p \lambda(dx)dt.
\]
This inequality is of great importance for improving estimates of both Wasserstein and Kolmogorov distances. Another consequence of Theorem~\ref{thmGeneralpPoin} is a more technical inequality given in Corollary~\ref{corKolgIneq}, this one being crucial when refining the bound on the Kolmogorov distance.

The proof of Theorem~\ref{thmGeneralpPoin} relies on a new version of Itô formula, shown in Theorem~\ref{thmItoFormula}. In contrast to the classical Itô formula for Poisson point processes as given in \cite[Theorem~II.5.1]{IkedaWatanabe}, our version does not assume the process to be a semi-martingale or the integrand to be predictable. In turn, we only use the term corresponding to the integral with respect to a compensated Poisson measure. A detailed discussion of differences and similarities with the classical Itô formula and comparable results in the literature is provided in Section~\ref{secIto}. We believe this result also to be of independent interest, as to the best of our knowledge no such formula for anticipative integrands and general Poisson processes exists in the literature.

\begin{rem}
	As demonstrated in the forthcoming Section~\ref{secAppl}, the principal achievement of this paper is the derivation of probabilistic bounds requiring minimal moment assumptions, that one can directly apply to a variety of models without implementing truncation or smoothing procedures. That said, it is plausible to expect that alternate bounds to some of those derived in Section~\ref{secAppl} could be derived by combining the results of \cite{LPS14} with a truncation procedure similar to the ones implemented e.g. in \cite[proof of Theorem~1.1]{Wu} or \cite[proof of Corollary~3.2]{NPY20}. In order to keep the length of this paper within reasonable limits, the comparison between the two approaches (in situations where both apply) will be discussed elsewhere.
\end{rem}

\noindent\underline{\textit{Plan of the paper:}} In Section~\ref{secFrame} we provide a short background on Poisson processes and Malliavin Calculus, with a more detailed account to be found in Appendix~\ref{secBackground}. In Section~\ref{secWasserKolg}, we discuss second order $p$-Poincaré inequalities, while Section~\ref{secSkorEstimates} contains our version of Itô formula and the new estimates for Skorohod integrals. Applications are discussed in Section~\ref{secAppl}, in particular the ONNG is dealt with in Section~\ref{secONNG}. All proofs can be found in the appendices: in Appendix~\ref{secPIto} we present the proof of Itô formula (Theorem~\ref{thmItoFormula}), Appendix~\ref{secPPPoin} contains the proofs for Theorem~\ref{thmGeneralpPoin} and its corollaries, and the second order Poincaré inequalities (Theorems~\ref{thmWasserKolg1}, \ref{thmWasserCond} and \ref{thmWasserNon-Cond.}) are shown in Appendix~\ref{secProofWK}. The ONNG is discussed in Section~\ref{secPONNG} and the proofs for the Gilbert graph, the $k$-Nearest Neighbour graphs and the Radial Spanning Tree can be found in Appendices~\ref{secPGil}, \ref{secPkNN} and \ref{secPRad} respectively.\\

\noindent\textbf{Acknowledgment.} I would like to thank my supervisor Giovanni Peccati for extensive discussions and his invaluable help on this project. I would also like to thank Pierre Perruchaud for his contribution to the computation of the constant discussed in Lemmas~\ref{lemI_2} and \ref{lemI_2'}. I am grateful to Günter Last, Matthias Schulte and Mark Podolskij for useful discussions and comments.

\section{Framework and notations}\label{secFrame}
We provide here an overview of the most relevant (in the context of this article) properties of Poisson point processes and elements of Poisson Malliavin calculus. Further definitions and properties that will be necessary for the proofs can be found in Appendix~\ref{secBackground}. We refer the reader to \cite{LastPoiss,LPenLectures} for an exhaustive discussion of the material presented below.\\

\noindent \underline{\textit{Poisson random measure.}}
Let $(\W,\mcal{W},\nu)$ be a $\sigma$-finite measure space and let $\mathbf{N}_\W$ be the set of $\N_0 \cup \{\infty\}$-valued measures on $(\W,\mcal{W})$. Define the $\sigma$-algebra $\mcal{N}_\W$ on $\mathbf{N}_\W$ as the smallest $\sigma$-algebra such that $\forall\, W \in \mcal{W}$, the map $\mathbf{N}_\W \ni \xi \mapsto \xi(W) \in \N \cup \{\infty\}$ is measurable. If it is clear from context which space we refer to, we will write $\mathbf{N}$ and $\mathcal{N}$ instead of $\mathbf{N}_\W$ and $\mathcal{N}_\W$.

A \textbf{Poisson random measure} with intensity $\nu$ is a $(\mathbf{N},\mathcal{N})$-valued random element $\chi$ defined on some probability space $(\Omega,\mathcal{F},\p)$ such that
\begin{itemize}
	\item for all $W \in \mathcal{W}$ and all $k \in \N_0$, we have $\p(\chi(W)=k)=\exp(-\nu(W)) \frac{\nu(W)^k}{k!}$ (with the convention that $\chi(W)=\infty \ \p$-a.s. if $\nu(W)=\infty$);
	\item for $W_1, ..., W_n \in \mathcal{W}$ disjoint, the random variables $\chi(W_1),...,\chi(W_n)$ are mutually independent. 
\end{itemize}
Existence and uniqueness of such a measure is shown in \cite[Chapter~3]{LPenLectures}. We denote by $\p_\chi$ the law of $\chi$ in $(\mathbf{N},\mathcal{N})$ and we say that $\chi$ is a $(\W,\nu)$-\textbf{Poisson measure}.

In view of the $\sigma$-finiteness of $(\W,\nu)$ and using \cite[Corollary~6.5]{LPenLectures} we can and will assume throughout the paper that the Poisson measure $\chi$ is \textbf{proper}, i.e. that there exist independent random elements $W_1,W_2,... \in \W$ and an independent  $\N_0\cup\{\infty\}$-valued random variable $\kappa$ such that $\p$-a.s.
\[
\chi = \sum_{n=1}^\kappa \delta_{W_n},
\]
where $\delta_w$ is the Dirac mass at the point $w \in \W$. All our results only depend on the law of $\chi$, hence this assumption has no impact on them. In this context, we will often identify $\chi$ with its support, i.e. with the random collection of points $\{W_1,W_2,...\}$.\\

\noindent \underline{\textit{Poisson functionals.}}
For $p \geq 0$, denote by $L^p(\p_\chi)$ the set of random variables $F$ such that there is a measurable function $f:\mathbf{N} \rightarrow \R$ such that $F=f(\chi)$ $\p$-a.s. and, if $p>0$, such that $\E |F|^p < \infty$. We call $F$ a \textbf{Poisson functional} and $f$ a \textbf{representative} of $F$. All results that follow do not depend on the choice of the representative $f$ and hence, throughout the article, we will use the symbol $F$ indiscriminately to represent both $f$ and $F$.
\\

\noindent \underline{\textit{Add-one cost and Malliavin derivative.}}
For a Poisson functional $F \in L^0(\p_\chi)$ and $w \in \W$, define the \textbf{add-one cost operator} of $F$ as
\[
\MalD_w F := F(\chi + \delta_w) - F(\chi),
\]
and inductively set $\MalD^{(n)}_{w_1,...,w_n}F := \MalD_{w_n}\MalD^{(n-1)}_{w_1,...,w_{n-1}} F$ for $n \geq 1$ and $w_1,...,w_n \in \W$, where $\MalD^{(0)} F = F$ and $\MalD^{(1)} F=\MalD F$. It can be shown that $\MalD^{(n)}F$ is jointly measurable in all variables and symmetric in $w_1,...,w_n$ (cf. \cite[p. 5]{LastPoiss}). We denote by $\dom \MalD$ the set of all $F \in L^2(\p_\chi)$ such that
\begin{equation}\label{eqDomDCond}
\E \int_\W (\MalD_w F)^2 \nu(dx) < \infty.
\end{equation}
The restriction of the operator $\MalD$ to $\dom \MalD$ is called the \textbf{Malliavin derivative} of $F$ (see \cite[Theorem~3]{LastPoiss}). Note that for $F \in L^1(\p_\eta)$, the LHS of \eqref{eqDomDCond} is well-defined and \eqref{eqDomDCond} is sufficient for $F$ to be in $\dom \MalD$ (as follows from the $L^1(\p_\eta)$-Poincaré inequality as stated in \cite[Cor.~1]{LastPoiss}).

\noindent For $F,G \in L^0(p_\chi)$, we have the following formula for the add-one cost of a product:
\begin{equation}\label{eqProductFormula}
	\MalD(FG) = (\MalD F) G + F (\MalD G) + (\MalD F) (\MalD G).
\end{equation}

\noindent \underline{\textit{Chaotic decomposition.}}
For a function $g \in L^2(\W^n,\nu^{(n)})$, denote by $\Fint_n(g)$ the $n$th \textbf{Wiener-Itô integral} of $g$. Then for $F \in L^2(\p_\chi)$, we have the \textbf{Wiener-Itô chaos expansion}
\begin{equation}\label{eqWienerItoChaos}
	F = \sum_{n=0}^\infty \Fint(f_n),
\end{equation}
where $f_n(w_1,...,w_n) = \frac{1}{n!} \E \MalD^{(n)}_{w_1,...,w_n}F$ and the series converges in $L^2(\p_\chi)$ (cf. \cite[Theorem~2]{LastPoiss}).\\

\noindent \underline{\textit{Mecke formula.}}
Denote by $L^p(\mathbf{N} \times \W)$ the quotient set of all measurable functions $h : \mathbf{N} \times \W \rightarrow \R$ such that, if $p>0$, one has $\E \int_W |h(\chi,w)|^p \nu(dw) < \infty$.

Next, we introduce the so-called \textbf{Mecke formula} (cf. \cite[(7)]{LastPoiss}), which holds for $h \in L^1(\mathbf{N} \times \W)$ and for $h : \mathbf{N} \times \W \rightarrow [0,\infty)$ measurable:
\begin{equation} \label{eqMecke}
	\E \int_\W h(\chi,w) \chi(dw) = \E \int_\W h(\chi + \delta_w,w) \nu(dw).
\end{equation}

In particular, combined with the fact that $\chi$ is assumed to be proper, this implies that for a function $h \in L^1(\mathbf{N} \times \W)$, the integral
\begin{equation}\label{eqIntwithEta}
	\int_\W h(\chi - \delta_w,w) \chi(dw)
\end{equation}
is well-defined.\\

\noindent \underline{\textit{Skorohod integrals.}}
If $h \in L^2(\mathbf{N} \times \W)$, then for $\nu$-a.e. $w \in \W$, we have $h(.,w) \in L^2(\p_\chi)$ and thus we can write
\begin{equation}\label{eqhExpand}
h(\chi,w) = \sum_{n=0}^\infty \Fint_n(h_n(w,.)),
\end{equation}
with $h_n(w,w_1,...,w_n) = \frac{1}{n!} \E \MalD^{(n)}_{w_1,...,w_n} h(\chi,w)$ (cf. \cite[(42)]{LastPoiss}). We say that $h \in \dom \delta$ if
\[
\sum_{n=0}^\infty (n+1)! \int_{\W^{n+1}} \tilde{h}_n^2 d\nu^{n+1} < \infty,
\]
where $\tilde{h}_n$ is the symmetrisation of $h_n$ given by
\[
\tilde{h}_n(w_1,...,w_{n+1}) = \frac{1}{n+1} \sum_{k=1}^{n+1} h_n(w_k,w_1,...,w_{k-1},w_{k+1},...,w_{n+1}).
\]
For $h\in \dom \delta$ we define the \textbf{Skorohod integral} of $h$ by
\[
\delta(h) := \sum_{n=0}^\infty \Fint_{n+1}(h_n),
\]
which converges in $L^2(\p_\chi)$. Note that, by \cite[Theorem~5]{LastPoiss}, the following condition is sufficient for $h \in L^2(\mathbf{N} \times \W)$ to be in $\dom \delta$:
\begin{equation} \label{eqDomDelCond}
	\E \int_\W \int_\W \left(\MalD_{x} h(\chi,y)\right)^2 \nu(dx) \nu(dy) < \infty.
\end{equation}
If $h \in L^1(\mathbf{N} \times \W) \cap \dom \delta$, then by \cite[Theorem~6]{LastPoiss}, we have $\p$-a.s.
\begin{equation}\label{eqPathwise}
\delta(h) = \int_\W h(\chi-\delta_w,w) \chi(dw) - \int_\W h(\chi,w) \nu(dw),
\end{equation}
where the RHS is well-defined for any $h \in L^1(\mathbf{N} \times \W)$ by \eqref{eqIntwithEta}.\\

\noindent \underline{\textit{Extension to a marked space.}}
It will often be convenient to endow the space $\W$ with marks representing time. As we are only interested in the law of the Poisson functionals in question, we can always suppose that the $(\W,\nu)$-Poisson measure $\chi$ is the marginal of a $(\W \times [0,1],\nu \otimes ds)$-Poisson measure $\eta$. Indeed, $\eta(. \times [0,1])$ has the same law on $\mathbf{N}_\W$ as $\chi$. For a functional $F \in L^0(\p_\chi)$, define
\[
G(\eta) := F(\eta(.\times [0,1])).
\]
Then $G(\eta)$ has the same law under $\p_\eta$ as $F(\chi)$ under $\p_\chi$. Moreover, for any $(x,s) \in \W \times [0,1]$,
\[
\MalD_{(x,s)}G(\eta) = \MalD_xF(\eta(.\times [0,1])),
\]
which is equal in law to $\MalD_x F(\chi)$.\\

\noindent\underline{\textit{Predictability.}}
We call a measurable function $h:\mathbf{N}_{\W \times [0,1]} \times \W \times [0,1] \rightarrow \R$ \textbf{predictable} if for all $(y,s)\in \W \times [0,1]$ and all $\nu \in \mathbf{N}_{\W \times [0,1]}$
\begin{equation}\label{eqPred}
h(\nu,y,s) = h(\nu_{|\W \times [0,s)},y,s).
\end{equation}
This definition of predictability appears e.g. in \cite[(2.5)]{LPen2}, where it is argued that this version of predictability is comparable to predictability in the classical sense (as defined e.g. in \cite[Definition~I.5.2]{IkedaWatanabe}). It is also shown in \cite[Proposition~2.4]{LPen2} that if $h \in L^2(\mathbf{N}\times\W\times[0,1])$ satisfies \eqref{eqPred}, then $h \in \dom \delta$.\\

\noindent\underline{\textit{Conditional expectations and Clark-Ocône formula.}} Let $\eta$ be a $(\W \times [0,1],\nu \otimes ds)$-Poisson measure. Using that the measures $\eta_{|\W \times [0,s)}$ and $\eta_{|\W \times [s,0]}$ are independent, one can define a version of conditional expectation for any non-negative or integrable random variable $G \in L^0(\p_\eta)$ by
\begin{equation}\label{eqCondExp}
\E[G | \eta_{|\W \times [0,s)}] := \int G(\eta_{|\W \times [0,s)} + \xi) \Pi_{s}(d\xi),
\end{equation}
where $\Pi_{s}$ is the law of $\eta_{|\W \times [s,1]}$. If it is finite, the conditional expectation $\E[G | \eta_{|\W \times [0,s)}]$ is predictable. In particular, for $F \in L^2(\p_\eta)$ the quantity $\E [\MalD_{(x,s)}F | \eta_{|\W \times [0,s)} ]$ is well-defined, finite and predictable and the following Clark-Ocône type formula is shown in \cite[Theorem~2.1]{LPen2} (see also \cite{Wu,HP02}):
\begin{equation}\label{eqClarkOcone}
	F = \E F + \delta(\E [\MalD_{(x,s)}F | \eta_{|\W \times [0,s)} ]) \qquad \p_\chi-\text{a.s.}
\end{equation}
This formula will be essential in the proof of Corollary~\ref{corpPoinGen}.\\

\noindent\underline{\textit{Generic sets.}} Let $\mu \subset \R^d$ be a finite set. We say that $\mu$ is \textbf{generic} if all pairwise distances between points are distinct.
We say that a set $\mu \subset \R^d$ is \textbf{generic with respect to points} $x,y \in \R^d$ if $x,y \notin \mu$ and $\mu \cup \{x,y\}$ is generic. Note that for compact sets $H \subset \R^d$, any $(H,dx)$-Poisson measure $\chi$ can a.s. be identified with its support and this support is a.s. generic. To simplify the presentation, we will at times adopt the notation
\begin{equation} \label{setNot}
	F(\mu) := F(\xi_\mu), \qquad \text{where } \xi_\mu = \sum_{x \in \mu} \delta_x
\end{equation}
for a finite set $\mu \in \R^d$ and a measurable functional $F : \mathbf{N_{\R^d}} \rightarrow \R$. Similar notation will be used for $\MalD F(\mu)$, $\DD F(\mu)$ etc.

\subsection{Notation}
For $x \in \R^d$ and $r >0$, we write $B^d(x,r)$ to indicate the (open) ball of centre $x$ and radius $r$. For a measurable set $A \subset \R^d$, we denote by $|A|$ the Lebesgue measure of $A$, unless $A$ is finite, in which case $|A|$ denotes the number of elements in $A$. We use $\overbar{A}$ to denote the closure of $A$. Throughout this paper, $\kappa_d = |B^d(0,1)|$. We use the symbols $\wedge$ (resp. $\vee$) to denote a minimum (resp. maximum) of two elements. We shall use LHS and RHS to denote `left hand side' and `right hand side' and use $|x|$ to denote the Euclidean norm of $x \in \R^d$. The supremum norm of a function $f:\R \rightarrow \R$ is denoted by $\norm{f}_\infty$. By $\stackrel{d}{=}$ and $\stackrel{d}{\longrightarrow}$ we mean equality and convergence in distribution respectively. We use the symbol $\simeq$ (resp. $\lesssim$) if there is equality (resp. inequality) up to multiplication by a positive constant.
\section{Second order $p$-Poincaré inequalities in Wasserstein and Kolmogorov distances}\label{secWasserKolg}

In this section we state our new bounds on the distance between the distribution of a Poisson functional and the Normal law. These bounds are called second order $p$-Poincaré inequalities, following a nomenclature coined in \cite{Chatterjee2009}, where bounds of this type were given for the first time in a Gaussian context. We make use of the well-established Malliavin-Stein method, which was pioneered in \cite{PN09} in the Wiener case, used for the first time in the Poisson case in \cite{PeccTaqq} and subsequently extended and developed in a wide range of articles -- see the references given in \cite{LPS14}, the survey \cite{APY18}, the monograph \cite{PeccReitz} and the website \cite{MallStein}. Related bounds in the Kolmogorov distance have been studied in various places \cite{TE14,S16,LPS14,LachPeccYang}.

Recall that for an integrable random variable $F$ and a standard Gaussian $N$, the Wasserstein distance between the distributions of $F$ and $N$ is given by
\begin{equation}\label{eqdW}
d_W(F,N) = \sup_{h \in \mathcal{H}} |\E h(F) - \E h(N)|
\end{equation}
where $\mathcal{H}$ is the set of Lipschitz-continuous functions $h:\R \rightarrow \R $ with Lipschitz constant $\norm{h}_L \leq 1$.
On the other hand, the Kolmogorov distance between the distributions of $F$ and $N$ is defined as
\begin{equation}\label{eqdK}
d_K(F,N) = \sup_{z \in \R} |\p(F \leq z) - \p(N \leq z)|.
\end{equation}
See e.g. \cite[Appendix~C]{PeccNourd}, and the references therein, for a discussion of the basic properties of $d_W$ and $d_K$.

\begin{rem}\label{remNotatWK}
	For the rest of this section, we fix a $\sigma$-finite measure space $(\X, \mathcal{X},\lambda)$. Before we state our main theorems, we introduce some simplified notation to improve legibility of the following results. Write $\Y := \X \times [0,1]$ and $\bar{\lambda}= \lambda \otimes dt$. We introduce a total order on $\Y$ by saying that $x<y$ if $x=(z,s)$, $y=(w,u)$ and $s<u$. In the following, $\eta$ will be a $(\Y,\bar{\lambda})$-Poisson measure and $\chi$ will be a $(\X,\lambda)$-Poisson measure. We will write $\eta_x$ for $\eta_{|\X \times [0,s)}$. Integrals with respect to $\bar{\lambda}$ are taken over $\Y$ and integrals with respect to $\lambda$ are taken over $\X$. 
\end{rem}
The next statement contains the general abstract bounds on which our analysis will rely.

\begin{thm} \label{thmWasserKolg1}
	Let $F \in L^2(\p_\eta) \cap \dom \MalD$ such that $\E F = 0$ and $\E F^2 = 1$. Then for any $q \in [1,2]$
	\begin{align}
		&d_W(F,N) \leq \sqrt{\frac{2}{\pi}}\, \E \left| 1- \yint \MalD_yF\, \E[\MalD_yF | \eta_y] \bar{\lambda}(dy) \right| + 2\, \E \yint \left| \E[\MalD_yF | \eta_y] \right| \cdot |\MalD_yF|^q \bar{\lambda}(dy)\label{eqW2}
		\intertext{and}
		&\begin{multlined}[0.9\displaywidth]
			d_K(F,N) \leq \E \left| 1- \yint \MalD_yF\, \E[\MalD_yF | \eta_y] \bar{\lambda}(dy) \right|\\ + \sup_{z \in \R} \E \yint \left|\E\big[\MalD_y F \big|\eta_y\big]\right| \MalD_y F \cdot
			\MalD_y(Ff_z(F) + \ind{F>z}) \bar{\lambda}(dy).
		\end{multlined} \label{eqK2}
	\end{align}
\end{thm}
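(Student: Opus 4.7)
The plan is to apply the Malliavin--Stein method, combining Stein's equation with the Clark--Ocône formula \eqref{eqClarkOcone} to convert the distance bound into an expression controlled by moments of the add-one cost operator.

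I would first recall the Stein framework: for both distances one has $d(F,N) \leq \sup_h |\E[f_h'(F) - F f_h(F)]|$, where $f_h$ solves Stein's equation $f_h'(x) - xf_h(x) = h(x) - \E h(N)$ for a test function $h$ in the relevant class. For Wasserstein, $h$ is $1$-Lipschitz and one has the standard bounds $\|f_h'\|_\infty \leq \sqrt{2/\pi}$ and $\|f_h''\|_\infty \leq 2$; for Kolmogorov, $h = \ind{\cdot \leq z}$ and the solution $f_z$ satisfies $\|f_z'\|_\infty \leq 1$ (with a jump of size $1$ at $z$).

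Using $\E F = 0$ and $F \in \dom\MalD$, the Clark--Ocône formula \eqref{eqClarkOcone} expresses $F = \delta(u)$ with $u_y := \E[\MalD_y F \mid \eta_y]$, a predictable integrand. By Skorohod duality,
\[
\E[F f_h(F)] = \E[\delta(u) f_h(F)] = \E \yint u_y \MalD_y f_h(F) \, \bar{\lambda}(dy),
\]
and the same duality applied to $F$ itself gives $\E W = 1$, where $W := \yint u_y \MalD_y F \, \bar{\lambda}(dy)$. Setting $R_h(y) := \MalD_y f_h(F) - f_h'(F)\MalD_y F$, I decompose
\[
\E[f_h'(F) - F f_h(F)] = \E[f_h'(F)(1 - W)] - \E \yint u_y R_h(y) \, \bar{\lambda}(dy).
\]
The first term is controlled by $\|f_h'\|_\infty \E|1-W|$, producing the leading terms of both bounds.

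For the Wasserstein bound, writing $R_h(y) = \int_0^{\MalD_y F} (f_h'(F+s) - f_h'(F))\,ds$ and applying the two derivative bounds, I obtain the two pointwise estimates $|R_h(y)| \leq (\MalD_y F)^2$ and $|R_h(y)| \leq 2\sqrt{2/\pi}\,|\MalD_y F|$. Geometric interpolation between these gives, for any $q \in [1,2]$,
\[
|R_h(y)| \leq \bigl(2\sqrt{2/\pi}\bigr)^{2-q} |\MalD_y F|^q \leq 2\, |\MalD_y F|^q,
\]
which combined with $|u_y|$ produces \eqref{eqW2}.

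For the Kolmogorov bound, Taylor expansion fails because $f_z'$ has a jump at $z$. Instead, I would apply Stein's equation at both $F$ and $F + s$ to write
\[
f_z'(F+s) - f_z'(F) = [(F+s)f_z(F+s) - F f_z(F)] + [\ind{F+s \leq z} - \ind{F \leq z}],
\]
and substitute this in the integral representation of $R_z(y)$. A sign analysis based on the pointwise inequality $\MalD_y F \cdot [\ind{F+\MalD_y F > z} - \ind{F > z}] \geq 0$ allows one to dominate $\bigl|\E \int u_y R_z(y)\,\bar{\lambda}(dy)\bigr|$ by $\E \int |u_y|\,\MalD_y F \cdot \MalD_y(F f_z(F) + \ind{F > z}) \, \bar{\lambda}(dy)$; taking the supremum over $z$ yields \eqref{eqK2}.

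The main obstacle is the Kolmogorov case: the careful algebraic manipulation needed to replace the ill-defined second derivative of $f_z$ (a Dirac mass at $z$) by the add-one cost quantity $\MalD_y(F f_z(F) + \ind{F > z})$, using the specific sign structure of the indicator jump. Integrability of the various integrands is guaranteed by $F \in \dom\MalD$ combined with the uniform bounds on $f_z$ and on $x f_z(x)$.
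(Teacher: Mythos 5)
Your proposal is correct and follows essentially the same strategy as the paper: isolate the term $\E\left|1 - \yint \MalD_yF\,\E[\MalD_yF\mid\eta_y]\,\bar{\lambda}(dy)\right|$ via $\|f'\|_\infty$, reduce $\E[Ff(F)]$ to $\E\yint \E[\MalD_yF\mid\eta_y]\,\MalD_y f(F)\,\bar{\lambda}(dy)$, and then control the Taylor remainder of the Stein solution pointwise. The only cosmetic differences are that the paper routes the duality step through the covariance formula of Lemma~\ref{lemCondCov} rather than writing $F=\delta(u)$ and applying integration by parts directly (equivalent here since $f(F)\in\dom\MalD$), uses $2\min\{|b-a|,(b-a)^2\}\leq 2|b-a|^q$ in place of your geometric interpolation (same constant $2$), and cites \cite{LachPeccYang} for the pointwise Kolmogorov inequality $|f_z'(F)\MalD_yF - \MalD_y f_z(F)|\leq \MalD_yF\cdot\MalD_y(Ff_z(F)+\ind{F>z})$, which is exactly what your ``sign analysis'' step would establish.
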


As a next step, we derive the upper bounds we use in applications. Define the following quantities:

\begin{align*}
	&\beta_1 := \frac{2^{2/p}\sqrt{2}}{\sqrt{\pi}} \sigma^{-2} \left( \yint \left(\yint \E\left[\E\big[|\MalD_yF|\big|\eta_y\big]^{2p}\right]^{\frac{1}{2p}} \cdot \E\left[\E\big[|\DD_{x,y} F |\big| \eta_{x \vee y} \big]^{2p}\right]^{\frac{1}{2p}} \bar{\lambda}(dy) \right)^p \bar{\lambda}(dx)\right)^{1/p} \\
	&\beta_2 := \frac{2^{2/p}}{\sqrt{2\pi}} \sigma^{-2} \left( \yint \left(\yint \ind{x<y} \E\left[\E\big[|\DD_{x,y} F|\big|\eta_y\big]^{2p} \right]^{1/p} \bar{\lambda}(dy) \right)^p \bar{\lambda}(dx)\right)^{1/p} \\
	&\beta_3 := 2\sigma^{-(q+1)} \E \yint \big| \E \left[\left. \MalD_yF \right|\eta_y\right]\big|^{q+1} \Bar{\lambda}(dy),\\
	&\beta_4 := 2^{3-q} \sigma^{-(q+1)} \yint\yint\ind{y \leq x}\E\left[\E[\MalD_yF | \eta_y]^2\right]^{1/2} \cdot \E \left[ \big|\E[\DD_{x,y} F|\eta_x]\big|^{2q}\right]^{1/2} \bar{\lambda}(dx) \Bar{\lambda}(dy)
\end{align*}

The following statement is our first bound on Wasserstein distances, expressed in terms of moments of the first and second order add-one costs conditional on past behaviour.

\begin{thm}\label{thmWasserCond}
	Let $\eta$ be a $(\Y, \bar{\lambda})$-Poisson-measure and let $F \in L^2(\p_\eta) \cap \dom \MalD$. Define $\sigma:= \sqrt{\var(F)}$ and  $\hat{F}:= (F-\E F)\sigma^{-1}$. Let $p,q\in (1,2]$. Then
	\begin{equation}\label{eqWasserCond}
		d_W(\hat{F},N) \leq \beta_1 + \beta_2 + \beta_3 + \beta_4.
	\end{equation}
\end{thm}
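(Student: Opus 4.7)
The plan is to apply Theorem~\ref{thmWasserKolg1} to $\hat F$ with the given exponent $q$, which bounds $d_W(\hat F,N)$ by $\sqrt{2/\pi}\,\sigma^{-2}\E|T-\sigma^2|$ plus $2\sigma^{-(q+1)}\E\yint|E_y|\cdot|\MalD_yF|^q\,\bar{\lambda}(dy)$, where I set $E_y:=\E[\MalD_yF|\eta_y]$ and $T:=\yint\MalD_yF\cdot E_y\,\bar{\lambda}(dy)$. The Last--Penrose variance formula together with the tower property yields $\E T=\sigma^2$, so the first summand equals $\sqrt{2/\pi}\,\sigma^{-2}\E|T-\E T|$. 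The contributions $\beta_1,\beta_2$ will come from this first summand, while $\beta_3,\beta_4$ will come from the second.

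For the first summand, Jensen's inequality and the time-augmented $p$-Poincar\'e bound from the introduction reduce matters to estimating $2^{(2-p)/p}\bigl(\yint\E|\E[\MalD_xT|\eta_x]|^p\bar{\lambda}(dx)\bigr)^{1/p}$. The key computation is $\MalD_xT$, for which the product formula and the identity $\MalD_xE_y=\ind{x<y}\E[\DD_{x,y}F|\eta_y]$ (which holds since $E_y$ is $\eta_y$-measurable and adding a point at $x$ only alters the conditioning region when $t_x<t_y$) together with the tower property produce a decomposition $\E[\MalD_xT|\eta_x]=\Phi_1(x)+\Phi_2(x)+\Phi_3(x)$. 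Here $\Phi_1+\Phi_2$ collects the mixed terms of the form $E_y\cdot\E[\DD_{x,y}F|\eta_{x\vee y}]$ (with total coefficient at most $2$, combining contributions from the regions $\{y<x\}$ and $\{y>x\}$), while $\Phi_3(x)=\int_{y>x}\E[(\E[\DD_{x,y}F|\eta_y])^2|\eta_x]\bar{\lambda}(dy)$. For the mixed part, Minkowski's integral inequality in $L^p(\bar{\lambda}(dx)\otimes\p)$ combined with conditional Jensen and pointwise Cauchy--Schwarz at exponent $2p$ produces $\beta_1$; for $\Phi_3$, a direct application of Minkowski and Jensen produces $\beta_2$. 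The constants match: $\sqrt{2/\pi}\cdot 2^{(2-p)/p}\cdot 2=\tfrac{2^{2/p}\sqrt 2}{\sqrt\pi}$ for $\beta_1$, and $\sqrt{2/\pi}\cdot 2^{(2-p)/p}=\tfrac{2^{2/p}}{\sqrt{2\pi}}$ for $\beta_2$.

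For the second summand, I would write $\MalD_yF=E_y+R_y$ with $R_y:=\MalD_yF-E_y$ and use the Taylor-type bound $|\MalD_yF|^q\leq|E_y|^q+q|E_y|^{q-1}|R_y|+q|R_y|^q$, valid for $q\in[1,2]$. Multiplying by $|E_y|$ and integrating, the $|E_y|^{q+1}$ piece gives $\beta_3$ after normalisation. For the cross terms, pointwise Cauchy--Schwarz separates $|E_y|$ from the $|R_y|$-factors; the Skorohod representation $R_y=\delta(\ind{\cdot>y}\E[\DD_{\cdot,y}F|\eta_\cdot])$ together with the Skorohod-integral moment estimate of Theorem~\ref{thmGeneralpPoin} then converts the $R_y$-moments into the desired integral in the $x$-variable. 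A final relabelling $x\leftrightarrow y$ sends $\ind{x>y}$ to $\ind{y\leq x}$ and yields $\beta_4$.

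The main technical obstacle is bookkeeping the numerical constants through the chain of Minkowski, Jensen, Cauchy--Schwarz, and Taylor-expansion steps. Particular care is needed to adapt Theorem~\ref{thmGeneralpPoin} (whose base range is $p\in(1,2]$) so as to obtain a bound on $(\E R_y^{2q})^{1/2}$ of the precise form $\int(\E|\E[\DD_{x,y}F|\eta_x]|^{2q})^{1/2}\bar{\lambda}(dx)$ required by $\beta_4$.
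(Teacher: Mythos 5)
Your Step~1 follows the paper's proof: set $G:=\yint\MalD_yF\cdot E_y\,\bar{\lambda}(dy)-\sigma^2$ (with $E_y:=\E[\MalD_yF|\eta_y]$), apply the $L^1$ variant \eqref{eqPPoin1} of the conditional $p$-Poincar\'e inequality, compute $\MalD_x(\MalD_yF\cdot E_y)$ via the product formula \eqref{eqProductFormula} together with the identity $\MalD_x E_y=\ind{x<y}\E[\DD_{x,y}F|\eta_y]$, and close with Minkowski, Jensen and Cauchy--Schwarz. A minor correction to your description: the factor $2$ in $\beta_1$ arises from absorbing the term $\ind{x<y}\E[\E[|\MalD_yF||\eta_y]^p\cdot|\E[\DD_{x,y}F|\eta_y]|^p]^{1/p}$ into the dominant term of the same shape, not from summing contributions across the two time-orderings.

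Your Step~2 has a genuine gap. After multiplying your Taylor bound by $|E_y|$ and applying Cauchy--Schwarz, you face $\E[|R_y|^{2q}]^{1/2}$ with $2q\in(2,4]$; you propose to control it via the Skorohod representation $R_y=\delta(\ind{\cdot>y}\E[\DD_{\cdot,y}F|\eta_\cdot])$ and an ``adaptation'' of Theorem~\ref{thmGeneralpPoin}. That cannot work: Remark~\ref{remPPoin}, item 2, provides an explicit counterexample showing the $p$-Poincar\'e inequalities \eqref{eqPPoin2}/\eqref{eqPPoinNoTime} fail for $p>2$ with any multiplicative constant, so there is nothing to adapt. (Your other cross term also produces $\E[|E_y|^{2q}]^{1/2}$, which does not match the $\E[E_y^2]^{1/2}$ appearing in $\beta_4$.) The paper's Step~2 is structurally different and avoids this entirely: after extracting $\beta_3$, it uses the tower property to rewrite the remainder as $\E\yint|E_y|\cdot\bigl|\E[|\MalD_yF|^q|\eta_y]-|E_y|^q\bigr|\bar{\lambda}(dy)$, recognizes $\E[|\MalD_yF|^q|\eta_y]-|E_y|^q$ as $\tilde{\E}|g_y|^q-|\tilde{\E} g_y|^q$ for $g_y:=\MalD_yF(\eta_y+\cdot)$ viewed as a functional of the ``future'' part of the process, and applies the $q$-Poincar\'e inequality \eqref{eqPPoin2} to $g_y$ --- legitimately, since $q\in(1,2]$. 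The exponent $2q$ in $\beta_4$ then appears only at the very end, from a single Cauchy--Schwarz applied to the product $|E_y|\cdot|\E[\DD_{x,y}F|\eta_x]|^q$, and never as a moment of a remainder term. The moral: rather than linearising $|\cdot|^q$ around $E_y$ and separately estimating the remainder, condition first and apply the $q$-Poincar\'e inequality to the whole functional $g_y$.
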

\noindent The proof can be found in Appendix~\ref{secProofWK}.

Now define
\begin{align*}
	&\gamma_1 := \frac{2^{2/p}\sqrt{2}}{\sqrt{\pi}} \sigma^{-2} \left( \int_\X \left(\int_\X \E\left[|\MalD_yF|^{2p}\right]^{\frac{1}{2p}} \cdot \E\left[|\DD_{x,y} F|^{2p}\right]^{\frac{1}{2p}} \lambda(dy) \right)^p \lambda(dx)\right)^{1/p} \\
	&\gamma_2 := \frac{2^{2/p}}{\sqrt{2\pi}} \sigma^{-2} \left( \int_\X \left(\int_\X \E\left[|\DD_{x,y} F|^{2p} \right]^{1/p} \lambda(dy) \right)^p \lambda(dx)\right)^{1/p} \\
	&\gamma_3 := 2\sigma^{-(q+1)}  \int_\X \E\left|\MalD_yF \right|^{q+1} \lambda(dy)
\end{align*}
and
\begin{align*}
	&\gamma_4 := \sigma^{-2} \left(4  \int_\X \E\left[|\MalD_yF|^{2p}\right] \lambda(dy)\right)^{1/p} \\
	&\gamma_5 := \sigma^{-2} \left(4p \int_\X \int_\X \E\left[|\DD_{x,y} F|^{2p}\right] \lambda(dy) \lambda(dx)\right)^{1/p} \\
	&\gamma_6 := \sigma^{-2} \left(2^{2+p}p \int_\X \int_\X \E\left[|\DD_{x,y} F|^{2p}\right]^{1/2} \cdot \E\left[|\MalD_{x} F|^{2p}\right]^{1/2} \lambda(dy) \lambda(dx)\right)^{1/p} \\
	&\gamma_7 := \sigma^{-2} \left(8p \int_\X \int_\X \left(\E|\DD_{x,y} F|^{2p}\right)^{\frac{1}{2p}} \cdot  \left(\E|\MalD_xF|^{2p}\right)^{\frac{1}{2p}} \cdot \left(\E\left|\MalD_yF\right|^{2p}\right)^{1-1/p} \lambda(dy)\lambda(dx)\right)^{1/p}.
\end{align*}
Note that the quantities $\beta_1,...,\beta_4$ and $\gamma_1,...,\gamma_7$ only contain expressions related to $\MalD F$ and $\DD F$.

The next statement contains our main estimates on Wasserstein and Kolmogorov distances, given in terms of moments of first and second order add-one costs (without conditioning).
\begin{thm}\label{thmWasserNon-Cond.}
	Let $\chi$ be a $(\X,\lambda)$-Poisson measure and let $F \in L^2(\p_\chi) \cap \dom \MalD$. Define $\sigma:= \sqrt{\var F}$ and $\hat{F}:= (F- \E F)\sigma^{-1}$. Then
	\begin{equation}\label{eqWasserNCond}
		d_W(\hat{F},N) \leq \gamma_1 + \gamma_2 + \gamma_3
	\end{equation}
	and
	\begin{equation}\label{eqKolg}
		d_K(\hat{F},N) \leq \sqrt{\tfrac{\pi}{2}} \gamma_1 + \sqrt{\tfrac{\pi}{2}}\gamma_2 + \gamma_4 + \gamma_5 + \gamma_6 + \gamma_7.
	\end{equation}
\end{thm}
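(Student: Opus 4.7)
The plan is to apply the abstract bounds of Theorem~\ref{thmWasserKolg1} to the time-augmented extension $G(\eta) := F(\eta(\,\cdot\, \times [0,1]))$ of $F$, where $\eta$ is an auxiliary $(\Y, \bar{\lambda})$-Poisson measure on $\Y = \X \times [0,1]$, and then to eliminate the conditional expectations $\E[\MalD_y G \mid \eta_y]$ appearing in that bound by means of Jensen's inequality. The decisive observation is that $\MalD_{(x,s)} G = \MalD_x F$ and $\DD_{(x,s),(y,t)} G = \DD_{x,y} F$ do not depend on the time marks, so conditional Jensen $\E[|X|\mid\mathcal{F}]^{r} \leq \E[|X|^{r}\mid\mathcal{F}]$ followed by a trivial integration in $s,t\in[0,1]$ converts every $\bar{\lambda}$-integral over $\Y$ into a $\lambda$-integral over $\X$ involving only the unconditional moments of $\MalD F$ and $\DD F$.

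\textbf{Wasserstein bound.} Applied to $\hat{G} = (G - \E G)/\sigma$, \eqref{eqW2} produces two summands. For the second one, $2\E\yint |\E[\MalD_y\hat G \mid \eta_y]|\cdot|\MalD_y\hat G|^{q}\bar{\lambda}(dy)$, Hölder's inequality with exponents $(q+1,(q+1)/q)$ combined with conditional Jensen collapses the integrand into $\E|\MalD_y\hat G|^{q+1}$, yielding $\gamma_3$. For the first summand $\sqrt{2/\pi}\,\E|1-Z|$ with $Z := \yint \MalD_y\hat G\,\E[\MalD_y\hat G\mid\eta_y]\bar{\lambda}(dy)$, the Last--Penrose variance identity gives $\E Z = \var\hat G = 1$, so Cauchy--Schwarz reduces the task to bounding $\sqrt{\var Z}$. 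The variance of $Z$ is then estimated via the Poincaré inequality $\var Z \leq \E\yint(\MalD_z Z)^{2}\bar{\lambda}(dz)$; the product formula~\eqref{eqProductFormula} expands $\MalD_z(\MalD_y\hat G\cdot\E[\MalD_y\hat G\mid\eta_y])$ and brings in the second-order add-one cost $\DD\hat G$, splitting the result into two pieces (according as the derivative acts on $\MalD_y\hat G$ or on the conditional expectation, the latter surviving only when $z \leq y$ in the time ordering). A final application of Jensen and the time-mark factorisation delivers exactly $\gamma_1 + \gamma_2$.

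\textbf{Kolmogorov bound.} The first term of \eqref{eqK2} has the same form as above but lacks the $\sqrt{2/\pi}$ prefactor, producing $\sqrt{\pi/2}\,(\gamma_1 + \gamma_2)$. The second term contains $\MalD_y(\hat G f_z(\hat G) + \ind{\hat G > z})$. Expanding $\MalD_y(\hat G f_z(\hat G))$ via~\eqref{eqProductFormula} and using the Stein-kernel bounds $\|f_z\|_\infty \leq \sqrt{2\pi}/4$ and $\|f'_z\|_\infty \leq 1$ produces a first family of summands controlled by Hölder pairings of $\E|\MalD\hat F|^{2p}$ and $\E|\DD\hat F|^{2p}$; the indicator contribution is handled via the pointwise bound $|\MalD_y\ind{\hat G > z}| \leq \ind{|\hat G - z| \leq |\MalD_y\hat G|}$ together with the self-improving Chebyshev estimate $\p(|\hat G - z| \leq \epsilon) \leq \epsilon\sqrt{2/\pi} + 2d_K(\hat G,N)$, which allows one to absorb a $d_K$-term back into the LHS. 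Choosing the Hölder exponents to match the four distinct pairings of moments gives precisely the four terms $\gamma_4, \gamma_5, \gamma_6, \gamma_7$.

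\textbf{Main obstacle.} The principal difficulty lies in the second term of the Kolmogorov bound: the three-fold product-formula expansion combined with the indicator-based handling of $\ind{\hat G > z}$ requires carefully chosen Hölder pairings so that the $2p$-moments of $\MalD\hat F$ and $\DD\hat F$ appear in exactly the four combinations encoded by $\gamma_4,\ldots,\gamma_7$, and one has to solve the resulting self-improving inequality for $d_K$ while tracking the absolute constants so that the factors of $p$ in front of the integrals come out as stated.
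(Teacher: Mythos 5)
Your outline diverges from the paper in the two places that actually carry the new content, and in both cases the substitution you propose would lose exactly the improvement the theorem is designed to capture.

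For the Wasserstein bound, you propose $\E|1-Z|\leq\sqrt{\var Z}$ followed by the classical $L^2$-Poincaré inequality $\var Z\leq\E\int(\MalD_z Z)^2$. Since $\MalD_z Z$ involves products of first and second order add-one costs, squaring and integrating this produces fourth moments of $\MalD F$ and $\DD F$ and recovers only the $p=2$ case of $\gamma_1,\gamma_2$; it cannot produce the free parameter $p\in(1,2]$ and the associated $2p$-moments. What the paper actually does (inside the proof of Theorem~\ref{thmWasserCond}, Step~1, which is then invoked) is to set $G:=Z-1$, note $\E G=0$, write $\E|G|\leq(\E|G|^p)^{1/p}$, and apply the $p$-Poincaré inequality of Corollary~\ref{corpPoinGen} (Remark~\ref{remPPoin}) to $|G|^p$. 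This is the first of the two new ingredients. The subsequent Minkowski and Jensen manipulations then yield $\beta_1,\beta_2$ and finally $\gamma_1,\gamma_2$. Without replacing Cauchy--Schwarz plus $L^2$-Poincaré by the $p$-Poincaré step, your chain of inequalities cannot reach the stated bound for any $p<2$.

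For the Kolmogorov bound, you propose to expand $\MalD_y(\hat G f_z(\hat G)+\ind{\hat G>z})$ by the product formula, bound the indicator contribution by $\ind{|\hat G-z|\leq|\MalD_y\hat G|}$, and close a self-improving inequality involving $d_K$ on both sides. This is the method of \cite{LPS14} and \cite{LachPeccYang}, and it is precisely the route the paper abandons: it again requires fourth-moment control, forces a term depending on $\E F^4$, and needs the self-improvement step. The paper instead sets $h(\eta,y):=\MalD_yF\cdot|\E[\MalD_yF|\eta_y]|$ and $Z_z:=Ff_z(F)+\ind{F>z}$, observes $|Z_z|\leq 2$, and applies Corollary~\ref{corKolgIneq} (a consequence of the new Skorohod moment estimate in Theorem~\ref{thmGeneralpPoin}) to the quantity $\E\yint h(\eta,y)\,\MalD_yZ_z\,\bar\lambda(dy)$. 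This produces three terms $I_1,I_2,I_3$; after expanding $\MalD_x h$ by the product formula and applying Minkowski, Cauchy--Schwarz and Jensen, these give $\gamma_4,\dots,\gamma_7$ directly, with no pointwise bound on $\MalD\ind{F>z}$, no Chebyshev estimate, and no self-improvement to solve. So, in both parts, the missing idea is the same: the moment inequalities of Section~\ref{secSkorEstimates} are not optional refinements but the mechanism by which the exponent $2p$ with $p<2$ enters, and your argument, as written, would only re-derive the fourth-moment bounds already known from \cite{LPS14}.
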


%The term $\gamma_7$ could also be given differently: Split the exponent of $\DD$ into $\frac{p-1}{p} \frac{1}{1+2(p-1)}$ and $\frac{1}{2p}-\frac{p-1}{p} \frac{1}{1+2(p-1)}$ and use the first part together with $\left(\E\left|\MalD_yF\right|^{2p}\right)^{1-1/p}$ and the Hölder exponent $1+\frac{1}{2(p-1)}$.

\begin{rem}\label{remGamma7}
	Using Hölder's inequality, one can replace the term $\gamma_7$ by the slightly larger but simpler bound
	\[
	\sigma^{-2} \left(\int_\X\int_\X \left(\E|\DD_{x,y} F|^{2p}\right)^{\frac{1}{2p}} \cdot  \left(\E|\MalD_xF|^{2p}\right)^{1-\frac{1}{2p}} \lambda(dx) \lambda(dy) \right)^{1/p}.
	\]
	We will use this bound in the proof of Theorem~\ref{thmRST} in the context of the Radial Spanning Tree.
\end{rem}

\begin{rem}[Discussion of literature]
	Our results in this section are a substantial extension of \cite{LPS14}. The bounds given in \cite[Theorems~1.1 and 1.2]{LPS14} contain moments of first and second order add-one costs with exponent $4$ (or even $4+\epsilon$, see \cite[Proposition~1.4]{LPS14}). While this is a very powerful tool for showing asymptotically Gaussian behaviour, a finite $4$th moment is too strong a condition for some applications, most notably for the ONNG discussed in Section~\ref{secONNG}. Our Theorem~\ref{thmWasserNon-Cond.} reduces this condition to finite $2p$ moments, where $p\in (1,2]$, while retaining similar bounds in the case $p=2$. In particular, \cite[Theorem~6.1 and Proposition~1.4]{LPS14} follow from our Theorem~\ref{thmWasserNon-Cond.}. (See also \cite{Tri19} for qualitative results requiring bounds on moments of order $2p$ under weak stabilisation assumptions).

	The proofs of Theorems~\ref{thmWasserKolg1}, \ref{thmWasserCond} and \ref{thmWasserNon-Cond.} follow in spirit the ideas from \cite{LPS14} and \cite[Theorem~1.12]{LachPeccYang} (for the Kolmogorov distance). However, we work on a space $\X \times [0,1]$ extended by a time component and systematically replace the operator $L^{-1}$ by the conditional expectation $\E[\MalD_{(x,t)} . |\eta_{|\X \times [0,t)}]$ (see \cite{PT13} for a similar approach for Poisson measures on the real line). Moreover, we apply the inequalities established in Section~\ref{secSkorEstimates} to achieve the improvement in the exponent. For the Wasserstein distance, we also use an improvement due to \cite{BOPT} to obtain the terms $\beta_3,\beta_4,\gamma_3$. For the Kolmogorov distance, our bound in Theorem~\ref{thmWasserNon-Cond.} makes use of an improvement implemented in \cite{LachPeccYang}, but we remove a strong condition on $F$. The resulting bound is close in spirit to the one given in \cite[Theorem~1.2]{LPS14}, but with an improvement from $4$th moments to $2p$th moments. Moreover, our bound does not need the term corresponding to \cite[term $\gamma_3$, p. 670]{LPS14} and replaces the term corresponding to \cite[term $\gamma_4$, p. 671]{LPS14} by a term depending only on the add-one cost operators of $F$ instead of $\E F^4$.
	
	In Theorem~\ref{thmWasserCond}, we do not take moments of the first and second order add-one costs of our functionals, but of their expectation conditional on `past behaviour'. A bound of this type is new and the distinction is crucial to solve the critical case of the ONNG (see Theorem~\ref{thmONNG}). As of now, such a bound is only available in the Wasserstein distance.
\end{rem}

\section{Ancillary results: new estimates for Skorohod integrals}\label{secSkorEstimates}

\subsection{A version of Itô formula}\label{secIto}

We start this section by giving a version of Itô formula for Poisson integrals with anticipative integrands. This is a crucial ingredient for the proof of the new estimates given in Theorem~\ref{thmGeneralpPoin}.
In the following, we will take $\eta$ to be a $(\X \times [0,1], \mathcal{X} \otimes \mathcal{B}([0,1]),\lambda(dx)\otimes ds)$-Poisson measure, where $(\X,\mathcal{X},\lambda)$ is a $\sigma$-finite measure space.

\begin{thm}[Itô formula for non-adapted integrands]\label{thmItoFormula}
	Let $h \in L^1(\mathbf{N} \times \X \times [0,1])$ be bounded and let $X_0 \in \R$. For $t\in[0,1]$, define
	\begin{equation}\label{eqItoProcess}
	X_t(\eta) := X_0 + \int_{\X \times [0,t]} h(\eta-\delta_{(y,s)},y,s) \eta(dy,ds)
	- \int_{\X}\int_0^t  h(\eta,y,s) \lambda(dy)ds.
	\end{equation}
	Then the process $(X_t)_{t \in [0,1]}$ is well-defined and $\p$-a.s. càdlàg. Let $\phi \in \mathcal{C}^1(\R)$. Then, $\forall\, t \in [0,1]$,
	\begin{multline}
		\phi(X_t) = \phi(X_0) + \int_{\X \times [0,t]} \left(\phi\left(X_{s-}+h(\eta-\delta_{(y,s)},y,s)\right)-\phi(X_{s-})\right) \eta(dy,ds)\\ - \int_\X\int_0^t \phi'(X_s)h(\eta,y,s) \lambda(dy)ds \qquad \p\text{-a.s.} \label{eqIto}
	\end{multline}
	and the quantities in \eqref{eqIto} are well-defined.
\end{thm}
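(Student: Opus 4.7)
The plan is to prove \eqref{eqIto} pathwise, exploiting the fact that integration against $\eta$ is, realisation by realisation, a discrete sum over atoms and therefore no predictability of the integrand is needed. First I would verify that $(X_t)$ is $\p$-a.s.\ a well-defined càdlàg function of bounded variation on $[0,1]$. For this, the Mecke formula \eqref{eqMecke} gives
\[
\E \int_{\X\times[0,1]} |h(\eta-\delta_{(y,s)},y,s)|\, \eta(dy,ds) \;=\; \E \int_\X\int_0^1 |h(\eta,y,s)|\,\lambda(dy)\,ds \;<\; \infty,
\]
since $h \in L^1(\mathbf{N}\times\X\times[0,1])$. This yields a full-probability event on which the atomic sum $\sum_{(y,s)\in\eta,\,s\le t} h(\eta-\delta_{(y,s)},y,s)$ is absolutely convergent (hence càdlàg and of finite total variation) and on which the compensator $t\mapsto \int_\X\int_0^t h(\eta,y,s)\,\lambda(dy)\,ds$ is absolutely continuous. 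On this event, $X$ is càdlàg BV, its range on $[0,1]$ is bounded, and $\phi$ is therefore Lipschitz on that range.

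Next I would apply the classical pathwise chain rule for càdlàg BV functions: for such $x:[0,1]\to\R$ and $\phi \in \mathcal{C}^1(\R)$,
\[
\phi(x_t) - \phi(x_0) \;=\; \int_0^t \phi'(x_s)\, dx_s^c \;+\; \sum_{s\le t}\bigl[\phi(x_s)-\phi(x_{s-})\bigr],
\]
where $x^c$ is the continuous part of the Lebesgue decomposition of $x$ (see e.g.\ Protter's monograph). Applying this to $x_t = X_t(\omega)$ realisation by realisation is the heart of the argument. The identification of terms is then direct: the continuous part of $X$ is $X_t^c = -\int_\X\int_0^t h(\eta,y,s)\,\lambda(dy)\,ds$, so $\int_0^t \phi'(X_s)\,dX_s^c$ gives the compensator term of \eqref{eqIto}; and the jumps of $X$ occur exactly at times $s$ of atoms $(y,s)\in\eta$, with jump size $\Delta X_s = h(\eta-\delta_{(y,s)},y,s)$, so
\[
\sum_{s\le t}\bigl[\phi(X_s)-\phi(X_{s-})\bigr] \;=\; \int_{\X\times[0,t]} \bigl[\phi\bigl(X_{s-}+h(\eta-\delta_{(y,s)},y,s)\bigr)-\phi(X_{s-})\bigr]\,\eta(dy,ds),
\]
which is the jump term of \eqref{eqIto}. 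That $\phi'(X_s)$ versus $\phi'(X_{s-})$ is irrelevant in the continuous integral follows from the countability of jump times.

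The main obstacle, I expect, is not any single hard estimate but rather the bookkeeping that makes the pathwise strategy legitimate: one must confirm on a single $\p$-null complement that (i) both the atomic and compensator integrals in \eqref{eqItoProcess} converge absolutely (so $X_t(\omega)$ is unambiguously defined for every $t$), (ii) the range of $X$ is bounded so that we may localise $\phi$ and invoke its Lipschitz property on that range in order to bound the jump sum $\sum|\phi(X_s)-\phi(X_{s-})|$ by a constant times the absolutely convergent jump series of $X$, and (iii) the pathwise chain rule applies with the right decomposition. Boundedness of $h$ is used here to ensure each individual jump of $X$ is controlled, while the $L^1$ assumption plus Mecke controls the total variation. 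Once these points are secured, the formula \eqref{eqIto} drops out by direct substitution, and its two integrals on the right-hand side are automatically well-defined since $\phi'$ is bounded on the range of $X$ (giving a bound by the same Mecke quantity).
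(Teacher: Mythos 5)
Your proof is correct and takes a genuinely different route from the paper's. The paper does not invoke the general change-of-variables formula for càdlàg BV functions; instead it truncates spatially, replacing the $\eta$-integral by one over $U_m\times[0,t]$ with $U_m\uparrow\X$, $\lambda(U_m)<\infty$, so that the truncated process $X^{(m)}$ has only finitely many jumps on the full-probability event where $\eta(U_m\times[0,1])<\infty$ for all $m$. It then proves \eqref{eqIto} for $X^{(m)}$ via an explicit telescoping sum over the (finitely many) jump times, treating the absolutely continuous piece in between by the fundamental theorem of calculus, and finally lets $m\to\infty$ by dominated convergence --- in effect re-deriving the BV chain rule in this concrete setting. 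Your argument skips the truncation entirely: having checked via Mecke that the jump series of $X$ converges absolutely a.s.\ and that the compensator is absolutely continuous, $X$ is a.s.\ càdlàg of finite variation with bounded range, and the classical pathwise change-of-variables formula for BV functions (as in Protter's or Dellacherie--Meyer's monographs) applies realisation by realisation to deliver \eqref{eqIto} directly. The trade-off is that the paper's argument is longer but elementary and self-contained, whereas yours imports a standard external ingredient. One small slip in your write-up: you state that boundedness of $h$ is needed to control individual jumps, but the Mecke/$L^1$ estimate already yields $\sup_t|X_t|<\infty$ a.s.\ and hence controls each jump; boundedness of $h$ plays no essential role in your argument (the paper uses it mostly as a convenience in the bookkeeping of the dominated-convergence passage $m\to\infty$).
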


In the next three items we compare our version of Itô formula with the classical one given in \cite[Theorem~II.5.1]{IkedaWatanabe}.
\begin{enumerate}
	\item The main difference between \eqref{eqIto} and \cite[Thm.~II.5.1]{IkedaWatanabe} consists in the fact that we do not assume the integrand $h$ to be predictable. There exist Itô formulae for anticipative integrands in various settings, e.g. in the Wiener case in \cite{AlosNualart1996} and \cite{Nualart1988}) and for pure jump and general Lévy processes in \cite{Nunno2005} and \cite{Alos2008} respectively. To the best of our knowledge, our setting of a general Poisson point process is new.	 
	 %-----------------------------------------------------------
	 \item Assume $h$ to be predictable in the sense of \eqref{eqPred}. It follows that $h(\eta-\delta_{(y,s)},y,s) = h(\eta,y,s)$ for all $(y,s) \in \X \times [0,1]$. For $ \phi \in \mathcal{C}^2(\R)$, formula \eqref{eqIto} is now roughly equivalent to the Itô formula given in \cite[Theorem~II.5.1]{IkedaWatanabe} in the special case where the semi-martingale in the statement of \cite[Theorem~II.5.1]{IkedaWatanabe} has the following properties:
	 \begin{itemize}
	 	\item the point process in question is a Poisson point process;
	 	\item the only non-zero part is the one with respect to the compensated Poisson measure;
	 	\item the integrand $h$ is both in $L^2$ and in $L^1$.
	 \end{itemize}
	 
	 \item Our setting is thus both more general ($\phi \in \mathcal{C}^1(\R)$ and $h$ anticipative) and more restrictive ($h \in L^1 \cap L^2$ instead of $h \in L^{2,\text{loc}}$ and the Gaussian, finite variation and non-compensated Poisson terms are zero) than the one given by Ikeda and Watanabe. The proof of our result relies however on the same ideas as the proof of \cite[Theorem~II.5.1]{IkedaWatanabe}.
\end{enumerate}

\subsection{Moment Inequalities}
In this section, we present a number of functional inequalities that are of independent interest and also crucial to the improved bounds on Wasserstein and Kolmogorov distances presented in earlier sections.

To the best of our knowledge, Theorem~\ref{thmGeneralpPoin} is the first bound of its kind on functionals of general Poisson-Skorohod integrals. Partial results are known in the particular case where $h$ is predictable, see Corollary~\ref{corpPoinGen} and the discussion thereafter. In particular, Theorem~\ref{thmGeneralpPoin} below contains the first general estimate in terms of add-one costs for $p$-moments of the Skorohod integral, where $p \in [1,2]$, the cases $p=1$ and $p=2$ being the only ones known. See also \cite{LMS22}.

In the special case $\phi(x)=x^2$, the theorem below follows immediately from the isometry relation reported in formula \eqref{eqIsomDelta} of Appendix~\ref{secBackground}.

\begin{thm}\label{thmGeneralpPoin}
	Let $h\in L^2(\mathbf{N}\times\X\times[0,1])$ satisfy \eqref{eqDomDelCond}. Let $\phi:\R \rightarrow \R$ be a differentiable function with $(p-1)$-Hölder continuous derivative, for some $p \in (1,2]$ and assume that $\phi(0)=0$. Then
	\begin{align}
		\left|\E \phi(\delta(h))\right| &\leq \frac{c_\phi}{p} \E \int_\X\lambda(dy) \int_0^1ds\ |h(\eta,y,s)|^p  \notag\\
		&+ c_\phi \E\int_\X\lambda(dy) \int_0^1ds \int_\X \lambda(dx) \int_0^s dt\ \left|\MalD_{(y,s)}h(\eta,x,t)\right| \cdot \left|\MalD_{(x,t)}h(\eta,y,s)\right|^{p-1} \notag\\
		&+ 2c_\phi \E\int_\X\lambda(dy) \int_0^1ds \int_\X \lambda(dx) \int_0^s dt\ \left|\MalD_{(y,s)}h(\eta,x,t)\right| \cdot \left|h(\eta,y,s)\right|^{p-1}, \label{eqGeneralPPoin}
	\end{align}
	where $c_\phi$ is the Hölder constant of $\phi'$. In particular, this inequality holds with $\phi(x)=|x|^p$ and $c_\phi = p2^{2-p}$, for $p\in[1,2]$.
\end{thm}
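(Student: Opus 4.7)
My plan is to apply the Itô formula (Theorem~\ref{thmItoFormula}) to the process $X_t$ defined by \eqref{eqItoProcess} with $X_0 = 0$ and integrand $h$. First I would assume $h$ bounded and in $L^1(\mathbf{N}\times\X\times[0,1])\cap\dom\delta$; the general case will be recovered by approximation (e.g.\ $h_n := h\,\mathbf{1}_{|h|\leq n}\mathbf{1}_{K_n}$ for an exhausting sequence $K_n$ of finite-$\bar{\lambda}$-measure sets), using that $\delta(h_n)\to\delta(h)$ in $L^2$ and that the RHS of \eqref{eqGeneralPPoin} passes to the limit by dominated convergence. By the pathwise representation \eqref{eqPathwise}, $X_1 = \delta(h)$. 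Taking expectations in Itô's formula and applying the Mecke formula \eqref{eqMecke} to the $\eta$-integral (observing that the added point $(y,s)$ does not lie in $\X\times[0,s)$, so $h(\eta-\delta_{(y,s)},y,s)$ becomes $h(\eta,y,s)$ while $X_{s-}$ becomes $X_{s-}^+ := X_{s-}(\eta+\delta_{(y,s)})$) and replacing $X_s$ by $X_{s-}$ in the Lebesgue integral (valid since $X$ has at most countably many jumps), I obtain
\[
\E\phi(\delta(h)) = \E\int_\X\int_0^1 \Bigl[\phi(X_{s-}^+ + h(\eta,y,s)) - \phi(X_{s-}^+) - \phi'(X_{s-})\,h(\eta,y,s)\Bigr]\lambda(dy)\,ds.
\]

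The crucial manipulation is to add and subtract $\phi(X_{s-}+h)-\phi(X_{s-})$ to split the integrand as $R_1 + R_2$, where
\[
R_2 = \phi(X_{s-}+h)-\phi(X_{s-})-\phi'(X_{s-})h,\qquad R_1 = \phi(X_{s-}^++h)-\phi(X_{s-}^+) - \phi(X_{s-}+h)+\phi(X_{s-}).
\]
The term $R_2$ is the usual Taylor remainder, controlled via $(p-1)$-Hölder continuity of $\phi'$ by $|R_2|\leq\frac{c_\phi}{p}|h|^p$. For $R_1$, I set $G(x) := \phi(x+h)-\phi(x)$, so that $|G'(x)| = |\phi'(x+h)-\phi'(x)| \leq c_\phi|h|^{p-1}$, and the mean value theorem yields $|R_1| = |G(X_{s-}^+) - G(X_{s-})| \leq c_\phi|h|^{p-1}|\MalD_{(y,s)}X_{s-}|$.

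A direct computation (using again that $(y,s)\notin\X\times[0,s)$) gives $\MalD_{(y,s)}X_{s-} = A_{(y,s)} - B_{(y,s)}$ with
\begin{align*}
A_{(y,s)} &= \int_{\X\times[0,s)}\MalD_{(y,s)}h(\eta-\delta_{(x,t)},x,t)\,\eta(dx,dt),\\
B_{(y,s)} &= \int_\X\int_0^s\MalD_{(y,s)}h(\eta,x,t)\,\lambda(dx)\,dt.
\end{align*}
The contribution $|h|^{p-1}|B_{(y,s)}|\leq|h|^{p-1}\int\int_0^s|\MalD_{(y,s)}h(\eta,x,t)|$ directly yields one copy of the third term of \eqref{eqGeneralPPoin} with coefficient $c_\phi$. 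For $|h|^{p-1}|A_{(y,s)}|$, a second Mecke application converts the $\eta$-integral into a $\lambda$-integral, replacing $\MalD_{(y,s)}h(\eta-\delta_{(x,t)},x,t)$ by $\MalD_{(y,s)}h(\eta,x,t)$ and $h(\eta,y,s)$ by $h(\eta+\delta_{(x,t)},y,s) = h(\eta,y,s)+\MalD_{(x,t)}h(\eta,y,s)$; the $(p-1)$-subadditivity $(a+b)^{p-1}\leq a^{p-1}+b^{p-1}$ (valid since $p-1\in(0,1]$) then splits this into one copy each of the second and third terms of \eqref{eqGeneralPPoin}. Summing up gives the claimed coefficients $c_\phi/p$, $c_\phi$, $2c_\phi$. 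For the special case $\phi(x)=|x|^p$, one checks that $\phi'(x)=p|x|^{p-1}\sgn(x)$ has Hölder constant $p\cdot 2^{2-p}$ (extremised at $a=-b$).

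The main obstacle is the choice of the decomposition $R_1+R_2$: a naive Taylor expansion of the integrand around $X_{s-}^+$ produces a factor $|\MalD_{(y,s)}X_{s-}|^{p-1}|h|$, i.e.\ a $(p-1)$-th power applied to the implicit Skorohod-integral $A-B$, which there is no clean way to distribute over the integrands making up $A$ and $B$. Swapping to $|h|^{p-1}|\MalD_{(y,s)}X_{s-}|$ via the mean value theorem on the auxiliary function $G$ is what lets the $\MalD h$ factor appear with exponent $1$, so that one can use $|B|\leq\int|\MalD h|$ and the second Mecke step without losing control.
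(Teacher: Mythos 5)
Your proposal is correct and follows essentially the same route as the paper: Itô formula for the truncated/approximated integrand, take expectations and apply Mecke, then split by adding and subtracting $\phi(X_{s-}+h)-\phi(X_{s-})$ into a Taylor remainder $R_2$ (bounded by $\tfrac{c_\phi}{p}|h|^p$) and a ``commutator'' $R_1$ bounded via the auxiliary function $G(x)=\phi(x+h)-\phi(x)$, followed by a second Mecke step and $(p-1)$-subadditivity; your observation that the naive Taylor split around $X_{s-}^+$ produces the unusable factor $|\MalD_{(y,s)}X_{s-}|^{p-1}$ is exactly the point. The only small gap is the endpoint $p=1$ for $\phi(x)=|x|$ included in the final sentence of the statement: since $|x|$ is not differentiable your argument does not apply, and the paper treats it separately by the pathwise representation of $\delta(h_m)$, the triangle inequality, and one Mecke application (dropping the Itô machinery entirely).
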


Our first corollary is a version of the above inequality for predictable functions $h$ and contains a generalisation of the classical Poincaré inequality.

\begin{cor} \label{corpPoinGen}
	Let $h \in L^2(\mathbf{N}\times\X\times[0,1])$ be predictable in the sense of \eqref{eqPred}. Let $\phi:\R \rightarrow \R$ be a differentiable function with $(p-1)$-Hölder continuous derivative, for some $p \in (1,2]$. Assume $\phi(0)=0$. Then
	\begin{equation} \label{eqPhiDelta}
	|\E \phi(\delta(h))| \leq \frac{c_\phi}{p} \E \int_\X \lambda(dy) \int_0^1 ds |h(\eta,y,s)|^p.
	\end{equation}
	Moreover, for $F\in L^2(\p_\eta)$ and $p \in [1,2]$,
	\begin{equation}\label{eqPPoin2}
		\E |F|^p - |\E F|^p \leq 2^{2-p} \E \int_\X \int_0^1 \left|\E [\MalD_{(x,t)}F | \eta_{\X \times [0,t)} ]\right|^p \lambda(dx)dt.
	\end{equation}
\end{cor}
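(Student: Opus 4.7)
The two inequalities follow from Theorem~\ref{thmGeneralpPoin} and the Clark--Ocône formula \eqref{eqClarkOcone} respectively.

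For \eqref{eqPhiDelta}, I would apply Theorem~\ref{thmGeneralpPoin} to $h$ and show that the last two summands on the right-hand side of \eqref{eqGeneralPPoin} vanish identically. Predictability gives $h(\eta,x,t)=h(\eta_{|\X\times[0,t)},x,t)$, so for any $(y,s)$ with $s\geq t$, adding the Dirac mass $\delta_{(y,s)}$ does not alter $\eta_{|\X\times[0,t)}$, and therefore $\MalD_{(y,s)}h(\eta,x,t)=0$. Since both inner integrals in those summands are taken over $t\in[0,s]$, i.e.\ $s\geq t$ up to a Lebesgue-null set, the second and third summands in \eqref{eqGeneralPPoin} are $0$. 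If $h$ does not satisfy \eqref{eqDomDelCond} a priori, I would first apply the theorem to the bounded predictable truncations $h_n:=h\,\mathbbm{1}_{\{|h|\leq n\}}$ and pass to the limit, using the $L^2(\p_\eta)$-continuity of $\delta$ on predictable functions.

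For \eqref{eqPPoin2} with $p\in(1,2]$, set $\mu:=\E F$. The Clark--Ocône formula gives $F-\mu=\delta(h)$ with predictable integrand $h(x,t):=\E[\MalD_{(x,t)}F\mid\eta_{|\X\times[0,t)}]$. The key is to apply \eqref{eqPhiDelta} not to the naive choice $\psi(x)=|x|^p$ (which would merely control $\E|F-\mu|^p$) but to the \emph{shifted} function $\phi(x):=|x+\mu|^p-|\mu|^p$: it vanishes at $0$, and its derivative $\phi'(x)=p\,\sgn(x+\mu)|x+\mu|^{p-1}$ inherits from $y\mapsto p\,\sgn(y)|y|^{p-1}$ the $(p-1)$-Hölder constant $c_\phi=p\cdot 2^{2-p}$. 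Since $\phi(\delta(h))=|F|^p-|\mu|^p$, inequality \eqref{eqPhiDelta} yields
\[
\bigl|\E|F|^p-|\mu|^p\bigr|\leq 2^{2-p}\,\E\int_\X\!\!\int_0^1 |h(x,t)|^p\,\lambda(dx)\,dt,
\]
and Jensen's inequality $\E|F|^p\geq|\mu|^p$ removes the absolute value. For the boundary case $p=1$, since $|\cdot|$ is not $\mathcal{C}^1$ I would instead use the pathwise representation \eqref{eqPathwise} and Mecke's formula \eqref{eqMecke} to obtain $\E|\delta(h)|\leq 2\,\E\int|h|$, and combine this with $|F|\leq|F-\mu|+|\mu|$ to recover the stated bound with constant $2=2^{2-1}$.

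The main obstacle is conceptual rather than technical: one must shift the test function by $\mu$ in order to produce $|F|^p-|\mu|^p$ on the left-hand side rather than $|F-\mu|^p$. These two quantities differ in general ($\E|F|^p-|\mu|^p$ can strictly exceed $\E|F-\mu|^p$ when $\mu\neq 0$), and it is only via the shift that one obtains the ``variance-type'' form of the $p$-Poincaré bound stated in \eqref{eqPPoin2}.
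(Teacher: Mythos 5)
Your overall structure matches the paper's proof: predictability of $h$ kills the last two terms of \eqref{eqGeneralPPoin} (since $\MalD_{(y,s)}h(\eta,x,t)=0$ whenever $t\leq s$), the Clark--Ocône formula reduces \eqref{eqPPoin2} to \eqref{eqPhiDelta} applied to the \emph{shifted} test function $\phi(x)=|x+\E F|^p-|\E F|^p$ (with $c_\phi=p\,2^{2-p}$ inherited by translation from Lemma~\ref{lempHolder}, so $c_\phi/p=2^{2-p}$ as claimed), and the $p=1$ case is handled separately via the pathwise representation and Mecke's formula. All of that is right, and it is exactly what the paper does.

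The gap is in the approximation step for \eqref{eqPhiDelta}. You propose to ``apply the theorem to the bounded predictable truncations $h_n:=h\,\mathbbm{1}_{\{|h|\leq n\}}$'', but Theorem~\ref{thmGeneralpPoin} requires $h_n$ to satisfy \eqref{eqDomDelCond}, i.e.\ square-integrability of $\MalD h_n$ over $(\mathbf{N}\times\X\times[0,1])^2$. This is genuinely stronger than $h_n\in\dom\delta$: predictability plus $L^2$ gives $\dom\delta$ (via \cite[Prop.~2.4]{LPen2}) but does \emph{not} give \eqref{eqDomDelCond}, and boundedness of $h_n$ does not help when $\X$ has infinite measure. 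Worse, for the cut-off $h\,\mathbbm{1}_{\{|h|\leq n\}}$ the pointwise domination $|\MalD h_n|\leq|\MalD h|$ fails (it holds for the clipping $(h\wedge n)\vee(-n)$, which is what the paper uses inside the proof of Theorem~\ref{thmGeneralpPoin}, and even that clipping only inherits \eqref{eqDomDelCond} when $h$ satisfies it). The paper's resolution is different: it first upgrades Theorem~\ref{thmGeneralpPoin} to a version valid under the weaker hypothesis $h\in\dom\delta$, by applying the theorem to the Ornstein--Uhlenbeck smoothing $P_\tau h$ (which, crucially, \emph{automatically} satisfies \eqref{eqDomDelCond} for every $h\in L^2(\mathbf{N}\times\X\times[0,1])$ and $\tau<1$, thanks to the geometric damping of the chaos kernels in Lemma~\ref{lemOrnTrick}), using \eqref{eqOrnTrick1}--\eqref{eqOrnTrick2} to pass the moment bounds back to $h$, and then letting $\tau\to1$ via the $L^2$ convergence $\delta(P_\tau h)\to\delta(h)$. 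This smoothing step is not a cosmetic alternative to truncation: it is the device that lets one invoke Theorem~\ref{thmGeneralpPoin} at all when $h$ is only known to be in $\dom\delta$. As written, your argument does not supply a legitimate family $h_n$ to which Theorem~\ref{thmGeneralpPoin} applies.
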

\begin{rem}\label{remPPoin}
		\begin{enumerate}
		\item We can extend inequality \eqref{eqPPoin2} to $F \in L^1(\p_\eta)$ at the cost of introducing an additional absolute value on the RHS:
		\begin{equation}\label{eqPPoin1}
			\E |F|^p - |\E F|^p \leq 2^{2-p} \E \int_\X \int_0^1 \E [|\MalD_{(x,t)}F| | \eta_{\X \times [0,t)} ]^p \lambda(dx)dt.
		\end{equation}
		This can be seen easily by approximating $F$ by $F_n := (F \wedge n) \vee (-n)$ and using monotone and dominated convergence.
		%------------------------------------------------------------
		\item When removing the conditional expectation in \eqref{eqPPoin2} using Jensen's inequality, the inequality can be extended to functionals $G \in L^1(\p_\chi)$, where $\chi$ is a $(\X,\lambda)$-Poisson measure without time component. Indeed, as discussed in Section~\ref{secFrame}, the marginal $\eta(.\times [0,1])$ has the same law as $\chi$, which means one can see $G$ as a functional on $\mathbf{N}_{\X \times [0,1]}$. We have then
		\begin{equation}\label{eqPPoinNoTime}
			\E |G|^p - |\E G|^p \leq 2^{2-p} \E \int_\X \left|\MalD_{x}G\right|^p \lambda(dx).
		\end{equation}
	\end{enumerate}
\end{rem}
\begin{rem}[Literature review]
	The proof of Theorem~\ref{thmGeneralpPoin} relies on a combination of the Clark-Ocône type representation result~\eqref{eqClarkOcone} and the version of Itô formula given in Theorem~\ref{thmItoFormula}. This method of combining a Clark-Ocône result with Itô formulae to deduce functional inequalities has been applied before in various settings, e.g. in \cite{Wu} and \cite{Chafai}, where it was used to deduce a modified log-Sobolev inequality and $\Phi$-Sobolev inequalities respectively.
	
	Inequalities \eqref{eqPhiDelta}, \eqref{eqPPoin2} and \eqref{eqPPoinNoTime} can be seen as part of a larger family of functional inequalities on the Poisson space. The first to mention is the classical Poincaré inequality, given e.g. in \cite[Theorem~10]{LastPoiss} (see also \cite[Cor.~4.4]{HPA95} for a very early appearance of this inequality). Our inequality extends the classical one, which is \eqref{eqPPoinNoTime} in the case $p=2$. Another well-known inequality is the modified log-Sobolev inequality shown in \cite{Wu} (see also \cite{AneLedoux}). It is extended in \cite[(5.10)]{Chafai} to the so-called $\Phi$-Sobolev inequalities, which in the case $\Phi(x)=x^p$, imply \eqref{eqPPoinNoTime} when $F\geq 0$ or $\E F=0$. Similarly, the Beckner type $p$ inequalities discussed in \cite[Section 4.6]{AdamPola} imply \eqref{eqPPoinNoTime} when $F\geq 0$ or $\E F=0$, albeit with a worse constant. \cite[Theorem 3.3.2]{ZhuThesis} gives a version of \eqref{eqPhiDelta} for $p$-norms in martingale type $p$ Banach spaces. Although we did not check the details, it is reasonable to assume that one can deduce \eqref{eqPhiDelta} in the case $\phi(x)=|x|^p$ from such a result when applied to $\R^d$.
	
\end{rem}
\begin{rem}[Comparison with the Gaussian case and extensions when $p\geq 2$] \phantom{luckluckluckluckluck}
	\begin{enumerate}
		\item Inequality \eqref{eqPPoinNoTime} for $p<2$ does not hold for functionals of Gaussian random measures, as can be seen by taking $G=W_t$ and letting $t \rightarrow 0$, with $W$ a standard Brownian motion. This is in contrast with the classical Poincaré inequality ($p=2$) which holds in both Gaussian and Poisson settings.
		\item Inequality \eqref{eqPPoinNoTime} (and hence also \eqref{eqPPoin2}) is false in general for $p>2$. Indeed, consider $(\X,\lambda) = (\R^d,dx)$ and $G = \chi(A) - \lambda(A)$ for some measurable $A \subset \R^d$. Then $\E G =0$, $\E G^2 = \lambda(A)$ and $\MalD_x G = \1_A(x)$. On the LHS we have $\E |G|^p \geq (\E G^2)^{p/2} = \lambda(A)^{p/2}$ by Jensen's inequality and on the RHS
		\[
		\int_\X \E |\MalD_x G|^p \lambda(dx) = \lambda(A).
		\]
		However, since $p>2$, we have $\lambda(A)^{p/2} \gg \lambda(A)$ for $\lambda(A)$ large enough. Hence the inequality fails for any multiplying constant.
		\item Moment estimates for $p\geq 2$ are given in \cite[Theorem~4.1]{GusSamTh} and \cite[Proposition~4.20]{AdamPola}. The RHSs of these inequalities involve related, but different quantities.
	\end{enumerate}
\end{rem}

The versatility of Theorem~\ref{thmGeneralpPoin} can be appreciated when considering the following corollary, which will be crucial in finding a bound on the Kolmogorov distance.
\begin{cor}\label{corKolgIneq}
	Let $h \in L^1(\mathbf{N} \times \X \times [0,1])$ and $G \in L^0(\p_\eta)$ bounded by a constant $c_G>0$. Then for any $p \in [1,2]$,
	\begin{align}
		&\left|\E \int_\X \int_0^1 h(\eta,x,s) \MalD_{(x,s)}G \lambda(dx)ds\right|\notag\\
		&\leq c_G \left( 2^{2-p} \E \int_\X\int_0^1 |h(\eta,x,s)|^p \lambda(dx)ds \right. \notag\\
		&+ p2^{2-p} \E \int_\X\int_0^1 \int_\X\int_0^1 |\MalD_{(x,s)}h(\eta,y,u)|^p \lambda(dx)ds \lambda(dy)du \notag\\
		&\left.+ p2^{3-p} \int_\X\int_0^1 \int_\X\int_0^1 \ind{s<u} \E [|\MalD_{(y,u)} h(\eta,x,s)|^p]^{1/p} \E [|h(\eta,x,s)|^p]^{1-1/p} \lambda(dx)ds\lambda(dy)du \right)^{1/p}. \label{eqCorKolg}
	\end{align}
\end{cor}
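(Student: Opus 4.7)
The plan is to convert the left-hand side of \eqref{eqCorKolg} into $\E[G\cdot\delta(h)]$ via an integration-by-parts identity, bound this in terms of $\bigl(\E|\delta(h)|^p\bigr)^{1/p}$ using the boundedness of $G$, and then apply Theorem~\ref{thmGeneralpPoin} with $\phi(x)=|x|^p$ before massaging the three resulting terms into the exact shape displayed.

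First, I would establish the duality identity
\[
\E\int_\X\int_0^1 h(\eta,x,s)\,\MalD_{(x,s)}G\,\lambda(dx)\,ds = \E[G\cdot\delta(h)].
\]
Writing $\MalD_{(x,s)}G = G(\eta+\delta_{(x,s)})-G(\eta)$ and applying the Mecke formula \eqref{eqMecke} (to $f(\eta,(x,s)) = h(\eta-\delta_{(x,s)},x,s)\,G(\eta)$) converts the resulting $\lambda$-integral into an $\eta$-integral; the pathwise representation \eqref{eqPathwise} of $\delta(h)$ then rearranges the two pieces into $\E[G\cdot\delta(h)]$. Since $|G|\leq c_G$ and $p\in[1,2]$, Jensen's inequality yields
\[
\bigl|\E[G\cdot\delta(h)]\bigr| \leq c_G\,\E|\delta(h)| \leq c_G\,\bigl(\E|\delta(h)|^p\bigr)^{1/p}.
\]

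Next, I would invoke Theorem~\ref{thmGeneralpPoin} with $\phi(x)=|x|^p$ (so $c_\phi = p\,2^{2-p}$) to bound $\E|\delta(h)|^p$. The first of the three resulting terms, $2^{2-p}\,\E\int|h|^p$, matches the first contribution in \eqref{eqCorKolg} verbatim. For the second term, involving the mixed product $|\MalD_{(y,s)}h(\eta,x,t)|\cdot|\MalD_{(x,t)}h(\eta,y,s)|^{p-1}$ on $\{t<s\}$, I would apply Young's inequality $ab^{p-1}\leq \tfrac{1}{p}a^p+\tfrac{p-1}{p}b^p$ pointwise with $a=|\MalD_{(y,s)}h(\eta,x,t)|$ and $b=|\MalD_{(x,t)}h(\eta,y,s)|$, and then exploit the measure-preserving symmetry under the swap $(x,t)\leftrightarrow(y,s)$: the $b^p$-integral over $\{t<s\}$ is identified, after relabelling, with the $a^p$-integral over $\{s<t\}$, so the two half-integrals combine to at most the full integral $\E\int|\MalD_{(x,s)}h(\eta,y,u)|^p$ with overall constant $p\,2^{2-p}$, matching the second term. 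For the third term, of the form $|\MalD h|\cdot|h|^{p-1}$ on $\{t<s\}$, I would apply Hölder's inequality with conjugate exponents $p$ and $p/(p-1)$ inside the expectation to obtain the factorised bound $\E[|\MalD h|^p]^{1/p}\cdot\E[|h|^p]^{(p-1)/p}$, then use Fubini together with the symmetry of the double integral to arrange the dummy variables into the form with constant $p\,2^{3-p}$ displayed in \eqref{eqCorKolg}.

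The main obstacle I anticipate is the duality identity at the very start: since the corollary only assumes $h\in L^1(\mathbf{N}\times\X\times[0,1])$ rather than $h\in L^2\cap\dom\delta$, making sense of $\delta(h)$ and proving $\E\int h\,\MalD G = \E[G\cdot\delta(h)]$ requires either a truncation/approximation of $h$ by bounded, compactly supported $h_n$ (passing to the limit via dominated convergence and \eqref{eqPathwise}) or working under the implicit assumption that the right-hand side of \eqref{eqCorKolg} is finite, which itself forces the integrability needed for the Skorohod integral. A secondary subtlety is the bookkeeping in the third term: Hölder applied directly to the theorem's integrand produces an $\E[|h|^p]$-factor indexed by the later-time variables $(y,s)$, whereas \eqref{eqCorKolg} displays it at the earlier-time variables, so a careful relabelling and use of the symmetry of $\lambda\otimes ds$ are needed to reconcile the two expressions.
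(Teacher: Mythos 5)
Your overall strategy—duality identity $\E\int h\,\MalD G = \E[G\,\delta(h)]$, then Jensen, then Theorem~\ref{thmGeneralpPoin} with $\phi(x)=|x|^p$, then a symmetry/Young argument for the cross term and H\"older for the third—coincides with the paper's in its broad arc, and your proposed un-factoring of the second term via Young plus the swap $(x,t)\leftrightarrow(y,s)$ is a perfectly valid way to reach the displayed form. However, the regularity issue you flag at the end is the heart of the matter and your two proposed remedies do not close it. The duality identity you want to invoke is Lemma~\ref{lemNewIBP}, which requires $h\in\dom\delta\cap L^1$, and Theorem~\ref{thmGeneralpPoin} requires $h\in L^2(\mathbf{N}\times\X\times[0,1])$ satisfying \eqref{eqDomDelCond}; the Corollary only hypothesises $h\in L^1$. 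A bounded, compactly-supported truncation $h_n$ lands in $L^1\cap L^2$, but it still need not satisfy \eqref{eqDomDelCond}, because $\MalD_{(x,t)}h_n(\eta,y,s)$ is generally nonzero for all $(x,t)$ and its square may fail to be $\lambda\otimes ds$-integrable in $(x,t)$; the inheritance argument used inside the proof of Theorem~\ref{thmGeneralpPoin} (where $|\MalD h_m|\leq|\MalD h|$ gives \eqref{eqDomDelCond} for $h_m$) relies on $h$ already satisfying \eqref{eqDomDelCond}, which is exactly what you do not have here. Your second fallback—assume WLOG the right-hand side of \eqref{eqCorKolg} is finite—also does not rescue matters: for $p<2$ the right-hand side controls only $p$-th moments of $h$ and $\MalD h$, which does not imply the $L^2$-type condition \eqref{eqDomDelCond}.

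What the paper's proof does, and what your proposal is missing, is to smooth the truncation with the Ornstein--Uhlenbeck operator: it works with $P_\tau h_n$ for $\tau\in(0,1)$. By Lemma~\ref{lemOrnTrick}, $P_\tau h_n$ automatically satisfies \eqref{eqDomDelCond} (hence belongs to $\dom\delta$) for any $h_n\in L^2(\mathbf{N}\times\W)$, and one also checks directly from \eqref{eqOrnAlt} that $P_\tau h_n\in L^1$. This lets one apply Lemma~\ref{lemNewIBP} and Theorem~\ref{thmGeneralpPoin} legitimately to $P_\tau h_n$, while the contraction properties \eqref{eqOrnTrick1}--\eqref{eqOrnTrick2} of $P_\tau$ ensure all three terms on the right are still bounded by the corresponding expressions with $h$. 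The limit $\tau\to1$ (using the $L^2$-convergence $\delta(P_\tau h_n)\to\delta(h_n)$ from Lemma~\ref{lemOrnTrick}) and then $n\to\infty$ (dominated convergence for $h_n\MalD G\to h\MalD G$ in $L^1$) recover the stated inequality. A second, smaller point: your route applies Theorem~\ref{thmGeneralpPoin} with $\phi(x)=|x|^p$, which needs $\phi\in C^1$; this fails at $p=1$, so that endpoint needs the separate pathwise argument via Mecke's formula and the triangle inequality (as the paper does), rather than the differential-Hölder machinery.
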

\begin{rem}
	Provided that we upper bound the indicator in the third term on the RHS of \eqref{eqCorKolg} by 1, this inequality can be extended to a space $\X$ without time component.
\end{rem}

\section{Applications}\label{secAppl}
In this section, we look at four types of graphs built on Poisson measures and assess the speeds of convergence to Normality of $\alpha$-power-weighted edge-lengths such as \eqref{eqIntroF}. As was found in previous work \cite{LPS14,ST14rst}, we find for certain ranges of exponents $\alpha$ that the speed is given by $t^{-d/2}$, which corresponds to the order of the square root of the variance. This is the presumably optimal speed corresponding to the one in the classical Berry-Esseen theorem (see e.g. \cite[Theorem~4, p.111]{Petrov}). Beyond a certain threshold, we find a slower speed of convergence that depends on $\alpha$. Generally speaking, a $2p$th moment integrability of the first and second order add-one costs of the functionals leads to a speed of convergence of $t^{-d(1-1/p)}$. Whether this speed is optimal or not is an open question.

\subsection{Online Nearest Neighbour Graph} \label{secONNG}
Let $\mu \subset \R^d \times [0,1]$ be a finite set such that the projection of $\mu$ onto $\R^d$ is generic and does not contain any multiplicities and the projections onto $[0,1]$ are distinct. The ONNG on $\mu$ is an (undirected) graph in $\R^d$ constructed as follows:
\begin{itemize}
	\item Vertices are given by $\{x \in \R^d : (x,s) \in \mu\}$
	\item Let $(x,s) \in \mu$. If $\mu \cap \left(\R^d \times [0,s)\right)$ is non-empty, then the online nearest neighbour of $(x,s)$ is given by the point $(z,u) \in \mu \cap \left(\R^d \times [0,s)\right)$ which minimises $|x-z|$. In this case there is an edge from $x$ to $z$ and we denote this event by $\{(x,s) \rightarrow (z,u) \text{ in } \mu\}$.
\end{itemize}
For a point $(x,s) \in \mu$, the coordinate $s$ can be seen as the arrival time of the point $x \in \R^d$, or its mark. Any point $(x,s) \in \mu$ has exactly one online nearest neighbour, except for the point in $\mu$ whose mark is minimal, which has none. Even though the graph is undirected, we think of edges going from a point to its nearest neighbour, as this simplifies the discussion.

For $(x,s) \in \mu$, let
\[ e(x,s,\mu) := %
\begin{cases*}
	\inf \{|x-z| : (z,u) \in \mu \cap \left(\R^d \times [0,s)\right)\}, & if  $\mu \cap \left(\R^d \times [0,s)\right) \neq \emptyset $\\
	0, & otherwise.
\end{cases*}
\]
This is the length of the edge from $x$ to its online nearest neighbour if there is one, and zero otherwise. Note that one can find a unique online nearest neighbour in $\mu$ for any point $(x,s) \in \R^d \times [0,1]$ such that the time coordinate $s$ and the position $x$ do not occur in $\mu$. For convenience, we shall extend the above definitions to any such $(x,s) \in \R^d \times [0,1]$ and tacitly adopt the corresponding notation.

We will be studying the sums of power-weighted edge-lengths defined as follows: for $\alpha > 0$, let
\[
\Fa(\mu) := \sum_{(x,s) \in \mu} e(x,s,\mu)^\alpha.
\]
Note that here we make use of the convention explained in Section~\ref{secFrame} to identify a set of points $\mu$ with the point measure whose support is given by $\mu$.

Let $\eta$ be a Poisson measure on $\R^d \times [0,1]$ with Lebesgue intensity. Let $H \subset \R^d$ be a convex body. For $t \geq 1$, define
\[
\fat := \Fa(\eta_{|tH \times [0,1]}).
\]

\begin{figure}
	\centering
	\includegraphics[width=.5\textwidth]{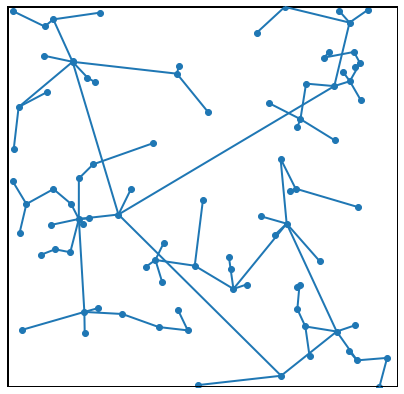}
	\caption{Realisation of the Online Nearest Neighbour graph}
\end{figure}

The ONNG is a relatively simple model for networks growing in time. Already mentioned in \cite{Steele89}, the Online Nearest Neighbour graph came to general attention in \cite{BBBCR07}, where is was presented as a simplified version of the FKP model developed in \cite{FKP02}, used to model the internet graph. The name of the graph was coined in \cite{Pen05}, where the martingale method is used to show central limit theorems for stabilising random systems satisfying a 4th moment condition. In particular, it is shown in the case of the ONNG:
\begin{thm}[{\cite[Theorem 3.6]{Pen05}}]\label{thmPenrose}
	For $0 \leq \alpha < \frac{d}{4}$, there is a constant $\sigma_{\alpha,d} > 0$ such that as $t \rightarrow \infty$,
	\begin{equation}\label{eqWade}
	 t^{-d} \var\left(\fat\right) \longrightarrow \sigma_{\alpha,d} \quad \text{and} \quad \frac{\fat - \E \fat}{t^{d/2}} \stackrel{d}{\longrightarrow} \mathcal{N}(0,\sigma_{\alpha,d}).
	\end{equation}
\end{thm}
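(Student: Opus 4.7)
The plan is to derive this qualitative CLT as a consequence of the quantitative Malliavin--Stein technology developed in Section~\ref{secWasserKolg}, by applying Theorem~\ref{thmWasserNon-Cond.} in the classical regime $p=q=2$, which corresponds to a uniform fourth-moment bound on add-one costs (as in \cite{LPS14}). Since convergence in Wasserstein distance implies convergence in distribution, the theorem will follow once we establish (a) the variance asymptotic $t^{-d}\var(\fat)\to \sigma_{\alpha,d}^2\in(0,\infty)$, and (b) the bound $d_W(\hat F_t,N)\to 0$, where $\hat F_t := (\fat-\E\fat)/\sqrt{\var(\fat)}$.

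The first step is to dissect the add-one cost $\MalD_{(x,s)}\fat$: inserting a point $x\in tH$ with arrival time $s$ alters the graph in two ways. First, $x$ itself acquires an online edge to its nearest predecessor in $\eta\cap(tH\times[0,s))$, contributing a length of typical order $(st^d)^{-1/d}$; second, some later-arriving points in $\eta\cap(tH\times(s,1])$ may now find $x$ closer than their original online neighbour, so their edges shorten. A geometric stabilisation argument in the spirit of \cite{PY01,Pen05} confines this second family of perturbations to a random ball of radius $R_{(x,s)}$ around $x$, with $R_{(x,s)}$ having stretched-exponential tails after rescaling by the local length scale $(st^d)^{-1/d}$. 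An analogous analysis controls $\DD_{(x,s),(y,u)}\fat$, where one additionally tracks the interaction between the two insertions.

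The moment book-keeping then gives, uniformly in $x\in tH$, a bound of the form $\E|\MalD_{(x,s)}\fat|^{4}\lesssim (st^d)^{-4\alpha/d}$, so that
\[
\int_0^1\!\!\int_{tH}\E|\MalD_{(x,s)}\fat|^{4}\,dx\,ds\;\lesssim\; t^{d-4\alpha}\int_0^1 s^{-4\alpha/d}\,ds,
\]
and the time integral converges precisely when $\alpha<d/4$, which is the origin of the threshold. An analogous estimate applies to $\DD\fat$. Substituted into $\gamma_1,\gamma_2,\gamma_3$ of Theorem~\ref{thmWasserNon-Cond.}, and combined with the matching variance lower bound $\var(\fat)\gtrsim t^d$ --- obtained via the Last--Penrose identity by conditioning on a favourably placed vacant region so as to guarantee a nontrivial conditional add-one cost (which also yields $\sigma_{\alpha,d}^2>0$) --- these terms are each of order $t^{-d/2}$. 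This closes (b), and a dominated-convergence argument inside the Last--Penrose representation upgrades the variance to the sharp asymptotic $\sigma_{\alpha,d}^2$.

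The principal obstacle is controlling the ``radius of influence'' $R_{(x,s)}$ uniformly in $s\in(0,1]$ and $x\in tH$: the ONNG is nonlocal, in the sense that a late-arriving point can be connected to a spatially distant ancient predecessor, so naive stabilisation estimates degrade as $s\to 0$. Establishing sub-polynomial tails for the rescaled radius --- tails sharp enough to survive the fourth-moment integration above --- is the technical heart of the argument, and is precisely the reason that the approach stops at $\alpha<d/4$; pushing beyond this threshold is the motivation for the conditional refinements of Theorem~\ref{thmWasserCond} developed in the remainder of the paper.
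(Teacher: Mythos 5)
The paper does not prove this theorem: it is a verbatim citation of Penrose's result \cite[Theorem 3.6]{Pen05}, obtained there via the martingale method for stabilising functionals. Your proposal is therefore an independent re-derivation, not a reconstruction of the paper's argument. What you sketch is, however, very close in spirit to how the paper proves its own Theorem~\ref{thmONNG}: in the range $\alpha<d/4$ one may take $p=q=2$ in Theorem~\ref{thmWasserNon-Cond.}, the fourth-moment bounds from Proposition~\ref{propONNGBounds} (together with the exponential decay of $\p(\DD_{(x,s),(y,u)}\fat\neq 0)$) give $\gamma_1+\gamma_2+\gamma_3=O(t^{-d/2})$, and the matching variance lower bound $\var(\fat)\gtrsim t^d$ comes from \cite[Theorem~5.2]{LPS14} as in Proposition~\ref{propONNGlowerVar1}. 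So your route does yield, as a by-product of the paper's machinery, a quantitative version of the weak-convergence half of the statement.

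Two concrete problems remain. First, the moment bound you quote, $\E|\MalD_{(x,s)}\fat|^4\lesssim(st^d)^{-4\alpha/d}$, carries an extraneous factor of $t^d$. In the paper's parametrisation the intensity is Lebesgue and the window is $tH$, so the effective spatial intensity of points with arrival time less than $s$ is $s$, not $st^d$, and Proposition~\ref{propONNGBounds} gives $\E[|\MalD_{(x,s)}\fat|^r]^{1/r}\lesssim s^{-\alpha/d-\epsilon}$, uniformly in $t$ and $x$. Your subsequent display $\int_0^1\int_{tH}\E|\MalD|^4\lesssim t^{d-4\alpha}\int_0^1 s^{-4\alpha/d}\,ds$ inherits the error; the correct power is $t^d$. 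The threshold $\alpha<d/4$ survives (it comes from the $s$-integral), but the claimed rate $t^{-d/2}$ comes out right only after the $t$-dependence is corrected.

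Second, and more seriously, Theorem~\ref{thmPenrose} asserts the sharp limit $t^{-d}\var(\fat)\to\sigma_{\alpha,d}$, not just two-sided bounds of the right order. The Malliavin--Stein estimates, and the paper's Theorem~\ref{thmONNG}, only deliver $c_2 t^d<\var(\fat)<C_2 t^d$ (cf.\ \eqref{eqONNGVarNC}); the Last--Penrose identity \eqref{lemCondCov} gives a representation of $\var(\fat)$, but extracting a limit from it requires a stationarity/stabilisation argument controlling boundary effects uniformly in $t$, which you cannot get from dominated convergence alone because the integrand itself depends on $t$ through the domain $tH$. Without the variance limit, the convergence $d_W(\hat F_t,N)\to 0$ only gives $(\fat-\E\fat)/\sqrt{\var(\fat)}\Rightarrow N$, which is strictly weaker than the stated $(\fat-\E\fat)/t^{d/2}\Rightarrow\mathcal N(0,\sigma_{\alpha,d})$. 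The variance asymptotic is precisely the part Penrose's martingale method supplies and the Malliavin--Stein approach does not.
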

A quantitative counterpart to this result is shown in \cite{LachPeccYang}. Our Theorem~\ref{thmONNG} provides a speed of convergence that is faster than the one given in \cite{LachPeccYang}.

In \cite{Pen05,PenWade08,Wade2009}, results similar to Theorem~\ref{thmPenrose} were conjectured to hold for $\alpha \in \left[\frac{d}{4},\frac{d}{2}\right]$. In particular, part of the Conjectures~2.1 and 2.2. in \cite{Wade2009} states
\begin{conj}[\cite{Wade2009}]
	For $\alpha \in \left[\left.\frac{d}{4}, \frac{d}{2}\right)\right.$, there is a constant $\sigma_{\alpha,d}>0$ such that \eqref{eqWade} holds.
	
	For $\alpha=\frac{d}{2}$, there is a constant $\sigma_d > 0$ such that
	\[
	\log(t)^{-1}t^{-d} \var\left(\fat\right) \longrightarrow \sigma_{d} \quad  \text{and} \quad \frac{\fat - \E \fat}{\log(t)^{1/2}t^{d/2}} \stackrel{d}{\longrightarrow} \mathcal{N}(0,\sigma_{d}).
	\]
\end{conj}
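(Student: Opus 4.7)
The plan is to apply the two abstract bounds of Section~\ref{secWasserKolg} -- Theorem~\ref{thmWasserNon-Cond.} in the subcritical range $\alpha \in \left[\tfrac{d}{4}, \tfrac{d}{2}\right)$ and Theorem~\ref{thmWasserCond} in the critical case $\alpha = \tfrac{d}{2}$ -- together with tailored moment estimates for the (conditional) first and second order add-one costs of $\fat$. I would first derive the variance asymptotics from the Last--Penrose identity, which reads
\[
\var(\fat) = \E \int_{tH} \int_0^1 \E\!\left[\MalD_{(x,s)}\fat \,\middle|\, \eta_{|\R^d\times[0,s)}\right]^2 dx\,ds.
\]
For a point $(x,s)$ with arrival time $s$, the dominant contribution comes from the newly created edge, whose $L^2$-norm is of order $s^{-\alpha/d}$; after rescaling $x \mapsto tx$, a Palm-type analysis yields convergence of $t^{-d}\var(\fat)$ to a positive constant $\sigma_{\alpha,d}$ for $\alpha<\tfrac{d}{2}$, and of $(t^d \log t)^{-1}\var(\fat)$ to a positive constant $\sigma_d$ when $\alpha=\tfrac{d}{2}$, the logarithmic factor coming from the borderline divergence of $\int_0^1 s^{-1}ds$.

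For the subcritical range, the key step is to bound $\E|\MalD_{(x,s)}\fat|^{2p}$ and $\E|\DD_{(x,s),(y,u)}\fat|^{2p}$ for some $p \in (1,2]$ with $2p\alpha/d < 1$. The add-one cost splits into (a) the new edge from $x$ to its nearest earlier-arriving neighbour, whose length has stretched-exponential tail $\p(e(x,s,\eta)>r)\le \exp(-c r^d s)$, and (b) the total change in edges of future points rewiring to $x$; a geometric stabilisation argument in the spirit of \cite{PY01,Pen05} shows that only finitely many future points can rewire to $x$ and that the resulting perturbation has polynomial tails uniformly in $x,t$. Combining these estimates yields a uniform bound $\E|\MalD_{(x,s)}\fat|^{2p} \lesssim s^{-2p\alpha/d}$, and an analogous bound for the second order cost. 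Inserting into the quantities $\gamma_1,\dots,\gamma_7$ of Theorem~\ref{thmWasserNon-Cond.} and using $\sigma \asymp t^{d/2}$ gives a rate of the form $t^{-d(1-1/p)}$ in Wasserstein and Kolmogorov distances; in particular the qualitative CLT in the range $\left[\tfrac{d}{4},\tfrac{d}{2}\right)$ follows.

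The critical case $\alpha=\tfrac{d}{2}$ is the main obstacle and is precisely the motivation for Theorem~\ref{thmWasserCond}. Here the integrability of $s\mapsto s^{-2p\alpha/d}=s^{-p}$ fails on $(0,1]$ for every $p>1$, so a direct application of Theorem~\ref{thmWasserNon-Cond.} is impossible. Instead, the plan is to exploit the better behaviour of the conditional add-one cost $\E[\MalD_{(x,s)}\fat \mid \eta_{|\R^d\times[0,s)}]$: by integrating out the random arrivals in $(s,1]$, the future-rewiring contribution is replaced by its conditional mean, which turns out to be uniformly bounded. One should therefore show that for a suitable $p>1$,
\[
\E\!\left[\E\bigl[|\MalD_{(x,s)}\fat| \bigm| \eta_{|\R^d\times[0,s)}\bigr]^{2p}\right] \lesssim s^{-2p\alpha/d} + 1,
\]
with an analogous bound for the conditional $\DD$ term (with the time coordinate replaced by $s\vee u$). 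Plugging these into $\beta_1,\dots,\beta_4$ and using the refined variance $\sigma^2 \asymp t^d\log t$ will then produce a quantitative CLT, settling the conjecture. The truly delicate ingredients I anticipate are (i) the sharp tail bound on the conditional rewiring cost, which requires a careful analysis of how many and how far away future points rewire onto an inserted point, and (ii) establishing strict positivity of the limiting variance constants $\sigma_{\alpha,d}$ and $\sigma_d$.
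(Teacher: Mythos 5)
Your high-level strategy matches the paper's: derive moment estimates for the (conditional) first and second order add-one costs of $\fat$, get the variance order from the Last--Penrose identity, and apply Theorem~\ref{thmWasserNon-Cond.} in the subcritical range and Theorem~\ref{thmWasserCond} in the critical case, the latter precisely because the conditional add-one cost behaves better than the raw add-one cost. That identification of the dichotomy is the right insight.

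However, several details as stated are off in ways that would sink the argument. First, your proposed conditional moment bound $\E\!\left[\E[|\MalD_{(x,s)}\fat| \mid \eta_{|\R^d\times[0,s)}]^{2p}\right]\lesssim s^{-2p\alpha/d}+1$ is not integrable on $(0,1]$ when $\alpha=\tfrac{d}{2}$ and $p>1$ (it is $s^{-p}+1$). What the paper actually establishes (Proposition~\ref{propONNGcondBounds}) is the truncated bound $\E\left[\E[|\MalD_{(x,s)}\fat| \mid \eta_{|tH\times[0,s)}]^{r}\right]^{1/r}\lesssim s^{-\alpha/d}\wedge t^{\alpha}$, and the $\wedge\, t^{\alpha}$ cutoff at $s\approx t^{-d}$ is essential: without it, terms like $\beta_3$ do not converge, and with it, the integral over $s$ produces exactly the factors that cancel against the $\log t$ in the variance. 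Second, your claim that the conditional rewiring contribution is ``uniformly bounded'' is wrong; the actual gain from conditioning is the removal of the $\epsilon$-loss that the unconditional bound suffers (Proposition~\ref{propONNGBounds} gives $s^{-\alpha/d-\epsilon}$, not $s^{-\alpha/d}$ as you claim), together with the $\wedge\, t^{\alpha}$ truncation. The $\epsilon$-loss in the unconditional estimate is not a technical artefact: the number of future points rewiring onto an inserted point in the ONNG is almost surely finite but not uniformly bounded (unlike the $k$-NN graph), and its moments carry a logarithmic factor; conditioning on the past precisely allows one to average that factor away. Third, the exponential spatial decay $\p(\DD_{(x,s),(y,u)}\fat\neq 0 \mid \dots)\lesssim \exp(-c(s\vee u)|x-y|^d)$ is not optional background for $\beta_1,\beta_2,\beta_4$; after integrating over $y$ it contributes the decisive factor $(s\vee u)^{-1}$ that makes the time-integrals converge. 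You flag tail bounds as ``delicate'' but don't identify this mechanism.

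Finally, your assertion that a Palm-type analysis ``yields convergence of $t^{-d}\var(\fat)$ to a positive constant'' overclaims relative to what the paper actually proves: Theorem~\ref{thmONNG} gives only matching upper and lower bounds of the conjectured orders ($c\,t^d\le\var\le C\,t^d$, and $c\,t^d\log t^d\le\var(\fad)\le C\,t^d\log t^d$), and normalizes the CLT by $\sqrt{\var(\fat)}$ rather than by $t^{d/2}$ or $\sqrt{t^d\log t}$. In particular the lower bound in the critical case is by far the most technically demanding step (Lemmas~\ref{lemSplitVar}--\ref{lemI_2'}), requiring an explicit Mecke-formula decomposition of the variance into terms $I_1,\dots,I_5$ and careful estimation of the constant $c(d)$, including a numerical verification for $d=2$; a generic stabilisation argument will not produce this. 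Your proposal underestimates this component.
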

Our forthcoming Theorem~\ref{thmONNG} confirms this conjecture by giving quantitative central limit theorems for $\alpha \in \left(\left.0,\frac{d}{2}\right]\right.$ and upper and lower bounds for the variances that match the conjectured orders. Upper bounds of the conjectured orders were already given in \cite[Theorem~2.1]{Wade2009} for the variances involved. They are shown for an ONNG built on $n$ uniformly distributed random variables and the corresponding result for the Poisson version follows by Poissonisation. For the sake of completeness, we will give purely Poissonian proofs of the upper bounds, following however a similar strategy as in \cite{Wade2009}. A law of large numbers is shown in \cite{Wade07} and the case $\alpha>\frac{d}{2}$ is discussed in \cite{PenWade08} (especially for $d=1$) and in \cite{Wade2009}, where is is shown that a limit exists in this case, but is non-Gaussian for $\alpha>d$. For more related results we refer to the survey \cite{PW09} (for results up to 2010) and to the paper \cite{LM21}.

\begin{thm}\label{thmONNG}
	For $0 < \alpha < \frac{d}{2}$, and for every $1<p<\frac{d}{2\alpha}$ such that $p \leq 2$, there is a constant $c_1>0$ such that for all $t \geq 1$ large enough
	\[
	\max\left\{d_W\left(\frac{\fat - \E \fat}{\sqrt{\var\big(\fat\big)}}, N\right),d_K\left(\frac{\fat - \E \fat}{\sqrt{\var\big(\fat\big)}}, N\right)\right\} \leq c_1 t^{-d\left(1-\frac{1}{p}\right)},
	\]
	where $N$ denotes a standard normal random variable.
	Moreover, there are constants $c_2,C_2>0$ such that for all $t \geq 1$ large enough
	\begin{equation}\label{eqONNGVarNC}
	c_2 t^d < \var(\fat) < C_2 t^d.
	\end{equation}
	For $\alpha = \frac{d}{2}$, there is a constant $c_3>0$ such that for all $t \geq 1$ large enough
	\[
	d_W\left(\frac{\fad - \E \fad}{\sqrt{\var\big(\fad\big)}}, N\right) \leq c_3 \log(t)^{-1}.
	\]
	Moreover, there are constants $c_4,C_4>0$ such that for all $t \geq 1$ large enough
	\begin{equation}\label{eqONNGVarC}
	c_4 t^d\log(t^d) < \var(\fad) < C_4 t^d\log(t^d).
	\end{equation}
	The constants $c_1,c_2, C_2,c_3,c_4, C_4$ may depend on $H,\ \alpha,\ d$ and $p$.
\end{thm}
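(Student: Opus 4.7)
The overall plan is to apply the abstract bounds of Section~\ref{secWasserKolg} to $\fat$: Theorem~\ref{thmWasserNon-Cond.} in the sub-critical regime $\alpha \in (0, d/2)$ and Theorem~\ref{thmWasserCond} in the critical regime $\alpha = d/2$. The work then splits into three tasks: matching upper and lower bounds on $\var(\fat)$, uniform $2p$-moment bounds for $\MalD \fat$ and $\DD \fat$ (or their conditional versions in the critical case), and the final assembly of these ingredients into the stated Wasserstein and Kolmogorov rates.

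The moment estimates rest on the following picture. Inserting a point $(x,s)$ into a configuration $\mu \subset tH \times [0,1]$ affects $\fat$ in two ways: it creates the edge from $(x,s)$ to its online nearest neighbour in $\mu \cap (\R^d \times [0,s))$, of length of order $(st^d)^{-1/d}$, and it may cause some later-arriving points $(z,u)$, $u > s$, to re-attach to $x$ because $|x-z|$ is smaller than their current nearest-neighbour distance. A stabilisation argument controls the number and total length of such re-attachments uniformly in the surrounding configuration, yielding a bound of the form $\E |\MalD_{(x,s)} \fat|^{2p} \lesssim t^{-2p\alpha} s^{-2p\alpha/d}$, which is integrable in $s \in [0,1]$ exactly when $2p\alpha/d < 1$, explaining the constraint $p < d/(2\alpha)$. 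A similar but more intricate analysis gives $2p$-moment bounds for $\DD_{(x,s),(y,u)} \fat$. For the variance, the upper bound follows from the Poincaré inequality combined with the $L^2$ version of the moment bound. The lower bound I would derive from the Last-Penrose identity $\var(\fat) = \E \int_\Y \E[\MalD_y \fat \mid \eta_y]^2 \bar{\lambda}(dy)$, localised to a bulk region of points $(x,s)$ deep inside $tH$ and bounded below by the deterministic edge created at $(x,s)$; in the critical case $\alpha = d/2$ the integrand in $x$ is of order $s^{-1}$, and integrating $s$ from $t^{-d}$ to $1$ produces the extra $\log(t^d)$ factor.

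Plugging these estimates into $\gamma_1, \gamma_2, \gamma_3$ of Theorem~\ref{thmWasserNon-Cond.}, using $|tH| \simeq t^d$ and the variance lower bound, bounds each term by a constant multiple of $t^{-d(1-1/p)}$ after normalisation; the analogous computation for $\gamma_4, \dots, \gamma_7$ yields the Kolmogorov estimate. In the critical regime $\alpha = d/2$ one cannot choose $p>1$ with $2p\alpha/d < 1$, so Theorem~\ref{thmWasserNon-Cond.} fails to apply and I pass to Theorem~\ref{thmWasserCond} with $p$ just above $1$. The decisive gain from conditioning is that $\E[\MalD_y \fat \mid \eta_y]$ filters out the contribution of the later re-attachments and essentially retains only the expected length of the edge created at $y$, whose $2p$-moment has strictly better integrability in the time variable. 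I expect the main obstacle of the whole proof to lie precisely in this critical case: extracting the $\log(t)^{-1}$ rate from $\beta_1, \dots, \beta_4$ under the logarithmic normalisation $\sigma^2 \simeq t^d \log t^d$, and in particular obtaining sharp bounds for the second-order conditional cost $\E[\DD_{x,y}\fat \mid \eta_{x \vee y}]$, which requires a delicate joint stabilisation argument for two simultaneous insertions at different times.
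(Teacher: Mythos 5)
Your overall structure matches the paper's: apply Theorem~\ref{thmWasserNon-Cond.} with unconditional moment bounds when $\alpha < \tfrac{d}{2}$, apply Theorem~\ref{thmWasserCond} with conditional moment bounds when $\alpha = \tfrac{d}{2}$, and establish the variance estimates separately. The picture you paint for the first- and second-order add-one cost moments (the edge created at $(x,s)$ plus a stabilising re-attachment contribution), the variance upper bound via the Last--Penrose/Poincar\'e-type identity, and the role of conditioning in recovering the sharp $s^{-\alpha/d}$ exponent needed at $\alpha = \tfrac{d}{2}$ all agree with Propositions~\ref{propONNGcondBounds}, \ref{propONNGBounds} and \ref{propONNGVarUpper} (up to a scaling-convention discrepancy in your factor $(st^d)^{-1/d}$, which in the paper's growing-window set-up should read $s^{-1/d}$; this cancels in the normalised rate).

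The genuine gap is the variance \emph{lower} bound. You propose to start from $\var(\fat) = \E\int_\Y \E[\MalD_y\fat \mid \eta_y]^2\,\bar\lambda(dy)$ and bound the integrand below by the deterministic edge created at $y$. But from \eqref{eqDexpr}, $\MalD_{(x,s)}\fat = e(x,s,\cdot)^\alpha + \sum_{u>s}\bigl(e(y,u,\cdot+\delta_{(x,s)})^\alpha - e(y,u,\cdot)^\alpha\bigr)$, and since inserting $(x,s)$ can only shorten edges of later points, the sum (save at most one term) is non-positive and its conditional expectation is itself of order $s^{-\alpha/d}$ by Lemma~\ref{lemONNGFirstBounds}; the two contributions can cancel, so no term-by-term lower bound of the form $e(x,s,\cdot)^{2\alpha}$ is available. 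The paper therefore takes an entirely different route. For $0<\alpha<\tfrac{d}{2}$, Proposition~\ref{propONNGlowerVar1} invokes \cite[Theorem~5.2]{LPS14}, constructing an explicit configuration $U_\tau$ of a centre $(x,s)$ surrounded by $K$ auxiliary marked points at distance $\simeq\tau$ for which the conditional add-one cost is bounded away from zero in absolute value, thereby engineering away the cancellation. For $\alpha=\tfrac{d}{2}$, Proposition~\ref{propONNGlowerVar2} abandons the Clark--Oc\^one identity entirely and derives an exact decomposition $\var(\fad)=I_1+I_2-I_3+I_4-I_5$ by a double Mecke computation (Lemma~\ref{lemSplitVar}), then estimates each piece; the crux is showing $I_2 \gtrsim c(d)\,t^d\log t^d$ with $c(d)>\tfrac{\pi}{2}-1$ for every $d$ (Lemmas~\ref{lemI_2}, \ref{lemI_2'}), which is partly numerical. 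This is by far the hardest component of the theorem and is not captured by your ``integrate $s^{-1}$ over a bulk region'' sketch.
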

Note that, in the special case $0<\alpha<\frac{d}{4}$, we find a speed of convergence of $t^{-d/2}$, which corresponds to the square root of the order of the variance.

\subsection{Gilbert Graph} \label{secGil}

For a finite set $\mu \subset \R^d$ and a real number $\epsilon > 0$, the Gilbert graph $G(\mu,\epsilon)$ has vertex set $\mu$ and an edge between $x,y \in \mu$, $x \neq y$ if and only if $|x-y|<\epsilon$. To construct our functional of interest, we consider
\begin{itemize}
	\item $W \subset \R^d$ a convex body;
	\item for every $t>0$, we take $\eta^t$ a $(W,t\, dx)$-Poisson measure;
	\item $(\epsilon_t)_{t>0}$ a sequence of positive real numbers s.t. $\epsilon_t \rightarrow 0$ as $t \rightarrow \infty$.
\end{itemize}
Then for $\alpha \in \R$, define
\[
\lat := \sum_{e \in G(\eta^t,\epsilon_t)} |e|^{\alpha} = \frac{1}{2} \sum_{x,y \in \eta^t, x\neq y} \ind{|x-y| \leq \epsilon_t} |x-y|^{\alpha},
\]
where $e$ denote the edges of the graph and $|e|$ their length.

Define
\[
\lhat := \frac{\lat - \E \lat}{\sqrt{\var\big(\lat\big)}}.
\]

\begin{figure}
	\centering
	\includegraphics[width=.5\textwidth]{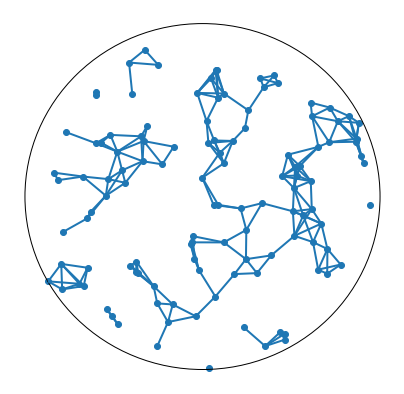}
	\caption{Realisation of the Gilbert graph}
\end{figure}

\begin{rem}
	For the sake of continuity with the article \cite{TSRGilbert}, we use the convention that the intensity $t\, dx$ of $\eta$ grows and the observation window $W$ stays constant. In the other applications presented in this article, we keep the intensity constant and instead let the observation window grow as $tW$. Note that one can pass from one setting to the other by a simple rescaling. Indeed, consider $\tilde{\eta}$ a Poisson measure on $\R^d$ and Lebesgue intensity and for $s \geq 1$, construct  a Gilbert graph on $\tilde{\eta}_{|sH}$ by connecting two points $x \neq y \in \tilde{\eta}_{|sH}$ if and only if $|x-y| < \tilde{\epsilon}_s$. For $\alpha \in \R$, let
	\[
	F_s^{(\alpha)} := \frac{1}{2} \sum_{x,y \in \tilde{\eta}_{|sW},x \neq y} \ind{|x-y| < \tilde{\epsilon}_s} |x-y|^\alpha.
	\]
	Then $\fat$ is equal in law to $s^\alpha L_{s^d}^{(\alpha)}$ with $\epsilon_{s^d}=s^{-1}\tilde{\epsilon}_s$. The central limit theorem for $\fat$ can be deduced from the one for $\lat$.
\end{rem}
The first mention of the Gilbert graph was by Gilbert in \cite{Gil61}, in dimension $d=2$. It has been treated in many works under various names: geometric or proximity graph, interval graph (when $d=1$) or disk graph (when $d=2$). The book \cite{Pen03} provides a vast background and literature review and we also refer to \cite{LRP13I,LRP13II} for central limit theorems of generalisations of the Gilbert graph and \cite{RS13} for a quantitative CLT on a sum of weighted edge-lengths. For a comprehensive overview of the Gilbert graph in the context of $U$-statistics, see \cite{LRRinPR}, especially Section~4.3. See also \cite{McD03,Mul06,GM09,BP14,DST16,GT20}.
In \cite{TSRGilbert}, the authors give a complete picture of the asymptotic behaviour of $\lhat$ for $\alpha \in \R$. In particular, they show that for $\alpha>-\frac{d}{2}$, the quantity $\lhat$ converges in distribution to a standard Gaussian as $t \rightarrow \infty$, provided that $t^2\epsilon_t^2 \rightarrow \infty$. They also give a quantitative bound on the speed of convergence in Kolmogorov distance in the case $\alpha>-\frac{d}{4}$. As an application of our estimates, we recover this speed of convergence below and extend to the case $-\frac{d}{2} < \alpha \leq -\frac{d}{4}$. The authors of \cite{TSRGilbert} show that CLTs hold also for $-d <\alpha \leq -\frac{d}{2}$ with different rescalings; however, establishing corresponding speeds of convergence in this range is still an open problem.

\begin{thm}\label{thmGilbert}
	Let $\alpha>-\frac{d}{2}$ and assume that $t^2\epsilon_t^d \rightarrow \infty$ as $t \rightarrow \infty$. Then for $t\geq 1$ large enough
	\begin{itemize}
		\item if $\alpha>-\frac{d}{4}$, there is a constant $c_1>0$ such that
		\begin{equation}\label{eqGil1}
		\max\left\{ d_W\left(\hat{L}^{(\alpha)}_t,N\right), d_K\left(\hat{L}^{(\alpha)}_t,N\right) \right\} \leq c_1 \left(t^{-1/2} \vee (t^2\epsilon_t^d)^{-1/2}\right).
		\end{equation}
		\item if $-\frac{d}{2} < \alpha \leq -\frac{d}{4}$, then for any $1<p<-\frac{d}{2 \alpha}$, there is a constant $c_2>0$ such that
		\begin{equation}\label{eqGil2}
		\max\left\{ d_W\left(\hat{L}^{(\alpha)}_t,N\right), d_K\left(\hat{L}^{(\alpha)}_t,N\right) \right\} \leq c_2 \left(t^{-1+1/p} \vee (t^2\epsilon_t^d)^{-1+1/p}\right).
		\end{equation}
	\end{itemize}
\end{thm}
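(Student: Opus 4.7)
The strategy is to apply Theorem~\ref{thmWasserNon-Cond.} directly to $F=\lat$, choosing the free parameter $p$ according to the integrability of the weight $|\cdot|^{\alpha}$. From the definition of $\lat$ one computes the add-one costs
\[
\MalD_x\lat = \sum_{y\in\eta^t} \ind{|x-y|<\epsilon_t}\,|x-y|^\alpha, \qquad \DD_{x,y}\lat = \ind{|x-y|<\epsilon_t}\,|x-y|^\alpha,
\]
the crucial observation being that $\DD\lat$ is \emph{deterministic}, which trivialises every contribution of $\DD\lat$ to the terms $\gamma_1,\gamma_2,\gamma_5,\gamma_6,\gamma_7$ of \eqref{eqKolg}.

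Because $\lat$ is a $U$-statistic of order two it lies in the first two Wiener--Itô chaoses, and the resulting variance identity
\[
\var(\lat) = t\int_W \bigl(\E\MalD_x\lat\bigr)^2 dx + \tfrac{t^2}{2}\int_{W\times W} \bigl(\DD_{x,y}\lat\bigr)^2 dx\,dy
\]
gives, after computation of the two integrals for $\alpha>-d/2$, the matching bounds $\var(\lat)\simeq t^2\epsilon_t^{d+2\alpha}\max\{1,t\epsilon_t^d\}$; these asymptotics are also available in \cite{TSRGilbert}. The two regimes correspond to whether the second or the first Wiener--Itô chaos dominates, and the hypothesis $t^2\epsilon_t^d\to\infty$ ensures that the normalisation by $\sqrt{\var(\lat)}$ is non-degenerate.

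The main technical step is a moment bound on $\MalD_x\lat$. Conditionally on $N_x:=\eta^t(B(x,\epsilon_t))\sim\mathrm{Poisson}(t\kappa_d\epsilon_t^d)$, the points of $\eta^t\cap B(x,\epsilon_t)$ are i.i.d.\ uniform in the ball, so a Rosenthal-type inequality for Poisson sums (equivalently, repeated application of Mecke's formula) yields
\[
\E\bigl[|\MalD_x\lat|^{2p}\bigr] \lesssim \epsilon_t^{2p\alpha}\bigl((t\epsilon_t^d)^{2p}+t\epsilon_t^d\bigr),
\]
which is finite exactly when $2p\alpha>-d$. This is automatic for $\alpha>-d/4$ with the choice $p=2$ (the case of \eqref{eqGil1}); and corresponds to the constraint $p<-d/(2\alpha)$ when $\alpha\leq-d/4$, which is precisely the range permitted in \eqref{eqGil2}.

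It then remains to substitute these bounds, along with the trivial estimate of $\DD_{x,y}\lat$ and the variance asymptotic, into each of $\gamma_1,\ldots,\gamma_7$ from \eqref{eqWasserNCond} and \eqref{eqKolg}. Contributions involving $\DD_{x,y}\lat$ reduce to the deterministic integral $\int_W\!\!\int_W \ind{|x-y|<\epsilon_t}|x-y|^{2p\alpha}dxdy\lesssim |W|\epsilon_t^{d+2p\alpha}$; contributions involving $\MalD_y\lat$ pick up one factor of the Rosenthal bound per occurrence; the parameter $q$ in $\gamma_3$ is chosen as $q+1=2p$ when $p\leq 3/2$ and otherwise recovered by a Hölder interpolation between admissible moments. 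A short case analysis depending on whether $t\epsilon_t^d\leq 1$ or $>1$, combined with the variance estimate, shows that every $\gamma_i$ is dominated by a constant multiple of $\max\{t^{-1+1/p},(t^2\epsilon_t^d)^{-1+1/p}\}$; specialising to $p=2$ gives \eqref{eqGil1} and any $p\in(1,-d/(2\alpha))$ gives \eqref{eqGil2}. The principal obstacle is the Rosenthal-type moment bound in the near-critical regime where $p$ approaches $-d/(2\alpha)$ and the singular weight $|x-y|^\alpha$ is barely $2p$-integrable; the remaining work is systematic bookkeeping through the two regimes of $t\epsilon_t^d$ and the two chaoses in the variance.
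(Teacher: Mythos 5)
Your strategy matches the paper's: same explicit add-one costs, same stochastic-domination (Rosenthal/Poisson) moment bound for $\MalD_x\lat$, same variance asymptotics from the order-$2$ chaos decomposition, and the same substitution into Theorem~\ref{thmWasserNon-Cond.}; the bound on $\DD$ being deterministic is indeed the simplifying feature the paper exploits (Proposition~\ref{propGilBounds}, Appendix~\ref{secPGil}).

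There is one step that would fail as you describe it: the treatment of $\gamma_3$ when $p>3/2$, which is the relevant case since $p=2$ is what produces \eqref{eqGil1}. Your primary choice $q+1=2p$ forces $q\in(1,2]$ only for $p\leq 3/2$, so at $p=2$ you must use $q\in(1,2]$, say $q=3-2/p$, and estimate $\E|\MalD_x\lat|^{q+1}$ with $q+1<2p$. If ``Hölder interpolation between admissible moments'' means Jensen from the $2p$th moment, the estimate loses a factor $(t\epsilon_t^d)^{1-(q+1)/(2p)}$ against what is needed in the sparse regime $t\epsilon_t^d\to 0$: for instance with $p=2$, $q=2$ and $\epsilon_t^d=t^{-\theta}$, $5/3<\theta<2$, one gets $\gamma_3\lesssim t^{(3\theta-5)/4}$, which does not tend to zero even though $t^2\epsilon_t^d\to\infty$. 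The ingredient you need to make this work — stated explicitly in the paper as \eqref{iPGilPDx} — is $\p(\MalD_x\lat\neq 0)\leq 1\wedge\kappa_d t\epsilon_t^d$, combined with a higher moment via Hölder (not Jensen), so that the probability of nonvanishing supplies the extra factor $(1\wedge t\epsilon_t^d)^{1-(q+1)/(2r)}$. Equivalently, note that your conditional-Poisson/Rosenthal argument already gives $\E|\MalD_x\lat|^{r}\lesssim\epsilon_t^{r\alpha}\bigl((t\epsilon_t^d)^r+t\epsilon_t^d\bigr)$ for \emph{every} exponent $r$ with $r\alpha+d>0$, and applying it directly at $r=q+1$ (which is admissible since $(q+1)\alpha+d>2p\alpha+d>0$ for $q=3-2/p$) avoids the loss; it is only the detour through Jensen from the $2p$th moment that fails. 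Once this is fixed, the rest of the bookkeeping you outline goes through and gives the claimed rates.
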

\begin{rem}
	A careful inspection of the bounds applied to $\gamma_3$ in the proof of Theorem~\ref{thmGilbert} reveals that in the sparse regime ($t\epsilon_t^d \rightarrow 0$) when $-\frac{d}{2}<\alpha\leq-\frac{d}{4}$, a slightly improved rate can be found for the Wasserstein distance. Indeed, for any $1<p<-\frac{d}{2 \alpha}$ and any $0<r<-\frac{d}{\alpha}-2$, there is a constant $c>0$ such that
	\[
	d_W(\hat{L}^{(\alpha)}_t,N) \leq c \left(t^{-1+1/p} \vee (t^2\epsilon_t^d)^{-r/2}\right).
	\]
	Since $\frac{r}{2} \in \left(0,-\frac{d}{2\alpha}-1\right)$ and $1-\frac{1}{p} \in \left(0,1+\frac{2\alpha}{d}\right)$ and $1+\frac{2\alpha}{d} < -\frac{d}{2\alpha}-1$, one can choose $\frac{r}{2} > 1-\frac{1}{p}$. This then gives a slightly faster convergence rate. As an illustrating example, consider the case where $\epsilon_t^d = t^{-\theta}$ with $1<\theta<2$. Then $t\epsilon_t^d \rightarrow 0$ and $t^2\epsilon_t^d \rightarrow \infty$. Theorem~\ref{thmGilbert} provides the rate of convergence $t^{(-1+\frac{1}{p})(2-\theta)}$, and by following this strategy it can be improved to $t^{-1+\frac{1}{p}} \vee t^{-\frac{r}{2}(2-\theta)}$ with $\frac{r}{2}>1-\frac{1}{p}$.
\end{rem}

\subsection{$k$-Nearest Neighbour graphs}
For a finite generic set $\mu \subset \R^d$ and a positive integer $k \in \N$, the $k$\textbf{-Nearest Neighbour graph} has vertex set $\mu$ and an edge between $x,y \in \mu$ if and only if $y$ is one of the $k$ nearest points to $x$ or vice-versa.

For our functional of interest, consider the following framework:
\begin{itemize}
	\item $H \subset \R^d$ is a convex body;
	\item $\eta$ is an $(\R^d,dx)$-Poisson measure;
	\item $\phi:(0,\infty) \rightarrow (0,\infty)$ is a decreasing function such that there is an $r>2$ verifying
	\begin{equation}\label{eqkNNC1}
		\int_0^1 \phi(s)^{r} s^{d-1} ds < \infty.
	\end{equation}
\end{itemize}
\noindent For any finite generic set $\mu \subset \R^d$, define
\[
F(\mu) := \frac{1}{2} \sum_{x,y\in\mu,x\neq y} \ind{x \in N(y,\mu) \text{ or } y \in N(x,\mu)} \phi(|x-y|),
\]
where $N(x,\mu)$ is the set of $k$-nearest neighbours of $x$ in $\mu$. For $t\geq 1$, define $F_t:=F(\eta_{|tH})$ and set $\hat{F}_t:= (F_t-\E F_t)\var(F_t)^{-1/2}$.	
\begin{figure}
	\centering
	\includegraphics[width=.5\textwidth]{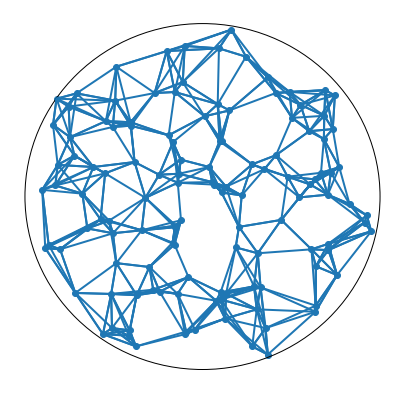}
	\caption{Realisation of a 6-Nearest Neighbour graph}
\end{figure}

	The $k$-nearest neighbour graph is a model frequently used in e.g. social sciences or geography, see \cite{Wade07} for a discussion of applications. Quantitative central limit theorems for edge-related quantities were shown in \cite{AB93,PY05} and subsequently improved in \cite{LPS14}. For a discussion of the literature, we refer to \cite{LPS14}.
	
	In \cite{LPS14}, the authors give a quantitative central limit theorem for the sum of power-weighted edge-lengths with powers $\alpha \geq 0$, at a speed of convergence of $t^{-d/2}$. We complement this result by dealing with the case $\alpha \in \left(-\frac{d}{2},0\right)$. Note that in this case, the CLT is new even in its qualitative version. In the regime $\alpha \in \left(-\frac{d}{4},0\right)$, we also recover the same, presumably optimal, speed of convergence of $t^{-d/2}$ as in \cite{LPS14}, whereas in the case $\alpha \in \left(\left.-\frac{d}{2},-\frac{d}{4}\right]\right.$, we find a speed of convergence that decreases as $\alpha$ approaches $-\frac{d}{2}$. It is natural to ask what happens when $\alpha \leq -\frac{d}{2}$. We consider this a separate issue and leave it open for further research.

\begin{thm}\label{thmkNN}
	Under the conditions stated above, for any $p \in (1,2]$ such that $p<\frac{r}{2}$, there is a constant $c>0$ such that, for $t\geq 1$,
	\[
	\max\left\{d_W\big(\hat{F}_t,N\big),d_K\big(\hat{F}_t,N\big)\right\} \leq c t^{d(1/p-1)}.
	\]
	This inequality holds in particular for the function $\phi(x)=x^{-\alpha}$ with $0<\alpha<\frac{d}{2}$, for any $p \in (1,2]$ such that $p<\frac{d}{2\alpha}$. 
\end{thm}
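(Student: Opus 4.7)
The plan is to apply Theorem~\ref{thmWasserNon-Cond.} directly to $F_t = F(\eta_{|tH})$, choosing an exponent $p \in (1, 2]$ with $p < r/2$ and setting $q := \min(2p-1, 2) \in (1, 2]$, so that every moment entering $\gamma_1, \ldots, \gamma_7$ is of order at most $2p$. The problem then reduces to three ingredients: \emph{(a)} a uniform first-order bound $\sup_{y \in tH}\E|\MalD_y F_t|^{2p} \leq C$; \emph{(b)} an integrable decay estimate for $\E|\DD_{x,y} F_t|^{2p}$ and for $\E[|\DD_{x,y}F_t|^{2p}]^{1/(2p)}$, uniformly in $x \in tH$; and \emph{(c)} a matching variance lower bound $\var(F_t) \geq c\, t^d$. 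Granted these, each outer $tH$-integral in $\gamma_1, \gamma_2$ produces a factor at most $t^{d/p}$, which combined with $\sigma^{-2} \lesssim t^{-d}$ yields $\gamma_1, \gamma_2 = O(t^{-d(1-1/p)})$; a parallel computation for $\gamma_3$ through $\gamma_7$ shows them to be no larger, giving the announced rate $t^{d(1/p-1)}$ in both distances.

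For \emph{(a)} I would use the local structure of the $k$-NN graph: inserting a point $y$ only affects edges incident to points lying within $2 R_k(y, \eta)$ of $y$, where $R_k(y, \eta)$ is the distance from $y$ to its $k$-th nearest point in $\eta$. Consequently $|\MalD_y F_t|$ is bounded, up to a combinatorial constant depending only on $k$, by a sum of $\phi$-values of edge-lengths; by monotonicity of $\phi$, each such value is dominated by $\phi$ applied to the distance between $y$ and one of its finitely many nearest Poisson points. Using the radial density of nearest-neighbour distances in a Poisson process, the resulting moment reduces to
\[
\E\bigl[\phi(|y - Z_j|)^{2p}\bigr] \lesssim \int_0^\infty \phi(s)^{2p}\, s^{d-1}\, e^{-c s^d}\, ds,
\]
which is finite on $[0,1]$ by Hölder's inequality applied to $\int_0^1 \phi^{r} s^{d-1}\, ds < \infty$ with $2p < r$, and on $[1,\infty)$ by the Gaussian factor and the boundedness of $\phi$ away from $0$.

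For \emph{(b)} the standard stabilisation argument for $k$-NN graphs shows that $\DD_{x,y} F_t$ vanishes unless $|x-y|$ is smaller than a random radius with super-polynomial (in fact Gaussian-type) tails, inherited from Poisson void probabilities. Choosing $\delta > 0$ so that $2p(1+\delta) < r$ (possible because the condition $p < r/2$ is strict) and applying Hölder's inequality to split the expectation according to the stabilisation event yields $\E|\DD_{x,y}F_t|^{2p} \leq C\, e^{-c|x-y|^d}$, which is integrable over $\R^d$ uniformly in $x$ in both forms needed. For \emph{(c)}, the Fock-space orthogonality gives $\var(F_t) \geq \int (\E\MalD_y F_t)^2\, \lambda(dy)$, and translation-invariance of the underlying graph together with positivity of $\phi$ provides a uniform lower bound $\E\MalD_y F_t \geq c > 0$ for $y$ in the bulk of $tH$, whence $\var(F_t) \gtrsim t^d$.

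The main obstacle is estimate \emph{(a)} in the singular regime $\phi(s) \to \infty$ as $s \to 0$: two Poisson points can be arbitrarily close in $tH$, so the summands bounding $|\MalD_y F_t|$ are unbounded, and the whole control rests on the sharp matching between the integrability exponent $r$ of $\phi$ and the Malliavin exponent $2p$. This interplay is exactly what forces the restriction $p < r/2$ and, combined with the volume factor $t^d$ from the outer integrals, dictates the final rate $t^{d(1/p-1)}$; strict inequality is moreover needed to supply the Hölder slack exploited in \emph{(b)}.
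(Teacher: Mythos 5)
Your plan matches the paper's strategy (Propositions~\ref{propkNNBounds} and~\ref{propkNNVar} feeding into Theorem~\ref{thmWasserNon-Cond.}), with one genuine variant and one small imprecision.

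The variant is in item~\emph{(c)}. The paper lower bounds the variance via \cite[Theorem~5.2]{LPS14}, constructing a blocking configuration of $m$ auxiliary points around $y$ so that inserting $y$ causes no edge removals, which forces $\MalD_y F\geq k\phi(\tau)$. You propose instead the first-chaos inequality $\var(F_t)\geq\int(\E\MalD_y F_t)^2\,dy$, which is valid, but your sentence that positivity of $\phi$ yields a uniform lower bound $\E\MalD_y F_t\geq c>0$ hides the real work: $\MalD_y F_t\geq 0$ alone is not quantitative, since each removed edge is merely dominated (possibly with arbitrarily small slack) by a compensating added edge. To bound the expectation away from zero uniformly you still need a positive-probability local event on which no removals occur at all, which is precisely the paper's blocking construction. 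The two routes are therefore comparable in effort; yours is slightly more self-contained in that it replaces the abstract lower-bound criterion of \cite{LPS14} with a direct first-chaos estimate, at the cost of having to carry out the blocking construction by hand inside the probability estimate.

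The imprecision is in item~\emph{(b)}: $\DD_{x,y}F_t$ contains the contribution of the potential new edge between $x$ and $y$, so its moment carries the singularity $\phi(|x-y|)$ at small $|x-y|$. The correct estimate, as in Proposition~\ref{propkNNBounds}, is $\E[|\DD_{x,y}F_t|^{2p}]^{1/(2p)}\lesssim\phi(|x-y|)+1$, which after H\"older against the stabilisation probability gives decay of the form $(\phi(|x-y|)+1)\exp(-c|x-y|^d)$, not a pure exponential. This does not change the outcome, since the product remains integrable in $y$ over $\R^d$ uniformly in $x$ by condition~\eqref{eqkNNC1} and the same H\"older slack you invoke in~\emph{(a)}, but your written bound $\E|\DD_{x,y}F_t|^{2p}\leq C\exp(-c|x-y|^d)$ is false near $|x-y|=0$ when $\phi$ is unbounded at the origin.
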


\subsection{Radial Spanning Tree}
Let $\mu \subset \R^d\setminus \{0\}$ be a finite set, generic with respect to the point $0$. The radial spanning tree on $\mu$, in short $RST(\mu)$, is constructed as follows:
\begin{itemize}
	\item The set of vertices is given by $\mu \cup \{0\}$;
	\item for every $x \in \mu$, we add exactly one edge to the point $z \in \mu \cup \{0\} \cap B^d(0,|x|)$ which minimises $|x-z|$. We call $z$ the radial nearest neighbour of $x$ and say `$x$ connects to $z$', denoted by `$x \rightarrow z$ in $\mu$'. We denote the length $|x-z|$ by $g(x,\mu)$.
\end{itemize}
In order to define our functional of interest, consider the following setting:
\begin{itemize}
	\item $H \subset \R^d$ a convex body such that $B^d(0,\epsilon) \subset H$ for some $\epsilon>0$;
	\item $\eta$ is an $(\R^d,dx)$-Poisson measure;
	\item $\phi:(0,\infty) \rightarrow (0,\infty)$ is a decreasing function such that there is an $r>2$ satisfying
	\begin{equation}\label{eqRSTcond}
		\int_0^1 \phi(s)^{r} s^{d-1} ds < \infty.
	\end{equation}
\end{itemize}

For any finite set $\mu \subset \R^d$ generic with respect to $0$, define
\[
F(\mu) := \sum_{x\in\mu} \phi(g(x,\mu))
\]
and for $t \geq 1$, define $F_t := F(\eta_{|tH})$. Set $\hat{F}_t:= (F_t-\E F_t)\var(F_t)^{-1/2}$.

\begin{figure}
	\centering
	\includegraphics[width=.5\textwidth]{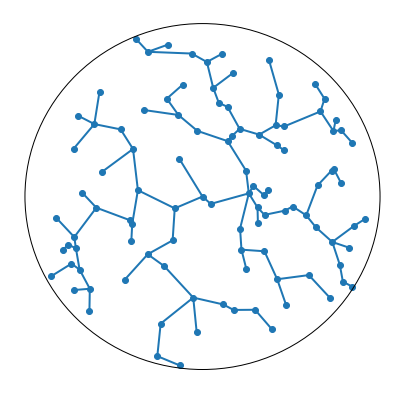}
	\caption{Realisation of the Radial Spanning Tree}
\end{figure}

The radial spanning tree was developed in \cite{BB07} as a model related to the minimal directed spanning tree and to Poisson forests. The paper also discusses various applications, most notably in communication networks. Further work on the radial spanning tree has been done in \cite{PW09,BCT13,ST14rst}. In \cite{ST14rst}, the authors give a quantitative central limit theorem for sums of power-weighted edge-lengths of the radial spanning tree for powers $\alpha  \geq 0$. The framework is one where the intensity of the Poisson measure increases while the observation window stays constant. After rescaling to our framework of a constant intensity and a growing window, one obtains by \cite[Theorem~1.2]{ST14rst} a speed of convergence of $t^{-d/2}$. We add quantitative central limit theorems for $\alpha \in \left(-\frac{d}{2},0\right)$, recovering the same speed of $t^{-d/2}$ for $\alpha \in \left(-\frac{d}{4},0\right)$. Note that this CLT is new even in its qualitative version. As for the $k$-Nearest Neighbour graph, the case $\alpha \leq -\frac{d}{2}$ will be the object of further research.
	
\begin{thm}\label{thmRST}
Under the conditions stated above, for any $p \in (1,2]$ such that $p<\frac{r}{2}$, there is a constant $c>0$ such that for $t\geq 1$,
\begin{equation}\label{eqRST}
\max\left\{d_W\big(\hat{F}_t,N\big),d_K\big(\hat{F}_t,N\big)\right\} \leq c t^{d(1/p-1)}.
\end{equation}
This inequality holds in particular for the function $\phi(x)=x^{-\alpha}$ with $0<\alpha<\frac{d}{2}$, for any $p \in (1,2]$ such that $p<\frac{d}{2\alpha}$. 
\end{thm}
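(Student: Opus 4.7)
The plan is to invoke Theorem~\ref{thmWasserNon-Cond.}, adapting the strategy used for Theorem~\ref{thmkNN} to the radial structure of the RST. One picks $p \in (1, r/2) \cap (1, 2]$ and some $q \in (1, 2]$, then needs to verify two ingredients: (a) uniform $2p$-moment bounds on $\MalD F_t$ and $\DD F_t$ that integrate to $O(t^d)$ over $tH$ and $tH \times tH$ respectively; (b) a matching lower bound $\var(F_t) \gtrsim t^d$. If both hold, each of $\gamma_1, \dots, \gamma_6$ together with the simplified $\gamma_7$ from Remark~\ref{remGamma7} is of order $t^{-d} \cdot (t^d)^{1/p} = t^{d(1/p-1)}$, which yields the theorem.

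For the add-one cost, inserting $x \in \R^d$ into a configuration $\mu$ produces the contribution $\phi(g(x, \mu \cup \{x\}))$ from the new edge attached to $x$, plus terms $\phi(|x - y|) - \phi(g(y, \mu))$ from those $y \in \mu$ with $|y| > |x|$ whose radial parent switches to $x$. By the classical geometric stabilisation of the RST developed in \cite{BB07, ST14rst}, the set of affected $y$'s lies in a random cone of influence with apex at $x$ whose cardinality in $\eta$ admits exponential moments uniformly in $|x| \geq \epsilon$. Bounding each summand using the monotonicity of $\phi$ and the hypothesis $\int_0^1 \phi(s)^{r} s^{d-1} ds < \infty$ with $r > 2p$, together with the tail estimate $\p(g(y, \eta \cup \{y\}) > s) \leq e^{-c s^d}$ for $s < |y|$, a polar-coordinate calculation gives $\int_{tH} \E |\MalD_x F_t|^{2p}\, dx = O(t^d)$, and an analogous two-point stabilisation argument yields $\int_{tH} \int_{tH} \E |\DD_{x,y} F_t|^{2p}\, dx\, dy = O(t^d)$.

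The variance lower bound $\var(F_t) \gtrsim t^d$ follows from the standard Poincaré-based route used in \cite{LPS14, ST14rst}: one isolates $\Omega(t^d)$ disjoint unit-volume cubes deep inside $tH$, and argues that inserting a point at the centre of such a cube yields an $\Omega(1)$ squared add-one cost depending essentially only on the configuration of the cube itself, producing independent $\Omega(1)$ contributions to $\var(F_t)$. Plugging the moment bounds and the variance lower bound into $\gamma_1, \dots, \gamma_6$ and the simplified $\gamma_7$ from Remark~\ref{remGamma7} then delivers the announced rate $t^{d(1/p - 1)}$.

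The main obstacle is expected to be the handling of the second-order add-one cost $\DD_{x,y} F_t$: both the pointwise bound and its integrability over $tH \times tH$ require a careful two-point stabilisation analysis specific to the RST, exploiting the fact that simultaneous changes at $x$ and $y$ only interact when one of them lies in the cone of influence of the other. The use of the simplified $\gamma_7$ from Remark~\ref{remGamma7} is precisely to avoid a triple-moment expression in $\MalD_y F$ that would otherwise demand an additional delicate estimate.
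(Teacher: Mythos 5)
Your overall route is the one the paper takes: establish pointwise moment bounds for $\MalD_x F_t$ and $\DD_{x,y}F_t$ that combine an integrability bound driven by $\int_0^1\phi(s)^r s^{d-1}\,ds<\infty$ with exponential spatial decay (from the fact that $\p(\DD_{x,y}F_t\neq 0)$ decays like $\exp(-c|x-y|^d)$), prove $\var F_t\gtrsim t^d$, and then feed all of this into Theorem~\ref{thmWasserNon-Cond.} with the simplified $\gamma_7$ from Remark~\ref{remGamma7}. The decomposition of $\MalD_x F$ into the new edge $\phi(g(x,\cdot))$ plus the re-attachment terms, and the reliance on \cite{BB07,ST14rst} for degree/stabilisation control, also matches the paper's Proposition~\ref{propRSTBounds}.

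There is one genuine gap in the plan, though. You assert that ``each of $\gamma_1,\dots,\gamma_6$ is of order $t^{-d}\cdot(t^d)^{1/p}=t^{d(1/p-1)}$'' once the moment and variance estimates are in place, but this is false for $\gamma_3$ as you have set it up. The normalisation in $\gamma_3$ is $\sigma^{-(q+1)}$, not $\sigma^{-2}$, and the integral is not raised to a $1/p$-th power; under the uniform $(q+1)$-moment bound you get $\gamma_3 = O\big(t^{-d(q+1)/2}\cdot t^d\big)=O\big(t^{d(1-(q+1)/2)}\big)$, and this matches $t^{d(1/p-1)}$ only for the specific choice $q = 3-\tfrac{2}{p}$ (which also requires $q+1 \leq 2p < r$ so that the needed moment of $\MalD_y F_t$ is finite). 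You wrote ``pick $p$ and some $q\in(1,2]$'', leaving $q$ free; you must tie $q$ to $p$ exactly as above, otherwise the $\gamma_3$ term does not land at the claimed rate. Separately, your phrasing of the moment inputs as bounds ``that integrate to $O(t^d)$'' is not strong enough for $\gamma_1,\gamma_2,\gamma_6,\gamma_7$: those terms involve an inner $y$-integral of a product of two moment expressions, raised to the $p$-th power, so you genuinely need pointwise estimates of the form $\E[|\MalD_x F_t|^{2p}]^{1/(2p)}\lesssim 1+\phi(|x|)e^{-c|x|^d}$ and $\E[|\DD_{x,y}F_t|^{2p}]^{1/(2p)}\lesssim (1+\phi(|x-y|))e^{-c|x-y|^d}$ (obtained after Hölder-splitting with the indicator bound and a moment of order $s$ with $2p<s<r$), not just $L^1$-control of the $2p$-th moments over $tH$.
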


The rest of the paper is devoted to providing the proofs of the results in Sections~\ref{secWasserKolg} to \ref{secAppl}.

\appendix
\section{Background on Malliavin Calculus} \label{secBackground}
In this section, we present several useful notions related to Malliavin calculus. Unless otherwise indicated, these results are explained in \cite{LastPoiss}. We work in the setting of Section~\ref{secFrame}: in particular, $\chi$ indicates a $(\W,\nu)$-Poisson measure.

We start with three useful \textbf{isometry} relations. Let $f \in L^2(\W^n,\nu^{(n)})$ and $g \in L^2(\W^m,\nu^{(m)})$. Then
\begin{equation}\label{eqIsom1}
	\E \Fint_n(f)\Fint_m(g) = \ind{m=n} n! \int_{\W^n} \tilde{f}(x)\tilde{g}(x)\, \nu^{(n)}(dx),
\end{equation}
where $\tilde{f}$ and $\tilde{g}$ are the \textbf{symmetrisations} of $f$ and $g$ defined by
\[
\tilde{f}(x_1,...,x_n) = \frac{1}{n!} \sum_{\sigma \in \Sigma_n} f(x_{\sigma(1)},...,x_{\sigma(n)}),
\]
the set $\Sigma_n$ being the set of all permutations of $\{1,...,n\}$. See \cite[Lemma~4]{LastPoiss} and the remark thereafter on page~10 for a proof.

Relation \eqref{eqIsom1} implies that for $F,G \in L^2(\p_\chi)$ having an expansion \eqref{eqWienerItoChaos} with kernels $f_n$ and $g_n$ respectively,
\begin{equation}\label{eqIsom}
	\E FG = \E F \E G + \sum_{n=1}^\infty n! \int_{\W^n} f_n g_n \, d\nu^{(n)}.
\end{equation}
By \cite[Theorem~5]{LastPoiss}, if $h \in L^2(\mathbf{N} \times \W)$ satisfies \eqref{eqDomDelCond}, then
\begin{equation}\label{eqIsomDelta}
	\E \delta(h)^2 = \E \int_\W h(\chi,w)^2 \nu(dw) + \E \int_\W \int_\W \MalD_z h(\chi,w) \MalD_w h(\chi,z) \, \nu(dz)\nu(dw).
\end{equation}

A well-known relation in Malliavin calculus is the so-called \textbf{integration by parts} formula: for $F \in \dom \MalD$ and $h \in \dom \delta$, we have $\E \int_\W h(\chi,w)\MalD_w F \nu(dw) = \E[F\delta(h)]$ (cf. \cite[Theorem~4]{LastPoiss}). The condition on $F$ is however suboptimal in our context, which is why we need a version of integration by parts under slightly different assumptions.

\begin{lem}\label{lemNewIBP}
	Let $h \in \dom \delta \cap L^1(\mathbf{N} \times \W)$ and $F \in L^0(\p_\chi)$ bounded. Then
	\[
	\E \int_\W h(\chi,w)\MalD_w F \nu(dw) = \E[F\delta(h)].
	\]
\end{lem}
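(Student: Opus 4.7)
The plan is to bypass the Malliavin-derivative hypothesis on $F$ by exploiting the pathwise representation of the Skorohod integral given in \eqref{eqPathwise}, combined with the Mecke formula \eqref{eqMecke}. Since $h \in \dom \delta \cap L^1(\mathbf{N}\times\W)$, formula \eqref{eqPathwise} gives
\[
\delta(h) = \int_\W h(\chi-\delta_w,w)\,\chi(dw) - \int_\W h(\chi,w)\,\nu(dw) \qquad \p\text{-a.s.},
\]
where the RHS is well-defined thanks to \eqref{eqIntwithEta}. Multiplying by the bounded functional $F$ and taking expectations, I would like to obtain
\[
\E[F\delta(h)] = \E \int_\W F(\chi)\,h(\chi-\delta_w,w)\,\chi(dw) - \E \int_\W F(\chi)\,h(\chi,w)\,\nu(dw).
\]

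Before splitting the expectation, I need to verify that each of the three terms above is integrable. Since $h \in \dom \delta$ we have $\delta(h) \in L^2(\p_\chi)$, so $F\delta(h) \in L^1(\p_\chi)$ by boundedness of $F$. For the other two terms, the bound $|F| \leq \|F\|_\infty$ together with the Mecke formula applied to the non-negative function $|h(\chi-\delta_w,w)|$ yields
\[
\E \int_\W |F(\chi)\,h(\chi-\delta_w,w)|\,\chi(dw) \leq \|F\|_\infty\, \E \int_\W |h(\chi,w)|\,\nu(dw) < \infty,
\]
and similarly $\E \int_\W |F(\chi)\,h(\chi,w)|\,\nu(dw) < \infty$ since $h \in L^1(\mathbf{N}\times\W)$.

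Next, applying the Mecke formula \eqref{eqMecke} to the function $(\xi,w) \mapsto F(\xi)\,h(\xi-\delta_w,w)$ (which is integrable by the estimate above), the first term becomes
\[
\E \int_\W F(\chi)\,h(\chi-\delta_w,w)\,\chi(dw) = \E \int_\W F(\chi+\delta_w)\,h(\chi,w)\,\nu(dw),
\]
after cancelling $\delta_w$ inside $h$. Subtracting the second term and recognising $F(\chi+\delta_w)-F(\chi) = \MalD_w F$ gives exactly
\[
\E[F\delta(h)] = \E \int_\W \MalD_w F \cdot h(\chi,w)\,\nu(dw),
\]
where Fubini is justified by $|\MalD_w F| \leq 2\|F\|_\infty$ and $h \in L^1(\mathbf{N}\times\W)$.

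There is no real obstacle here: the statement is essentially a bookkeeping exercise built on \eqref{eqPathwise} and \eqref{eqMecke}, with the only delicate point being to check integrability of the intermediate quantities so as to legitimately split the expectation and apply Mecke. The appeal of this route is that we never need to invoke the $L^2$ condition on $\MalD F$ that underlies the classical integration by parts of \cite[Theorem~4]{LastPoiss}; boundedness of $F$ combined with $h \in L^1$ is enough.
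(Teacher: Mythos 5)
Your proof is correct and essentially the same as the paper's: both rest on exactly the two ingredients \eqref{eqPathwise} and \eqref{eqMecke}, with the same integrability checks via boundedness of $F$ and $h\in L^1(\mathbf{N}\times\W)$. The only difference is the direction of travel — you start from $\E[F\delta(h)]$ and expand via the pathwise representation, whereas the paper starts from $\E\int_\W \MalD_w F\,h(\chi,w)\,\nu(dw)$, splits, applies Mecke, and recombines into the pathwise expression for $\delta(h)$.
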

\begin{proof}
	Since $h \in \dom \delta \cap L^1(\mathbf{N} \times \W)$, it is easy to check that the expectations appearing in the statement are well-defined and finite. Note that
	\begin{align}
		\E \int_\X \MalD_w F\, h(\chi,w) \nu(dw) &= \E \int_\X  (F(\chi + \delta_w)-F(\chi)) h(\chi,w) \nu(dw) \notag\\
		&= \E \int_\X F(\chi + \delta_w) h(\chi,w) \nu(dw) - \E \int_\X F(\chi) h(\chi,w) \nu(dw), \label{iP100eq1}
	\end{align}
	where the last line is justified by the fact that $F$ is bounded and $h \in L^1(\mathbf{N} \times \W)$, so both integrals are well-defined. We now apply Mecke formula \eqref{eqMecke} to deduce that \eqref{iP100eq1} equals
	\begin{multline}
		\E \int_\X F(\chi) h(\chi - \delta_w,w) \chi(dw) - \E \int_\X F(\chi) h(\chi,w) \nu(dw) \\= \E F(\chi) \left(\int_\X  h(\chi - \delta_w,w) \chi(dw) - \int_\X h(\chi,w) \nu(dw) \right).
	\end{multline}
	Since $h \in \dom \delta \cap L^1(\mathbf{N} \times \X)$,
	\[
	\int_\X  h(\chi - \delta_w,w) \chi(dw) - \int_\X h(\chi,w) \nu(dw) = \delta(h) \qquad \asp
	\]
	The result follows.
\end{proof}

Next, we introduce the \textbf{Ornstein-Uhlenbeck} operator $P_\tau$. For $F \in L^1(\p_\chi)$ and $\tau \in [0,1]$, we define 
\begin{equation}\label{eqOrnAlt}
	P_\tau F = \int \E \big[F(\chi^{\tau} + \xi) \big| \chi \big] \Pi_{\tau}(d\xi),
\end{equation}
where $\chi^{\tau}$ is a $\tau$-thinning of $\chi$ (see \cite[p. 24]{LastPoiss} and the reference given therein) and $\Pi_\tau$ is the law of an independent Poisson measure with intensity measure $(1-\tau)\nu$. It follows by Jensen's inequality that for all $p \geq 1$, one has
\begin{equation}\label{eqOrnx2}
	\E |P_\tau F|^p \leq \E |F|^p.
\end{equation}
By \cite[Lemma~6]{LastPoiss}, for all $F \in L^2(\p_\chi)$ and all $\tau \in [0,1]$, for $\nu^{(n)}$-a.e. $w_1,...,w_n \in \W$ it holds $\p$-a.s. that
\begin{equation}\label{eqOrnx1}
	\MalD^{(n)}_{w_1,...,w_n} (P_\tau F) = \tau^n P_\tau \MalD^{(n)}_{w_1,...,w_n} F.
\end{equation}
This implies that for $F \in L^2(\p_\chi)$, the following expansion holds (see also \cite[(79)]{LastPoiss}):
\begin{equation}\label{eqOrnUhlDef}
	P_\tau F = \E F + \sum_{n=1}^\infty \tau^n \Fint_n(f_n).
\end{equation}

The following lemma summarises some useful approximation properties of the Ornstein-Uhlenbeck operator.
\begin{lem}\label{lemOrnTrick}
	Let $h \in L^2(\mathbf{N}_\W \times \W)$ and let $\tau \in (0,1)$. Then $P_\tau h$ satisfies condition \eqref{eqDomDelCond} and $P_\tau h \rightarrow h$ in $L^2(\mathbf{N}_\W \times \W)$ as $\tau \rightarrow 1$. Moreover, for $w,z \in \W$, and all $p \geq 1$,
	\begin{align}
		&\E |P_\tau h(\chi,w)|^p \leq \E |h(\chi,w)|^p \label{eqOrnTrick1}\\
		\intertext{and}
		&\E |\MalD_z P_\tau h(\chi,w)|^p \leq \E |\MalD_z h(\chi,w)|^p. \label{eqOrnTrick2}
	\end{align}
	Under the additional assumption that $h \in \dom \delta$, it holds that $\delta(P_\tau h) \rightarrow \delta(h)$ in $L^2(\p_\chi)$ as $\tau \rightarrow 1$.
\end{lem}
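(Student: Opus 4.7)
The plan is to work throughout via the chaos decomposition, exploiting the fact that $P_\tau$ acts by multiplication of the $n$-th Wiener--Itô chaos by $\tau^n$ (cf.\ \eqref{eqOrnUhlDef}), and to leverage the contraction $\tau\in(0,1)$ at the crucial step. For the two moment inequalities, I would argue directly from the definition \eqref{eqOrnAlt}: applying Jensen's inequality first to the conditional expectation $\E[\,\cdot\mid\chi]$ and then to the integral against the probability measure $\Pi_\tau$ yields
\[
|P_\tau h(\chi,w)|^p \leq \int \E\bigl[\,|h(\chi^{\tau}+\xi,w)|^p \,\bigm|\, \chi\,\bigr]\,\Pi_{\tau}(d\xi).
\]
Taking expectation and invoking the superposition property of Poisson measures (that $\chi^\tau+\xi$ with $\xi\sim\Pi_\tau$ independent has the same law as $\chi$) gives \eqref{eqOrnTrick1}. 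Inequality \eqref{eqOrnTrick2} then follows immediately from the commutation relation \eqref{eqOrnx1} at $n=1$, namely $\MalD_z P_\tau h(\cdot,w) = \tau P_\tau \MalD_z h(\cdot,w)$, by applying \eqref{eqOrnTrick1} to $\MalD_z h(\cdot,w)$ and using $\tau^p \leq 1$.

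For the claim that $P_\tau h$ satisfies \eqref{eqDomDelCond}, I would expand $h(\cdot,w)$ in chaos via \eqref{eqhExpand}. Combining \eqref{eqOrnUhlDef} with the rule $\MalD_z \Fint_n(g)= n\Fint_{n-1}(g(z,\cdot))$ gives
\[
\MalD_z P_\tau h(\chi,w) = \sum_{n \geq 1} n\tau^n\, \Fint_{n-1}(h_n(w,z,\cdot)).
\]
The isometry \eqref{eqIsom1} together with the symmetry of $h_n(w,\cdot)$ in its last $n$ arguments then produces
\[
\E \int_\W \int_\W (\MalD_z P_\tau h(\chi,w))^2\, \nu(dz)\nu(dw) \leq \sum_{n \geq 1} n\tau^{2n}\cdot n!\,\|h_n\|^2_{L^2(\nu^{n+1})}.
\]
The key observation is that $\tau \in (0,1)$ forces $C_\tau := \sup_{n\geq 1} n\tau^{2n}$ to be finite, so the right-hand side is bounded by $C_\tau \int_\W \E[h(\chi,w)^2]\,\nu(dw) < \infty$ since $h \in L^2(\mathbf{N}\times\W)$.

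Finally, both convergence statements reduce to the chaos $L^2$-isometry combined with dominated convergence. For $P_\tau h \to h$ in $L^2(\mathbf{N}\times\W)$, one finds
\[
\E \int_\W (P_\tau h(\chi,w) - h(\chi,w))^2\, \nu(dw) = \sum_{n \geq 1} n!\,(1-\tau^n)^2\,\|h_n\|^2_{L^2(\nu^{n+1})} \longrightarrow 0
\]
as $\tau \to 1$, since $(1-\tau^n)^2 \leq 1$ and the total sum $\sum_n n!\,\|h_n\|^2$ is finite by $h \in L^2(\mathbf{N}\times\W)$. The same reasoning, applied to the isometry for Skorohod integrals $\E\delta(g)^2 = \sum_n (n+1)!\,\|\tilde g_n\|^2$ and using that the symmetrised chaos kernels of $P_\tau h$ are $\tau^n \tilde h_n$, yields $\delta(P_\tau h) \to \delta(h)$ in $L^2(\p_\chi)$ whenever $h \in \dom\delta$. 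No serious obstacle is expected; the only step requiring genuine care is the finiteness argument for \eqref{eqDomDelCond}, where it matters to use the boundedness of $n\tau^{2n}$ rather than its summability, so that no integrability on $h$ beyond $h\in L^2(\mathbf{N}\times\W)$ is needed.
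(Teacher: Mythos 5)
Your proof is correct and follows essentially the same route as the paper: the moment contractions come from Jensen applied to the Ornstein--Uhlenbeck representation (paper's \eqref{eqOrnx2}) together with the commutation rule \eqref{eqOrnx1}, while the finiteness of \eqref{eqDomDelCond} and both $L^2$-convergence statements are handled via the chaos decomposition, with the boundedness of $n\tau^{2n}$ being the decisive observation, exactly as in the paper. The only cosmetic difference is that you re-derive the $p$-moment contraction from the definition rather than citing \eqref{eqOrnx2} directly.
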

\begin{proof}
	By the isometry property \eqref{eqIsom} and the expansions \eqref{eqOrnUhlDef} and \eqref{eqhExpand}, we infer that
	\[
	\E \int_\W \int_\W \left(\MalD_w P_\tau h(\chi,z)\right)^2 \nu(dw) \nu(dz) = \sum_{n=0}^\infty n \cdot n!\, \tau^{2n} \norm{h_n}_{n+1}^2,
	\]
	where $\norm{.}_{n}$ is the norm in $L^2(\W^{n},\nu^{(n)})$. Now note that $\sup_{n \geq 1} n\tau^{2n} < \infty$ and
	\[
	\sum_{n=0}^\infty n!\, \norm{h_n}_{n+1}^2 = \int_\W \E h(w)^2 \nu(dw) < \infty,
	\]
	hence $P_\tau h$ satisfies \eqref{eqDomDelCond}. Similarly using the expansions, we deduce that
	\[
	\E \int_\W \left(P_\tau h(\chi,w) - h(\chi,w)\right)^2 \nu(dw) = \sum_{n=1}^\infty n! \, (\tau^n-1)^2 \norm{h_n}_{n+1}^2.
	\]
	By dominated convergence, this expression tends to $0$ as $\tau \rightarrow 1$. Properties \eqref{eqOrnTrick1} and \eqref{eqOrnTrick2} follow immediately from \eqref{eqOrnx2} and \eqref{eqOrnx1}. For the last point, note that
	\[
	\delta(P_\tau h)-\delta(h) = \sum_{n=0}^\infty \Fint_{n+1}((\tau^n-1)h_n)
	\]
	and
	\[
	\E \left(\delta(P_\tau h)-\delta(h)\right)^2 = \sum_{n=0}^\infty (n+1)! (1-\tau^n)^{2} \norm{\tilde{h}_n}_{n+1}^2,
	\]
	which converges to $0$ as $\tau \rightarrow 1$ by dominated convergence since $h \in \dom \delta$.
\end{proof}

The following lemma is used on several occasions:
\begin{lem}[{\cite[Theorem~1.5]{LPen1}}] \label{lemCondCov}
	Let $\eta$ be a $(\W \times [0,1],\nu \otimes ds)$-Poisson measure and let $F,G \in L^2(\p_\eta)$. Then
	\[
	\E \int_{\W}\int_0^1 \E[\MalD_{(y,s)}F|\eta_{|\W \times [0,1]}]^2 \lambda(dy)ds < \infty,
	\]
	and an analogous estimate holds for $G$. Moreover,
	\[
	\cov (F,G) = \E \int_{\W}\int_0^1 \E[\MalD_{(y,s)}F|\eta_{|\W \times [0,1]}]\E[\MalD_{(y,s)}G|\eta_{|\W \times [0,1]}] \lambda(dy)ds.
	\]
\end{lem}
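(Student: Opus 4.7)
The plan is to derive the lemma from the Clark-Ocône formula \eqref{eqClarkOcone} combined with the Skorohod isometry \eqref{eqIsomDelta}, exploiting the fact that the isometry drastically simplifies for predictable integrands. (I read the conditioning events in the statement as $\eta_{|\W \times [0,s)}$, matching the form of \eqref{eqClarkOcone}; otherwise the identity would collapse trivially.)

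First I would apply \eqref{eqClarkOcone} to $F$ and to $G$ separately to write $F - \E F = \delta(u)$ and $G - \E G = \delta(v)$, where
\[
u(\eta,y,s) := \E\big[\MalD_{(y,s)}F \,\big|\, \eta_{|\W \times [0,s)}\big], \qquad v(\eta,y,s) := \E\big[\MalD_{(y,s)}G \,\big|\, \eta_{|\W \times [0,s)}\big].
\]
By the paragraph following \eqref{eqCondExp}, $u$ and $v$ are predictable in the sense of \eqref{eqPred}, and $u,v \in \dom\delta$ is built into \eqref{eqClarkOcone}.

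The key observation is that the cross term in the isometry \eqref{eqIsomDelta} vanishes for any predictable $h$. Indeed, predictability gives $\MalD_{(z,t)} h(\eta,y,s) = 0$ whenever $t \geq s$, so the integrand $\MalD_{(z,t)} h(\eta,y,s) \cdot \MalD_{(y,s)} h(\eta,z,t)$ can be non-zero only when $t<s$ \emph{and} $s<t$, which is impossible. Hence for predictable $h \in \dom\delta$,
\[
\E \delta(h)^2 = \E \int_\W \int_0^1 h(\eta,y,s)^2 \,\lambda(dy)\,ds.
\]
Applying this to $h=u$ yields $\E\int u^2 = \E \delta(u)^2 = \var(F) < \infty$, which is exactly the first assertion (and analogously for $G$).

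For the covariance identity I would polarise. Since $(F+G) - \E(F+G) = \delta(u+v)$ and $u+v$ is again predictable and in $\dom\delta$, the same isometry gives $\var(F+G) = \E \int (u+v)^2$. Expanding the square and subtracting $\var(F) + \var(G) = \E\int u^2 + \E\int v^2$ leaves $2\cov(F,G) = 2\,\E\int uv$, as claimed. The only step that requires any thought is the vanishing of the correction term in \eqref{eqIsomDelta} under predictability; everything else is a direct application of Clark-Ocône and bilinearity.
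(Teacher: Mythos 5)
The paper does not prove this lemma; it is a direct citation to [LPen1, Theorem~1.5], so your proposal is a reconstruction rather than an alternative to an existing argument. Your reading of the conditioning event as $\eta_{|\W\times[0,s)}$ is right: $\eta_{|\W\times[0,1]}=\eta$, so the stated version would reduce to $\cov(F,G)=\E\int \MalD F\,\MalD G$, which is false in general; this is evidently a typo inherited from transcribing the Last--Penrose result.

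The overall route -- Clark--Oc\^{o}ne to write $F-\E F=\delta(u)$ with $u(\eta,y,s)=\E[\MalD_{(y,s)}F\,|\,\eta_{|\W\times[0,s)}]$, observe $u$ is predictable, invoke an It\^{o}-type isometry, and polarise -- is sound and does give the lemma. However, the specific step where you appeal to \eqref{eqIsomDelta} has a gap. That formula is stated only for $h$ satisfying the sufficient condition \eqref{eqDomDelCond}, namely $\E\int\int(\MalD_x h(\chi,y))^2\,\nu(dx)\nu(dy)<\infty$, and this does \emph{not} follow from $h\in\dom\delta$ (the two correspond to $\sum_n n\cdot n!\,\|h_n\|^2<\infty$ and $\sum_n(n+1)!\,\|\tilde h_n\|^2<\infty$ respectively, which are inequivalent). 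The Clark--Oc\^{o}ne integrand $u$ is guaranteed to lie in $\dom\delta$ by \eqref{eqClarkOcone}, but there is no reason for it to satisfy \eqref{eqDomDelCond}, so you are not entitled to use \eqref{eqIsomDelta} directly. The correct way to obtain $\E\delta(u)^2=\E\int u^2$ is to run the same cross-term-vanishing argument at the level of the chaos kernels: $\E\delta(h)^2=\sum_n(n+1)!\,\|\tilde h_n\|^2$ by definition, and for predictable $h$ the off-diagonal terms in $\|\tilde h_n\|^2$ vanish because $h_n((x,t),(x_1,t_1),\ldots)=\tfrac{1}{n!}\E\MalD^{(n)}h(\eta,(x,t))$ is zero unless all $t_i<t$, so $h_n(w_1,w_2,\ldots)h_n(w_2,w_1,\ldots)\equiv 0$ -- leaving $\sum_n n!\,\|h_n\|^2=\E\int h^2$. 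Alternatively, and more economically, one can simply cite the It\^{o} isometry for predictable $L^2$-integrands, which is precisely the content of [LPen2] (already cited in the paper for $h\in\dom\delta$). With either patch, your polarisation step completes the proof cleanly.
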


\section{Proof of Theorem~\ref{thmItoFormula}}\label{secPIto}
%PROOF 1

	Each summand on the RHS of \eqref{eqItoProcess} is well defined by virtue of Mecke formula~\eqref{eqMecke} and the discussion thereafter. By Mecke formula \eqref{eqMecke} it can be seen that $h(\eta-\delta_{(y,s)},y,s)$ is almost surely integrable with respect to the measure $\eta(dy,ds)$. By assumption, $h$ is also integrable with respect to $\lambda(dy)ds$. It now follows by dominated convergence that the process $(X_t)$ is càdlàg.
	
	As a next step, we show that the integrals on the RHS of \eqref{eqIto} are well-defined. For this, note that $(X_t)_{t \in [0,1]}$ (and $(X_{t-})_{t \in [0,1]}$) are a.s. bounded on $[0,1]$. Indeed,
	
	\begin{align}
			\E \sup_{t \in [0,1]} |X_t| &\leq \E \int_{\X \times [0,1]} |h(\eta-\delta_{(y,s)},y,s)| \eta(dy,ds) + \E \int_{\X}\int_0^1 |h(\eta,y,s)| \lambda(dy)ds \notag\\
										&= 2 \E \int_{\X}\int_0^1 |h(\eta,y,s)| \lambda(dy)ds < \infty, \label{iPsup}
		\end{align}
	where the second inequality follows by Mecke formula \eqref{eqMecke}. Now write
	\begin{multline}\label{iPc1}
				\int_{\X \times [0,t]} \left|\phi(X_{s-}+h(\eta-\delta_{(y,s)},y,s))-\phi(X_{s-})\right| \eta(dy,ds) \\ \leq
			\int_{\X \times [0,1]} \left| \int_0^1 \phi'\left(X_{s-}+uh(\eta-\delta_{(y,s)},y,s)\right)h(\eta-\delta_{(y,s)},y,s) du\right|  \eta(dy,ds).
		\end{multline}
	By the boundedness of $h$, we infer from \eqref{iPsup} that $X_{s-}+uh(\eta-\delta_{(y,s)},y,s)$ almost surely takes values in a compact interval. Since the function $\phi'$ is continuous, this entails
	\[
	\sup_{s,u\in[0,1],y\in\X}|\phi'(X_{s-}+uh(\eta-\delta_{(y,s)},y,s))| < \infty \qquad \p\text{-a.s.}
	\]
	Hence the RHS of \eqref{iPc1} is bounded by
	\[
		\left(\sup_{s,u\in[0,1],y\in\X}|\phi'(X_{s-}+uh(\eta-\delta_{(y,s)},y,s))|\right) \int_{\X \times [0,1]} |h(\eta-\delta_{(y,s)},y,s)| \eta(dy,ds) < \infty \qquad \p\text{-a.s.}
	\]
	which implies that the first integral on the RHS of \eqref{eqIto} is well-defined. Similarly,
	\[
	\sup_{s \in [0,1]} |\phi'(X_s)| < \infty \qquad \p\text{-a.s.}
	\]
	and hence
	\[
		\int_\X\int_0^1 |\phi'(X_s)h(\eta,x,s)| \lambda(dx)ds < \infty \qquad \p\text{-a.s.}
	\]
	This concludes the proof that all terms in \eqref{eqIto} are well-defined.
	
	To show \eqref{eqIto}, we start by showing it for an approximation of $X_t$. Let $(U_m)_{m\in\N}\subset\X$ s.t. $\bigcup_mU_m = \X$ and $\forall\, m\in\N$, $\lambda(U_m)<\infty$ and $U_m \subset U_{m+1}$. Define
	\[
	X_t^{(m)}(\eta) := X_0 + \int_{U_m \times [0,t]} h(\eta-\delta_{(y,s)},y,s) \eta(dy,ds)
	- \int_{U_m}\int_0^t  h(\eta,y,s) \lambda(dy)ds.
	\]
	Define the event $\Omega_0:=\{\eta(U_m \times [0,1])<\infty, m\geq 1\}$. Then $\p(\Omega_0)=1$ and, since $\eta$ is proper, $\forall \omega \in \Omega_0$ and all $m \geq 1$ there exists a finite collection of points $(y_1,s_1),...,(y_{n_m},s_{n_m}) \in U_m\times[0,1]$ (all depending on $\omega$) s.t.
	\[
	\eta_{|U_m \times [0,1]} = \sum_{i=1}^{n_m} \delta_{(y_i,s_i)}.
	\]
	W.l.o.g. we can assume that $0<s_1<s_2<...<s_{n_m}<1$ and set $s_0:=0$ and $s_{n_m+1}:=1$. Now the process $X^{(m)}$ can be written as
	\[
	X^{(m)}_t = \sum_{i=1}^{n_m} \1_{\{s_i\leq t\}} h(\eta-\delta_{(y_i,s_i)},y_i,s_i) - \int_{U_m} \int_0^t h(\eta,y,s) \lambda(dy) ds,
	\]
	and one has the telescopic sums:
	
	\begin{align}
		\phi\left(X^{(m)}_t\right) - \phi(X_0)%
		&= \sum_{i=1}^{n_m+1} \left(\phi\left(X^{(m)}_{s_i \wedge t}\right) - \phi\left(X^{(m)}_{s_{i-1} \wedge t}\right)\right) \notag\\
		&= \sum_{i=1}^{n_m} \left(\phi\left(X^{(m)}_{s_i \wedge t}\right) - \phi\left(X^{(m)}_{(s_{i}-) \wedge t}\right)\right) + \sum_{j=1}^{n_m+1} \left(\phi\left(X^{(m)}_{(s_j-) \wedge t}\right) - \phi\left(X^{(m)}_{s_{j-1} \wedge t}\right)\right) \notag\\
		&= I_1(t) + I_2(t),
	\end{align}
	where
	\[X^{(m)}_{(s-) \wedge t} = %
	\begin{cases*}
		X^{(m)}_{s-} & if $s \leq t$ \\
		X^{(m)}_t & if $s > t$.
	\end{cases*}
	\]
	The sum $I_1(t)$ represents what is happening at jump times, whereas $I_2(t)$ shows what happens in between jump times. The index $i=n_m+1$ does not appear in the sum $I_1(t)$ because $\p$-a.s. there is no jump at time $t=1$.
	
	\noindent We first study $I_1(t)$.
	\begin{align}
		I_1(t) &= \sum_{i=1}^{n_m} \left(\phi\left(X^{(m)}_{s_i \wedge t}\right) - \phi\left(X^{(m)}_{(s_{i}-) \wedge t}\right)\right) \notag\\
		&= \sum_{i=1}^{n_m} \1_{\{s_i \leq t\}} \left(\phi\left(X^{(m)}_{s_i-}+ h(\eta-\delta_{(y_i,s_i)},y_i,s_i)\right) - \phi\left(X^{(m)}_{s_{i}-}\right)\right) \notag\\
		&= \int_{U_m \times [0,t]} \left(\phi\left(X^{(m)}_{s-}+ h(\eta-\delta_{(y,s)},y,s)\right) - \phi\left(X^{(m)}_{s-}\right)\right) \eta(dy,ds).
	\end{align}
	Now consider $I_2(t)$. For $s\in [s_{i-1},s_i)$,
	\[
	X^{(m)}_s = \sum_{j=1}^{i-1} h(\eta-\delta_{(y_j,s_j)}) - \int_{U_m}\int_0^s h(\eta,y,u) \lambda(dy)du
	\]
	and so for $s\in (s_{i-1},s_i)$
	\[
	\frac{d}{ds} X^{(m)}_s = - \int_{U_m} h(\eta,y,s) \lambda(dy).
	\]
	This implies that
	\[
	\phi\left(X^{(m)}_{(s_i-) \wedge t}\right) - \phi\left(X^{(m)}_{s_{i-1} \wedge t}\right) = -\int_{s_{i-1} \wedge t}^{(s_i-)\wedge t} \phi'\left(X^{(m)}_s\right) \int_{U_m} h(\eta,y,s) \lambda(dy) ds.
	\]
	We conclude that
	\begin{align}
		I_2(t) &= \sum_{i=1}^{n_m+1} \phi\left(X^{(m)}_{(s_i-) \wedge t}\right) - \phi\left(X^{(m)}_{s_{i-1} \wedge t}\right) \notag\\
		&= - \sum_{i=1}^{n_m+1} \int_{s_{i-1} \wedge t}^{(s_i-)\wedge t}\int_{U_m} \phi'\left(X^{(m)}_s\right)  h(\eta,y,s) \lambda(dy) ds \notag\\
		&= - \int_0^t \int_{U_m} \phi'\left(X^{(m)}_s\right)  h(\eta,y,s) \lambda(dy) ds.
	\end{align}
	We have shown until now that
	\begin{multline}
		\phi\left(X^{(m)}_t\right) = \phi(X_0) + \int_{U_m \times [0,t]} \left(\phi\left(X^{(m)}_{s-}+h(\eta-\delta_{(y,s)},y,s)\right)-\phi\left(X^{(m)}_{s-}\right)\right) \eta(dy,ds)\\ - \int_{U_m}\int_0^t \phi'\left(X^{(m)}_s\right)h(\eta,x,s) \lambda(dx)ds \qquad \p\text{-a.s.} \label{iP1Ito}
	\end{multline}
	Our aim is now to let $m \rightarrow \infty$. By dominated convergence, $X^{(m)}_t \rightarrow X_t$ a.s. for fixed $t \in [0,1]$. We would like to use dominated convergence for both the second and third terms on the RHS of \eqref{iP1Ito}. Start by noting that $(X_t^{(m)})_{t \in [0,1]}$ (as well as $(X_{t-}^{(m)})_{t \in [0,1]}$ and $(X_{t})_{t \in [0,1]}$) are $\p$-a.s. uniformly bounded on $[0,1]$ and in $m$. Indeed,
	\begin{multline}
		\sup_{\substack{m\in\N \\ t \in [0,1]}} \max\left\{ \left|X_t^{(m)}\right|, \left|X_{t-}^{(m)}\right|, \left|X_{t}\right|  \right\} \\ \leq \int_{\X \times [0,1]} |h(\eta-\delta_{(y,s)},y,s)| \eta(dy,ds) + \int_{\X}\int_0^1 |h(\eta,y,s)| \lambda(dy)ds < \infty \qquad \p\text{-a.s.} \label{iP1Bound}
	\end{multline}
	We start with the second term on the RHS of \eqref{iP1Ito} and write
	\begin{multline}\label{iP1step}
		\int_{U_m \times [0,t]} \left|\phi\left(X^{(m)}_{s-}+h(\eta-\delta_{(y,s)},y,s)\right)-\phi\left(X^{(m)}_{s-}\right)\right| \eta(dy,ds) \\ \leq
		\int_{\X \times [0,1]} \left| \int_0^1 \phi'\left(X^{(m)}_{s-}+uh(\eta-\delta_{(y,s)},y,s)\right)h(\eta-\delta_{(y,s)},y,s) du\right|  \eta(dy,ds).
	\end{multline}
	By boundedness of $h$, we get that $X^{(m)}_{s-}+uh(\eta-\delta_{(y,s)},y,s)$ almost surely takes values in a compact interval independent of $m$. The function $\phi'$ being continuous, we deduce
	\[
	\sup_{\substack{m\in\N \\ s,u\in[0,1] \\ y\in\X}} \left|\phi'\left(X^{(m)}_{s-}+uh(\eta-\delta_{(y,s)},y,s)\right)\right| < \infty \qquad \p\text{-a.s.}
	\]
	Hence the RHS of \eqref{iP1step} is bounded by
	\[
	\bigg(\sup_{\substack{m\in\N \\ s,u\in[0,1] \\ y\in\X}}\left|\phi'\left(X^{(m)}_{s-}+uh(\eta-\delta_{(y,s)},y,s)\right)\right|\bigg) \int_{\X \times [0,1]} |h(\eta-\delta_{(y,s)},y,s)| \eta(dy,ds) < \infty \qquad \p\text{-a.s.}
	\]
	We can thus apply dominated convergence and deduce that
	\begin{multline}
		\int_{U_m \times [0,t]} \left(\phi\left(X^{(m)}_{s-}+h(\eta-\delta_{(y,s)},y,s)\right)-\phi\left(X^{(m)}_{s-}\right)\right) \eta(dy,ds) \\ \stackrel{m\rightarrow \infty}{\longrightarrow} \int_{\X \times [0,t]} \left(\phi\left(X_{s-}+h(\eta-\delta_{(y,s)},y,s)\right)-\phi\left(X_{s-}\right)\right) \eta(dy,ds) \qquad \p\text{-a.s.}
	\end{multline}
	Now, similarly,
	\begin{multline}
	\int_{U_m}\int_0^t \left|\phi'\left(X^{(m)}_s\right)h(\eta,x,s)\right| \lambda(dx)ds \\ \leq \left(\sup_{\substack{m\in\N \\ s\in[0,1]}} \left|\phi'\left(X^{(m)}_s\right)\right|\right)\int_{\X}\int_0^t \left|h(\eta,x,s)\right| \lambda(dx)ds < \infty \qquad \p\text{-a.s.}
	\end{multline}
	and by dominated convergence
	\[
	\int_{U_m}\int_0^t \phi'\left(X^{(m)}_s\right)h(\eta,x,s) \lambda(dx)ds \\ \stackrel{m\rightarrow \infty}{\longrightarrow} \int_{\X}\int_0^t \phi'\left(X_s\right)h(\eta,x,s) \lambda(dx)ds \qquad \p\text{-a.s.}
	\]
	The fact that $\phi\left(X^{(m)}_t\right) \rightarrow \phi\left(X_t\right)$ a.s. follows by continuity of $\phi$. This concludes the proof. \qed

\section{Proofs of Theorem~\ref{thmGeneralpPoin} and Corollaries~\ref{corpPoinGen} and \ref{corKolgIneq}}\label{secPPPoin}
\begin{lem} \label{lempHolder}
	For $p\in (1,2]$, the function $\phi:\R \rightarrow \R:x \mapsto |x|^p$ is continuously differentiable and its derivative $\phi'$ is $(p-1)$-Hölder continuous with Hölder constant $c_\phi=p2^{2-p}$.
\end{lem}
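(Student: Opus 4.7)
The claim that $\phi(x)=|x|^p$ is continuously differentiable on $\R$ when $p>1$ is routine: one checks that $\phi'(x)=p\,\sgn(x)\,|x|^{p-1}$ for $x\neq 0$ and that $\phi'(0)=0$, with continuity at the origin following from $p-1>0$. The substantive part is the Hölder estimate
\[
|\phi'(x)-\phi'(y)|\leq p\,2^{2-p}\,|x-y|^{p-1},\qquad x,y\in\R,
\]
which I would establish by splitting into the two natural cases according to the signs of $x$ and $y$.

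The first step is to handle the \emph{same-sign} case. By symmetry (using $\phi'(-x)=-\phi'(x)$) it suffices to assume $x\geq y\geq 0$. The key input is the subadditivity of $t\mapsto t^{p-1}$ on $[0,\infty)$, valid because $p-1\in(0,1]$: applied to $a=x-y\geq 0$ and $b=y\geq 0$ it yields $x^{p-1}=(a+b)^{p-1}\leq a^{p-1}+b^{p-1}$, so that
\[
|\phi'(x)-\phi'(y)|=p\,(x^{p-1}-y^{p-1})\leq p\,(x-y)^{p-1}.
\]
Since $2^{2-p}\geq 1$ for $p\leq 2$, this is dominated by $p\,2^{2-p}\,|x-y|^{p-1}$.

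The second step treats the \emph{opposite-sign} case; by symmetry assume $x>0>y$ and set $a=x$, $b=-y=|y|$, so $|x-y|=a+b$. Then
\[
|\phi'(x)-\phi'(y)|=p\,(a^{p-1}+b^{p-1})=p\,(a+b)^{p-1}\bigl(s^{p-1}+(1-s)^{p-1}\bigr),
\]
where $s:=a/(a+b)\in(0,1)$. The heart of the argument is the sharp bound $\max_{s\in[0,1]}\bigl(s^{p-1}+(1-s)^{p-1}\bigr)=2^{2-p}$. I would obtain it by a one-variable calculus check: the derivative of $f(s)=s^{p-1}+(1-s)^{p-1}$ vanishes only at $s=1/2$ (because $p-2\leq 0$ makes $s\mapsto s^{p-2}$ strictly monotone), the value there is $2\cdot(1/2)^{p-1}=2^{2-p}$, and the endpoint values $f(0)=f(1)=1\leq 2^{2-p}$ are smaller. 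Combining the two cases yields the desired constant $c_\phi=p\,2^{2-p}$.

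No step is really an obstacle; if anything is delicate, it is making sure that the case analysis covers $x=0$ or $y=0$ (which it does: both cases reduce to the same-sign bound with one of the terms vanishing) and verifying that the constant $2^{2-p}$ is indeed attained, so that it cannot be lowered for this choice of $\phi$. The $\mathcal{C}^1$ regularity is immediate and needs only a line.
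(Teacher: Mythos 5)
Your proof is correct, and it takes a genuinely different route from the paper's. The paper normalizes by the larger argument: writing $|a|\geq|b|$ and $x=b/a\in[-1,1)$, it reduces the Hölder quotient to the single-variable function $f(x)=(1-\sgn(x)|x|^{p-1})/(1-x)^{p-1}$ on $[-1,1)$, computes $f'(x)=(p-1)(1-x)^{-p}(1-|x|^{p-2})\leq 0$, and reads off the supremum $f(-1)=2^{2-p}$. You instead split by the sign configuration of the two points: in the same-sign case you invoke subadditivity of $t\mapsto t^{p-1}$ on $[0,\infty)$ (giving the weaker constant $p$, which suffices since $2^{2-p}\geq 1$), and in the opposite-sign case you parametrize by $s=a/(a+b)$ and maximize $s^{p-1}+(1-s)^{p-1}$ over $[0,1]$, finding the optimum at $s=1/2$. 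Both arguments are elementary and correct; yours is a little longer because of the case split but makes more transparent \emph{where} the sharp constant is attained (at antipodal points $a=-b$), which corresponds exactly to the paper's endpoint $x=-1$. The paper's argument is more compact because the normalization handles both sign patterns at once through the factor $\sgn(x)$. Your handling of $x=0$ or $y=0$ by folding them into the same-sign case is fine, as is the degenerate case $p=2$ where $t^{p-1}$ is linear.
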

\begin{proof}
	We want to show that
	\[
	c_\phi := \sup_{\substack{a \neq b\\a,b \in \R}} \frac{|\phi'(a)-\phi'(b)|}{|a-b|^{p-1}} = p2^{2-p}.
	\]
	First we observe that $\phi'(x) = p \sgn(x) |x|^{p-1}$, with the convention that $sgn(0)=1$. Let $a \neq b$ and assume without loss of generality that $|a|\geq |b| \geq 0$. Then $a \neq 0$ and
	\[
	\frac{\left|\sgn(a)|a|^{p-1}-\sgn(b)|b|^{p-1}\right|}{|a-b|^{p-1}} = \frac{\left|1-\sgn\left(\frac{b}{a}\right)\left|\frac{b}{a}\right|^{p-1}\right|}{\left|1-\frac{b}{a}\right|^{p-1}}.
	\]
	It follows that
	\[
	c_\phi = p \sup_{x\in [-1,1)} \frac{1-\sgn(x)\left|x\right|^{p-1}}{(1-x)^{p-1}} =: p \sup_{x\in [-1,1)} f(x)
	\]
	The function $f$ is differentiable on $[-1,1)$ with derivative $f'(x)=(p-1)(1-x)^{-p}(1-|x|^{p-2}) \leq 0$, so $f$ is decreasing and therefore
	\[
	f(x) \leq f(-1) = 2^{2-p}.
	\]
	We conclude that $c_\phi = p 2^{2-p}$.
\end{proof}

\begin{proof}[Proof of Theorem~\ref{thmGeneralpPoin}]%PROOF 2
	We start by showing the result for an approximation of $h$. Let $(U_m)_{m\in\N}\subset\X$ s.t. $\bigcup_mU_m = \X$ and $\forall\, m\in\N$, $\lambda(U_m)<\infty$ and $U_m \subset U_{m+1}$. Define for $\mu \in \mathbf{N}$, $(y,s)\in \X\times[0,1]$:
	\begin{equation}\label{iP2approx}
	h_m(\mu,y,s):= \left[(h(\mu,y,s) \wedge m) \vee (-m)\right] \ind{y \in U_m}.
	\end{equation}
	Now $h_m \in L^1(\mathbf{N}\times\X\times[0,1]) \cap L^2(\mathbf{N}\times\X\times[0,1])$ and $h_m \rightarrow h$ in $L^2$ as $m \rightarrow \infty$. Moreover, $|h_m| \leq |h|$ and $|\MalD_{(x,t)}h_m(\eta,y,s)| \leq |\MalD_{(x,t)}h(\eta,y,s)|$ for all $(x,t),(y,s) \in \X\times[0,1]$. This implies that $h_m$ satisfies \eqref{eqDomDelCond} and hence $h_m \in \dom \delta$.
	
	\noindent Finally, $h_m$ is also bounded. In particular, for any $\mu\in\mathbf{N}$, $(y,s) \in \X\times[0,1]$,
	\begin{equation} \label{iP2Ptauhm}
		| h_m(\mu,y,s)| \leq m\ind{y \in U_m}.
	\end{equation}
	We conclude that $\delta( h_m)$ is well-defined and has by \eqref{eqPathwise} the pathwise expression
	\[
	\delta( h_m) = \int_{\X \times [0,1]}  h_m(\eta-\delta_{(y,s)},y,s) \eta(dy,ds) - \int_{\X} \int_0^1  h_m (\eta,y,s) \lambda(dy) ds.
	\]
	Define $X_t$ as in Theorem~\ref{thmItoFormula} with $h= h_m$ and $X_0=0$. Then $\delta( h_m)=X_1$ and we infer from \eqref{eqIto} that
	\begin{multline}\label{iP2Poin1}
		\phi(\delta( h_m)) - \phi(0) = \int_{\X \times [0,1]} \left(\phi(X_{s-}+ h_m(\eta-\delta_{(y,s)},y,s)) - \phi(X_{s-}) \right) \eta(dy,ds)\\%
		- \int_\X \int_0^1 \phi'(X_s) h_m(\eta,y,s) \lambda(dy) ds \qquad \p\text{-a.s.}
	\end{multline}
	We would now like to take expectations on both sides of \eqref{iP2Poin1}, but in order to do so, we must first show that the terms in question are in $L^1(\p_\eta)$. Start with a simple estimate for $\phi$ that uses Hölder continuity of $\phi'$. Let $a,b \in \R$. Then
	\begin{align}
		|\phi(a+b)-\phi(a)| &= \left| \int_0^1 \phi'(a+ub) b\, du \right| \notag\\
		&\leq |\phi'(a)b| + \int_0^1 |b|\cdot \left|\phi'(a+ub)-\phi'(a)\right| du \notag\\
		&\leq |\phi'(0)b| + |\phi'(a)-\phi'(0)| \cdot |b| + |b| \int_0^1 c_\phi |ub|^{p-1} du \notag\\
		&\leq |\phi'(0)|\cdot|b| + c_\phi |a|^{p-1} \cdot |b| + \frac{c_\phi}{p} |b|^p. \label{iP2phiBound}
	\end{align}
	We are also going to need a bound on $\sup_{s\in[0,1]} \left\{\max{\left\{|X_s|,|X_{s-}(\eta+\delta_{(y,s)})|\right\}}\right\}$. Using \eqref{iP2Ptauhm}, we find the following:
	\begin{align}
		\max{\left\{|X_s|,|X_{s-}(\eta+\delta_{(y,s)})|\right\}}
		&\leq \int_{\X \times [0,1]} m \ind{x \in U_m} \eta(dx,dt) + \int_\X \int_0^1 m \ind{x \in U_m} \lambda(dx)dt \notag\\
		&\leq m \left(\eta(U_m \times [0,1]) + \lambda(U_m)\right) \label{iP2XBound}
	\end{align}
	Using \eqref{iP2phiBound} with $a=0$ and $b=\delta(h_m)$, we get for the LHS in \eqref{iP2Poin1}
	\[
	\E \left|\phi(\delta( h_m))\right| \leq |\phi'(0)|\cdot\E|\delta( h_m)| + \frac{c_\phi}{p} \E|\delta( h_m)|^p < \infty
	\]
	which is finite since $\delta(h_m) \in L^2(\p_\eta)$. To show that the first term on the RHS in \eqref{iP2Poin1} is integrable, we first apply Mecke formula \eqref{eqMecke} to get
	\begin{multline}\label{Rainbow9}
		\E \left|\int_{\X \times [0,1]} \left(\phi(X_{s-}+ h_m(\eta-\delta_{(y,s)},y,s)) - \phi(X_{s-})\right) \eta(dy,ds)\right| \\ \leq
		\E \int_{\X} \int_0^1 \left|\phi(X_{s-}(\eta + \delta_{(y,s)})+ h_m(\eta,y,s)) - \phi(X_{s-}(\eta + \delta_{(y,s)}))\right| \lambda(dy)ds.
	\end{multline}
	Now we combine \eqref{iP2phiBound}, \eqref{iP2XBound} and \eqref{iP2Ptauhm} to get that the RHS of \eqref{Rainbow9} is bounded by
	\begin{align}
		&\E \int_\X \int_0^1 \bigg( |\phi'(0)|\cdot m \ind{y \in U_m} + c_\phi |m \left(\eta(U_m \times [0,1]) + \lambda(U_m)\right)|^{p-1} \cdot m \ind{y \in U_m}\notag\\
		&\hspace{.6\textwidth} + \frac{c_\phi}{p} m^p \ind{y \in U_m} \bigg) \lambda(dy)ds \notag\\
		&\leq m \lambda(U_m) \left(|\phi'(0)| + c_\phi m^{p-1} \E\left[ \left(\eta(U_m \times [0,1]) + \lambda(U_m)\right)^{p-1}\right] + \frac{c_\phi m^{p-1}}{p} \right) < \infty
	\end{align}
	which is finite since $\eta(U_m \times [0,1])$ is a Poisson random variable with parameter $\lambda(U_m)$ and thus all its moments are finite.	This also shows that $\phi(X_{s-}(\eta + \delta_{(y,s)})+ h_m(\eta,y,s)) - \phi(X_{s-}(\eta + \delta_{(y,s)})) \in L^1(\mathbf{N} \times \X \times [0,1])$. The second term on the RHS can be treated by the same method.

	We can now take expectations on both sides of \eqref{iP2Poin1} and apply Mecke formula \eqref{eqMecke} to get:
	\begin{multline}
		\E\phi(\delta( h_m)) = \E \int_{\X}\int_0^1 \phi(X_{s-}(\eta + \delta_{(y,s)})+ h_m(\eta,y,s)) - \phi(X_{s-}(\eta + \delta_{(y,s)})) \lambda(dy)ds\\%
		- \E\int_\X \int_0^1 \phi'(X_s(\eta)) h_m(\eta,y,s) \lambda(dy) ds \label{iP2Poin2}
	\end{multline}
	Now add and subtract the integral of $\phi(X_{s-}(\eta)+ h_m(\eta,y,s)) - \phi(X_{s-}(\eta))$ (which can be shown to be in $L^1(\mathbf{N}\times \X \times [0,1])$ by the same methods as above). We obtain:
	\begin{align}\label{iP2Poin3}
		\E\phi(\delta(h_m)) &= \E \int_{\X}\int_0^1 \phi(X_{s-}(\eta + \delta_{(y,s)})+ h_m(\eta,y,s)) - \phi(X_{s-}(\eta + \delta_{(y,s)})) \notag\\
		&\hspace{.3\textwidth}  - \phi(X_{s-}(\eta)+ h_m(\eta,y,s)) + \phi(X_{s-}(\eta))\lambda(dy)ds \notag\\
		&+\E \int_{\X}\int_0^1 \phi(X_{s-}(\eta)+ h_m(\eta,y,s)) - \phi(X_{s-}(\eta)) - \phi'(X_s(\eta)) h_m(\eta,y,s)\lambda(dy)ds \notag\\
		&=: I_1 + I_2.
	\end{align}
	To deal with $I_1$, note that for $a,b,c \in \R$
	\begin{align}
		|\phi(a+c)-\phi(a)-\phi(b+c)+\phi(b)| &= \left| \int_a^b \phi'(u+c)-\phi'(u) du \right| \\
		&\leq \int_{a \wedge b}^{a \vee b} c_\phi |c|^{p-1} du \\
		&= c_\phi |a-b| \cdot |c|^{p-1}.
	\end{align}
	Hence
	\begin{equation}\label{Rainbow10}
	|I_1| \leq c_\phi \E \int_\X \int_0^1 \left|X_{s-}(\eta + \delta_{(y,s)})-X_{s-}(\eta)\right| \cdot | h_m(\eta,y,s)|^{p-1} \lambda(dy)ds.
	\end{equation}
	Now we bound and rewrite part of the integrand on the RHS of \eqref{Rainbow10} to find
	
	\begin{align}
		&\left|X_{s-}(\eta + \delta_{(y,s)})-X_{s-}(\eta)\right| \notag\\
		&= \left| \int_{\X \times [0,s)}  h_m(\eta+\delta_{(y,s)}-\delta_{(x,t)},x,t) \eta(dx,dt) - \int_\X \int_0^s  h_m(\eta + \delta_{(y,s)},x,t) \lambda(dx)dt \right. \notag\\
		&\phantom{=} \left. -\int_{\X \times [0,s)}  h_m(\eta-\delta_{(x,t)},x,t) \eta(dx,dt) + \int_\X \int_0^s  h_m(\eta,x,t) \lambda(dx)dt \right| \notag\\
		&= \left| \int_{\X \times [0,s)} \MalD_{(y,s)} h_m(\eta-\delta_{(x,t)},x,t) \eta(dx,dt) - \int_\X \int_0^s \MalD_{(y,s)} h_m(\eta,x,t) \lambda(dx)dt \right| \notag\\
		&\leq \int_{\X \times [0,s)} \left|\MalD_{(y,s)} h_m(\eta-\delta_{(x,t)},x,t)\right| \eta(dx,dt) + \int_\X \int_0^s \left|\MalD_{(y,s)} h_m(\eta,x,t)\right| \lambda(dx)dt
	\end{align}
	Multiplying this by $| h_m(\eta,x,t)|^{p-1}$ and taking expectations, after an application of Mecke formula \eqref{eqMecke} we deduce that
	\begin{multline}
		|I_1| \leq c_\phi\E \int_\X \lambda(dy) \int_0^1 ds \int_\X \lambda(dx) \int_0^s dt\, \left|\MalD_{(y,s)} h_m(\eta,x,t)\right| \\ \left( | h_m(\eta + \delta_{(x,t)},y,s)|^{p-1} + | h_m(\eta,y,s)|^{p-1} \right) \lambda(dx)dt\lambda(dy)ds.
	\end{multline}
	Since $|a-b|^{p-1} \leq |a|^{p-1} + |b|^{p-1}$ for all $a,b \in\R$, one has that
	\[
	| h_m(\eta + \delta_{(x,t)},y,s)|^{p-1} \leq |\MalD_{(x,t)} h_m(\eta,y,s)|^{p-1} + | h_m(\eta,y,s)|^{p-1}
	\]
	This implies that
	\begin{align}
		|I_1| &\leq c_\phi\E \int_\X \lambda(dy) \int_0^1 ds \int_\X \lambda(dx) \int_0^s dt \left|\MalD_{(y,s)} h_m(\eta,x,t)\right| \cdot |\MalD_{(x,t)} h_m(\eta,y,s)|^{p-1} \notag\\
		&\phantom{\leq} + 2 c_\phi\E \int_\X \lambda(dy) \int_0^1 ds \int_\X \lambda(dx) \int_0^s dt\, \left|\MalD_{(y,s)} h_m(\eta,x,t)\right| \cdot | h_m(\eta,y,s)|^{p-1}.
	\end{align}
	Now we consider $|I_2|$. For this, note that for $a,b \in \R$,
	\begin{align}
		|\phi(a+b)-\phi(a)-\phi'(a)b| &= \left| \int_0^b \phi'(a+u)-\phi'(a) du \right| \notag\\
		&\leq c_\phi \int_0^{|b|} u^{p-1} du \notag\\
		&= \frac{c_\phi}{p} |b|^{p}.
	\end{align} 
	Applying this to $a=X_s(\eta)$ and $h=h_m(\eta,y,s)$ yields
	\[
	|I_2| \leq \frac{c_\phi}{p} \E \int_\X \int_0^1 | h_m(\eta,y,s)|^{p} \lambda(dy)ds.
	\]
	\sloppy Observe that in the previous computation we implicitly used the fact that, $\p$-a.s., the set $\left\{ s \in [0,1]: X_s(\eta) \neq X_{s-}(\eta)\right\}$ has zero Lebesgue measure.
	We have therefore shown the following inequality:
	\begin{align}
		|\E\phi(\delta( h_m))| &\leq \frac{c_\phi}{p} \E \int_\X \lambda(dy) \int_0^1 ds\, | h_m(\eta,y,s)|^{p} \notag\\
		&\phantom{\leq} + c_\phi\E \int_\X \lambda(dy) \int_0^1 ds \int_\X \lambda(dx) \int_0^s dt\, \left|\MalD_{(y,s)} h_m(\eta,x,t)\right| \cdot |\MalD_{(x,t)} h_m(\eta,y,s)|^{p-1} \notag\\
		&\phantom{\leq} + 2 c_\phi\E \int_\X \lambda(dy) \int_0^1 ds \int_\X \lambda(dx) \int_0^s dt\, \left|\MalD_{(y,s)} h_m(\eta,x,t)\right| \cdot | h_m(\eta,y,s)|^{p-1}.
	\end{align}
	By the construction of $h_m$, the RHS of this inequality is upper bounded by
	\begin{multline}
		\frac{c_\phi}{p} \E \int_\X \lambda(dy) \int_0^1 ds\, | h(\eta,y,s)|^{p} \\
		+ c_\phi\E \int_\X \lambda(dx) \int_0^1 ds \int_\X \lambda(dx) \int_0^s dt \left|\MalD_{(y,s)} h(\eta,x,t)\right| \cdot |\MalD_{(x,t)} h(\eta,y,s)|^{p-1} \\
		+ 2 c_\phi\E \int_\X \lambda(dy) \int_0^1 ds \int_\X \lambda(dx) \int_0^s dt\, \left|\MalD_{(y,s)} h(\eta,x,t)\right| \cdot | h(\eta,y,s)|^{p-1}.
	\end{multline}
	In order to conclude the proof, it remains to show that $\E\phi(\delta( h_m))\rightarrow \E\phi(\delta( h))$, as $m\rightarrow \infty$. For this, we use \eqref{iP2phiBound} with $a=\delta(h)$ and $b=\delta(h_m)-\delta(h)$ to get
	\begin{align}
		\E|\phi(\delta(h_m))-\phi(\delta(h))| &\leq \E |\delta(h_m)-\delta( h)| \left(|\phi'(0)| + c_\phi |\delta(h)|^{p-1} + \frac{c_\phi}{p} |\delta( h_m)-\delta( h)|^{p-1}\right).
	\end{align}
	Using, in order, the Cauchy-Schwarz, Minkowski and Jensen inequalities, we obtain
	\begin{align}
		&\E|\phi(\delta( h_m))-\phi(\delta(h))| \notag\\
		&\hspace{.1\textwidth}\leq \E \left[\left(\delta( h_m)-\delta( h)\right)^2\right]^{1/2} \E\left[\left(|\phi'(0)| + c_\phi |\delta(h)|^{p-1} + \frac{c_\phi}{p} |\delta( h_m)-\delta( h)|^{p-1}\right)^2\right]^{1/2} \notag\\
		&\hspace{.1\textwidth}\leq \E \left[\delta(h_m-h)^2\right]^{1/2} \left(|\phi'(0)| + c_\phi \E\left[ |\delta(h)|^{2} \right]^{(p-1)/2} + \frac{c_\phi}{p} \E\left[|\delta(h_m-h)|^{2}\right]^{(p-1)/2}\right). \label{iP2ineq}
	\end{align}
	By the isometry property \eqref{eqIsomDelta} and the Cauchy-Schwarz inequality, we infer that
	\begin{align}
		&\E\left[ |\delta(h)|^{2} \right] \leq \E \int_{\X}\int_0^1 h(\eta,y,s)^2 \lambda(dy) ds + \E \int_\X \int_0^1 \int_\X \int_0^1 \left(\MalD_{(x,t)} h(\eta,y,s)\right)^2 \lambda(dy) ds \lambda(dx) ds \label{eqWow} \\
		\intertext{and}
		&\E\left[ |\delta(h_m-h)|^{2} \right] \leq \E \int_{\X}\int_0^1 (h_m(\eta,y,s)-h(\eta,y,s))^2 \lambda(dy) ds \notag\\
		&\hspace{.2\textwidth}+ \E \int_\X \int_0^1 \int_\X \int_0^1 \left(\MalD_{(x,t)} h_m(\eta,y,s)-\MalD_{(x,t)} h(\eta,y,s)\right)^2 \lambda(dy) ds \lambda(dx) ds. \label{eqWow2}
	\end{align}
	The RHS of \eqref{eqWow} is finite because $h$ satisfies \eqref{eqDomDelCond}, and the RHS of \eqref{eqWow2} tends to $0$ as $m \rightarrow \infty$ by dominated convergence and the assumption \eqref{eqDomDelCond}. This implies convergence to $0$ on the RHS of \eqref{iP2ineq}.
	
	\noindent To show that the inequality holds in particular for $\phi(x)=|x|^p$ with $p \in (1,2]$, it suffices to note that by Lemma~\ref{lempHolder}, this function satisfies the conditions of the theorem. For $\phi(x)=|x|$, we define $h_m$ as in \eqref{iP2approx}. Then $h_m \in L^1(\mathbf{N}\times \X \times [0,1]) \cap L^2(\mathbf{N}\times \X \times [0,1]) \cap \dom \delta$ and the pathwise representation \eqref{eqPathwise} holds. By the triangle inequality, we have
	\[
	\E |\delta(h_m)| \leq \E \int_{\X \times [0,1]} |h_m(\eta-\delta_{y,s},y,s)| \eta(dy,ds) + \E \int_\X \int_0^1 |h_m(\eta,y,s)| \lambda(dy)ds.
	\]
	By Mecke formula \eqref{eqMecke} and the fact that $|h_m| \leq |h|$, inequality \eqref{eqGeneralPPoin} follows with $h_m$ on the LHS instead of $h$, and the second and third term on the RHS being zero. As we have shown before, $\delta(h_m) \rightarrow \delta(h)$ in $L^2(\p_\eta)$ as $m \rightarrow \infty$, hence the inequality follows for $h$.
\end{proof}

\begin{proof}[Proof of Corollary~\ref{corpPoinGen}]
	\textbf{Step 1.} We first prove the following slightly modified version of \eqref{eqGeneralPPoin} which holds under the weaker assumption $h \in \dom \delta$:
	\begin{align}
		&\left|\E \phi(\delta(h))\right| \notag\\
		&\leq \frac{c_\phi}{p} \E \int_\X\lambda(dy) \int_0^1ds |h(\eta,y,s)|^p  \notag\\
		&+ c_\phi \int_\X\lambda(dy) \int_0^1ds \int_\X \lambda(dx) \int_0^s dt \left(\E\left|\MalD_{(y,s)}h(\eta,x,t)\right|^p\right)^{1/p} \cdot \left(\E\left|\MalD_{(x,t)}h(\eta,y,s)\right|^{p}\right)^{1-1/p} \notag\\
		&+ 2c_\phi \int_\X\lambda(dy) \int_0^1ds \int_\X \lambda(dx) \int_0^s dt \left(\E\left|\MalD_{(y,s)}h(\eta,x,t)\right|^p\right)^{1/p} \cdot \left(\E\left|h(\eta,y,s)\right|^{p}\right)^{1-1/p}. \label{iP0eq1}
	\end{align}
	For $h$ satisfying \eqref{eqDomDelCond}, this is an immediate consequence of Theorem \ref{thmGeneralpPoin} by applying Hölder inequality.
	
	\noindent Now let $h \in L^2(\mathbf{N}\times\X\times[0,1])$ and for $\tau \in (0,1)$ define $P_\tau h$. By Lemma~\ref{lemOrnTrick}, we have that $P_\tau h$ satisfies \eqref{eqDomDelCond}, and so inequality \eqref{iP0eq1} holds when $h$ is replaced by $P_\tau h$. Using \eqref{eqOrnTrick1} and \eqref{eqOrnTrick2}, we deduce
	\begin{align}
		&\left|\E \phi(\delta(P_\tau h)) - \phi(0)\right|\\
		&\leq \frac{c_\phi}{p} \E \int_\X\lambda(dy) \int_0^1ds |h(\eta,y,s)|^p  \notag\\
		&+ c_\phi \int_\X\lambda(dy) \int_0^1ds \int_\X \lambda(dx) \int_0^s dt \left(\E\left|\MalD_{(y,s)}h(\eta,x,t)\right|^p\right)^{1/p} \cdot \left(\E\left|\MalD_{(x,t)}h(\eta,y,s)\right|^{p}\right)^{1-1/p} \notag\\
		&+ 2c_\phi \int_\X\lambda(dy) \int_0^1ds \int_\X \lambda(dx) \int_0^s dt \left(\E\left|\MalD_{(y,s)}h(\eta,x,t)\right|^p\right)^{1/p} \cdot \left(\E\left|h(\eta,y,s)\right|^{p}\right)^{1-1/p}.
	\end{align}
	
	\noindent It remains to show that $\phi(\delta(P_\tau h)) \rightarrow \phi(\delta(h))$ in $L^1(\eta)$ as $\tau \rightarrow 1$. Using arguments similar to the ones in the proof of Theorem \ref{thmGeneralpPoin}, one shows that
	\begin{multline}
	\E |\phi(\delta(P_\tau h)) - \phi(\delta(h))| \\ \leq \norm{\delta(P_\tau h - h)}_{L^2(\p_\eta)} \left(|\phi'(0)| + c_\phi \norm{\delta(h)}_{L^2(\p_\eta)}^{p-1} + \frac{c_\phi}{p} \norm{\delta(P_\tau h - h)}_{L^2(\p_\eta)}^{p-1}\right).
	\end{multline}
	By the isometry property \eqref{eqIsom1}, the second moment $\E \delta(h)^2$ is finite since $h \in \dom \delta$ and Lemma~\ref{lemOrnTrick} implies that $\norm{\delta(P_\tau h - h)}_{L^2(\p_\eta)} \rightarrow 0$ as $\tau \rightarrow 1$.
	
	\noindent\textbf{Step 2.} Let $\in L^2(\mathbf{N}\times\X\times[0,1])$ be predictable. Then $h \in \dom \delta$ and \eqref{iP0eq1} holds. If $t<s$, then by predictability of $h$ one has $h(\eta + \delta_{(y,s)},x,t) = h(\eta,x,t)$ and hence $\MalD_{(y,s)}h(\eta,x,t)=0$. It follows that the second and third terms on the RHS of \eqref{iP0eq1} are 0. Inequality \eqref{eqPhiDelta} ensues.
	
	\noindent\textbf{Step 3.} To prove \eqref{eqPPoin2} for $p \in (1,2]$, let $h(\eta,x,s) := \E [\MalD_{(x,s)}F | \eta_{\X \times [0,s)} ]$. This is predictable in the sense of \eqref{eqPred} (see Section~\ref{secFrame}) and by the Clark-Ocône representation \eqref{eqClarkOcone}, one has that
	\[
	F = \E F + \delta(h) \qquad \asp
	\]
	Define $\phi(x):= |x+\E F|^p-|\E F|^p$. Then $\phi(0)=0$ and the derivative $\phi'$ is $(p-1)$-Hölder continuous with Hölder constant $2^{2-p}$ by Lemma~\ref{lempHolder}. Moreover, it holds that
	\[
	\phi(\delta(h)) = |F|^p - |\E F |^p \qquad \asp
	\]
	We can now apply \eqref{eqPhiDelta} for this choice of $\phi$ and $h$ and inequality \eqref{eqPPoin2} follows.
	
	\noindent For $p=1$, assume the RHS of the inequality \eqref{eqPPoin2} to be finite (else there is nothing to prove). Then $h(\eta,x,s) = \E [\MalD_{(x,s)}F | \eta_{\X \times [0,s)} ] \in \dom \delta \cap L^1(\N \times \X \times [0,1])$ and hence
	\[
	\delta(h) = \int_{\X \times [0,1]} h(\eta-\delta_{(x,s)},x,s) \eta(dx,ds) - \int_\X \int_0^1 h(\eta,x,s) \lambda(dx)ds.
	\]
	Using triangle inequality and Mecke formula, we deduce the chain of inequalities
	\begin{align}
		\E |F| - |\E F| &\leq \E |F-\E F| \notag\\
		&= \E |\delta(h)| \\
		&\leq \E \int_{\X \times [0,1]} |h(\eta-\delta_{(x,t)},x,t)| \eta(dx,dt) + \E \int_{\X} \int_0^1 |h(\eta,x,t)| \lambda(dx)dt \notag\\
		&\leq 2 \E \int_{\X} \int_0^1 |h(\eta,x,t)| \lambda(dx)dt,
	\end{align}
	which concludes the proof.
\end{proof}

\begin{proof}[Proof of Corollary~\ref{corKolgIneq}]
	Let $(U_n)_{n\geq 1} \subset \X$ be an increasing sequence of subsets such that $\bigcup_{n\geq 1}U_n = \X$ and $\lambda(U_n)<\infty$. Define for $n \in \N$
	\[
	h_n(\eta,y,s) := \ind{y \in U_n} [(h(\eta,y,s) \wedge n) \vee (-n)].
	\]
	Clearly $h_n \in L^1(\mathbf{N} \times \X \times [0,1]) \cap L^2(\mathbf{N} \times \X \times [0,1])$ and we can define $P_\tau h_n$ for $\tau \in (0,1)$. We will start by showing \eqref{eqCorKolg} for $P_\tau h_n$ instead of $h$. Using the definition \eqref{eqOrnAlt}, one easily checks that $P_\tau h_n \in L^1(\mathbf{N} \times \X \times [0,1])$. By Lemma~\ref{lemOrnTrick}, it also follows that $P_\tau h_n \in \dom \delta$ and hence we can apply Lemma~\ref{lemNewIBP} and deduce that
	\[
	\E \int_\X \int_0^1 P_\tau h_n(\eta,x,s) \MalD_{(x,s)} G \lambda(dx) ds = \E \left[ G \delta(P_\tau h_n) \right].
	\]
	This implies by Jensen's inequality that for any $p \in [1,2]$
	\begin{equation}\label{iP1001}
	\left|\E \int_\X \int_0^1 P_\tau h_n(\eta,x,s) \MalD_{(x,s)} G \lambda(dx) ds\right| \leq c_G \left(\E |\delta(P_\tau h_n)|^p\right)^{1/p}.
	\end{equation}
	As by Lemma~\ref{lemOrnTrick} the quantity $P_\tau h_n$ also satisfies \eqref{eqDomDelCond}, we can apply Theorem~\ref{thmGeneralpPoin} to $\E |\delta(P_\tau h_n)|^p$ with $\phi(x) = |x|^p$ (which satisfies the required conditions by Lemma~\ref{lempHolder}). After a further application of Hölder inequality, this yields for $p \in (1,2]$ that
	\begin{align}
		&\E |\delta(P_\tau h_n)|^p \notag\\
		&\leq 2^{2-p} \E \int_\X\lambda(dy) \int_0^1ds |P_\tau h_n(\eta,y,s)|^p \notag\\
		&+ p2^{2-p} \int_\X\lambda(dy) \int_0^1ds \int_\X \lambda(dx) \int_0^s dt \E\left[\left|\MalD_{(y,s)}P_\tau h_n(\eta,x,t)\right|^p\right]^{1/p} \cdot \E\left[\left|\MalD_{(x,t)}P_\tau h_n(\eta,y,s)\right|^p\right]^{1-1/p} \notag\\
		&+ p2^{3-p} \int_\X\lambda(dy) \int_0^1ds \int_\X \lambda(dx) \int_0^s dt \E\left[\left|\MalD_{(y,s)}P_\tau h_n(\eta,x,t)\right|^p\right]^{1/p} \cdot \E\left[\left|P_\tau h_n(\eta,y,s)\right|^p\right]^{1-1/p}. \label{eqRainbow}
	\end{align}
	Using Lemma~\ref{lemOrnTrick} and the definition of $h_n$, we find that
	\begin{align}
		\E |\MalD_{(x,s)} P_\tau h_n(\eta,y,u)|^p \leq \E |\MalD_{(x,s)} h(\eta,y,u)|^p \\
		\intertext{and}
		\E |P_\tau h_n(\eta,y,u)|^p \leq \E |h(\eta,y,u)|^p
	\end{align}
	hence \eqref{eqRainbow} is upper bounded by the RHS of \eqref{eqCorKolg}. For $p=1$, we reason as in the proof of Corollary~\ref{corpPoinGen} to deduce that
	\[
	\E |\delta(P_\tau h_n)| \leq 2 \E \int_\X \int_0^1 |P_\tau h_n(\eta,x,s)| \lambda(dx) ds,
	\]
	which is again upper bounded by the RHS of \eqref{eqCorKolg}.
	
	\noindent It remains to take $\tau \rightarrow 1$ and $n \rightarrow \infty$ in the LHS of \eqref{iP1001}. By \eqref{eqOrnAlt}, one sees that $P_\tau h_n(\eta,x,s) = \ind{x \in U_n} P_\tau h_n(\eta,x,s)$, and hence by the Cauchy-Schwarz inequality
	\[
	\E \int_\X \int_0^1 \left|P_\tau h_n(\eta,x,s) - h_n(\eta,x,s)\right| \cdot |\MalD_{(x,s)} G| \lambda(dx) ds \leq 2c_G \lambda(U_n) \norm{P_\tau h_n - h_n}_{L^2(\mathbf{N} \times \X \times [0,1])},
	\]
	which converges to zero as $\tau \rightarrow 1$, by Lemma~\ref{lemOrnTrick}. Therefore inequality \eqref{eqCorKolg} holds with $h_n$ instead of $h$ on the LHS. By dominated convergence, $h_n \MalD G \rightarrow h \MalD G$ in $L^1(\mathbf{N} \times \X \times [0,1])$, thus inequality \eqref{eqCorKolg} holds for $h$.
\end{proof}

\section{Proofs of Theorems~\ref{thmWasserKolg1}, \ref{thmWasserCond} and \ref{thmWasserNon-Cond.}} \label{secProofWK}

Throughout this section, we work with the simplified notation adopted in Remark~\ref{remNotatWK}; recall also the definitions of the distances $d_W$ and $d_K$ given in \eqref{eqdW} and \eqref{eqdK}, and write $\mathcal{H}$ to denote the set of Lipschitz-continuous functions $h:\R \rightarrow \R $ with Lipschitz constant $\norm{h}_L \leq 1$. Let $N \sim \mathcal{N}(0,1)$.

Given $h \in \mathcal{H}$ and $\phi(z) = \p(N \leq z)$, Stein's equation
\[
f'(x) = xf(x) + h(x) - \int_\R h(y) \phi'(y)dy, \qquad x\in \R,
\]
admits a \textbf{canonical solution} $f_h$ satisfying the two properties: (a) $f_h \in \mathcal{C}^1(\R)$ and $\norm{f_h'}_\infty \leq \sqrt{\frac{2}{\pi}}$ and (b) $f_h'$ is Lipschitz-continuous with $\norm{f_h'}_L \leq 2$, see e.g. \cite[Theorem~3, Chapter~6]{PeccReitz} and the references therein. In particular, one has that
\begin{equation} \label{eqW1}
	d_W(F,N) = \sup_{h \in \mathcal{H}} |f_h'(F)-Ff_h(F)|.
\end{equation}
Similarly, for fixed $z \in \R$, the canonical solution $f_z$ to Stein's equation
\[
f_z'(x) = xf_z(x) + \1_{(-\infty,z]}(x) - \phi(z), \qquad x \in \R,
\]
is differentiable everywhere except at $z$, where it is customary to define $f_z'(z) = zf_z(z) + 1 - \phi(z)$. One has that
\begin{itemize}
	\item $\norm{f_z}_\infty \leq \frac{\sqrt{2\pi}}{4}$ and $\norm{f_z'}_\infty \leq 1$
	\item $|xf_z(x)| \leq 1$ for all $x \in \R$ and the function $x \mapsto xf_z(x)$ is non-decreasing for all $z \in \R$,
\end{itemize}
(see e.g. \cite[Lemma~2.3]{CGS11}). We consequently have that
\begin{equation} \label{eqK1}
	d_K(F,N) = \sup_{z \in \R} |f_z'(F)-Ff_z(F)|.
\end{equation}

\begin{proof}[Proof of Thm. \ref{thmWasserKolg1}] %PROOF 3
	We will show the upper bounds in \eqref{eqW2} and \eqref{eqK2} by exploiting the representations \eqref{eqW1} and \eqref{eqK1}. The proof is divided into three steps. First, we are going to derive the first terms on the RHSs of \eqref{eqW2} and \eqref{eqK2} for both Wasserstein and Kolmogorov distances at the same time. Second, we deduce the second term on the RHS of \eqref{eqW2} and, as a last step, we find the second term on the RHS of \eqref{eqK2}. Throughout the proof, we fix $h \in \mathcal{H}$ and $z \in \R$ and consider the corresponding canonical solutions $f_h$ and $f_z$.
	
	\noindent\textbf{Step 1.} Write $f$ for either $f_h$ or $f_z$. Then, $f$ is Lipschitz and there is a version of $f'$ which is bounded. Since $F \in \dom \MalD$ and $f'$ is bounded, the expression
	\[
	\E f'(F)\yint \MalD_yF \ \E[\MalD_yF|\eta_y] \bar{\lambda}(dy)
	\]
	is well-defined. Add and subtract this term to $\E f'(F)- \E Ff(F)$ and bound the resulting first term as follows:
	\begin{align}
		& \left| \E f'(F) - \E f'(F)\yint \MalD_yF\ \E[\MalD_yF|\eta_y] \bar{\lambda}(dy) \right| \notag\\
		&\leq  \E |f'(F)| \cdot \left| 1- \yint \MalD_yF\ \E[\MalD_yF | \eta_y] \bar{\lambda}(dy) \right|\notag\\
		&\leq \norm{f'}_\infty\, \E \left| 1- \yint \MalD_yF\ \E[\MalD_yF | \eta_y] \bar{\lambda}(dy) \right|.
	\end{align}
	As $\norm{f_h'}_\infty \leq \sqrt{\frac{2}{\pi}}$ and $\norm{f_z'}_\infty \leq 1$, the bounds follow.
	
	\noindent We are left to deal with
	\[
	\left|\E F f(F) - \E f'(F)\yint \MalD_yF \ \E[\MalD_yF|\eta_y] \bar{\lambda}(dy)\right|.
	\]
	Since $f$ is Lipschitz and $F\in L^2(\p_\eta)$, it follows that $f(F)\in L^2(\p_\eta)$. Hence by Lemma \ref{lemCondCov}
	\[
	\E F f(F) = \cov(F,f(F)) = \E \yint \E[\MalD_yF|\eta_y]\ \E[\MalD_yf(F)|\eta_y] \bar{\lambda}(dy).
	\]
	Again by Lipschitzianity of $f$, it follows that $|\MalD_yf(F)| \leq |\MalD_yF|$ and hence an application of Cauchy-Schwarz inequality justifies that
	\[
	\E F f(F) = \E \yint \E[\MalD_yF|\eta_y] \MalD_yf(F) \bar{\lambda}(dy).
	\]
	Therefore we are left to bound
	\begin{equation}\label{iP3Rest}
		\E \yint \left|\E[\MalD_yF|\eta_y]\right| \cdot \left| f'(F)\MalD_yF - \MalD_yf(F) \right| \bar{\lambda}(dy) .
	\end{equation}
	\textbf{Step 2.} To bound \eqref{iP3Rest} for the Wasserstein distance, we use an argument borrowed from the proof of \cite[Theorem~3.1]{BOPT} to upper bound $|f_h(b)-f_h(a)-f_h'(a)(b-a)|$. Let $a,b \in \R$. Then by the properties stated above, $f_h$ is Lipschitz and hence
	\[
	|f_h(b)-f_h(a)-f_h'(a)(b-a)| \leq |f_h(b)-f_h(a)| + |f_h'(a)(b-a)| \leq 2\sqrt{\tfrac{2}{\pi}}|b-a|.
	\]
	But at the same time by Lipschitzianity of $f_h'$,
	\[
	|f_h(b)-f_h(a)-f_h'(a)(b-a)| = \left|\int_a^b f_h'(x)-f_h'(a)dx\right| \leq 2 (b-a)^2.
	\]
	We deduce that for any $q \in [1,2]$
	\begin{equation}
		|f_h(b)-f_h(a)-f_h'(a)(b-a)| \leq 2 \min\{|b-a|,(b-a)^2\} \leq 2 |b-a|^q.
	\end{equation}
	It follows that
	\begin{align}
		\left| f_h'(F)\MalD_yF - \MalD_yf_h(F) \right| &= |f_h(F(\eta + \delta_y)) - f_h(F(\eta)) - f_h'(F(\eta))(F(\eta + \delta_y)-F(\eta))| \notag\\
		&\leq 2 |\MalD_yF|^q.
	\end{align}
	and therefore \eqref{iP3Rest} is bounded by
	\[
	2\, \E \yint \left|\E[\MalD_yF|\eta_y]\right| |\MalD_yF|^q \bar{\lambda}(dy).
	\]
	The required bound in the Wasserstein distance follows suit.
	
	\noindent\textbf{Step 3.} We reason as in the proof of \cite[Theorem~1.12]{LachPeccYang} to conclude
	\[
	\left| f_z'(F)\MalD_yF - \MalD_yf_z(F) \right| \leq \MalD_yF \cdot \MalD_y (Ff_z(F) + \1_{\{F > z\}})
	\]
	Thus \eqref{iP3Rest} is upper bounded by
	\[
	\E \yint \left|\E[\MalD_yF|\eta_y]\right| \MalD_yF \cdot \MalD_y \left(Ff_z(F) + \1_{\{F > z\}}\right) \bar{\lambda}(dy).
	\]
	The desired bound now follows by taking the supremum over all $z \in \R$.
\end{proof}

\begin{proof}[Proof of Theorem~\ref{thmWasserCond}]
	The proof will be split into two steps.
	
	\noindent\textbf{Step 1.} We start by showing that under the conditions of the theorem,
	
	\begin{equation}
		\E \left| 1- \yint \MalD_yF\, \E[\MalD_yF | \eta_y] \bar{\lambda}(dy) \right| \leq \beta_1 + \beta_2.
	\end{equation}
	
	\noindent We can assume that $\E F = 0$ and $\sigma =1$ (indeed, the result then follows since $\MalD \hat{F} = \sigma^{-1} \MalD F$).
	
	\noindent For ease of notation, define
	\[
	G := \yint \MalD_yF\, \E[\MalD_yF | \eta_y] \bar{\lambda}(dy) - 1.
	\]
	As a first step, we show that $\E G = 0$.
	As $F \in \dom \MalD$, we can use Fubini's theorem and Lemma~\ref{lemCondCov} to deduce
	\[
	\E \yint \MalD_yF\, \E[\MalD_yF | \eta_y] \bar{\lambda}(dy) = \E \yint \E[\MalD_yF | \eta_y]^2 \bar{\lambda}(dy) = \var(F) = 1.
	\]
	Now by the modification \eqref{eqPPoin1} of Corollary~\ref{corpPoinGen} given in Remark~\ref{remPPoin} and since $G \in L^1(\p_\eta)$, we have for $p \in [1,2]$,
	\[
	\E |G| \leq \left(\E |G|^p\right)^{1/p} \leq \left(2^{2-p} \E \yint \E [|\MalD_{x}G| | \eta_x ]^p \bar{\lambda}(dx)\right)^{1/p}.
	\]
	Let us now study the term
	\[
	\MalD_xG = \MalD_x \yint \MalD_yF\, \E[\MalD_yF | \eta_y] \bar{\lambda}(dy)
	\]
	and define
	\[
	h(\eta,y) := \MalD_yF\, \E[\MalD_yF | \eta_y].
	\]
	We want to  show that since $h \in L^1(\mathbf{N} \times \Y)$,
	\begin{equation}\label{eqRainbow2}
	\left|\MalD_x \yint h(\eta,y) \bar{\lambda}(dy)\right| \leq  \yint \left|\MalD_x h(\eta,y)\right| \bar{\lambda}(dy).
	\end{equation}
	and to do so we need an argument taken from the proof of \cite[Proposition~4.1]{LPS14}: Assume the RHS of \eqref{eqRainbow2} to be finite (if it is not, then the inequality \eqref{eqRainbow2} is trivially true). Then
	\[
	\yint |h(\eta + \delta_x,y)| \bar{\lambda}(dy) \leq \yint \left|\MalD_x h(\eta,y)\right| + |h(\eta,y)| \bar{\lambda}(dy) < \infty
	\]
	and hence
	\[
	\MalD_x \yint h(\eta,y) \bar{\lambda}(dy) =  \yint \MalD_x h(\eta,y) \bar{\lambda}(dy).
	\]
	The inequality \eqref{eqRainbow2} follows. We have therefore shown
	\begin{align}
		\E |G| &\leq \left(2^{2-p} \E \yint \E \left[\left. \yint |\MalD_x h(\eta,y)| \bar{\lambda}(dy) \right| \eta_x \right]^p \bar{\lambda}(dx)\right)^{1/p} \notag\\
		&= \left(2^{2-p}  \yint \E\left(\yint \E \big[|\MalD_x h(\eta,y)|\big| \eta_x \big] \bar{\lambda}(dy) \right)^p \bar{\lambda}(dx)\right)^{1/p}, \label{Rainbow11}
	\end{align}
	where the second line follows from Tonelli's theorem. By Minkowski's integral inequality,
	\begin{equation}\label{Rainbow12}
	\E\left(\yint \E \big[|\MalD_x h(\eta,y)|\big| \eta_x \big] \bar{\lambda}(dy) \right)^p%
	\leq \left(\yint \E\big[\E \big[|\MalD_x h(\eta,y)|\big| \eta_x \big]^p\big]^{1/p} \bar{\lambda}(dy) \right)^p.
	\end{equation}
	By the formula \eqref{eqProductFormula} for products,
	\begin{align}
		\MalD_x h(\eta,y) &= \MalD_x \left(\MalD_y F\, \E[\MalD_y F | \eta_y]\right) \notag\\
		&= \DD_{x,y} F \cdot \E[\MalD_yF | \eta_y] + \MalD_yF \cdot \MalD_x \E[\MalD_yF | \eta_y] + \DD_{x,y} F \cdot \MalD_x \E[\MalD_yF | \eta_y].
	\end{align}
	Since $\E[\MalD_yF | \eta_y]$ depends on $\eta$ only through $\eta_y$, it follows that $\MalD_x \E[\MalD_yF | \eta_y] = 0$ whenever $x \geq y$. Moreover, since $F \in L^2(\p_\eta)$ implies that $\MalD_yF, \DD_{x,y} F \in L^1(\p_\eta)$ by \cite[Theorem~1.1]{LPen1}, if $x<y$, we can put the add-one cost operator inside the conditional expectation. Therefore
	\[
	\MalD_x \E[\MalD_yF | \eta_y] = \1_{\{x<y\}} \E[ \DD_{x,y} F | \eta_y].
	\]
	By Minkowski's norm inequality, it follows that
	\begin{align}
		\E\left[\E \big[|\MalD_x h(\eta,y)|\big| \eta_x \big]^p\right]^{1/p}%
		&\leq \E\left[\E \big[|\DD_{x,y} F \cdot \E[\MalD_yF | \eta_y]|\big| \eta_x \big]^p\right]^{1/p}\notag\\
		&\phantom{\leq}+ \E\left[\E \big[|\MalD_yF \cdot \1_{\{x<y\}} \E[ \DD_{x,y} F | \eta_y]|\big| \eta_x \big]^p\right]^{1/p}\notag\\
		&\phantom{\leq}+ \E\left[\E \big[|\DD_{x,y} F \cdot \1_{\{x<y\}} \E[ \DD_{x,y} F | \eta_y]|\big| \eta_x \big]^p\right]^{1/p}.\label{eqThing}
	\end{align}
	By the properties of conditional expectations and splitting the first term into two parts, \eqref{eqThing} is bounded by
	\begin{align}
		&\ind{y \leq x} \E\left[|\E[|\DD_{x,y} F || \eta_x ]^p \cdot \E[\MalD_yF | \eta_y]|^p\right]^{1/p} \notag\\
		&+\ind{x<y} \E\left[\E \big[\E[|\DD_{x,y} F| |\eta_y] \cdot |\E[\MalD_yF | \eta_y]|\big| \eta_x \big]^p\right]^{1/p} \notag\\
		&+\ind{x<y} \E\left[\E \big[\E[|\MalD_yF||\eta_y] \cdot |\E[ \DD_{x,y} F | \eta_y]|\big| \eta_x \big]^p\right]^{1/p}\notag\\
		&+\ind{x<y} \E\left[\E \big[\E[|\DD_{x,y} F||\eta_y] \cdot |\E[ \DD_{x,y} F | \eta_y]|\big| \eta_x \big]^p\right]^{1/p}. \label{eqThing2}
	\end{align}
	By an application of Jensen's inequality \eqref{eqThing2} is now bounded by
	\begin{align}
		&\E\left[|\E[\MalD_yF | \eta_y]|^p \cdot \E\big[|\DD_{x,y} F |\big| \eta_{x \vee y} \big]^p\right]^{1/p} \notag\\
		&+\ind{x<y} \E\left[\E\big[|\MalD_yF|\big|\eta_y\big]^p \cdot \big|\E\big[ \DD_{x,y} F \big| \eta_y\big]\big|^p\right]^{1/p}\notag\\
		&+\ind{x<y} \E\left[\E\big[|\DD_{x,y} F|\big|\eta_y\big]^{2p} \right]^{1/p}. \label{Rainbow13}
	\end{align}
	Plugging the conclusion from \eqref{Rainbow12} - \eqref{Rainbow13} into \eqref{Rainbow11} and applying Minkowski's norm inequality again yields
	\begin{align}
		\E |G|%
		&\leq2^{2/p-1} \left( \yint \left(\yint \E\left[\big|\E[\MalD_yF | \eta_y]\big|^p \cdot \E\big[|\DD_{x,y} F |\big| \eta_{x \vee y} \big]^p\right]^{1/p} \bar{\lambda}(dy) \right)^p \bar{\lambda}(dx)\right)^{1/p} \notag\\
		&\phantom{\leq} + 2^{2/p-1} \left( \yint \left(\yint \ind{x<y} \E\left[\E\big[|\MalD_yF|\big|\eta_y\big]^p \cdot \big|\E[ \DD_{x,y} F | \eta_y]\big|^p\right]^{1/p} \bar{\lambda}(dy) \right)^p \bar{\lambda}(dx)\right)^{1/p}\notag\\
		&\phantom{\leq} + 2^{2/p-1} \left( \yint \left(\yint \ind{x<y} \E\left[\E\big[|\DD_{x,y} F|\big|\eta_y\big]^{2p} \right]^{1/p} \bar{\lambda}(dy) \right)^p \bar{\lambda}(dx)\right)^{1/p}
	\end{align}
	which is in turn bounded by
	\begin{align}
		&2^{2/p} \left( \yint \left(\yint \E\left[\E\big[|\MalD_yF|\big|\eta_y\big]^p \cdot \E\big[|\DD_{x,y} F |\big| \eta_{x \vee y} \big]^p\right]^{1/p} \bar{\lambda}(dy) \right)^p \bar{\lambda}(dx)\right)^{1/p} \notag\\
		&+ 2^{2/p-1} \left( \yint \left(\yint \ind{x<y} \E\left[\E\big[|\DD_{x,y} F|\big|\eta_y\big]^{2p} \right]^{1/p} \bar{\lambda}(dy) \right)^p \bar{\lambda}(dx)\right)^{1/p}.
	\end{align}
	The result follows by an application of the Cauchy-Schwarz inequality.
	
	\noindent\textbf{Step 2.} We want to show that
	\begin{equation}\label{eqRainbow3}
	\E \yint \left| \E[\MalD_yF | \eta_y] \right| \cdot |\MalD_yF|^q \bar{\lambda}(dy) \leq \beta_3 + \beta_4.
	\end{equation}
	Again it suffices to show \eqref{eqRainbow3} for $F$ with $\E F = 0$ and $\E F^2 = 1$. Assume the $\beta_3,\beta_4$ to be finite (otherwise there is nothing to prove). Then, in particular
	\[
	\E \yint \left| \E \left[\left. \MalD_yF \right|\eta_y\right]\right|^{q+1} \bar{\lambda}(dy) < \infty
	\]
	and we can add and subtract this term to the LHS of \eqref{eqRainbow3}, yielding $\beta_3$ and the following rest term:
	\begin{align}
		&\left|\E \yint \left| \E[\MalD_yF | \eta_y] \right| \cdot |\MalD_yF|^q \bar{\lambda}(dy) - \E \yint \left| \E \left[\left. \MalD_yF \right|\eta_y\right]\right|^{q+1} \Bar{\lambda}(dy)\right|\notag\\
		&=\left|\E \yint \left| \E[\MalD_yF | \eta_y] \right| \cdot \E[|\MalD_yF|^q | \eta_y] \bar{\lambda}(dy) - \E \yint \left| \E \left[\left. \MalD_yF \right|\eta_y\right]\right|^{q+1} \Bar{\lambda}(dy)\right|\notag\\
		&\leq \E \yint|\E[\MalD_yF | \eta_y]| \cdot \big|\E[|\MalD_yF|^q | \eta_y] - \left| \E \left[\left. \MalD_yF \right|\eta_y\right]\right|^{q}\big| \Bar{\lambda}(dy), \label{iP4Bound1}
	\end{align}
	where the equality is justified by the fact that $\E [G|\eta_y]$ is defined for all $G \in L^0(\p_\eta)$ which are integrable or non-negative (see the definition \eqref{eqCondExp}).

	\noindent The term $\E[|\MalD_yF|^q | \eta_y] - \left| \E \left[\left. \MalD_yF \right|\eta_y\right]\right|^{q}$ can be rewritten as
	\[
	\tilde{\E} |g_y(\chi)|^q - |\tilde{\E} g_y(\chi)|^q,
	\]
	where $g_y(\chi) = \MalD_yF(\eta_y + \chi)$ and $\tilde{\E}$ is the expectation with respect to $\chi$, a Poisson measure on the space $\{x \in \Y: x \geq y\}$ with intensity $\lambda \otimes ds$, independent of $\eta$. As $F \in \dom \MalD$, it can be shown that $g_y \in L^2(\p_\chi)$ for a.e.~$y \in \Y$.
	
	\noindent By \eqref{eqPPoin2} in Corollary~\ref{corpPoinGen},
	\begin{equation}\label{Rainbow4}
	\tilde{\E} |g_y(\chi)|^q - |\tilde{\E} g_y(\chi)|^q \leq 2^{2-q} \tilde{\E} \yint \ind{y \leq x}  \big|\tilde{\E}[\MalD_xg_y(\chi)|\chi_x]\big|^q \bar{\lambda}(dx).
	\end{equation}
	Plugging \eqref{Rainbow4} into \eqref{iP4Bound1}, we deduce that the RHS of \eqref{iP4Bound1} is upper bounded by
	\[
	2^{2-q}\E \yint|\E[\MalD_yF | \eta_y]| \yint \ind{y \leq x} \tilde{\E} \big|\tilde{\E}[\MalD_xg_y(\chi)|\chi_x]\big|^q \bar{\lambda}(dx) \Bar{\lambda}(dy).
	\]
	Since $\chi$ and $\eta$ are independent and have the same intensity measure,
	\[
	\ind{y \leq x} |\E[\MalD_yF | \eta_y]|\cdot \tilde{\E} \big|\tilde{\E}[\MalD_xg_y(\chi)|\chi_x]\big|^q
	\]
	has the same law as
	\[
	\ind{y \leq x} |\E[\MalD_yF | \eta_y]|\cdot \E\big[|\E[\DD_{x,y} F|\eta_x]|^p\big|\eta_y\big].
	\]
	This implies finally that the RHS of \eqref{iP4Bound1} is upper bounded by
	\[
	2^{2-q}\E \yint\yint\ind{y \leq x}|\E[\MalD_yF | \eta_y]| \cdot |\E[\DD_{x,y} F|\eta_x]|^q \bar{\lambda}(dx) \Bar{\lambda}(dy)
	\]
	and the result follows by the Cauchy-Schwarz inequality.
\end{proof}

\begin{proof}[Proof of Theorem~\ref{thmWasserNon-Cond.}]
	It suffices to prove this result for $F$ such that $\E F =0$ and $\sigma = 1$. If Theorem~\ref{thmWasserNon-Cond.} holds for all $\sigma$-finite spaces $\X$, it holds in particular for the space $\X \times [0,1]$. In fact, it suffices to prove Theorem~\ref{thmWasserNon-Cond.} for functionals $F \in L^2(\p_\eta) \cap \dom \MalD$, where $\eta$ is a $(\X \times [0,1],\lambda \otimes ds)$-Poisson measure. Indeed, as discussed in Section~\ref{secFrame}, we can regard any functional $F \in L^2(\p_\chi)$, with the $(\X,\lambda)$-Poisson measure $\chi$, as a functional of $\eta$ without changing the law of $F$ or its add-one costs. If we replace $\X$ by $\X \times [0,1]$ and $\chi$ by $\eta$ in the terms $\gamma_1,...,\gamma_7$, the expressions do not change either since for any $F \in L^2(\p_\chi) \cap \dom \MalD$, the integrands do not depend on time. For the rest of this proof, we let thus $F \in L^2(\p_\eta) \cap \dom \MalD$ and recall the notation from Remark~\ref{remNotatWK} where $\Y := \X \times [0,1]$ and $\bar{\lambda}:=\lambda \otimes ds$ are explicitly defined.
	
	We divide this proof into two steps: first, we discuss the bound on the Wasserstein distance, second, we show the bound on the Kolmogorov distance.
	
	\textbf{Step 1.} By a combination of Theorems~\ref{thmWasserKolg1}~and~\ref{thmWasserCond}, we see that
	\begin{align}
		d_W(G,N) &\leq \sqrt{\frac{2}{\pi}}\, \E \left| 1- \yint \MalD_yF\, \E[\MalD_yF | \eta_y] \bar{\lambda}(dy) \right| + 2\, \E \yint \left| \E[\MalD_yF | \eta_y] \right| \cdot |\MalD_yF|^q \bar{\lambda}(dy) \notag\\
		&\leq \beta_1 + \beta_2 + 2\, \E \yint \left| \E[\MalD_yF | \eta_y] \right| \cdot |\MalD_yF|^q \bar{\lambda}(dy).
	\end{align}
	To bound $\beta_1$ and $\beta_2$, we simply apply Jensen's inequality and bound $\ind{x<y}$ by $1$. This gives
	\[
	\beta_1 + \beta_2 \leq \gamma_1 + \gamma_2.
	\]
	For the second term, apply Hölder's inequality to deduce
	\[
	\E \yint \left| \E[\MalD_yF | \eta_y] \right| \cdot |\MalD_yF|^q \bar{\lambda}(dy) \leq \yint \left(\E\left| \E[\MalD_yF | \eta_y] \right|^{q+1}\right)^{1/(q+1)} \cdot \left(\E|\MalD_yF|^{q+1}\right)^{1-1/(q+1)} \bar{\lambda}(dy).
	\]
	A further application of Jensen's inequality yields the result.
	
	\textbf{Step 2.} We start from inequality \eqref{eqK2} in Theorem~\ref{thmWasserKolg1}. The first term was dealt with in Step~1 of this proof. Let us deal with the second term. Define
	\[
	h(\eta,y):= \MalD_yF \cdot |\E[\MalD_yF|\eta_y]|
	\]
	and for $z \in \R$,
	\[
	Z_z:= F f_z(F) + \ind{F>z}.
	\]
	The term we want to bound is thus given by
	\[
	\sup_{z \in \R} \E \yint h(\eta,y) \MalD_y Z_z \bar{\lambda}(dy).
	\]
	Since $F \in \dom \MalD$, it follows by Cauchy-Schwarz inequality that $h \in L^1(\mathbf{N} \times \Y)$. Moreover, by the properties of $f_z$, we have $|Z_z| \leq 2$ for all $z \in \R$. Hence we can apply Corollary~\ref{corKolgIneq} and get for any $p \in [1,2]$
	\begin{align}
		\E \yint h(\eta,y) \MalD_y Z_z \bar{\lambda}(dy)%
		&\leq 2 \left(2^{2-p} \E \yint |h(\eta,y)|^p \bar{\lambda}(dy) \right. \notag\\
		&+ p2^{2-p} \E\yint \yint \left|\MalD_{y}h(\eta,x)\right|^p \bar{\lambda}(dy) \bar{\lambda}(dx) \notag\\
		&+ \left. p2^{3-p} \yint \yint \ind{x<y} \left(\E\left|\MalD_{y}h(\eta,x)\right|^p\right)^{1/p} \cdot \left(\E\left|h(\eta,y)\right|^{p}\right)^{1-1/p} \bar{\lambda}(dy)\bar{\lambda}(dx)\right)^{1/p}\notag\\
		&= (I_1 + I_2 + I_3)^{1/p} \notag\\
		&\leq (I_1)^{1/p} + (I_2)^{1/p} + (I_3)^{1/p},
	\end{align}
	since $|a+b|^{1/p} \leq |a|^{1/p} + |b|^{1/p}$.
	
	Let us first look at $I_1$. By applying the Cauchy-Schwarz and Jensen inequalities, it follows that
	\begin{align}
		I_1 &= 4\, \E \yint |\MalD_yF|^p \cdot |\E[\MalD_yF|\eta_y]|^p \bar{\lambda}(dy) \notag\\
		&\leq 4  \yint \E\left[|\MalD_yF|^{2p}\right] \bar{\lambda}(dy).
	\end{align}
	Now we deal with $I_2$. By the formula \eqref{eqProductFormula},
	\[
	\MalD_yh(\eta,x) = \DD_{x,y} F \cdot \MalD_y |\E[\MalD_xF|\eta_x]| + \DD_{x,y} F \cdot |\E[\MalD_xF|\eta_x]| + \MalD_x F \cdot \MalD_y|\E[\MalD_xF|\eta_x]|.
	\]
	By Minkowski's norm inequality,
	\begin{align}
		\left(\E\yint \yint \left|\MalD_{y}h(\eta,x)\right|^p \bar{\lambda}(dy) \bar{\lambda}(dx)\right)^{1/p}%
		&\leq \left(\E\yint \yint \left|\DD_{x,y} F \cdot \MalD_y |\E[\MalD_xF|\eta_x]|\right|^p \bar{\lambda}(dy) \bar{\lambda}(dx)\right)^{1/p} \notag\\
		&+ \left(\E\yint \yint \left|\DD_{x,y} F \cdot |\E[\MalD_xF|\eta_x]|\right|^p \bar{\lambda}(dy) \bar{\lambda}(dx)\right)^{1/p} \notag\\
		&+ \left(\E\yint \yint \big|\MalD_x F \cdot \MalD_y|\E[\MalD_xF|\eta_x]|\big|^p \bar{\lambda}(dy) \bar{\lambda}(dx)\right)^{1/p}.
	\end{align}
	By the triangle inequality, and as in the proof of Theorem~\ref{thmWasserCond},
	\[
	\big|\MalD_y |\E[\MalD_xF|\eta_x]|\big| \leq \ind{y<x} |\MalD_y\E[\MalD_xF|\eta_x]| \leq |\E[\DD_{x,y} F|\eta_x]|.
	\]
	By the Cauchy-Schwarz and Jensen inequalities,
	\begin{align}
		(I_2)^{1/p}%
		&\leq (4p)^{1/p} \left(\yint \yint \E\left[|\DD_{x,y} F|^{2p}\right] \bar{\lambda}(dy) \bar{\lambda}(dx)\right)^{1/p} \notag\\
		&+ 2^{2/p+1}p^{1/p} \left(\yint \yint \E\left[|\DD_{x,y} F|^{2p}\right]^{1/2} \cdot \E\left[|\MalD_{x} F|^{2p}\right]^{1/2} \bar{\lambda}(dy) \bar{\lambda}(dx)\right)^{1/p}.
	\end{align}
	Lastly, we look at the term $I_3$. Since $\E[\MalD_xF|\eta_x]$ depends only on $\eta_x$, we get
	\[
	\ind{x<y} \MalD_y |\E[\MalD_xF|\eta_x]| = 0.
	\]
	Hence
	\[
	\ind{x<y} \big(\E\left|\MalD_{y}h(\eta,x)\right|^p\big)^{1/p} = \ind{x<y} \left(\E\left|\DD_{x,y} F \cdot  \E[\MalD_xF|\eta_x]\right|^p\right)^{1/p}.
	\]
	Applying Cauchy-Schwarz and Jensen again yields
	\[
	I_3 \leq 8p \yint \yint \left(\E|\DD_{x,y} F|^{2p}\right)^{\frac{1}{2p}} \cdot  \left(\E|\MalD_xF|^{2p}\right)^{\frac{1}{2p}} \cdot \left(\E\left|\MalD_yF\right|^{2p}\right)^{1-1/p} \bar{\lambda}(dy)\bar{\lambda}(dx).
	\]
	This inequality concludes the proof.
\end{proof}

\section{Online Nearest Neighbour Graph}\label{secPONNG}
The technical proofs in this section sometimes use ideas and techniques close to \cite[Section~3.4]{Pen05} and \cite{Wade2009} (and previous papers). We will discuss the connections with this part of the literature as the proofs unfold.

Throughout this section, we work under the setting of Section~\ref{secONNG}. We will adopt the following notation: for a measurable subset $A \subset \R^d$, we will write $\eta_A$ for $\eta_{|A \times [0,1]}$. Moreover, we need to adapt the definition of `generic' to sets in the marked space $\R^d \times [0,1]$. In this section only, we call a finite set $\mu \subset \R^d \times [0,1]$ \textbf{generic} if
\begin{itemize}
	\item all projections of $\mu$ onto $\R^d$ are distinct;
	\item all pairwise distances between projections of $\mu$ onto $\R^d$ are distinct;
	\item all projections of $\mu$ onto $[0,1]$ are distinct.
\end{itemize}
Note that for any convex body $D_0 \subset \R^d$, the restriction of an $(\R^d \times [0,1],dx \otimes ds)$-Poisson measure $\eta$ to $D_0 \times [0,1]$ has generic support. We call $\mu$ \textbf{generic with respect to} a point (or points) $(x,s),(y,u) \in \R^d \times [0,1]$ if $\mu \cup \{(x,s),(y,u)\}$ is generic.

We start with a short discussion of the properties of the space $H$ defined in Section~\ref{secONNG}. Since $H$ has non-empty interior, there exist $\delta > 0$ and $y_0 \in H$ such that $B^d(y_0,\delta) \subset H$. Fix these $\delta$ and $y_0$ throughout this section. For $\epsilon > 0$ define
\[
H_\epsilon := \{x \in H: \dist(x,\partial H) > \epsilon\},
\]
where $\dist$ denotes the Euclidean distance and $\partial H$ the boundary of $H$. For small $\epsilon$, this set is non-empty. By \cite[(3.19)]{HLS16}, one has
\[
|H\setminus H_\epsilon| \leq |H + B^d(0,\epsilon)| - |H|,
\]
where the sum is the Minkowski sum of sets. By Steiner's formula (cf. \cite[(14.5)]{StochIntGeo}), it follows that there is a constant $\beta_H>0$ such that
\begin{equation}\label{eqHcond}
	|H\setminus H_\epsilon| \leq \beta_H\, \epsilon.
\end{equation}

\subsection{Moment estimates of conditional expectations of add-one costs}\label{subsecCondExp}

The goal of this subsection is to prove the following proposition:
\begin{prop}\label{propONNGcondBounds}
	Assume the conditions in the statement of Theorem~\ref{thmONNG}. Let $\alpha>0$ and $r \geq 1$. Then for all $t\geq 1$ and all $(x,s),(y,u) \in tH\times(0,1]$ with $s<u$, it holds that
	\begin{align}
		&\E\left[ \E\big[ |\MalD_{(x,s)}\fat| \big| \eta_{|tH \times [0,s)} \big]^r \right]^{1/r} \leq c_1  (s^{-\alpha/d} \wedge t^\alpha) \label{eqONNGdxBound} \\
		&\E\left[ \E\big[ |\DD_{(x,s),(y,u)}\fat| \big| \eta_{|tH \times [0,u)} \big]^r \right]^{1/r} \leq c_1 (u^{-\alpha/d}\wedge t^\alpha). \label{eqONNGdxybound}\\
		&\p\left( \E\big[ |\DD_{(x,s),(y,u)}\fat| \big| \eta_{|tH \times [0,u)} \big] \neq 0 \right) \leq C_2 \exp(-c_2 u|x-y|^d). \label{eqONNGpBound}
	\end{align}
	where $c_1>0$ is a constant depending on $\alpha,r$ and $\epsilon$ and $c_2,C_2>0$ depend only on $d$ and $H$.
\end{prop}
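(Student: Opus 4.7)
The plan is to decompose each add-one cost operator into a ``new edge'' contribution and a ``cascade'' contribution (accounting for later points whose online nearest neighbours are reassigned when a new point is inserted), and then to estimate each piece via Mecke's formula combined with the explicit void probabilities of the underlying Poisson measure and Minkowski's integral inequality.

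For \eqref{eqONNGdxBound}, first write
\begin{equation*}
\MalD_{(x,s)}\fat \;=\; e(x,s,\eta_{|tH \times [0,s)})^\alpha \;+\; \!\!\sum_{\substack{(z,w)\in\eta \\ w>s,\, z\in tH}}\!\! \bigl(|x-z|^\alpha - e(z,w,\eta)^\alpha\bigr)\ind{|x-z|<e(z,w,\eta)}.
\end{equation*}
The new-edge term is $\eta_{|tH \times [0,s)}$-measurable; using the void probability $\p(e(x,s,\eta_{|tH \times [0,s)}) > r) = \exp(-sV(x,r))$, with $V(x,r) := |B^d(x,r) \cap tH| \geq c(r^d \wedge t^d)$ by convexity of $H$, a direct computation yields an $L^r$-norm $\leq c(s^{-\alpha/d}\wedge t^\alpha)$. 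For the cascade sum, conditioning on $\eta_{|tH\times[0,s)}$ and applying Mecke gives the pointwise bound $\int_{tH}\int_s^1 |x-y|^\alpha \ind{\eta_{|tH\times[0,s)}\cap B^d(y,|x-y|)=\emptyset}\,e^{-(u-s)V(y,|x-y|)}\,du\,dy$, and Minkowski's integral inequality introduces an additional factor $e^{-sV(y,|x-y|)/r}$ when passing to the $L^r$-norm. Integrating $u$ and evaluating the resulting $y$-integral in polar coordinates around $x$ produces the bound $\leq c_r(s^{-\alpha/d}\wedge t^\alpha)$.

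For \eqref{eqONNGdxybound}, assume $s<u$ (the opposite case is symmetric). Applying $\MalD_{(y,u)}$ to the above decomposition and performing a case analysis on whether $|x-z|$ and $|y-z|$ are smaller than $e(z,w,\eta)$ for each $(z,w)\in\eta$ with $w>u$, one finds that $\DD_{(x,s),(y,u)}\fat$ equals the \emph{interaction term} $(|x-y|^\alpha - e(y,u,\eta)^\alpha)\ind{|x-y|<e(y,u,\eta)}$ plus a \emph{secondary cascade} whose $(z,w)$-summand has absolute value at most $e(z,w,\eta)^\alpha\,\ind{|x-z|\vee|y-z|<e(z,w,\eta)}$. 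The interaction term is $\eta_{|tH\times[0,u)}$-measurable and is bounded in $L^r$ exactly as the new-edge term above, with $s$ replaced by $u$. For the secondary cascade, the same Mecke--Minkowski argument reduces matters to controlling the deterministic integral
\begin{equation*}
\int_{tH}\int_u^1 \left[R(z)^\alpha e^{-(w-u)V(z,R(z))} + \int_{R(z)}^\infty \alpha r^{\alpha-1} e^{-(w-u)V(z,r)}\, dr\right] e^{-uV(z,R(z))/r}\, dw\, dz,
\end{equation*}
where $R(z):=|x-z|\vee|y-z|\geq|x-y|/2$; passing to polar coordinates around $(x+y)/2$ then yields $\leq c_r(u^{-\alpha/d}\wedge t^\alpha)$.

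For \eqref{eqONNGpBound}, the event $\{\E[|\DD_{(x,s),(y,u)}\fat|\,|\,\eta_{|tH\times[0,u)}]\neq 0\}$ requires either the interaction indicator $\{|x-y|<e(y,u,\eta)\}$ -- whose probability equals $\exp(-uV(y,|x-y|))\leq C\exp(-cu|x-y|^d)$ -- or the $\eta_{|tH\times[0,u)}$-measurable condition that some future Poisson point $(z,w)$ in $tH\times(u,1]$ could satisfy $|x-z|\vee|y-z|<e(z,w,\eta)$ with positive conditional probability. Combining Markov's inequality with Mecke's formula bounds the probability of this second contingency by $\int_{tH}\int_u^1 \exp(-wV(z,R(z)))\,dw\,dz$, and the polar-coordinate computation using $R(z)\geq|x-y|/2$ yields the required exponential decay $C_2\exp(-c_2 u|x-y|^d)$. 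The principal technical obstacle throughout is the uniform handling of \emph{boundary effects}: the convexity lower bound $V(z,r)\geq cr^d$ holds cleanly only for $z$ at distance $\geq r$ from $\partial(tH)$, and the Steiner-type estimate \eqref{eqHcond} is what guarantees that the boundary strip contributes only lower-order terms, so that all polar integrations extend uniformly to $tH$ with only a harmless multiplicative constant.
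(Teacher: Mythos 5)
The decomposition of $\MalD_{(x,s)}\fat$ you start from is correct (it matches the paper's \eqref{eqDexpr}), but the pointwise bound you claim for the conditional expectation of the cascade sum is not. Each cascade summand has the form $(|x-z|^\alpha - e(z,w,\eta)^\alpha)\ind{|x-z|<e(z,w,\eta)}$; when the indicator is on, the summand is negative and its absolute value is $e(z,w,\eta)^\alpha - |x-z|^\alpha$, which can only be bounded by $e(z,w,\eta)^\alpha$ and is in general \emph{not} bounded by $|x-z|^\alpha$ (indeed, on the event $\{|x-z|<e(z,w,\eta)\}$ the quantity $e(z,w,\eta)^\alpha$ dominates $|x-z|^\alpha$). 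After applying Mecke to the conditional expectation, the integrand must therefore contain $\E\big[e(z,w,\eta)^\alpha\ind{|x-z|<e(z,w,\eta)}\,\big|\,\eta_{|tH\times[0,s)}\big]$, a quantity governed by the (conditional) law of $e(z,w,\eta)$; replacing it by $|x-z|^\alpha$ drops exactly the contribution that makes the estimate difficult, so the displayed integral does not bound the cascade. The same substitution appears in the secondary-cascade integral for \eqref{eqONNGdxybound}, so the gap propagates there.

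The missing ingredient is geometric. The paper controls $e(z,w,\eta)^\alpha$ for reassigned points by showing first that any point connecting to $(x,s)$ lies in the ball $B^d(x,R_s(x))$, where $R_s(x)$ is a cone-based stabilisation radius measurable with respect to $\eta_{|tH\times[0,s)}$ (this relies on the cone-geometry fact from \cite[Lemma~3.3]{Pen05}), and second that for such a point $e(z,w,\eta)\leq 2R_s(x)$ (the bound \eqref{iP5Edge}); the entire conditional expectation is then bounded pathwise by a deterministic constant times $R_s(x)^\alpha$, via a self-similar rescaling to $D_0 = R_s(x)^{-1}(tH\cap\overbar{B}^d(x,R_s(x)))$ that reduces matters to Lemma~\ref{lemONNGFirstBounds}, and the moment bound $s^{-\alpha/d}\wedge t^\alpha$ comes from a single application of Lemma~\ref{lemONNGradius} to $R_s(x)$. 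Your Mecke-plus-void-probability route bypasses the cone construction entirely, so you have no a priori localisation of the reassigned points in $B^d(x,R_s(x))$ and no uniform control of their edge lengths; without reinstating that geometric input (or an equivalent), the computation does not close. By contrast, your treatment of the new-edge term $e(x,s,\eta_{|tH\times[0,s)})^\alpha$ via void probabilities is sound, and the idea for \eqref{eqONNGpBound} is plausible once the earlier steps are repaired, but as written the core estimates \eqref{eqONNGdxBound}--\eqref{eqONNGdxybound} are not established.
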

\begin{rem}
	The bound \eqref{eqONNGdxBound} can be compared to Lemma~4.2 in \cite{Wade2009}, with the difference that we work in a Poisson setting and consider general moments $r\geq 1$, whereas \cite{Wade2009} considers uniform random variables and $r=2$.
\end{rem}
We start with two technical lemmas.

\begin{lem}\label{lemGeo1}
	Let $\epsilon, \ell>0$. There exists $0<q<1$ depending only on $\ell, \epsilon$ and $d$ such that for every convex body $D_0 \subset \R^d$ satisfying $\diam(D_0)\leq \ell$ and $B^d(y_0,\epsilon) \subset D_0$ for some $y_0 \in D_0$, for every $w \in D_0$ and $0<s\leq \ell$, there is a point $w' \in D_0$ with
	\[
	B^d(w',qs) \subset D_0 \cap B^d(w,s).
	\]
	In particular, this implies that
	\begin{equation}\label{AppelsandPears}
	\inf_{\substack{w \in D_0 \\ 0<s\leq \ell}} \frac{|D_0 \cap B^d(w,s)|}{s^d} \geq c_1(\epsilon,\ell,d)
	\end{equation}
	with $c_1(\epsilon,\ell,d)>0$ a constant depending on $\epsilon,\ell$ and $d$.
\end{lem}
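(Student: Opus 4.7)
The plan is to exploit convexity of $D_0$ together with the inscribed ball $B^d(y_0,\epsilon)\subset D_0$ to construct an explicit witness point $w'$ on the segment from $w$ to $y_0$. The key geometric observation is that the convex hull of $\{w\}\cup B^d(y_0,\epsilon)$ is an ice-cream-cone-shaped region contained in $D_0$; a cross-section of this cone, at a fraction $\alpha\in[0,1]$ of the way from $w$ to $y_0$, is exactly the ball $B^d\bigl((1-\alpha)w+\alpha y_0,\,\alpha\epsilon\bigr)$.

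Concretely, I would set $w':=(1-\alpha)w+\alpha y_0$, so that by the cone observation and convexity of $D_0$ one has $B^d(w',\alpha\epsilon)\subset D_0$. The inclusion $B^d(w',qs)\subset B^d(w,s)$ via the triangle inequality requires $\alpha|y_0-w|+qs\le s$, while the inclusion in $D_0$ requires $qs\le\alpha\epsilon$. Writing $L=|y_0-w|\le\ell$, these two constraints are jointly satisfiable for some $\alpha\in[0,1]$ precisely when $q\le\epsilon/(L+\epsilon)$, which holds uniformly in $w\in D_0$ and $s\in(0,\ell]$ upon choosing $q:=\epsilon/(\ell+\epsilon)\in(0,1)$. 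The degenerate case $w=y_0$ (so $L=0$) is handled by taking $\alpha=0$ and noting $qs\le q\ell<\epsilon$, and the case $\alpha=qs/\epsilon>1$ does not occur since $s\le\ell$ implies $qs/\epsilon\le q\ell/\epsilon=\ell/(\ell+\epsilon)<1$.

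The second conclusion \eqref{AppelsandPears} is then an immediate volume comparison: the ball $B^d(w',qs)$ just constructed sits inside $D_0\cap B^d(w,s)$, so
\[
|D_0\cap B^d(w,s)|\ge\kappa_d (qs)^d=\kappa_d\Bigl(\tfrac{\epsilon}{\ell+\epsilon}\Bigr)^{\!d}s^d,
\]
yielding the constant $c_1(\epsilon,\ell,d)=\kappa_d\bigl(\epsilon/(\ell+\epsilon)\bigr)^d$. There is no real obstacle here beyond carefully handling the two boundary situations ($w$ close to $y_0$, and $s$ close to $\ell$); the main point is simply to write down the inequalities the pair $(\alpha,q)$ must satisfy and solve them in closed form so that $q$ depends only on $\epsilon,\ell,d$ as required.
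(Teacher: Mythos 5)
Your proof is correct, and it is cleaner and more elementary than the paper's own argument, so it is worth pointing out what each route buys. You use the ice-cream cone $\mathrm{conv}\bigl(\{w\}\cup B^d(y_0,\epsilon)\bigr)\subset D_0$ directly: the affine image $\{(1-\alpha)w+\alpha z:z\in B^d(y_0,\epsilon)\}$ is exactly the ball $B^d(w',\alpha\epsilon)$ with $w'=(1-\alpha)w+\alpha y_0$, and then the two inclusions $B^d(w',qs)\subset D_0$ and $B^d(w',qs)\subset B^d(w,s)$ reduce to the linear constraints $qs\le\alpha\epsilon$ and $\alpha|y_0-w|+qs\le s$, which a short computation shows are jointly solvable for $q=\epsilon/(\ell+\epsilon)$ uniformly in $w,s$ (one small stylistic note: your "cross-section at fraction $\alpha$" is the image of the ball under a dilation centred at $w$, not a hyperplane slice, but what you actually use is only the inclusion in the convex hull, which is correct). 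The paper instead passes through a \emph{fixed-angle} cone $C(w)$ of half-angle $\theta=\frac{\pi}{4}\wedge\arcsin(\epsilon/\ell)$ with axis through $y_0$, shows $C(w)\cap\overline{B}^d(w,\epsilon)\subset D_0$, rescales to $C(w)\cap\overline{B}^d(w,\epsilon s/\ell)\subset D_0\cap\overline{B}^d(w,s)$, and then inscribes a ball of radius $r$ in the reference set $\overline{B}^d(0,1)\cap C(0)$, yielding $q=\epsilon r/\ell$ where $r$ is left implicit. Your construction gives a closed-form constant and avoids the auxiliary cone-geometry facts (e.g.\ that the cone tip caps are inside the convex hull when $\theta\le\pi/4$); both routes exploit convexity together with the inscribed ball, but yours does so by parametrizing the cone itself rather than trimming it to a fixed angle. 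The reason the paper sets up the fixed-angle cone machinery is that the same cones $C_i(x),C_i^+(x)$ reappear throughout Section~\ref{secPONNG} (Lemma~\ref{lemGeo2}, Lemma~\ref{lemONNGradius}), where the angular control is genuinely needed; for Lemma~\ref{lemGeo1} in isolation your version is the more efficient proof.
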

In words, this lemma means that there is a constant rate $q$ such that for any not-too-big ball intersecting $D_0$, we can find a smaller ball shrunken by the factor $q$ within the intersection. The ratio $q$ at which the ball is shrunk depends only on a lower bound for the in-radius of $D_0$ and an upper bound for the diameter of $D_0$. This will become important in the proof of Proposition~\ref{propONNGcondBounds}, where $D_0$ is a probabilistic object, but has deterministic upper and lower bounds.

\begin{rem}
	While geometric lemmas in the spirit of Lemma~\ref{lemGeo1} are already shown in \cite{Pen05,Wade2009} and appear in some form in many papers on stochastic geometry, we need a version that allows for fine control over the constants, in order to be able to state our results in the full generality of convex sets.
\end{rem}
\begin{proof}
	Since the ball $B^d(y_0,\epsilon)$ is a subset of $D_0$ and $D_0$ is closed and convex, for any $w \in D_0$, the convex hull of $w \cup \bb^d(y_0,\epsilon)$ is inside $D_0$. If $d \geq 2$ and for $w \notin \bb^d(y_0,\epsilon)$, this is a cone-like structure with angular radius $\omega_w = \arcsin(\epsilon/|w-y_0|)$. Since $\diam(D_0) \leq \ell$, it follows that
	\[
	\frac{\pi}{4} \wedge \inf_{w \in D_0} \omega_w \geq \frac{\pi}{4} \wedge \arcsin\left(\frac{\epsilon}{\ell}\right) =: \theta.
	\]
	
	For every $w \in D_0$, let $C(w)$ be the closed cone centred at $w$, of angular radius $\theta$ and central axis the half-line from $w$ through $y_0$ (if $d=1$, take $C(w)$ to be the closed half-line starting at $w$ and containing $y_0$). The set $A(w,\epsilon):= C(w) \cap \bb^d(w,\epsilon)$ is now included in $D_0$. This is clear if $d=1$. If $d \geq 2$, then if $w \notin \bb^d(y_0,\epsilon)$, it follows that $|y_0-w| > \epsilon$ and since $\theta \leq \omega_w \wedge \frac{\pi}{4}$, one deduces that $A(w,\epsilon)$ is inside the convex hull of $w \cup \bb^d(y_0,\epsilon)$ (here we need $\theta \leq \frac{\pi}{4}$). If $w \in \bb^d(y_0,\epsilon)$, then by construction $A(w,\epsilon)$ is also inside $\bb^d(y_0,\epsilon)$ (here we need $\theta \leq \frac{\pi}{3}$).
	
	Since $s,\epsilon \leq \ell$, it follows that 
	\[
	A\left(w,\frac{\epsilon s}{\ell}\right) \subset A(w,\epsilon) \cap \bb^d(w,s) \subset D_0 \cap \bb^d(w,s).
	\]
	The set $A(0,1) = \bb^d(0,1) \cap C(0)$ is convex and of non-zero volume, therefore it contains an open ball of some radius $r>0$, centred at some point $z \in A(0,1)$. By translation and scaling,
	\[
	B^d\left(\frac{\epsilon s}{\ell}\cdot z + w, \frac{\epsilon sr}{\ell}  \right) \subset A\left(w,\frac{\epsilon s}{\ell}\right).
	\]
	Thus with $q = \frac{\epsilon r}{\ell} $ and $w' = \frac{\epsilon s}{\ell}\cdot z + w$, we achieve
	\[
	B^d(w',qs) \subset A\left(w,\frac{\epsilon s}{\ell}\right) \subset D_0 \cap \bb^d(w,s).
	\]
	Since $B^d(w',qs)$ is open, we have $B^d(w',qs) \subset B^d(w,s)$.
	The statement \eqref{AppelsandPears} follows easily by taking $c_1(\epsilon,\ell,d):= \kappa_d q^d$.
\end{proof}

For the next part, we need some additional technical definitions. Let $C_1(0),...,C_K(0)$ be a collection of closed cones centred at $0$ and of angular radius $\frac{\pi}{12}$ such that $\bigcup_{i=1}^K C_i(0) = \R^d$ (for $d=1$, take $C_1(0):= (-\infty,0]$ and $C_2(0):= [0,\infty)$). Define $C_i(x):=C_i(0)+x$ for $x \in \R^d$ and let $C_1^+(x),...,C_K^+(x)$ be the cones centred at $x$ of angular radius $\frac{\pi}{6}$ such that $C_i^+(x)$ has the same central half-axis as $C_i(x)$ for each $i$ (for $d=1$, take $C_i^+(0)=C_i(0)$).

\noindent For $A \subset \R^d$ a convex body, $x \in A$ and $\mu \subset \R^d \times [0,1]$ a finite set which is generic with respect to $x$, define
\begin{itemize}
	\item $s_i(x,A) := \diam(C_i(x) \cap A)$;
	\item $R_{i,\theta}(x,A,\mu) := \inf\{|x-y|:\ (y,u) \in \left(A \cap C_i^+(x) \times [0,\theta)\right)\cap \mu \} \wedge s_i(x,A)$ for $\theta \in [0,1]$ and where $\inf \emptyset = \infty$;
	\item $R_\theta(x,A,\mu) := \max_{i=1,...,K} R_{i,\theta}(x,A,\mu)$.
\end{itemize}
In practice, when it is clear what $A$ and $\mu$ are, we will omit them from the notation and write $s_i(x),R_{i,\theta}(x),R_\theta(x)$. In words, $s_i(x,A)$ is the maximal distance within $A$ from $x$ to any point in the cone $C_i(x)$. The quantity $R_{i,\theta}(x,A,\mu)$ is the distance to the closest point of $\mu$ of mark smaller then $\theta$ within $A$ and the larger cone $C_i^+(x)$. The quantity $R_\theta(x,A,\mu)$ is such that the ball $B^d(x,R_\theta(x,A,\mu))$ either contains a point of mark less than $\theta$, or if it doesn't, then it contains all of $A$.

\begin{rem}
	The use of cones and the construction of the radii $R_\theta$ follows the ideas given in \cite{Pen05,Wade2009}, again with the difference that we work in a Poisson setting without passing through uniform random vectors.
\end{rem}

\begin{lem} \label{lemGeo2}
	Let $\epsilon,\ell>0$ and $D_0 \subset \R^d$ be as in Lemma~\ref{lemGeo1}. Take $\lambda > 0$ and $x \in \lambda D_0$, and let $0< r \leq s_i(x,\lambda D_0)$ for some $i \in \{1,...,K\}$. Then there is a constant $c_2(\epsilon,\ell,d)>0$ such that
	\[
	|\lambda D_0 \cap C_i^+(x) \cap B^d(x,r)| \geq c_2(\epsilon,\ell,d) r^d.
	\]
\end{lem}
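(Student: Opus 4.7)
The plan is to combine Lemma~\ref{lemGeo1} with an elementary angular argument in three steps. First I would locate a reference point $y' \in C_i(x) \cap \lambda D_0$ at distance exactly $r/2$ from $x$. Set $\rho := \sup\{|y - x| : y \in C_i(x) \cap \lambda D_0\}$; the supremum is attained by compactness, and since for any $y_1, y_2 \in C_i(x) \cap \lambda D_0$ we have $|y_1 - y_2| \leq |y_1 - x| + |x - y_2| \leq 2\rho$, one gets $s_i(x, \lambda D_0) \leq 2\rho$, whence $\rho \geq r/2$. As $\lambda D_0$ is convex and $C_i(x)$ is a cone with apex $x$, the segment from $x$ to a point realising $\rho$ lies entirely in $C_i(x) \cap \lambda D_0$, and along it I extract $y'$ with $|y' - x| = r/2$ exactly.

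Next I would verify that, with $\delta := \sin(\pi/12)$,
\[
B^d(y', \delta r/2) \subset C_i^+(x) \cap B^d(x, r).
\]
The inclusion in $B^d(x, r)$ follows from $|z - x| \leq |z - y'| + |y' - x| < (1 + \delta)r/2 < r$. For the cone inclusion, any $z \in B^d(y', \delta r/2)$ makes an angle at most $\arcsin(\delta) = \pi/12$ with $y' - x$ as seen from $x$, since the tangent cone from $x$ to $B^d(y', \delta r/2)$ has half-angle $\arcsin((\delta r/2)/(r/2)) = \pi/12$. Because $y' \in C_i(x)$ deviates by at most $\pi/12$ from the central axis (which is shared between $C_i(x)$ and $C_i^+(x)$), the direction $(z - x)$ deviates by at most $\pi/6$, placing $z$ in $C_i^+(x)$.

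Finally I apply Lemma~\ref{lemGeo1} after rescaling by $\lambda^{-1}$: the point $y'/\lambda$ lies in $D_0$, and with $s := \delta r/(2\lambda)$, which satisfies $s \leq \lambda \ell/(2\lambda) = \ell/2 \leq \ell$ since $r \leq s_i(x,\lambda D_0) \leq \diam(\lambda D_0) \leq \lambda \ell$, the lemma provides $w^\star \in D_0$ with $B^d(w^\star, qs) \subset D_0 \cap B^d(y'/\lambda, s)$, where $q = q(\epsilon, \ell, d) > 0$. Rescaling back by $\lambda$ yields
\[
B^d(\lambda w^\star, q\delta r/2) \subset \lambda D_0 \cap B^d(y', \delta r/2) \subset \lambda D_0 \cap C_i^+(x) \cap B^d(x, r),
\]
and the claim holds with $c_2(\epsilon, \ell, d) = \kappa_d (q\delta/2)^d$.

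The only delicate point is the angular estimate in Step~2: the angular radii $\pi/12$ and $\pi/6$ of $C_i$ and $C_i^+$ were chosen in Section~\ref{subsecCondExp} precisely so that a $\sin(\pi/12)$-scale perturbation at a point of $C_i(x)$ stays inside $C_i^+(x)$, while the choice $|y'-x|=r/2$ balances the angular slack against the room to fit the perturbation inside $B^d(x,r)$. Everything else reduces to a rescaled application of the previous lemma and routine bookkeeping.
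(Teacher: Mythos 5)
Your proof is correct and follows essentially the same route as the paper's: find a point $y'$ (the paper calls it $z$) in $C_i(x)\cap\lambda D_0$ at distance exactly $r/2$ from $x$, show a small ball around it lies in $C_i^+(x)\cap B^d(x,r)$ via the same angular estimate, and conclude by a rescaled application of Lemma~\ref{lemGeo1}. The only cosmetic differences are your choice of the slightly larger radius $\sin(\pi/12)\,r/2$ in place of the paper's $r/8$ (the paper uses $\arcsin(1/4)<\pi/12$, you use $\arcsin(\sin(\pi/12))=\pi/12$, both landing inside the closed cone $C_i^+(x)$), and that you invoke the ball-inclusion form of Lemma~\ref{lemGeo1} while the paper uses the volume-ratio form~\eqref{AppelsandPears} — the two are equivalent.
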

\begin{rem}
	A similar result was shown in the proof of \cite[Lemma~3.2]{Wade2009}, with a constant depending on $D_0$.
\end{rem}
\begin{proof}
	First, we show that there is a point $z \in \lambda D_0$ such that $B^d(z, \frac{r}{8}) \subset C_i^+(x) \cap B^d(x,r)$. Indeed, as $r \leq s_i(x,\lambda D_0)$, there is by convexity a point $z \in \lambda D_0 \cap C_i(x)$ such that $|x-z| = \frac{r}{2}$. Now consider any point $y \in B^d(z,\frac{r}{8})$. For $d=1$, one has that $y \in C_i^+(x) \cap B^1(x,r)$. For $d \geq 2$, the angle $\angle zxy$ will be largest when the line $(xy)$ is tangent to $B^d(z,\frac{r}{8})$, in which case
	\[
	\angle zxy \leq \arcsin\left(\frac{r/8}{r/2}\right) = \arcsin\left(\frac{1}{4}\right) < \frac{\pi}{12}.
	\]
	
	Since $z \in C_i(x)$, this implies that $y \in C_i^+(x)$ and we clearly also have $|x-y| \leq |x-z| + |z-y| \leq \frac{5}{8}r < r$. We infer that for all $d \geq 1$,
	\[
	|\lambda D_0 \cap C_i^+(x) \cap B^d(x,r)| \geq |\lambda D_0 \cap B^d(z,\tfrac{r}{8})|.
	\]
	
	Now $\frac{1}{\lambda}z \in D_0$ and $\frac{1}{\lambda} \frac{r}{8} \leq \diam(D_0)\leq \ell$, therefore Lemma~\ref{lemGeo1} implies
	\[
	\left|\lambda D_0 \cap B^d\left(z,\tfrac{r}{8}\right)\right| = \lambda^d \left| D_0 \cap B^d\left(\tfrac{1}{\lambda}z,\tfrac{1}{\lambda}\tfrac{r}{8}\right)\right| \geq \lambda^d c_1(\epsilon,\ell,d) \left(\tfrac{r}{8\lambda}\right)^d = 8^{-d} c_1(\epsilon,\ell,d) r^d,
	\]
	which yields the desired bound.
\end{proof}

\begin{lem}\label{lemONNGradius}
	Let $\epsilon,\ell>0$ and $D_0 \subset \R^d$ be as in Lemma~\ref{lemGeo1}. Let $\lambda>0$, $x \in \lambda D_0$ and $\theta \in (0,1)$. Then for all $\beta > 0$, there is a constant $c_3(\epsilon,d,\ell,\beta)>0$ such that
	\[
	\E\left[R_\theta(x,\lambda D_0, \eta_{\lambda D_0} )^\beta\right] \leq c_3(\epsilon,d,\ell,\beta) \left(\theta^{-\beta/d} \wedge \lambda^\beta\right).
	\]
\end{lem}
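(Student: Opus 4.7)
The plan is to bound $\mathbb{E}[R_\theta^\beta]$ via the tail formula $\mathbb{E}[R_\theta^\beta] = \beta \int_0^\infty r^{\beta-1}\,\mathbb{P}(R_\theta > r)\,dr$ and to prove the two bounds $\theta^{-\beta/d}$ and $\lambda^\beta$ separately. The bound $\mathbb{E}[R_\theta^\beta] \leq (\ell\lambda)^\beta$ is immediate, since by definition $R_\theta(x,\lambda D_0,\eta_{\lambda D_0}) \leq \max_i s_i(x,\lambda D_0) \leq \mathrm{diam}(\lambda D_0) \leq \ell \lambda$; so the interesting content is the estimate $\mathbb{E}[R_\theta^\beta] \lesssim \theta^{-\beta/d}$.

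For the latter, I would first control the tail $\mathbb{P}(R_\theta > r)$ for $r \in (0,\ell\lambda]$. Since $R_\theta = \max_i R_{i,\theta}$, a union bound gives $\mathbb{P}(R_\theta > r) \leq \sum_{i=1}^K \mathbb{P}(R_{i,\theta} > r)$. For a fixed cone index $i$, the event $\{R_{i,\theta} > r\}$ is contained in the event that $\eta$ has no point with spatial coordinate in $\lambda D_0 \cap C_i^+(x) \cap B^d(x,r)$ and time coordinate in $[0,\theta)$, which happens only when $r \leq s_i(x,\lambda D_0)$. In that regime, Lemma~\ref{lemGeo2} (with the parameters $\epsilon,\ell$ of $D_0$) produces a constant $c_2 = c_2(\epsilon,\ell,d) > 0$ such that
\[
|\lambda D_0 \cap C_i^+(x) \cap B^d(x,r)| \geq c_2 r^d,
\]
so by the Poisson void probability formula,
\[
\mathbb{P}(R_{i,\theta} > r) \leq \exp(-c_2 \theta r^d).
\]
Summing over $i$, this yields $\mathbb{P}(R_\theta > r) \leq K \exp(-c_2 \theta r^d)$ for all $r > 0$ (using also the trivial bound $\mathbb{P}(R_\theta > r) \leq 1$).

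I would then plug this into the tail formula, split the range of integration at $r_* := \theta^{-1/d}$, use the trivial bound $\mathbb{P}(R_\theta > r) \leq 1$ on $[0,r_*]$ and the exponential bound on $[r_*,\infty)$, and perform the substitution $u = c_2 \theta r^d$ in the second piece. The latter produces a Gamma-type integral $\int_{c_2}^{\infty} u^{\beta/d - 1} e^{-u}\,du$, which is a finite constant depending only on $d$, $\beta$, $\epsilon$ and $\ell$ (through $c_2$). This gives $\mathbb{E}[R_\theta^\beta] \leq C(\epsilon,d,\ell,\beta)\, \theta^{-\beta/d}$. Combining with the deterministic bound from the first paragraph and taking the larger of the two constants yields the stated inequality with the minimum on the right-hand side.

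No step is really a serious obstacle: the geometric input (uniform lower bound on the cone-ball-body intersection) is already packaged in Lemma~\ref{lemGeo2}, and the remainder is a standard Poisson tail-integration computation. The only care to take is to keep track of the constants depending on $\epsilon$, $\ell$, $d$ and $\beta$ (and not on $x$ or $\lambda$), which follows automatically from the fact that $c_2$ in Lemma~\ref{lemGeo2} depends only on these parameters.
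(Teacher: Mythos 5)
Your proposal is correct and follows essentially the same route as the paper's proof: the deterministic bound $R_\theta \le \mathrm{diam}(\lambda D_0) \le \ell\lambda$, the union bound over the cones combined with Lemma~\ref{lemGeo2} and the Poisson void probability to get $\p(R_\theta > r) \le K\exp(-c_2\theta r^d)$, and then integration of the tail. The only cosmetic difference is that the paper evaluates $\int_0^\infty K\exp(-\theta C r^{d/\beta})\,dr$ directly as a Gamma integral rather than splitting at $r_*=\theta^{-1/d}$, which yields the explicit constant $KC^{-\beta/d}\Gamma(\beta/d+1)$; your split-and-compare version gives the same order and constant dependence.
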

\begin{rem}
	This result is comparable to \cite[Lemma~3.2]{Wade2009} and an argument in the proof of \cite[Lemma~3.3]{Pen05}, both of which worked with uniform random vectors.
\end{rem}
\begin{proof}
	It is clear that by construction $R_\theta(x,\lambda D_0, \eta_{\lambda D_0}) \leq \diam(\lambda D_0) \leq \ell \lambda$, thus we need only show the bound by $\theta^{-\beta/d}$. To simplify the notation, write $R_\theta(x)$ for $R_\theta(x,\lambda D_0, \eta_{\lambda D_0})$. We are going to study the probability $\p(\R_\theta(x)>r)$. If for some $r>0$, we have $R_\theta(x)>r$, then $\max_{i=1,...,K} R_{i,\theta}(x)>r$, implying that there is an $i \in \{1,...,K\}$ such that $R_{i,\theta}(x) > r$. Since $R_{i,\theta}(x) \leq s_i(x)$, this implies in turn that $r<s_i(x)$ and that
	\[
	\eta \left(\left(\lambda D_0 \cap B^d(x,r) \cap C_i^+(x)\right)\times [0,\theta)\right) = 0.
	\]
	It follows that
	\begin{align}
		\p(\R_\theta(x)>r)%
		 &\leq \p \left( \bigcup_{i=1}^K \left\{ \eta \left( (\lambda D_0 \cap B^d(x,r) \cap C_i^+(x)) \times [0,\theta) \right) = 0,\ r<s_i(x) \right\} \right) \notag\\
		 &\leq \sum_{i=1}^K \ind{r<s_i(x)} \p\left(\eta \left( (\lambda D_0 \cap B^d(x,r) \cap C_i^+(x)) \times [0,\theta) \right) = 0\right) \notag\\
		 &= \sum_{i=1}^K \ind{r<s_i(x)} \exp\left(-\theta |\lambda D_0 \cap B^d(x,r) \cap C_i^+(x)|\right).
	\end{align}
	By Lemma~\ref{lemGeo2}, there is a constant $C:=c_2(\epsilon,\ell,d)>0$ such that
	\[
	|\lambda D_0 \cap B^d(x,r) \cap C_i^+(x)| \geq C r^d.
	\]
	Hence
	\[
	\exp\left(-\theta |\lambda D_0 \cap B^d(x,r) \cap C_i^+(x)|\right) \leq \exp(-\theta C r^d)
	\]
	and
	\[
	\p(\R_\theta(x)>r) \leq K \exp(-\theta C r^d).
	\]
	We can now estimate for all $\beta>0$:
	\begin{align}
		\E [R_\theta(x)^\beta] &= \int_0^\infty \p(R_\theta(x)>r^{1/\beta}) dr\notag\\
		&\leq \int_0^\infty K \exp(-\theta C r^{d/\beta}) dr \notag\\
		&= K \frac{\beta}{d} \theta^{-\beta/d} C^{-\beta/d} \int_0^\infty u^{\beta/d-1} \exp(-u) du \notag\\
		&= K C^{-\beta/d} \Gamma\left(\frac{\beta}{d}+1\right) \theta^{-\beta/d},
	\end{align}
	by the properties of the Gamma function $\Gamma$ (see e.g. \cite[6.1.1]{Hand}).
\end{proof}

Before we pass to the next lemma, we note some useful properties of the add-one cost and introduce the quantity $\opL_{(x,s)}\Fa(\mu)$. Let $(x,s) \in \R^d \times [0,1]$ and $\mu \subset \R^d\times[0,1]$ be a generic set with respect to $(x,s)$. Note that
\begin{equation}\label{eqDexpr}
\MalD_{(x,s)}\Fa(\mu) = e(x,s,\mu)^\alpha + \sum_{\substack{(y,u) \in \\ \mu \cap (\R^d \times (s,1])}} \left( e(y,u,\mu + \delta_{(x,s)})^\alpha - e(y,u,\mu)^\alpha \right).
\end{equation}
Define the quantity
\[
\opL_{(x,s)} \Fa(\mu) := \sum_{\substack{(y,u) \in \\ \mu \cap (\R^d \times (s,1])}} e(y,u,\mu)^\alpha \ind{(y,u) \rightarrow (x,s) \text{ in } \mu + \delta_{(x,s)}},
\]
where we use (as before) the notation $(y,u) \rightarrow (x,s) \text{ in } \mu + \delta_{(x,s)}$ in order to indicate that the point $(y,u)$ connects to $(x,s)$ in the collection of points $\mu + \delta_{(x,s)}$, as was explained in Section~\ref{secONNG}.

We claim the following: For any convex body $A \subset \R^d$ such that the projection onto $\R^d$ of $\mu$ is included in the interior of $A$, it holds that

\begin{equation}
	\left| \MalD_{(x,s)}\Fa(\mu) \right| \leq R_s(x,A,\mu)^\alpha + \opL_{(x,s)} \Fa(\mu). \label{eqDbyL}
\end{equation}
To show the claim, we start by noting the following: if a point $(y,u) \in \mu \cap (\R^d \times (s,1])$ has an online nearest neighbour in $\mu$, then $e(y,u,\mu) \neq 0$. There are now two possibilities:
\begin{enumerate}
	\item The point $(y,u)$ connects to $(x,s)$ in $\mu + \delta_{(x,s)}$. This implies that $e(y,u,\mu + \delta_{(x,s)}) < e(y,u,\mu)$.
	\item The point $(y,u)$ does not connect to $(x,s)$ in $\mu + \delta_{(x,s)}$. Then $e(y,u,\mu + \delta_{(x,s)}) = e(y,u,\mu)$.
\end{enumerate}
In both cases, it holds that
\begin{equation}\label{eqonn}
|e(y,u,\mu + \delta_{(x,s)})^\alpha - e(y,u,\mu)^\alpha| \leq e(y,u,\mu)^\alpha \ind{(y,u) \rightarrow (x,s) \text{ in } \mu + \delta_{(x,s)}}.
\end{equation}
As a next step, consider three scenarios:
\begin{enumerate}
	\item If $\mu \cap {\R^d \times [0,s)} \neq \emptyset$, then any point $(y,u) \in \mu$ with $u>s$ has an online nearest neighbour in $\mu$ and \eqref{eqonn} holds. Moreover, the point $(x,s)$ has an online nearest neighbour in $\mu$ and by construction, $e(x,s,\mu) \leq R_s(x,A,\mu)$. Combining this with \eqref{eqDexpr} and \eqref{eqonn} implies that \eqref{eqDbyL} holds.
	\item If $\mu \cap {\R^d \times [0,s)} = \emptyset$ but $\mu \cap {\R^d \times (s,1]} \neq \emptyset$, then the point $(x,s)$ does not have an online nearest neighbour in $\mu$ and $e(x,s,\mu)=0$. However, there is now a point $(y_0,u_0) \in \mu \cap {\R^d \times (s,1]}$ which is the point of lowest mark in $\mu$ and it does not have an online nearest neighbour in $\mu$. Hence $e(y_0,u_0,\mu)=0$ and $e(y_0,u_0,\mu + \delta_{(x,s)})=|x-y_0|$ since this point will connect to $(x,s)$ in $\mu + \delta_{(x,s)}$. Since $\mu \cap {\R^d \times [0,s)} = \emptyset$, we have that $A \subset \overbar{B}^d(x,R_s(x,A,\mu))$ and hence $|x-y_0| \leq R_s(x,A,\mu)$. Any point $(y,u) \in \mu$ different from $(y_0,u_0)$ must have an online nearest neighbour in $\mu$, since $(y_0,u_0)$ is a potential neighbour. Thus for such $(y,u)$, the inequality \eqref{eqonn} holds and we deduce
	\begin{align}
	\left| \MalD_{(x,s)}\Fa(\mu) \right| &= \bigg| |x-y_0|^\alpha + \sum_{\substack{(y_0,u_0) \neq (y,u) \in \\ \mu \cap (\R^d \times (s,1])}} \left( e(y,u,\mu + \delta_{(x,s)})^\alpha - e(y,u,\mu)^\alpha \right) \bigg| \\
	&\leq R_s(x,A,\mu)^\alpha + \opL_{(x,s)} \Fa(\mu),
	\end{align}
	thus inequality \eqref{eqDbyL} holds.
	\item If $\mu = \emptyset$, then $\MalD_{(x,s)}\Fa(\mu) = 0$ and inequality \eqref{eqDbyL} trivially holds.
\end{enumerate}
This concludes the proof of the claim. In the next lemma, we now provide a bound for the quantity $\opL_{(x,s)}\Fa(\eta_{\lambda D_0})$.

\begin{lem}\label{lemONNGFirstBounds}
	Let $\epsilon,\ell>0$ and $D_0 \subset \R^d$ be as in Lemma~\ref{lemGeo1}. Let $\lambda>0$ and $(x,s) \in \lambda D_0 \times [0,1]$. Then there is a constant $c_3(\epsilon,\ell,d,\alpha)>0$ such that
	\begin{equation}\label{eqONNGRestBound}
		\E [\opL_{(x,s)}\Fa(\eta_{\lambda D_0})] \leq c_4(\epsilon,\ell,d,\alpha) (s^{-\alpha/d} \wedge \lambda^\alpha).
	\end{equation}
\end{lem}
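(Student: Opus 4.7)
By the Mecke formula \eqref{eqMecke}, applied to the functional $h(\eta,(y,u)) = e(y,u,\eta_{\lambda D_0})^\alpha \ind{(y,u) \to (x,s) \text{ in } \eta_{\lambda D_0}+\delta_{(x,s)}} \ind{u>s}$, I can rewrite
\[
\E[\opL_{(x,s)}\Fa(\eta_{\lambda D_0})] = \int_{\lambda D_0}\!\int_s^1 \E\bigl[e(y,u,\eta_{\lambda D_0})^\alpha \ind{A_y}\bigr]\, du\, dy,
\]
where $A_y := \{B^d(y,|x-y|) \cap \eta_{|\lambda D_0 \times [0,u)} = \emptyset\}$ is exactly the event that $(y,u)$ would connect to $(x,s)$ once $(x,s)$ is added. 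Lemma~\ref{lemGeo1} applied to $\lambda D_0$ gives $|\lambda D_0 \cap B^d(y,r)| \geq c_1 r^d$ for $r \leq \ell\lambda$, so the Poisson void formula yields both $\p(A_y) \leq \exp(-c_1 u |x-y|^d)$ and $\p(e(y,u,\eta_{\lambda D_0}) > r) \leq \exp(-c_1 u r^d)$.

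On $A_y$, $e(y,u,\eta_{\lambda D_0})$ is either $0$ (no early points) or the distance to the nearest early $\eta$-point, which must lie strictly outside $B^d(y,|x-y|)$. A layer-cake decomposition therefore yields the pointwise estimate
\[
\E\bigl[e(y,u,\eta_{\lambda D_0})^\alpha \ind{A_y}\bigr] \leq |x-y|^\alpha \exp(-c_1 u |x-y|^d) + \alpha \int_{|x-y|}^{\ell\lambda} r^{\alpha-1} \exp(-c_1 u r^d)\, dr.
\]
Integrating this bound over $y \in \lambda D_0$ in polar coordinates around $x$, and substituting $v = c_1 u r^d$, converts the $r$-integrals into upper incomplete Gamma integrals; this yields the key estimate
\[
\int_{\lambda D_0} \E\bigl[e^\alpha \ind{A_y}\bigr] dy \leq C\, u^{-\alpha/d - 1}.
\]
Integrating in $u \in (s,1]$ gives $\int_s^1 u^{-\alpha/d - 1}du \leq \tfrac{d}{\alpha}\, s^{-\alpha/d}$, establishing the first bound $\E\opL \leq C\, s^{-\alpha/d}$.

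For the companion bound $\E\opL \leq C\,\lambda^\alpha$, I would split the $u$-integral at the natural scale $u_0 := \lambda^{-d}$. On $u \in (s, u_0]$ (only relevant if $s \leq u_0$) I use the trivial deterministic bound $e \leq \ell\lambda$ together with $|\lambda D_0| \leq \kappa_d \ell^d \lambda^d$ to get $\int_{\lambda D_0}\E[e^\alpha \ind{A_y}]\, dy \leq (\ell\lambda)^\alpha |\lambda D_0|$; the $u$-integration contributes at most $(u_0 - s) \leq \lambda^{-d}$, so this piece is $\lesssim \lambda^\alpha$. On $u \in [u_0,1]$ I reuse the Gamma-type estimate $\int_{\lambda D_0}\E[e^\alpha \ind{A_y}]\,dy \lesssim u^{-\alpha/d-1}$, which integrates to $\tfrac{d}{\alpha}(u_0^{-\alpha/d} - 1) \lesssim \lambda^\alpha$. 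Combining gives $\E\opL \leq C(\epsilon,\ell,d,\alpha)\,\lambda^\alpha$, and together with the $s^{-\alpha/d}$ bound yields the claim.

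The step I expect to be the principal obstacle is the $\lambda^\alpha$ bound: a single-scale approach (e.g.\ Cauchy--Schwarz combined with Lemma~\ref{lemONNGradius}, or direct use of $\p(A_y) \leq \exp(-c_1 u |x-y|^d)$ with $e \leq \ell\lambda$) systematically produces a spurious $\log\lambda$ or $\log(1/s)$ factor. Avoiding this factor requires the two-regime split above at the precise crossover scale $u_0 = \lambda^{-d}$ at which the trivial bound and the Gamma-type bound on $\int_{\lambda D_0}\E[e^\alpha \ind{A_y}]\,dy$ coincide.
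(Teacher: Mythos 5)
Your proof is correct, and it follows a genuinely different route from the paper's. You apply the Mecke formula to ``tag'' the summand, reducing $\E\opL_{(x,s)}\Fa(\eta_{\lambda D_0})$ to an integral over deterministic $(y,u)\in\lambda D_0\times(s,1]$; the connection event then becomes a void condition $A_y$, and a layer-cake decomposition combined with the observation that on $A_y$ one has $e(y,u,\eta_{\lambda D_0})\in\{0\}\cup(|x-y|,\infty)$ yields the pointwise bound. Integrating in polar coordinates around $x$ then gives $\int_{\lambda D_0}\E[e^\alpha\ind{A_y}]\,dy\lesssim u^{-\alpha/d-1}$ directly. The paper instead avoids Mecke: it relies on the cone-based stabilization radius $R_\theta(x)$, proving first the deterministic containment that any connecting point with mark $>\theta$ lies in $\bb^d(x,R_\theta(x))$, then partitions $(s,1]$ into subintervals, uses independence of $\eta$ across disjoint time slices and Lemma~\ref{lemONNGradius} to control each piece, and passes to the Riemann limit. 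Your version is arguably more elementary and self-contained, since it sidesteps the $R_\theta$ machinery entirely and never needs the mesh-to-zero argument; on the other hand, the paper's $R_\theta$-based reasoning is deliberately set up to be reused in Proposition~\ref{propONNGcondBounds}, where the same geometric constructions and Lemma~\ref{lemONNGradius} reappear after a rescaling, so the paper gets more mileage out of it. Your identification of where the essential difficulty lies is also accurate: the paper too obtains the $\lambda^\alpha$ companion bound by splitting the time integral at the crossover scale $\theta_0=\lambda^{-d}$ (its Step~3), using the crude $(\ell\lambda)^\alpha$ bound below $\theta_0$ and the $\theta^{-\alpha/d-1}$ estimate above it, exactly as you do with $u_0=\lambda^{-d}$.
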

\begin{proof} %PROOF 5	
	To prove \eqref{eqONNGRestBound}, let $(y,u) \in \eta_{\lambda D_0}$ be a point connecting to $(x,s)$ in $\eta_{\lambda D_0} + \delta_{(x,s)}$. We will proceed in three steps.
	
	\textbf{Step 1.} We claim that $y \in \bb^d(x,\R_\theta(x,\lambda D_0,\eta_{\lambda D_0}))$ for any $\theta < u$.
	
	As the $C_i(x)$, $i=1,...,K$, cover $\R^d$, there is an $i$ such that $y \in C_i(x)$. Assume that $|x-y| > R_\theta(x)$. Since $|x-y| \leq s_i(x,\lambda D_0)$, we must have 
	\[
	R_{i,\theta}(x) \leq R_\theta(x) < s_i(x,\lambda D_0)
	\]
	and hence
	\[
	\eta \cap ((\lambda D_0 \cap C_i^+(x) \cap \bb^d(x,R_\theta(x)))\times [0,\theta)) \neq \emptyset.
	\]
	So there is a point $(z,v)$ within this set and in particular $v < \theta < u$. We now have:
	\begin{itemize}
		\item $|x-y| > R_\theta(x) \geq |x-z|$
		\item $z \in C_i^+(x)$
		\item $y \in C_i(x)$.
	\end{itemize}
	By the geometric properties of the cones $C_i(x)$ and $C_i^+(x)$, shown in \cite[Lemma~3.3]{Pen05}, this implies that $|z-y| \leq |x-y|$. Since $|z-y| \neq |x-y|$ a.s., we deduce that the point $y$ will connect to $z$ rather than to $x$, which is a contradiction. We must thus have $|x-y| \leq R_\theta(x)$.
	
	\textbf{Step 2.} We will now derive the first part of the bound in \eqref{eqONNGRestBound}.
	
	For $0 \leq \theta_1 < \theta_2 \leq 1$, define
	\[
	H(\theta_1,\theta_2) := \sum_{\substack{(y,u) \in \\ \eta \cap (\lambda D_0 \times (\theta_1,\theta_2])}} e(y,u,\eta_{\lambda D_0})^\alpha \ind{(y,u) \rightarrow (x,s) \text{ in } \eta_{\lambda D_0} + \delta_{(x,s)}}.
	\]
	This is the contribution to the sum $\opL_{(x,s)}\Fa(\eta_{\lambda D_0})$ of points with marks in the interval $(\theta_1,\theta_2]$, so that
	\begin{equation} \label{iP5Lsplit}
	\opL_{(x,s)}\Fa(\eta_{\lambda D_0}) = H(0,1) = H(s,1) = \sum_{i=1}^n H(\theta_{i-1},\theta_i),
	\end{equation}
	for any partition $s=\theta_0 < \theta_1 < ... < \theta_n = 1$. 
	
	Let $(y,u) \in \eta \cap \left(\lambda D_0 \times (\theta_{i-1},\theta_i]\right)$ be a point connecting to $x$. By Step 1, we have $y \in \bb(x,R_{\theta_{i-1}}(x))$. By construction, either $\lambda D_0 \cap B^d(x,R_{\theta_{i-1}}(x))$ contains a point $(w,v)$ of $\eta$ with mark less than $\theta_{i-1}$, in which case $e(y,u,\eta_{\lambda D_0}) \leq |y-w| \leq |x-y| + |x-w|$, or $\lambda D_0 \subset B^d(x,R_{\theta_{i-1}}(x))$. In both cases,
	\begin{equation}\label{iP5Edge}
	e(y,u,\eta_{\lambda D_0}) \leq |x-y| + R_{\theta_{i-1}}(x) \leq 2 R_{\theta_{i-1}}(x).
	\end{equation}
	Moreover, the number of points in $\lambda D_0 \times (\theta_{i-1},\theta_i]$ connecting to $x$ is upper bounded by the total number of points in $\eta \cap \left((\lambda D_0 \cap \bb^d(x,R_{\theta_{i-1}}(x))) \times (\theta_{i-1},\theta_i]\right)$.
	From this we conclude that
	\[
	H(\theta_{i-1},\theta_i) \leq 2^\alpha R_{\theta_{i-1}}(x)^\alpha \eta \left((\lambda D_0 \cap \bb^d(x,R_{\theta_{i-1}}(x))) \times (\theta_{i-1},\theta_i]\right).
	\]
	To upper bound the expectation of $H(\theta_{i-1},\theta_i)$, we are going to use the fact that $\eta_{|\lambda D_0 \times [0,\theta_{i-1}]}$ and $\eta_{|\lambda D_0 \times (\theta_{i-1},\theta_i]}$ are independent and $R_{\theta_{i-1}}(x)$ is measurable with respect to $\eta_{|\lambda D_0 \times [0,\theta_{i-1}]}$. We can thus calculate
	\begin{align}
		\E \left[ H(\theta_{i-1},\theta_i) \right] &\leq 2^\alpha \E \left[R_{\theta_{i-1}}(x)^\alpha \eta \left((\lambda D_0 \cap \bb^d(x,R_{\theta_{i-1}}(x))) \times (\theta_{i-1},\theta_i]\right)\right] \notag\\
		&= 2^\alpha \E \left[R_{\theta_{i-1}}(x)^\alpha \E \left[\left.\eta \left((\lambda D_0 \cap \bb^d(x,R_{\theta_{i-1}}(x))) \times (\theta_{i-1},\theta_i]\right)\right|\eta_{|\lambda D_0 \times [0,\theta_{i-1}]}\right]\right] \notag\\
		&= 2^\alpha \E \left[ R_{\theta_{i-1}}(x)^\alpha \big|\lambda D_0 \cap \bb^d(x,R_{\theta_{i-1}}(x))\big| (\theta_i-\theta_{i-1}) \right]. \label{Rainbow14}
	\end{align}
	Upper bounding the volume of the intersection by $\kappa_d R_{\theta_{i-1}}(x)^d$ and applying Lemma~\ref{lemONNGradius} yields that \eqref{Rainbow14} is bounded by
	\[
	2^\alpha \kappa_d (\theta_i-\theta_{i-1}) \E \left[R_{\theta_{i-1}}(x)^{\alpha + d}\right] \leq 2^\alpha \kappa_d (\theta_i-\theta_{i-1}) c\, \theta_{i-1}^{-\alpha/d-1},
	\]
	with $c = c_3(\epsilon,d,\ell,\alpha + d)$. Combining this with \eqref{iP5Lsplit}, we infer that
	\[
	\E \left[ \opL_{(x,s)}\Fa(\eta_{\lambda D_0})\right] \leq 2^\alpha \kappa_d c \sum_{i=1}^n \theta_{i-1}^{-\alpha/d-1} (\theta_i-\theta_{i-1})
	\]
	for any partition $s=\theta_0< \theta_1 < ... < \theta_n=1$. Letting $n \rightarrow \infty$ and the mesh of the partition tend to $0$, we get
	\begin{align}
		\E \left[ \opL_{(x,s)}\Fa(\eta_{\lambda D_0})\right] &\leq 2^\alpha \kappa_d c \int_s^1 \theta^{-\alpha/d-1} d\theta \notag\\
		&= 2^\alpha \kappa_d c \frac{d}{\alpha} \left(s^{-\alpha/d}-1\right) \notag\\
		&\leq c' s^{-\alpha/d},
	\end{align}
	with $c':= 2^\alpha \kappa_d c_3(\epsilon,d,\ell,\alpha + d) \frac{d}{\alpha}$, a constant independent of $x$, $s$ and $\lambda$.
	
	\textbf{Step 3.} To show the second part of \eqref{eqONNGRestBound}, note that
	\begin{equation}\label{E35}
	\opL_{(x,s)}\Fa(\eta_{\lambda D_0}) \leq \opL_{(x,0)}\Fa(\eta_{\lambda D_0}).
	\end{equation}
	Indeed, $e(y,u,\eta_{\lambda D_0})$ is independent of $(x,s)$ and hence remains unchanged for any $(y,u)\in \eta_{\lambda D_0}$ if we reduce the arrival time of $(x,s)$ to zero. Any point connecting to $(x,s)$ will also connect to $(x,0)$, hence no terms are deleted from the sum. Some positive terms might be added, since there may be points connecting to $(x,0)$ but not to $(x,s)$.
	
	For any $0<\theta_0<1$,
	\[
	\opL_{(x,0)}\Fa(\eta_{\lambda D_0}) = H(0,\theta_0) + \opL_{(x,\theta_0)}\Fa(\eta_{\lambda D_0}).
	\]
	A crude upper bound for the first term on the RHS is as follows:
	\[
	H(0,\theta_0) \leq \diam(\lambda D_0)^\alpha \eta(B^d(x,\lambda \diam(D_0)) \times [0,\theta_0)).
	\]
	Taking expectation, we get
	\[
	\E H(0,\theta_0) \leq \kappa_d\diam(D_0)^{\alpha+d} \lambda^{\alpha + d} \theta_0.
	\]
	On the other hand, by Step 2 we have
	\begin{equation}\label{E39}
	\E \left[ \opL_{(x,\theta_0)}\Fa(\eta_{\lambda D_0})\right] \leq c' \theta_0^{-\alpha/d}.
	\end{equation}
	Combining \eqref{E35}-\eqref{E39} yields for any $0<\theta_0<1$ that
	\[
	\E \left[\opL_{(x,s)}\Fa(\eta_{\lambda D_0})\right] \leq \kappa_d\diam(D_0)^{\alpha+d} \lambda^{\alpha + d} \theta_0 + c' \theta_0^{-\alpha/d}.
	\]
	Choosing $\theta_0 := \lambda^{-d}$, we deduce
	\[
	\E \opL_{(x,s)}\Fa(\eta_{\lambda D_0}) \leq \left(\kappa_d\diam(D_0)^{\alpha+d} + c'\right) \lambda^{\alpha}.
	\]
	The bound in \eqref{eqONNGRestBound} follows with $c_4(\epsilon,\ell,d,\alpha) := \kappa_d\diam(D_0)^{\alpha+d} + c'$.
\end{proof}

\begin{proof}[Proof of Prop.~\ref{propONNGcondBounds}]%Proof 6
	\textbf{Proof of \eqref{eqONNGdxBound}.} We will show that there is a constant $C_1>0$ such that for all $(x,s)\in tH\times[0,1]$ and $t\geq 1$
	\begin{equation}\label{iP6bound1}
		\E \big[|\MalD_{(x,s)}\fat| \big| \eta_{|tH \times [0,s)}\big] \leq C_1 R_s(x,tH,\eta_{tH})^\alpha,
	\end{equation}
	where $C_1$ does not depend on $x,\ s$ or $t$. The claim \eqref{eqONNGdxBound} then follows by Lemma~\ref{lemONNGradius}.
	
	From here on, write $R_s(x)$ for $R_s(x,tH,\eta_{tH})$.
	
	By \eqref{eqDbyL} and since $e(x,s,\eta_{tH}) \leq R_s(x)$, which is measurable with respect to $\eta_{|tH \times [0,s)}$, it is enough to show that
	\begin{equation}\label{iP6Lbound1}
		\E \big[|\opL_{(x,s)}\fat| \big| \eta_{|tH \times [0,s)}\big] \leq C_2 R_s(x)^\alpha,
	\end{equation}
	for some constant $C_2>0$.
	
	As established in the proof of Lemma~\ref{lemONNGFirstBounds}, all points connecting to $(x,s)$ are points of $\eta$ inside the set $A(t,x,s):=(tH \cap \bb^d(x,R_s(x)))\times (s,1]$. Since this is included in $tH$, a set of finite measure, the total number of points in $\eta \cap A(t,x,s)$ is almost surely finite. Denote the points connecting to $(x,s)$ by $(y_1,s_1),...,(y_m,s_m)$, ordered by increasing mark, i.e. $s_1<s_2<...<s_m$.
	
	With this notation, we have
	\begin{equation}\label{iP6Lbound2}
	\opL_{(x,s)}\fat = \sum_{i=1}^m e(y_i,s_i,\eta_{tH})^\alpha.
	\end{equation}
	When removing all points of $\eta$ outside of $A(t,x,s)$, i.e. all points further than $R_s(x)$ from $x$ and all points of mark less than $s$, then for each of the points $y_2,...,y_m$, the edge-length to its online nearest neighbour in $\eta_{tH}$ will either stay constant or increase. It will never be zero because $y_1$ remains as a potential nearest neighbour. In formulas:
	\[
	e(y_i,s_i,\eta_{tH}) \leq e(y_i,s_i,\eta_{|A(t,x,s)}), \qquad \text{for } i=2,...,m.
	\]
	For $i=1$, we know as in the proof of Lemma~\ref{lemONNGFirstBounds} that
	\[
	e(y_1,s_i,\eta_{tH}) \leq 2 R_s(x).
	\]
	Combining \eqref{iP6Lbound2} with the above discussion, we get
	\[
	\opL_{(x,s)}\fat \leq 2^\alpha R_s(x)^\alpha + \sum_{i=2}^m e(y_i,s_i,\eta_{|A(t,x,s)}).
	\]
	Using that edge-lengths are non-negative, we have
	\begin{equation}\label{iP6Lbound3}
		\opL_{(x,s)}\fat \leq 2^\alpha R_s(x)^\alpha + \sum_{\substack{(y,u) \in \\ \eta \cap A(t,x,s)}} e(y,u,\eta_{|A(t,x,s)})^\alpha \ind{(y,u) \rightarrow (x,s) \text{ in } \eta_{|A(t,x,s)} + \delta_{(x,s)}}.
	\end{equation}
	Here we have added the edge contributions from points that would not connect to $(x,s)$ in $\eta_{tH} + \delta_{(x,s)}$, but that do connect to $(x,s)$ in the smaller set $A(t,x,s) + \delta_{(x,s)}$. The sum on the RHS on \eqref{iP6Lbound3} still includes the points $(y_i,s_i)$ for $i \geq 2$.
	
	Observe that $R_s(x)>0$ a.s. and define
	\[
	D_0 := R_s(x)^{-1} \left(tH \cap \bb^d(x,R_s(x))\right).
	\]
	Now we have $A(t,x,s) = R_s(x)D_0 \times (s,1]$ and we can write the second term in \eqref{iP6Lbound3} as
	\begin{equation}\label{iP6sumsum}
	\sum_{\substack{(y,u) \in \\ \eta \cap (R_s(x)D_0 \times (s,1])}} e(y,u,\eta_{|R_s(x)D_0 \times (s,1]})^\alpha \ind{(y,u) \rightarrow (x,s) \text{ in } \eta_{|R_s(x)D_0 \times (s,1]} + \delta_{(x,s)}}.
	\end{equation}
	By the properties of Poisson measures, $\eta_{|tH \times [0,s)}$ (and hence also $R_s(x)$) is independent of $\eta_{|tH \times (s,1]}$. Conditioning on $\eta_{|tH \times [0,s)}$, the sum \eqref{iP6sumsum} is equal in law to $\opL_{(x,0)}\Fa(\eta_{|\lambda D_0 \times [0,1-s)}')$, where $\lambda=R_s(x)$ and $\eta'$ is an independent copy of $\eta$.
	
	We clearly have that
	\[
	\opL_{(x,0)}\Fa(\eta_{|\lambda D_0 \times [0,1-s)}') \leq \opL_{(x,0)}\Fa(\eta_{|\lambda D_0 \times [0,1]}') = \opL_{(x,0)}\Fa(\eta_{\lambda D_0}')
	\]
	since adding points of higher mark only adds more edges to the graph.
	
	By inequality \eqref{eqONNGRestBound} of Lemma~\ref{lemONNGFirstBounds},
	\[
	\E_{\eta'}\left[\opL_{(x,0)}\Fa(\eta_{\lambda D_0}')\right] \leq c_4(\epsilon,\ell,d,\alpha) \lambda^\alpha = c_4(\epsilon,\ell,d,\alpha) R_s(x)^\alpha,
	\]
	where $\ell$ is an upper bound for $\diam(D_0)$ and $\epsilon$ is such that there is a point $x_0 \in D_0$ with $B^d(x_0,\epsilon) \subset D_0$. Our claim \eqref{iP6Lbound1} is proven if we can find such deterministic $\ell$ and $\epsilon$ that do not depend on $x,\ s$ or $t$.
	
	We have that
	\[
	D_0 = R_s(x)^{-1}tH \cap \bb^d(R_s(x)^{-1}x,1),
	\]
	therefore $\diam(D_0)\leq 2=:\ell$. To find an $\epsilon$, let $w:= t^{-1}x \in H$ and $r := t^{-1}R_s(x)$. Since $R_s(x)\leq t\diam(H)$, we have $r \leq \diam(H)$ and by Lemma~\ref{lemGeo1}, there exists a $0<q<1$ depending on $H$ and its properties such that there is a point $w' \in H$ with
	\[
	B^d(w',qr) \subset H \cap B^d(w,r).
	\]
	Multiplying by $tR_s(x)^{-1}$ yields
	\[
	B^d(tR_s(x)^{-1}w',q) \subset R_s(x)^{-1} \left(tH \cap B^d(x,R_s(x))\right) = D_0.
	\]
	We can thus pick $\epsilon := q$ and the result is shown.
	
	\textbf{Proof of \eqref{eqONNGdxybound}.} By Lemma~\ref{lemONNGradius}, it suffices to show that
	\[
	\E\big[|\DD_{(x,s),(y,u)}\fat| \big|\eta_{|tH \times [0,u)}\big] \leq c R_u(y)^\alpha,
	\]
	for some constant $c>0$.
	
	By the triangle inequality,
	\[
	|\DD_{(x,s),(y,u)}\fat| \leq |\MalD_{(y,u)} \Fa(\eta_{tH})| + |\MalD_{(y,u)} \Fa(\eta_{tH}+\delta_{(x,s)})|.
	\]
	The inequality \eqref{iP6bound1} deals with the first term on the RHS. For the second term, we use \eqref{eqDbyL} to say
	\[
	|\MalD_{(y,u)} \Fa(\eta_{tH}+\delta_{(x,s)})| \leq R_u(y,tH,\eta_{tH}+\delta_{(x,s)})^\alpha + \opL_{(y,u)} \Fa (\eta_{tH}+\delta_{(x,s)}).
	\]
	Reusing arguments from the proof of Lemma~\ref{lemONNGFirstBounds}, one sees that
	\[
	R_u(y,tH,\eta_{tH}+\delta_{(x,s)}) \leq R_u(y,tH,\eta_{tH}),
	\]
	which yields the required bound for this term. On the other hand, if we remove a point of mark lower than the one of $y$, more points might connect to $y$, and no points already connected to $y$ will change neighbour. Therefore
	\[
	\opL_{(y,u)} \Fa (\eta_{tH}+\delta_{(x,s)}) \leq \opL_{(y,u)} \Fa (\eta_{tH}).
	\]
	The result now follows using \eqref{iP6Lbound1}.
	
	\textbf{Proof of \eqref{eqONNGpBound}.} Using ideas explained in \cite[p.673]{LPS14}, we will show that for any convex body $A \subset \R^d$, for all $(x,s),(y,u) \in A \times [0,1]$ with $s<u$ and any finite set $\mu \subset A \times [0,1]$ generic with respect to $(x,s)$ and $(y,u)$, the condition 
	\begin{equation}\label{iP6cond}
		|x-y| > 3R_u(y,A,\mu),
	\end{equation}
	implies that $\DD_{(x,s),(y,u)}\Fa(\mu) = 0$. This property is related to the theory of stabilisation, see \cite[p.673]{LPS14} for a discussion.
	Recall that
	\begin{multline}
	\MalD_{(y,u)}\Fa(\mu)\\ = e(y,u,\mu)^\alpha + \sum_{\substack{(z,v) \in \\ \mu \cap (\R^d \times (u,1])}} \left( e(z,v,\mu + \delta_{(y,u)})^\alpha - e(z,v,\mu)^\alpha \right)\ind{(z,v) \rightarrow (y,u) \text{ in } \mu + \delta_{(y,u)}}
	\end{multline}
	and
	\[
	\DD_{(x,s),(y,u)}\Fa(\mu) = \MalD_{(y,u)}\Fa(\mu + \delta_{(x,s)}) - \MalD_{(y,u)}\Fa(\mu).
	\]
	Condition \eqref{iP6cond} implies that $A \not\subset \bb^d(y,R_u(y,A,\mu))$ and hence that $(y,u)$ has an online nearest neighbour in $\mu$ and $e(y,u,\mu) \neq 0$. Thus $e(y,u,\mu+\delta_{(x,s)}) = |x-y| \wedge e(y,u,\mu)$, but at the same time
	\[
	e(y,u,\mu) \leq R_u(y,A,\mu) < |x-y|.
	\]
	It follows that $e(y,u,\mu+\delta_{(x,s)}) = e(y,u,\mu)$. Let $(z,v)$ be a point that connects to $(y,u)$ in $\mu + \delta_{(y,u)}$. Then as shown previously,
	\begin{equation}
	|z-y| \leq R_u(y,A,\mu) \quad \text{and} \quad e(z,v,\mu) \leq 2R_u(y,A,\mu).\label{Rainbow15}
	\end{equation}
	By conditions \eqref{iP6cond} and \eqref{Rainbow15},
	\[
	|x-z| \geq |x-y| - |z-y| > 2 R_u(y,A,\mu) \geq \max\{|z-y| , e(z,v,\mu)\}
	\]
	and as before $e(z,v,\mu)\neq 0$, since $(y,u)$ has an online nearest neighbour in $\mu$. Put together, this implies that $(z,v)$ will not connect to $(x,s)$, neither in $\mu+\delta_{(x,s)}$, nor in $\mu + \delta_{(y,u)} + \delta_{(x,s)}$. It follows that
	\[
	e(z,v,\mu + \delta_{(y,u)}) = e(z,v,\mu + \delta_{(y,u)} + \delta_{(x,s)}) \quad \text{and} \quad e(z,v,\mu) = e(z,v,\mu + \delta_{(x,s)}).
	\]
	This means that the addition of $(x,s)$ does not induce any changes to $\MalD_{(y,u)}\Fa(\mu)$ and we deduce that
	\[
	\MalD_{(y,u)}\Fa(\mu + \delta_{(x,s)}) = \MalD_{(y,u)}\Fa(\mu)
	\]
	and thus $\DD_{(x,s),(y,u)}\Fa(\mu) = 0$.
	
	Note that $R_u(y,A,\mu)$ and the reasoning above only depend on $\mu_{|A \times [0,u)}$ and thus for any finite $\chi \subset A \times (u,1]$, by the same reasoning one concludes that $R_u(y,A,\mu_{|A \times [0,u)} \cup \chi) = R_u(y,A,\mu_{|A \times [0,u)}) = R_u(y,A,\mu)$ and
	\[
	\DD_{(x,s),(y,u)}\Fa(\mu_{|A \times [0,u)} \cup \chi) = 0 \qquad \text{if } |x-y| > 3 R_u(y,A,\mu).
	\]
	In particular, this implies that for $(x,s), (y,u) \in tH\times[0,1]$ with $s<u$ and $|x-y| > 3R_u(y,tH,\eta_{tH})$,
	\[
	\E \big[\DD_{(x,s),(y,u)} \fat \big|\eta_{|tH \times [0,u)}\big] = \int \DD_{(x,s),(y,u)} \fat(\eta_{|tH \times [0,u)} + \chi) \Pi_{u}(d\chi) = 0,
	\]
	where $\Pi_{u}$ is the law of $\eta_{|tH \times (u,1]}$.
	
	We conclude that
	\[
	\p\left(\E \big[\DD_{(x,s),(y,u)} \fat \big|\eta_{|tH \times [0,u)}\big] \neq 0 \right) \leq \p\left(|x-y| < 3 R_u(y,tH,\eta_{tH})\right).
	\]
	As in the proof of Lemma~\ref{lemONNGradius}, there is a constant $c>0$ such that
	\[
	\p\left(\tfrac{1}{3}|x-y| < R_u(y,tH,\eta_{tH})\right) \leq K \exp(-uc|x-y|^d),
	\]
	which concludes the proof.
\end{proof}

\begin{rem}
	The proofs of Lemma~\ref{lemONNGFirstBounds} and the inequality \eqref{eqONNGdxBound} given in Proposition~\ref{propONNGcondBounds} build on and extend ideas used in \cite{Wade2009}. In particular, the proof of \eqref{eqONNGdxBound} adapts a rescaling argument from the proof of \cite[Lemma~3.2]{Wade2009} and extends it to the Poisson setting and to arbitrary convex bodies. As already discussed, in our proof we need the fine control over constants introduced in Lemma~\ref{lemGeo1}.
\end{rem}

\subsection{Moment estimates of add-one costs}
To find the speed of convergence in the Kolmogorov distance, we need bounds on quantities of the type $\E |\MalD_{(x,s)}\fat|^r$, with $r \geq 1$.
\begin{prop}\label{propONNGBounds}
	We work under the conditions of Theorem~\ref{thmONNG}. Let $\alpha>0$ and $r \geq 1$. For every $\epsilon>0$ and  for all $t\geq 1$ and all $(x,s),(y,u) \in tH\times(0,1]$ with $s<u$,
	\begin{align}
		&\E\left[ |\MalD_{(x,s)}\fat|^r \right]^{1/r} \leq c_1 s^{-\alpha/d-\epsilon} \label{eqONNGdxBound2} \\
		&\E\left[ |\DD_{(x,s),(y,u)}\fat|^r \right]^{1/r} \leq c_1 u^{-\alpha/d-\epsilon} \label{eqONNGdxybound2} \\
		&\p\left( \DD_{(x,s),(y,u)}\fat \neq 0 \right) \leq C_2 \exp(-c_2 u|x-y|^d),\label{eqONNGpBound2}
	\end{align}
	where $c_1>0$ is a constant depending on $\alpha,r$ and $\epsilon$ and $c_2,C_2>0$ are absolute.
\end{prop}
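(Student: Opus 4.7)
The plan is to mirror the proof of Proposition~\ref{propONNGcondBounds} but to estimate (unconditional) $r$-th moments of $\MalD_{(x,s)}\fat$ and $\DD_{(x,s),(y,u)}\fat$ directly, rather than $r$-th moments of their conditional expectations. Inequality \eqref{eqONNGpBound2} is essentially already contained in the proof of \eqref{eqONNGpBound}: the stabilisation property established there shows that $|x-y| > 3 R_u(y, tH, \eta_{tH})$ implies $\DD_{(x,s),(y,u)}\fat = 0$ pathwise (not merely in conditional expectation), so
\[
\p\big(\DD_{(x,s),(y,u)}\fat \neq 0\big) \leq \p\big(R_u(y, tH, \eta_{tH}) > |x-y|/3\big),
\]
and the latter is controlled by the Gaussian-type tail estimate derived in the proof of Lemma~\ref{lemONNGradius}.

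The heart of the argument is \eqref{eqONNGdxBound2}. Starting from the pointwise inequality \eqref{eqDbyL},
\[
|\MalD_{(x,s)}\fat| \leq R_s(x, tH, \eta_{tH})^\alpha + \opL_{(x,s)}\fat,
\]
the first term has $L^r$-norm bounded by $c\, s^{-\alpha/d}$ by a direct application of Lemma~\ref{lemONNGradius} (with $\beta = r\alpha$). For the second term, I would extend the strategy of Lemma~\ref{lemONNGFirstBounds} to higher moments: partition $(s,1]$ into dyadic slabs $\theta_i = (s 2^i) \wedge 1$, decompose $\opL_{(x,s)}\fat = \sum_i H(\theta_{i-1}, \theta_i)$ and apply Minkowski's inequality in $L^r$. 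As in Step~2 of the proof of Lemma~\ref{lemONNGFirstBounds}, one obtains the pointwise bound $H(\theta_{i-1},\theta_i) \leq 2^\alpha R_{\theta_{i-1}}(x)^\alpha Z_i$, where $Z_i$ counts points of $\eta$ in $(tH \cap \bb^d(x, R_{\theta_{i-1}}(x))) \times (\theta_{i-1}, \theta_i]$. Conditioning on $\eta_{|tH \times [0, \theta_{i-1}]}$ makes $Z_i$ a Poisson variable with mean at most $\kappa_d R_{\theta_{i-1}}(x)^d (\theta_i - \theta_{i-1})$, so standard Poisson moment bounds combined with Lemma~\ref{lemONNGradius} (applied with $\beta = r\alpha + d$ and $\beta = r\alpha + rd$) yield $\E[H(\theta_{i-1}, \theta_i)^r]^{1/r} \lesssim (s 2^{i-1})^{-\alpha/d}$. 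Summing the resulting geometric series over $i$ produces a bound of order $s^{-\alpha/d}$, which is stronger than, and in particular implies, the stated bound $s^{-\alpha/d - \epsilon}$.

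Inequality \eqref{eqONNGdxybound2} then follows from \eqref{eqONNGdxBound2} via the triangle inequality
\[
|\DD_{(x,s),(y,u)}\fat| \leq |\MalD_{(y,u)}\Fa(\eta_{tH})| + |\MalD_{(y,u)}\Fa(\eta_{tH} + \delta_{(x,s)})|,
\]
combined with two monotonicity properties: $R_u(y, tH, \eta_{tH} + \delta_{(x,s)}) \leq R_u(y, tH, \eta_{tH})$ (adding an earlier point can only shrink the cone radii, since $s < u$ means $(x,s)$ enters the infimum defining each $R_{i,u}$) and $\opL_{(y,u)}\Fa(\eta_{tH} + \delta_{(x,s)}) \leq \opL_{(y,u)}\Fa(\eta_{tH})$ (the summation set is unchanged since $s<u$, and each edge length and connection indicator can only decrease when an earlier point is added). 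These allow the $L^r$-bound derived for $|\MalD_{(y,u)}\Fa(\eta_{tH})|$ to be transferred to $|\MalD_{(y,u)}\Fa(\eta_{tH} + \delta_{(x,s)})|$ with $s$ replaced by $u$.

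The main obstacle is the delicate bookkeeping in the $L^r$-estimate for $\opL_{(x,s)}\fat$: one must simultaneously control moments of $R_{\theta_{i-1}}$ (which concentrate around $\theta_{i-1}^{-1/d}$) and the conditional Poisson moments of $Z_i$ (whose conditional mean is of order $R_{\theta_{i-1}}^d(\theta_i-\theta_{i-1})$) without producing a non-summable series. The choice of a dyadic partition is crucial here: a uniform partition leads to divergent Riemann sums as the mesh refines once $r>1$, whereas the dyadic partition renders the conditional mean of $Z_i$ essentially of constant order in $i$ and yields a convergent geometric series.
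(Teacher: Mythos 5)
Your proposal is correct, and for the core estimate \eqref{eqONNGdxBound2} it takes a genuinely different route from the paper. The paper's proof does not partition the mark interval at all: it bounds $\opL_{(x,s)}\fat \leq (2R_s(x))^\alpha |A(x,s)|$ globally, where $A(x,s)$ is the set of points connecting to $(x,s)$, and then uses a combinatorial ``record'' argument. Namely, conditional on the number $N_i$ of points in each cone-ball, the connecting points in cone $i$ are exactly the running minima of the marks along the distance ordering, so $\E\big[\binom{|A_i(x,s)|}{m}\,\big|\, N_i=n\big] = \sum_{j_1<\dots<j_m\leq n}(j_1\cdots j_m)^{-1} \leq \frac{1}{m!}(\log n+1)^m$; the resulting polylogarithm in $N_i$ (which is conditionally Poisson of mean $\asymp R_s(x)^d$) is then crudely dominated by $c\,n^{\epsilon'}$, which is exactly where the $\epsilon$ in the statement comes from. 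Your dyadic decomposition avoids the record-counting entirely: bounding the number of connecting points in each slab crudely by the total point count $Z_i$ costs nothing, because the dyadic scaling makes the conditional mean $\kappa_d R_{\theta_{i-1}}(x)^d(\theta_i-\theta_{i-1})$ of order one, so $\E[Z_i^r]^{1/r}$ is uniformly bounded, and the decaying radii $R_{\theta_{i-1}}(x)$ supply a convergent geometric series after Minkowski. The two approaches trade off in opposite directions: the paper's bound on the edge-count is combinatorially sharp but applies a single coarse radius $R_s(x)$ to every edge, while yours is crude on the count but tracks the decay of edge-lengths across scales. Your version actually yields the strictly sharper bound $c\,s^{-\alpha/d}$, eliminating the $\epsilon$ (equivalently, the constant no longer blows up as $\epsilon \to 0$), which more than suffices for the stated claim. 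Your observations about why a uniform partition fails for $r>1$ (a factor $\delta^{1/r-1}$ appears under Minkowski) and about the two monotonicity properties used for \eqref{eqONNGdxybound2} are both accurate, and your treatment of \eqref{eqONNGpBound2} coincides with the paper's.
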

\begin{rem}
	The bound \eqref{eqONNGdxBound2} is an extension to arbitrary exponents $r \geq 1$ of what was shown in \cite[Lemma~3.4]{Pen05}. The proof below builds on the same arguments, but without using uniform random vectors.
\end{rem}
\begin{proof}
	\textbf{Proof of \eqref{eqONNGdxBound2}.} By \eqref{eqDbyL} and \eqref{iP5Edge}, it can be seen that
	\begin{equation}\label{iP9dx}
	|\MalD_{(x,s)}\fat|^r \leq R_s(x)^{r \alpha} 2^{r-1} \left(1 + 2^{r\alpha}  |A(x,s)|^r \right).
	\end{equation}
	where $A(x,s) := \{(y,u) \in \eta_{|tH \times (s,1]}:\ (y,u) \rightarrow (x,s) \text{ in } \eta_{tH} + \delta_{(x,s)}\}$ is the set of points in $\eta_{tH}$ that will connect to $(x,s)$ upon addition of this point. Recall that any point connecting to $(x,s)$ must be inside $\bb^d(x,R_s(x))$. For each $i=1,...,K$, define
	\begin{multline}
	A_i(x,s) := \{ (y,u) \in \eta_{tH} \cap (C_i(x) \cap \bb^d(x,R_s(x)) \times (s,1]) : \\ |x-y| < |x-z|\ \forall (z,v) \in \eta_{tH} \cap (C_i(x) \cap \bb^d(x,R_s(x)) \times (s,u)) \}.
	\end{multline}
	This is the set of points $(y,u)$ in $\eta_{tH}$ inside the intersection of the cone $C_i(x)$ with $\bb^d(x,R_s(x))$ that are closer to $x$ than any other point of mark between $s$ and $u$ within this cone. Any point $(y,u)$ connecting to $(x,s)$ must be within such a set for some $i$, else there is a point closer to $y$ than $x$ and of lower mark, i.e. a potential neighbour. Hence
	\[
	|A(x,s)|^r \leq \left(\sum_{i=1}^K |A_i(x,s)|\right)^r \leq K^{r-1} \left(\sum_{i=1}^K |A_i(x,s)|^r\right).
	\]
	Define $m:= \ceil{r}$ and fix $i \in \{1,...,K\}$. Since $|A_i(x,s)|$ is a non-negative integer and $\frac{a}{m} \leq \frac{a-k}{m-k}$ for $0\leq k \leq m-1$ and $a \geq m$, we have
	\begin{equation}\label{iP9bound1}
		|A_i(x,s)|^r \leq |A_i(x,s)|^m \leq m^m {|A_i(x,s)| \choose m} + (m-1)^m.
	\end{equation}
	
	Our goal is now to estimate ${|A_i(x,s)| \choose m}$. First, let
	\[
	G_i := \eta_{tH} \cap \left( C_i(x) \cap B^d(x,R_s(x)) \times (s,1] \right)
	\]
	be the set of points in $\eta_{tH}$ that are closer than $R_s(x)$ to $x$, within the cone $C_i(x)$ and of mark higher than $s$. Any point connecting to $x$ must be within this set. Let $N_i:= |G_i|$ be the random number of points inside $G_i$ and note that $N_i$ is almost surely finite. Given $N_i$, the points in $G_i$ can be denoted by the random coordinates $\{(y_1,s_1),...,(y_{N_i},s_{N_i})\}$, where $y_1,...,y_{N_i}$ are the spatial coordinates of the points and $s_1,...,s_{N_i}$ are the marks of the points.

	As $N_i$ is almost surely finite, we can assume w.l.o.g. that the points $y_1,...,y_{N_i}$ are ordered by increasing distance to $x$. We now condition on the event $N_i=n\neq 0$ and, given this conditioning, we also take conditional expectation with respect to the $\sigma$-algebra generated by the random coordinates $y_j$. Since $A_i(x,s) \subset G_i$, we have
	\begin{align}
	 	\E \left[\left. {|A_i(x,s)| \choose m} \right| N_i=n,y_1,...,y_n \right] &= \sum_{\substack{\{z_1,...,z_m\} \subset \{y_1,...,y_n\} \\ \text{distinct}}} \E \left[\left.\ind{z_1,...,z_m \in A_i(x,s) }\right| N_i=n,y_1,...,y_n\right] \notag\\
	 		&= \sum_{1\leq j_1<j_2<...<j_m\leq n} \p \left(\{y_{j_1},...,y_{j_m}\} \subset A_i(x,s) | N_i=n,y_1,...,y_n\right)
	\end{align}
 	Note that this expression is zero if $n<m$.
 	
 	Conditional on the event $N_i=n$, the marks $s_1,...,s_n$ are independent of the spatial coordinates $y_1,...,y_n$ and i.i.d. uniformly distributed in $(s,1]$. Hence any ordering of the marks is equally likely. Conditional on the spatial coordinates $y_1,...,y_n$, the event $\{y_{j_1},...,y_{j_m}\} \subset A_i(x,s)$ happens if and only if $y_{j_1}$ has the smallest mark among the points $y_1,y_2,...,y_{j_1}$, and $y_{j_2}$ has the smallest mark among the points $y_1,y_2,...,y_{j_2}$ etc. The probability that this happens is exactly given by $(j_1j_2...j_m)^{-1}$. Thus
 	\[
 	\E \left[\left. {|A_i(x,s)| \choose m} \right| N_i=n,y_1,...,y_n \right] = \sum_{1\leq j_1<j_2<...<j_m\leq n} (j_1j_2...j_m)^{-1} \leq \frac{1}{m!} \left(\sum_{j=1}^n \frac{1}{j}\right)^m.
 	\]
 	It holds that
 	\begin{equation}
 	\sum_{j=1}^n \frac{1}{j} \leq \int_1^n \frac{1}{x} dx +1 = \log(n)+1. \label{Rainbow16}
 	\end{equation}
 	The bounds developed in \eqref{iP9bound1}-\eqref{Rainbow16} yield for $n\neq 0$:
 	\[
 	\E \left[|A_i(x,s)|^r | N_i=n\right] \leq m^m \frac{1}{m!} (\log(n)+1)^m+(m-1)^m.
 	\]
 	For any $\epsilon' > 0$, the function $g:[1,+\infty) \rightarrow \R : x \mapsto \left(m^m \frac{1}{m!} (\log(x)+1)^m+(m-1)^m\right)x^{-\epsilon'}$ is bounded by a constant $c>0$ (dependent on $r$ and $\epsilon'$), implying that
 	\begin{equation}\label{Rainbow5}
 	\E \left[|A_i(x,s)|^r | N_i=n\right] \leq c n^{\epsilon'}.
 	\end{equation}
 	If $n=0$, then $|A_i(x,s)|=0$, hence the bound \eqref{Rainbow5} continues to hold. Therefore
 	\[
 	\E \left[|A_i(x,s)|^r | N_i\right] \leq c N_i^{\epsilon'}.
 	\]
 	Conditional on $\eta_{|tH \times [0,s)}$, the random variable $N_i$ is equal in law to $\eta'(tH \cap C_i(x) \cap \bb^d(x,R_s(x)) \times (s,1])$, where $\eta'$ is an independent copy of $\eta$. This quantity in turn is upper bounded by $\eta'(\bb^d(x,R_s(x)) \times [0,1])$. Hence
 	\begin{align}
 		\E \big[|A_i(x,s)|^r \big| \eta_{|tH \times [0,s)} \big] &\leq c \E \big[ \eta'(\bb^d(x,R_s(x)) \times [0,1])^{\epsilon'} \big| \eta_{|tH \times [0,s)} \big] \notag\\
 		&\leq c(\kappa_d R_s(x)^d)^{\epsilon'},
 	\end{align}
 	where we applied Jensen's inequality to pass to the second inequality. Plugging this bound into \eqref{iP9bound1}, we deduce
 	\begin{equation}\label{Rainbow6}
 	\E \left[ |A(x,s)|^r | \eta_{|tH \times [0,s)} \right] \leq cK^r \kappa_d^{\epsilon'} R_s(x)^{\epsilon' d}.
 	\end{equation}
 	Combining \eqref{Rainbow6} with \eqref{iP9dx} leads to
 	\[
 	\E\left[|\MalD_{(x,s)}\fat|^r\right] \leq c_0 \E R_s(x)^{r\alpha} (1+R_s(x)^{\epsilon' d}).
 	\]
 	for some constant $c_0>0$. By Lemma~\ref{lemONNGradius}, the RHS is bounded by $c_1 s^{-r\alpha/d-\epsilon'}$ for some constant $c_1>0$. For $\epsilon'=\epsilon r$, we get
 	\[
 	\E\left[|\MalD_{(x,s)}\fat|^r\right]^{1/r} \leq c_1^{1/r} s^{-\alpha/d-\epsilon}.
 	\]
 	
 	\textbf{Proof of \eqref{eqONNGdxybound2}.} As in the proof of \eqref{eqONNGdxybound}, we have
 	\[
 	|\MalD_{(x,s),(y,u)}\fat| \leq |\MalD_{(y,u)} \Fa(\eta_{tH})| + R_u(y)^\alpha + \opL_{(y,u)} \Fa (\eta_{tH})
 	\]
 	and the result follows by the proof of \eqref{eqONNGdxBound2}.
 	
 	\textbf{Proof of \eqref{eqONNGpBound2}.} As in the proof of \eqref{eqONNGpBound}, one see that if $|x-y|>3R_u(y)$, then
 	\[
 	\MalD_{(x,s),(y,u)}\fat = 0.
 	\]
 	Hence for some constants $c_1,c_2>0$,
 	\[
 	\p(\MalD_{(x,s),(y,u)}\fat \neq 0) \leq \p(|x-y|>3R_u(y)) \leq c_1 \exp(-c_2u|x-y|^d),
 	\]
 	as seen in Lemma~\ref{lemONNGradius}.
\end{proof}

\subsection{The orders of the variances}\label{subsecVar}

The goal of this subsection is to show the orders of the variances given in \eqref{eqONNGVarNC} and \eqref{eqONNGVarC}. For the sake of legibility, we will split the proof into several propositions.
\begin{prop}\label{propONNGVarUpper}
	There are constants $c_1,c_2>0$ such that for $0<\alpha<\frac{d}{2}$,
	\[
	\var (\fat) \leq c_1 t^d
	\]
	and
	\[
	\var(\fad) \leq c_2 t^d \log t.
	\]
\end{prop}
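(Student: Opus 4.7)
The plan is to apply the conditional variance identity from Lemma~\ref{lemCondCov} (the Last-Penrose formula) directly, and then estimate the resulting integrand using the conditional moment bound \eqref{eqONNGdxBound} of Proposition~\ref{propONNGcondBounds} with $r=2$. First, observe that $\fat$ depends on $\eta$ only through its restriction to $tH \times [0,1]$, so that $\MalD_{(x,s)}\fat = 0$ whenever $x \notin tH$, and $\E[\MalD_{(x,s)}\fat \mid \eta_{|\R^d \times [0,s)}] = \E[\MalD_{(x,s)}\fat \mid \eta_{|tH \times [0,s)}]$. Hence Lemma~\ref{lemCondCov} yields
\[
\var(\fat) = \int_{tH}\int_0^1 \E\!\left[\E[\MalD_{(x,s)}\fat \mid \eta_{|tH\times[0,s)}]^2\right] ds\,dx.
\]

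Next, I would apply inequality \eqref{eqONNGdxBound} with $r=2$, which gives a constant $c>0$ (independent of $x,s,t$) such that the inner expectation is bounded by $c^2(s^{-2\alpha/d} \wedge t^{2\alpha})$. Substituting and using $|tH| = t^d|H|$ produces
\[
\var(\fat) \leq c^2 |H|\, t^d \int_0^1 \left(s^{-2\alpha/d} \wedge t^{2\alpha}\right) ds.
\]

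The final step is to evaluate this one-dimensional integral. In the subcritical case $0<\alpha<d/2$ we have $2\alpha/d<1$, so
\[
\int_0^1 \left(s^{-2\alpha/d} \wedge t^{2\alpha}\right) ds \leq \int_0^1 s^{-2\alpha/d}\, ds = \frac{d}{d-2\alpha},
\]
which yields $\var(\fat) \leq c_1 t^d$ with $c_1 = c^2|H|\frac{d}{d-2\alpha}$. In the critical case $\alpha=d/2$ we split the integral at $s=t^{-d}$ (the threshold at which the two quantities coincide):
\[
\int_0^{t^{-d}} t^d\, ds + \int_{t^{-d}}^1 s^{-1}\, ds = 1 + d\log t,
\]
giving $\var(\fad) \leq c^2|H|\,t^d(1+d\log t) \leq c_2 t^d \log t$ for all $t$ large enough. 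There is no substantive obstacle here: the entire argument is a clean application of the Last-Penrose identity followed by Proposition~\ref{propONNGcondBounds}, and the only delicate point is the critical-case integral split that produces the logarithmic factor characteristic of $\alpha=d/2$.
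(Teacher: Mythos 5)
Your proof is correct and follows essentially the same route as the paper: the Last--Penrose conditional variance identity (Lemma~\ref{lemCondCov}), followed by the conditional moment bound on $\MalD_{(x,s)}\fat$ and the resulting one-dimensional integral with the $t^{-d}$ split producing the logarithm at criticality. The only cosmetic difference is that you invoke the stated estimate \eqref{eqONNGdxBound} directly, while the paper cites the intermediate pointwise bound \eqref{iP6bound1} (in terms of the stabilisation radius) and then Lemma~\ref{lemONNGradius}, which is exactly how \eqref{eqONNGdxBound} is proved.
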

\begin{rem}
	The bounds in this proposition were already shown in \cite[Theorem~2.1]{Wade2009} via proving the result for uniform vectors and subsequent Poissonisation. We include a proof for the sake of completeness. It is similar in spirit to the one given in \cite{Wade2009}, but works in a purely Poisson setting.
\end{rem}
\begin{proof}
	Let $0<\alpha \leq \frac{d}{2}$. Since $\fat \leq (\diam(H)t)^\alpha \eta(tH \times [0,1])$, it is clear that $\fat \in L^2(\p_\eta)$. Hence by Lemma~\ref{lemCondCov}, we have
	\begin{equation}
	\var\left(\fat\right) = \E \int_{tH} \int_0^1 \, \E\big[\MalD_{(y,u)} \fat \big| \eta_{|tH \times [0,u)} \big]^2 \,dydu. \label{Rainbow17}
	\end{equation}
	Applying \eqref{iP6bound1} to the RHS of \eqref{Rainbow17} yields
	\begin{equation}
	\var\left(\fat\right) \leq c\ \E \int_{tH} \int_0^1 \, R_u(y,tH,\eta_tH)^{2\alpha}\, dydu \label{Rainbow18}
	\end{equation}
	for some constant $c>0$. Now applying Lemma~\ref{lemONNGradius} to the RHS of \eqref{Rainbow18} gives
	\[
	\var\left(\fat\right) \leq c' \int_{tH} \int_0^1 \, \left(t^{2\alpha} \wedge u^{-2\alpha/d}\right) dydu
	\]
	for some other constant $c'>0$. Integrating now yields the result.
\end{proof}
\begin{prop}\label{propONNGlowerVar1}
	For $\alpha>0$, there is a constant $c>0$ such that
	\begin{equation}\label{Rainbow19}
	c t^d \leq \var\left(\fat\right).
	\end{equation}
\end{prop}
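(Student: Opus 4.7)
The starting point is the identity of Lemma~\ref{lemCondCov},
$$\var\bigl(\fat\bigr) = \int_{tH}\int_0^1 \E\bigl[\,\E[\MalD_{(x,s)} \fat \mid \eta_{|tH \times [0,s)}]^2\,\bigr]\, ds\, dx.$$
The plan is to restrict this integral to a subset of Lebesgue measure at least $c' t^d$ on which the integrand is bounded below by a positive constant, namely $tH_\epsilon \times [1/2,1]$ for a fixed small $\epsilon>0$. Using that $e(x,s,\eta_{tH})$ is measurable with respect to $\eta_{|tH \times [0,s)}$,
$$\E[\MalD_{(x,s)} \fat \mid \eta_{|tH \times [0,s)}] = e(x,s,\eta_{tH})^\alpha - \E[\Delta_{(x,s)} \mid \eta_{|tH \times [0,s)}],$$
where $\Delta_{(x,s)} \geq 0$ collects the reductions in edge-lengths of later-arriving points that reconnect to $(x,s)$ in $\eta + \delta_{(x,s)}$.

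For a constant $a>0$ to be tuned, I would introduce the event $A_{x,s}$ that $\eta_{|tH \times [0,s)}$ contains exactly one point in each sector $C_i(x) \cap (B^d(x,2a) \setminus B^d(x,a))$, $i=1,\ldots,K$, using the cones $C_1(x),\ldots,C_K(x)$ defined before Lemma~\ref{lemGeo2}, and no other point of $\eta_{|tH \times [0,s)}$ in $B^d(x,3a)$. For $t$ large enough and $x \in tH_\epsilon$ one has $B^d(x,3a) \subset tH$, so $A_{x,s}$ is well-defined, and a direct Poisson computation yields $\p(A_{x,s}) \geq c_0 > 0$ uniformly in $(x,s) \in tH_\epsilon \times [1/2,1]$ and in $t$. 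On $A_{x,s}$, $e(x,s,\eta_{tH}) \in [a,2a]$, and the Voronoi cell of $x$ with respect to $\eta_{|tH \times [0,s)}$ is contained in $B^d(x,2a)$; hence every later-arriving point that can reconnect to $(x,s)$ lies in this cell, and its original edge-length is at most $3a$. Integration over $tH \times (s,1]$ then yields $\E[\Delta_{(x,s)} \mid \eta_{|[0,s)}] \leq C(1-s) a^{d+\alpha}$ for some $C = C(d,\alpha)$. Choosing $a$ small enough (depending only on $C,d,\alpha$) ensures $|\E[\MalD_{(x,s)} \fat \mid \eta_{|[0,s)}]| \geq a^\alpha/2$ on $A_{x,s}$. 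Integrating the resulting pointwise lower bound $c_0 a^{2\alpha}/4$ over $tH_\epsilon \times [1/2,1]$ and invoking \eqref{eqHcond} to bound $|tH_\epsilon|$ from below by $c'' t^d$ then concludes the proof.

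The main obstacle is the bound $\E[\Delta_{(x,s)} \mid \eta_{|[0,s)}] \leq C(1-s) a^{d+\alpha}$ on the shielding event $A_{x,s}$. This requires carefully controlling both which later-arriving points can actually reconnect to $(x,s)$ under the cone configuration, and the magnitude of each such reduction. The geometric and probabilistic ingredients are in the same spirit as those appearing in Lemma~\ref{lemONNGFirstBounds} and Proposition~\ref{propONNGcondBounds}, but are used here to produce a uniform lower bound on the conditional add-one cost rather than upper bounds on its moments.
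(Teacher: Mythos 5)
Your argument is correct, and it takes a genuinely different route from the paper's. The paper proves \eqref{Rainbow19} by invoking the general variance lower bound of \cite[Theorem~5.2]{LPS14}: it places $K$ \emph{deterministic} shielding points $x+\tau z_i$ (one per enlarged cone $C_i^+(0)$, with $z_i$ in an annular sector $V_i$) and verifies the two technical hypotheses of that theorem, namely a uniform lower bound on $\bigl|\E\,\MalD_{(x,s)}\Fa\bigl(\eta_{tH}+\sum_{i}\delta_{(x+\tau z_i,s_i)}\bigr)\bigr|$ over the parameter set $U_\tau$, together with a lower bound on the measures of projections of large subsets of $U_\tau$. You instead work directly from the conditional covariance identity of Lemma~\ref{lemCondCov}, restrict the integral to a \emph{random} shielding event $A_{x,s}$ that is $\eta_{|tH\times[0,s)}$-measurable and has probability uniformly bounded below on $tH_\epsilon\times[\tfrac12,1]$, and show that on $A_{x,s}$ the conditional add-one cost $\E[\MalD_{(x,s)}\fat\mid\eta_{|tH\times[0,s)}]$ stays $\geq a^\alpha/2$ once $a$ is tuned so that the gain $e(x,s,\eta_{tH})^\alpha\geq a^\alpha$ dominates the conditional expected loss of order $a^{d+\alpha}$ produced by later-arriving points inside the small Voronoi cell. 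What each route buys: your version is self-contained and elementary, avoiding the external theorem and the verification of its projection condition, whereas the paper's version reuses verbatim the machinery it already deploys for the $k$-Nearest Neighbour graph and the Radial Spanning Tree and so integrates more uniformly into the rest of the paper; the essential geometric input (shielding via the narrow cones of \cite[Lemma~3.3]{Pen05}) is identical in both. One small remark: for a later-arriving point $(y,u)$ in the Voronoi cell, your stated bound of $3a$ on its original edge-length is valid but the constant deserves a word of justification --- a crude triangle inequality through a shielding point gives $4a$, while using that $y$ and the shielding point $w_i$ lie in a common cone of angular radius $\pi/12$ sharpens this to $2a$; in either case this only shifts the absolute constant and leaves the conclusion intact.
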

\noindent The proof will make use of \cite[Theorem~5.2]{LPS14}.
\begin{proof}[Proof of Prop. \ref{propONNGlowerVar1}]%Proof 7
	Recall that there are $y_0 \in H$ and $\delta>0$ such that $B^d(y_0,\delta) \subset H$, as defined at the beginning of this Section~\ref{secPONNG}. Also recall the definition of the cones $C_1^+(0),...,C_K^+(0)$ from section~\ref{subsecCondExp}. For $i\in\{1,...,K\}$, define
	\[
	V_i:= C_i^+(0) \cap B^d(0,1) \setminus B^d\big(0,\tfrac{1}{2}\big).
	\]
	Now for $0<\tau<\frac{\delta}{2}$, define
	\begin{multline}
	U_\tau := \bigg\{ (x,s,x+\tau z_1,s_1,...,x+\tau z_K,s_K) : (x,s) \in tB^d\left(y_0,\tfrac{\delta}{2}\right)\times \left[\tfrac{1}{2},1\right],\\ \text{ and } (z_i,s_i) \in V_i \times \big[0,\tfrac{1}{2}\big),\ i=1,...,K \bigg\}.
	\end{multline}
	Note that $U_\tau \subset (tH\times [0,1])^{K+1}$. For \cite[Theorem~5.2]{LPS14} to yield a lower bound as in \eqref{Rainbow19}, we need to show that for a suitably chosen $\tau$ to be defined later,
	\begin{enumerate}
		\item There is a constant $c_1>0$ such that for all \label{iP7C1} $(x,s,\tilde{z}_1,s_1,...,\tilde{z}_K,s_K) \in U_\tau$,
		\[
		\left| \E \Fa\left(\eta_{tH} + \delta_{(x,s)} + \sum_{i=1}^K \delta_{(\tilde{z}_i,s_i)}\right) - \Fa\left(\eta_{tH} + \sum_{i=1}^K \delta_{(\tilde{z}_i,s_i)}\right) \right| \geq c_1.
		\]
		\item There is a constant $c_2>0$ such that \label{iP7C2}
		\[
		\min_{\emptyset \neq J \subset \{1,...,K+1\}} \inf_{\substack{V \subset U_\tau \\ \lambda^{(K+1)}(V)\geq \lambda^{(K+1)}(U_\tau)/2^{K+2}}} \lambda^{(|J|)}\left(\Pi_J(V)\right) \geq c_2t^d.
		\]
	\end{enumerate}
	\textbf{Proof of \ref{iP7C1}.} We use a construction similar to the one in \cite[Lemma~7.1]{LPS14} for the $k$-Nearest Neighbour graph. Let $(x,s) \in tB^d\left(y_0,\frac{\delta}{2}\right)\times \left[\frac{1}{2},1\right]$ and $(z_i,s_i) \in V_i \times \left[\left.0,\frac{1}{2}\right)\right.$ for all $i=1,...,K$. Define
	\[
	\mathcal{A}_\tau := \eta_{tH} + \sum_{i=1}^K \delta_{(x+\tau z_i,s_i)}.
	\]
	By the choice of the points $z_1,...,z_K$, we infer that
	\[
	R_s(x,tH,\mathcal{A}_\tau) \leq \tau
	\]
	and no point outside $B^d(x,\tau)$ will connect to $x$, since there is always a point $z_i$ which is closer (by Lemma~3.3 in \cite{Pen05}).
	
	If $\eta(\bb^d(x,\tau) \times [0,1]) = 0$, then no point at all will connect to $x$ and the only change upon addition of $x$ is the addition of the edge from $x$ to its online nearest neighbour. Since there is no point of $\eta$ in $\bb^d(x,\tau)$, the online nearest neighbour of $x$ must be one of the points $z_1,...,z_K$. But $|x-z_i|\geq \frac{\tau}{2}$ for all $i=1,...,K$ and we deduce
	\begin{equation}\label{iP7Dx1}
	\MalD_{(x,s)} \Fa (\mathcal{A}_\tau) \geq 2^{-\alpha}\tau^\alpha.
	\end{equation}
	If $\eta(\bb^d(x,\tau) \times [0,1]) \neq 0$, we use that by \eqref{eqDbyL} and the proof of Lemma~\ref{lemONNGFirstBounds},
	\begin{align}
		|\MalD_{(x,s)} \Fa (\mathcal{A}_\tau)| &\leq e(x,s,\mathcal{A}_\tau)^\alpha + \opL_{(x,s)}\Fa(\mathcal{A}_\tau) \notag\\
		&\leq R_s(x,tH,\mathcal{A}_\tau)^\alpha + (2R_s(x,tH,\mathcal{A}_\tau))^\alpha \eta(\bb^d(x,\tau) \times [0,1]) \notag\\
		&\leq \tau^\alpha + (2\tau)^\alpha \eta(\bb^d(x,\tau) \times [0,1])
	\end{align}
	to find the following bound:
	\begin{align}
	\left|\E \ind{\eta(\bb^d(x,\tau) \times [0,1]) \neq 0} \MalD_{(x,s)} \Fa (\mathcal{A}_\tau) \right| &\leq \p(\eta(\bb^d(x,\tau) \times [0,1]) \neq 0) \tau^\alpha + (2\tau)^\alpha \E \eta(\bb^d(x,\tau) \times [0,1]) \notag\\
	&= \tau^\alpha (1-\exp(-\kappa_d \tau^d)) + (2\tau)^\alpha \kappa_d \tau^d. \label{iP7Dx2}
	\end{align}
	Combining \eqref{iP7Dx1} and \eqref{iP7Dx2}, we find
	\begin{align}
	&\left|\E \MalD_{(x,s)} \Fa (\mathcal{A}_\tau) \right|\notag\\
	&\hspace{1cm}\geq \E \ind{\eta(B^d(x,\tau) \times [0,1]) = 0} \MalD_{(x,s)} \Fa (\mathcal{A}_\tau) - \left|\E \ind{\eta(B^d(x,\tau) \times [0,1]) \neq 0} \MalD_{(x,s)} \Fa (\mathcal{A}_\tau) \right| \notag\\
	&\hspace{1cm}\geq 2^{-\alpha}\tau^\alpha \exp(-\kappa_d \tau^d) - \tau^\alpha (1-\exp(-\kappa_d \tau^d)) - (2\tau)^\alpha \kappa_d \tau^d\notag\\
	&\hspace{1cm} = \tau^\alpha \left(\exp(-\kappa_d \tau^d) 2^{-\alpha} - (1-\exp(-\kappa_d \tau^d)) - 2^\alpha \kappa_d \tau^d\right)=:c_\tau.
	\end{align}
	But we have
	\[
	\lim_{\tau \rightarrow 0} \exp(-\kappa_d \tau^d) 2^{-\alpha} - (1-\exp(-\kappa_d \tau^d)) - 2^\alpha \kappa_d \tau^d = 2^{-\alpha} > 0,
	\]
	which means that we can choose $\tau > 0$ small enough such that $c_\tau>0$. This choice of $\tau$ depends only on $\alpha$ and $d$. We fix this $\tau$ for the rest of the proof.
	
	\textbf{Proof of \ref{iP7C2}.} We follow the same type of reasoning as was used in the proof of \cite[Theorem~5.3]{LPS14}. First, note that
	\[
	|U_\tau| = \kappa_d 2^{-d-K-1} \delta^d |V_1|^K t^d.
	\]
	Let $\emptyset \neq J = \{i_1,...,i_{|J|}\} \subset \{1,...,K+1\}$. For any $(y,u)=(y_1,u_1,...,y_{K+1},u_{K+1}) \in \left(\R^d \times [0,1]\right)^{K+1}$, write $(y,u)_J = (y_{i_1},u_{i_1},...,y_{i_{|J|}},u_{i_{|J|}})$. If $(y,u) \in U$, then for any $i,j \in \{1,...,K+1\}$, we have that $y_j \in B^d(y_i,2\tau)$ and $u_j \in \left[\left.0,\frac{1}{2}\right)\right.$. This implies that for any $(y,u)_J \in \left(\R^d \times [0,1]\right)^{|J|}$,
	\[
	\lambda^{(K+1-|J|)}\left((y,u)_{J^c} \in \left(\R^d \times [0,1]\right)^{K+1-|J|} : \left((y,u)_J,(y,u)_{J^c}\right) \in U \right) \leq \left(2^{d-1}\tau^d\kappa_d\right)^{K+1-|J|},
	\]
	where we use $\lambda$ for Lebesgue measure. Thus for any $V \subset U$,
	\begin{align}
		\lambda^{(K+1)}(V) &\leq \int_{\left(\R^d\times[0,1]\right)^{K+1}} \ind{(y,u)_J \in \Pi_J(V)} \ind{((y,u)_J,(y,u)_{J^c}) \in U} \lambda^{(K+1)}(d(y,u)) \notag\\
		&\leq \left(2^{d-1} \tau^d \kappa_d\right)^{K+1-|J|} \lambda^{(|J|)}\left(\Pi_J(V)\right).
	\end{align}
	Hence any $V \subset U$ with $\lambda^{(K+1)}(V) \geq \lambda^{(K+1)}(U)2^{-(K+2)}$ satisfies
	\[
	\lambda^{(|J|)}\left(\Pi_J(V)\right) \geq \left(2^{d-1} \tau^d \kappa_d\right)^{-(K+1-|J|)} 2^{-(K+2)} \kappa_d 2^{-d-K-1} \delta^d |V_1|^K t^d.
	\]
	This in turn is lower bounded by $c_2t^d$, where
	\[
	c_2:= \min\left\{1,\left(2^{d-1} \tau^d \kappa_d\right)^{-K}\right\} \kappa_d 2^{-d-2K-3} \delta^d |V_1|^K.
	\]
	This concludes the proof.
\end{proof}

\begin{prop}\label{propONNGlowerVar2}
	There are constants $c>0$ and $T_0 \geq 1$ such that for all $t\geq T_0$,
	\[
	ct^d \log(t) \leq \var \left(\fad\right).
	\]
\end{prop}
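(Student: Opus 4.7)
The plan is to apply the conditional variance formula of Lemma~\ref{lemCondCov}, which yields
\[
\var(\fad) = \int_{tH}\int_0^1 \E\,\E\big[\MalD_{(x,s)}\fad \,\big|\, \eta_{|tH \times [0,s)}\big]^2 \, ds \, dx,
\]
and then to establish a pointwise lower bound of the form $\E\,\E[\MalD_{(x,s)}\fad | \eta_{|tH\times[0,s)}]^2 \geq c/s$ for some constants $c, c_0, \epsilon > 0$, uniformly over $t$ large, $x \in tH_\epsilon$, and $s \in [c_0 t^{-d}, 1/2]$. Integrating this bound, and using that $|tH_\epsilon| \gtrsim t^d$ (which follows from \eqref{eqHcond}), then yields $\var(\fad) \gtrsim c\,|tH_\epsilon|\,\log(t^d/(2c_0)) \gtrsim t^d \log t$, as required.

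To produce the pointwise bound, I would test the conditional expectation against the $\sigma(\eta_{|tH\times[0,s)})$-measurable statistic $g := e(x,s,\eta_{tH})^{d/2}$. The Cauchy--Schwarz inequality yields
\[
\E\,\E\big[\MalD_{(x,s)}\fad \,\big|\, \eta_{|tH\times[0,s)}\big]^2 \;\geq\; \frac{\big(\E[g\cdot \MalD_{(x,s)}\fad]\big)^2}{\E g^2}.
\]
Decomposing $\MalD_{(x,s)}\fad = e(x,s,\eta_{tH})^{d/2} - S_{(x,s)}$, where $S_{(x,s)} \geq 0$ is the rearrangement cost coming from points of mark greater than $s$ which switch their online nearest neighbour to $(x,s)$, Mecke's formula gives
\[
\E[g\cdot \MalD_{(x,s)}\fad] = \E\,e(x,s,\eta_{tH})^d - \int_{tH}\!\int_s^1 \E\!\left[e(x,s,\eta_{tH})^{d/2}\big(e(y,u,\eta_{tH})^{d/2} - |x-y|^{d/2}\big)\ind{e(y,u,\eta_{tH})>|x-y|}\right] du\,dy.
\]
Direct Poisson tail estimates (using Lemma~\ref{lemGeo1} to handle the geometry of $tH$) produce matching bounds $c_1/s \leq \E g^2 = \E\,e(x,s,\eta_{tH})^d \leq C_1/s$ in the bulk. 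An asymptotic independence argument, exploiting the exponential spatial decorrelation of the add-one costs as encoded by \eqref{eqONNGpBound} in Proposition~\ref{propONNGcondBounds}, combined with an explicit Gamma-function computation that in the uncorrelated bulk limit yields the factor $\tfrac{\pi}{4}\!\int_0^\infty[\Gamma(3/2,v) - v^{1/2}e^{-v}]\,dv$, shows that the subtractive double integral is bounded by $(1-\delta)\,\E\,e(x,s,\eta_{tH})^d$ for some $\delta > 0$ (heuristically $\delta = 1 - \pi/4 > 0$) uniformly in the admissible range. Combining gives $\E[g \cdot \MalD_{(x,s)}\fad] \geq \delta c_1/s$, and squaring followed by dividing by $\E g^2$ delivers the desired pointwise lower bound.

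The principal technical challenge is the quantitative control of the double integral above: the contributions from pairs $(x,y)$ with $|x-y|$ of order $\max(s,u)^{-1/d}$ do not decouple and must be bounded directly, while the leading constant must be extracted from the complementary regime where $e(x,s,\eta_{tH})$ and $e(y,u,\eta_{tH})$ are effectively independent. Turning this quantitative stabilisation heuristic into a rigorous inequality, uniformly in $t$, and handling boundary effects near $\partial tH$ (which motivate the restriction $x \in tH_\epsilon$ and the lower cut-off $s \geq c_0 t^{-d}$), is the delicate step of the proof.
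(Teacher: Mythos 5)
Your route through the conditional-variance identity of Lemma~\ref{lemCondCov} followed by a Cauchy--Schwarz lower bound on the Clark--Oc\^one integrand, testing against $g=e(x,s,\eta_{tH})^{d/2}$, is genuinely different from the paper's proof, which decomposes $\var(\fad)$ by iterated Mecke formulae into explicit terms $I_1,\dots,I_5$ (Lemma~\ref{lemSplitVar}) and bounds each one (Lemmas~\ref{lemI_1}--\ref{lemI_2'}). The algebra is fine up to and including the Mecke identity
\[
\E[g\cdot\MalD_{(x,s)}\fad]
=\E\,e(x,s,\eta_{tH})^d
-\int_{tH}\!\!\int_s^1\E\!\left[e(x,s,\eta_{tH})^{d/2}\bigl(e(y,u,\eta_{tH})^{d/2}-|x-y|^{d/2}\bigr)^+\right]du\,dy,
\]
and the matching estimates $\E\,g^2\asymp 1/(\kappa_d s)$ in the bulk are straightforward. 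The gap is exactly where you flag it yourself: the claim that the subtractive double integral is at most $(1-\delta)\,\E\,e(x,s,\eta_{tH})^d$ for some uniform $\delta>0$. The ``asymptotic independence'' invoked to get $\delta=1-\pi/4$ is not a valid step. The quantities $e(x,s,\eta_{tH})$ and $(e(y,u,\eta_{tH})^{d/2}-|x-y|^{d/2})^+$ are both \emph{decreasing} functionals of the Poisson configuration, so by a Harris/FKG argument they are positively correlated; the true double integral is therefore \emph{at least} the product-of-means heuristic, not controlled by it. Moreover, the exponential bound \eqref{eqONNGpBound} that you cite controls only $\p(\DD\neq 0)$ and provides no decorrelation of $e(x,s)$ and $e(y,u)$ at the critical scale $|x-y|\asymp u^{-1/d}$, which is precisely the regime producing the leading $\asymp 1/s$ contribution. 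So as written you have an inequality in the wrong direction where you most need an upper bound.

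That this is not a cosmetic issue is evident from the paper's own computation, which shows the analogue of the subtractive contribution (there, $I_3$) carries the large constant $\tfrac{\pi}{2}>1$ relative to the leading term $I_1$, and positivity is rescued only by a strictly positive correlation term $I_2$ whose sharp constant $c(d)$ must be shown to satisfy $c(d)>\tfrac{\pi}{2}-1$ --- a delicate inequality that holds only barely for small $d$ and is verified in Lemmas~\ref{lemI_2} and~\ref{lemI_2'} by an explicit integral identity together with a numerical bound. Your approach would similarly require pinning down the correlation correction to $\E[e(x,s)^{d/2}(e(y,u)^{d/2}-|x-y|^{d/2})^+]$ to constant precision, uniformly over $t$, $s$, and the bulk of $tH$, and showing the net $\delta$ stays positive. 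That step is of essentially the same difficulty as the paper's hardest lemmas and is not carried out; without it the claimed pointwise lower bound $\E\,\E[\MalD_{(x,s)}\fad\,|\,\eta_{|tH\times[0,s)}]^2\gtrsim 1/s$ does not follow, and the proof does not close.
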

\begin{rem}
	By inspection of the arguments in Lemmas~\ref{lemSplitVar}-\ref{lemI_2'}, one sees that
	\begin{equation}\label{Rainbow35}
	d\left(1 - \tfrac{\pi}{2} + c(d)\right) \leq \liminf_{t \rightarrow \infty} \frac{\var \left(\fad\right)}{t^d \log(t)},
	\end{equation}
	where $c(d)$ is a positive constant such that $c(d) > \frac{\pi}{2}-1$ for $d \geq 1$. We believe this bound to correspond to the exact asymptotic order of $\var \left(\fad\right)$. We can however only provide a closed form of $c(d)$ in dimension $d=1$. For dimension $d=2$, we numerically estimate $c(2)$ and for dimensions $d \geq 3$, we use a lower bound which is smaller than the LHS of \eqref{Rainbow35}:
	\[
	d\left(1-\frac{\pi}{4}-\tilde{c}(d)\right) \leq \liminf_{t \rightarrow \infty} \frac{\var \left(\fad\right)}{t^d \log(t)},
	\]
	where $0<\tilde{c}(d)<1-\frac{\pi}{4}$ for $d \geq 3$ and $\tilde{c}(d) \rightarrow 0$ as $d \rightarrow \infty$.
\end{rem}
\begin{lem}\label{lemSplitVar}
	Let $\alpha>0$ and $t\geq 1$. Let $\ell:=\diam(H)$. Then
	\begin{equation}\label{eqvarONNG1}
		\var(\fat) = I_1(t) + I_2(t) - I_3(t) + I_4(t) - I_5(t),
	\end{equation}
	where the terms are defined as follows:
	\begin{align}
		I_1(t) &= \int_{tH}dy \int_0^1 dv \int_0^{(t\ell)^{2\alpha}} ds \left(\exp(-v|tH\cap B^d(y,s^{1/(2\alpha)})|)-\exp(-v|tH|)\right) \notag\\
		I_2(t) &= 2 \int_{tH}dx \int_{tH}dy \int_0^1dv \int_0^1du \int_0^{(t\ell)^\alpha}ds \int_0^{(t\ell)^\alpha}dr\ \ind{u<v} \ind{s<|x-y|^\alpha}\notag\\
		&\hspace{2cm}\exp\left(-u|tH\cap B^d(x,r^{1/\alpha})|\right) \exp\left(-v|tH\cap B^d(y,s^{1/\alpha})|\right)\notag\\ &\hspace{4cm}\left[\exp\left(u|tH\cap B^d(x,r^{1/\alpha})\cap B^d(y,s^{1/\alpha})|\right)-1\right]\notag\\
		I_3(t) &= 2 \int_{tH}dx \int_{tH}dy \int_0^1dv \int_0^1du \int_0^{(t\ell)^\alpha}ds \int_0^{(t\ell)^\alpha}dr\ \ind{u<v} \ind{s>|x-y|^\alpha}\notag\\
		&\hspace{2cm} \exp\left(-u|tH\cap B^d(x,r^{1/\alpha})|\right) \exp\left(-v|tH\cap B^d(y,s^{1/\alpha})|\right)\notag\\
		I_4(t) &= 2 \int_{tH}dx \int_{tH}dy \int_0^1dv \int_0^1du \int_0^{(t\ell)^\alpha}ds \int_0^{(t\ell)^\alpha}dr\ \ind{u+v\geq 1}\notag\\
		&\hspace{2cm} \exp\left(-u|tH \cap B^d(x,r^{1/\alpha})|\right) \exp\left(-v|tH|\right)\notag\\
		I_5(t) &= \int_{tH}dx \int_{tH}dy \int_0^1dv \int_0^1du \int_0^{(t\ell)^\alpha}ds \int_0^{(t\ell)^\alpha}dr \exp\left(-u|tH|\right) \exp\left(-v|tH|\right) \notag
	\end{align}
\end{lem}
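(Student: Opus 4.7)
The approach is to compute $\var(F^{(\alpha)}_t) = \E[(F^{(\alpha)}_t)^2] - (\E F^{(\alpha)}_t)^2$ directly, expressing both terms as explicit multiple integrals over $tH \times [0,1] \times [0,(t\ell)^\alpha]$ and its products, by combining the Mecke formula (and its multivariate version) with the tail-integral identity $\E[X^\alpha] = \int_0^\infty \p(X > r^{1/\alpha})\, dr$ for non-negative $X$. Since every edge length in $tH$ is bounded by $\diam(tH) = t\ell$, all tail integrals may be truncated at $(t\ell)^\alpha$ (respectively $(t\ell)^{2\alpha}$ for the second moment), which explains the integration ranges of $r$ and $s$ in $I_1, \dots, I_5$.

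For $\E[(F^{(\alpha)}_t)^2]$ I would split into diagonal and off-diagonal contributions. The diagonal $\sum_i e_i^{2\alpha}$ gives, via single Mecke and the tail identity, precisely $I_1$, using the void probability $\p(e(x,s,\eta_{tH}) > r) = \exp(-s|tH \cap B^d(x,r)|) - \exp(-s|tH|)$. For the off-diagonal sum, the multivariate Mecke formula combined with the $(x_1,u_1) \leftrightarrow (x_2,u_2)$ symmetry of the integrand reduces it to $2\int \ind{u<v} \E[e(x,u,\eta)^\alpha e(y,v,\eta+\delta_{(x,u)})^\alpha]\, dx\,dy\,du\,dv$ (using that for $u<v$, adding $(y,v)$ does not affect $e(x,u,\cdot)$ while adding $(x,u)$ may shrink $e(y,v,\cdot)$). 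Applying the two-dimensional tail identity $\E[XY] = \int\int \p(X>r,Y>s)\,dr\,ds$ and computing the joint void probability by inclusion-exclusion on the three events $\{\eta(E_i)=0\}$, where $E_1 = B^d(x,r^{1/\alpha})\cap tH \times [0,u)$, $E_2 = B^d(y,s^{1/\alpha}) \cap tH \times [0,v)$ and $E_3 = tH \times [0,u)$, yields an integrand of the form $\ind{s<|x-y|^\alpha}(Q - P)$, where $Q$ involves the triple intersection volume $|tH \cap B^d(x,r^{1/\alpha}) \cap B^d(y,s^{1/\alpha})|$ and $P$ depends on $v-u$ but not on $r$.

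The quantity $(\E F^{(\alpha)}_t)^2$ is expanded by single Mecke and the tail identity as a six-fold integral $\int aa' - 2\int ab' + \int bb'$, where $a = a(u,x,r) = \exp(-u|tH \cap B^d(x,r^{1/\alpha})|)$ and $b = b(u) = \exp(-u|tH|)$ (primed quantities refer to the $(v,y,s)$-variables). The last piece equals $I_5$. Combining with $\E[(F^{(\alpha)}_t)^2]$: the piece $Q - aa' = aa'(\exp(u|tH \cap B^d(x,r^{1/\alpha}) \cap B^d(y,s^{1/\alpha})|) - 1)$ produces exactly $I_2$; the combination $-\int aa' = -2\int_{u<v} aa'$ (by the $(x,u,r) \leftrightarrow (y,v,s)$ symmetry) merges with $I_2$ and splits the indicator into $\ind{s<|x-y|^\alpha} + \ind{s>|x-y|^\alpha}$, producing $-I_3$ upon cancellation.

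The most delicate step is extracting $I_4$ from the remaining contribution $2\int ab' - 2\int_{u<v}\ind{s<|x-y|^\alpha} P\, dx\,dy\,du\,dv\,dr\,ds$, whose second piece carries the non-trivial factor $\exp(-(v-u)|tH \cap B^d(y,s^{1/\alpha})|)$. The crucial move is the change of variables $w = v - u$ in the inner $(u,v)$-integral, which converts the domain $\{0 < u < v < 1\}$ into $\{u, w > 0,\ u + w < 1\}$; after relabelling $w \leftrightarrow u$ this is the domain $\{u+v < 1\}$, which is precisely the complement of the indicator appearing in $I_4$. Additional dummy-variable swaps exchanging the roles of the primed and unprimed coordinates are then needed to align the $a(u,x,r)b(v)$ structure of $I_4$ with the integrand produced by the substitution. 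The main obstacle is the careful bookkeeping of these substitutions and symmetries (tracking which pairs may be swapped without altering the integral) so that the two subtracted pieces collapse to exactly the form $2\int\ind{u+v \geq 1}\,a(u,x,r)b(v)$ stated in the lemma.
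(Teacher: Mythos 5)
Your proposal follows the paper's own proof essentially step for step: write $\fat$ as the Poisson integral of $e(y,v,\eta)^\alpha$, compute the first and second moments via the Mecke formula, expand edge-length powers as tail integrals of void events (truncating the tail integral for the shifted edge length $e(y,v,\eta+\delta_{(x,u)})^\alpha$ at $|x-y|^\alpha$ when $u<v$), use independence of $\eta$ on $[0,u)$ and $[u,v)$ to factor the joint void probability, and regroup against the expanded $(\E\fat)^2$ to produce $I_1,\dots,I_5$ — identifying $I_1$ from the diagonal, $I_2$ and $I_3$ from the inclusion--exclusion and the $\ind{s\lessgtr|x-y|^\alpha}$ split of $-\int aa'$, $I_5$ as the $\int bb'$ piece, and $I_4$ from the remainder via $\tilde v=v-u$.

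There is, however, a gap in the final step which you yourself flag as delicate and which I do not think closes as you assert. After the substitution $\tilde v=v-u$ the subtracted piece still carries the indicator $\ind{s<|x-y|^\alpha}$: the change of variables acts only on $(u,v)$, not on $(x,y,s)$, so the indicator survives intact and becomes $-2\int \ind{u+\tilde v<1}\ind{s<|x-y|^\alpha}\exp(-u|tH|)\exp(-\tilde v|tH\cap B^d(y,s^{1/\alpha})|)$. The cross term $2\int ab'$ from $(\E\fat)^2$ carries no such indicator. Their sum is therefore $2\int\bigl(1-\ind{u+v<1}\ind{s<|x-y|^\alpha}\bigr)\exp(-u|tH|)\exp(-v|tH\cap B^d(y,s^{1/\alpha})|)$, which equals $I_4$ \emph{plus} the strictly positive residual $2\int\ind{u+v<1}\ind{s\geq|x-y|^\alpha}\exp(-u|tH|)\exp(-v|tH\cap B^d(y,s^{1/\alpha})|)$. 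Integrating out $x$ and $r$, equating this to $I_4$ would require $\int dy\,ds\,|tH\cap B^d(y,s^{1/\alpha})|\int\!\!\int\ind{u+v<1}\exp(-u|tH|)\exp(-v|tH\cap B^d(y,s^{1/\alpha})|)\,du\,dv$ to vanish, which it manifestly does not. So the clean ``collapse to exactly $2\int\ind{u+v\geq 1}ab'$'' you describe does not occur; the identity \eqref{eqvarONNG1} holds only with an $I_4$ carrying an extra indicator (or equivalently an additional nonnegative term of the same order). The paper's own justification for this step (``it suffices to perform a change of variables'') shares the imprecision, and the discrepancy is harmless for the downstream variance asymptotics, which only need $I_4\geq 0$ and $I_4=O(t^d)$ — both true of the corrected expression — but you should not take the $I_4$ identification on faith.
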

\begin{proof}%Proof 8 (short version)
	We start by pointing out a few identities. The functional $\fat$ can be written
	\begin{equation}\label{iP8fat}
		\fat = \int_{tH \times [0,1]} e(y,v,\eta)^\alpha \eta(dy,dv).
	\end{equation}
	Moreover, for $(y,v),(x,u) \in tH\times[0,1]$ with $u<v$, we have
	\begin{equation}\label{iP8edge}
		e(y,v,\eta)^\alpha = \ind{\eta(tH\times[0,v))\neq0} \int_0^{(t\ell)^\alpha}ds\ \ind{\eta(tH\cap B^d(y,s^{1/\alpha})\times[0,v))=0}
	\end{equation}
	and
	\begin{multline}\label{iP8edgeMore}
		e\left(y,v,\eta+\delta_{(x,u)}\right)^\alpha = |x-y|^\alpha \ind{\eta(tH\times[0,v))=0} \\+ \ind{\eta(tH\times[0,v))\neq0} \int_0^{|x-y|^\alpha}ds\ \ind{\eta(tH\cap B^d(y,s^{1/\alpha})\times[0,v))=0}.
	\end{multline}
	Note also that $e(y,v,\eta)=e(y,v,\eta_{|tH \times [0,v)})$, a fact we are going to use repeatedly.
	
	Combining \eqref{iP8fat} and \eqref{iP8edge} with Mecke equation \eqref{eqMecke} allows us now to calculate $\E \fat$:
	\begin{align}
		\E \fat &= \int_{tH}dy \int_0^1 dv \int_0^{(t\ell)^\alpha} dr\ \E \ind{\eta(tH\times[0,v))\neq0} \ind{\eta(tH\cap B^d(y,r^{1/\alpha})\times[0,v))=0}\notag\\
		&= \int_{tH}dy \int_0^1 dv \int_0^{(t\ell)^\alpha} dr\ \left(\exp\big(-v|tH \cap B^d(y,r^{1/\alpha})|\big)-\exp(-v|tH|)\right).\label{iP8expFat}
	\end{align}
	To calculate $\E \big[(\fat)^2\big]$, note first that
	\begin{multline}\label{iP8v}
		\E \left(\int_{tH \times [0,1]} e(y,v,\eta)^\alpha \eta(dy,dv)\right)^2 \\= \int_{tH}dy\int_0^1dv\ \E\ e(y,v,\eta)^{2\alpha} + \int_{tH}dx\int_{tH}dy\int_0^1du\int_0^1dv\ \E\ e(y,v,\eta+\delta_{(x,u)})^\alpha e(x,u,\eta+\delta_{(y,v)})^\alpha,
	\end{multline}
	as can be seen by applying Mecke equation twice or other means. The first term on the RHS of \eqref{iP8v} is equal to $\E \big[F_t^{(2\alpha)}\big]$, which by \eqref{iP8expFat} is equal to $I_1(t)$.

	The second term on the RHS of \eqref{iP8v} can by symmetry be written as
	\[
	2\int_{tH}dx\int_{tH}dy\int_0^1du\int_0^1dv\ \ind{u<v}\E\ e(y,v,\eta+\delta_{(x,u)})^\alpha e(x,u,\eta)^\alpha.
	\]
	Plugging in \eqref{iP8edge} and \eqref{iP8edgeMore} and using that the product $\ind{\eta(tH\times[0,u))\neq0} \ind{\eta(tH\times[0,v))=0}$ is zero, leads to
	\begin{multline}
		2\E\int_{tH}dx\int_{tH}dy\int_0^1du\int_0^1dv\int_0^{(t\ell)^\alpha}dr\int_0^{(t\ell)^\alpha}ds\ \ind{u<v} \ind{s<|x-y|^\alpha}\\  \ind{\eta(tH\times[0,u))\neq0} \ind{\eta(tH\cap B^d(x,r^{1/\alpha})\times[0,u))=0} \ind{\eta(tH\cap B^d(y,s^{1/\alpha})\times[0,v))=0}. \label{Rainbow20}
	\end{multline}
	Writing
	\[
	\ind{\eta(tH\cap B^d(y,s^{1/\alpha})\times[0,v))=0} = \ind{\eta(tH\cap B^d(y,s^{1/\alpha})\times[0,u))=0} \ind{\eta(tH\cap B^d(y,s^{1/\alpha})\times[u,v))=0}
	\] and taking expectation, using that $\eta_{|tH \times [0,u)}$ and $\eta_{|tH \times [u,v)}$ are independent, \eqref{Rainbow20} is equal to
	\begin{multline}
		2\int_{tH}dx\int_{tH}dy\int_0^1du\int_0^1dv\int_0^{(t\ell)^\alpha}dr\int_0^{(t\ell)^\alpha}ds\ \ind{u<v} \ind{s<|x-y|^\alpha} \exp\left(-(v-u)|tH \cap B^d(y,s^{1/\alpha})|\right) \\ \left(\exp\big(-u|tH \cap (B^d(x,r^{1/\alpha}) \cup B^d(y,s^{1/\alpha}))|\big)-\exp(-u|tH|)\right). \label{Rainbow21}
	\end{multline}
	Combining \eqref{Rainbow21} with \eqref{iP8expFat}, we have
	\[
	\var\left(\fat\right) = I_1(t) + A(t) + B(t) + C(t)
	\]
	with
	\begin{align*}
		&A(t) = 2 \int_{tH}dx \int_{tH}dy \int_0^1du \int_0^1dv \int_0^{(t\ell)^\alpha}dr \int_0^{(t\ell)^\alpha}ds\ \ind{u<v} \ind{s<|x-y|^\alpha}\\
		&\hspace{1cm}\exp\left(-(v-u)|tH \cap B^d(y,s^{1/\alpha})|\right)\exp\big(-u|tH \cap (B^d(x,r^{1/\alpha}) \cup B^d(y,s^{1/\alpha}))|\big) \\
		&B(t) = - 2 \int_{tH}dx \int_{tH}dy \int_0^1du \int_0^1dv \int_0^{(t\ell)^\alpha}dr \int_0^{(t\ell)^\alpha}ds\ \ind{u<v} \ind{s<|x-y|^\alpha}\notag\\  &\hspace{1cm}\exp\left(-(v-u)|tH \cap B^d(y,s^{1/\alpha})|\right)\exp(-u|tH|)\\
		&C(t) = -\left(\int_{tH}dy \int_0^1 dv \int_0^{(t\ell)^\alpha} ds\ \left(\exp\big(-v|tH \cap B^d(y,s^{1/\alpha})|\big)-\exp(-v|tH|)\right)\right)^2 \\
	\end{align*}
	It can easily be seen that
	\[
	C(t) = C_1(t) + C_2(t) - I_3(t) - I_5(t)
	\]
	with
	\begin{align*}
		C_1(t) &= -2\int_{tH}dx\int_{tH}dy\int_0^1du\int_0^1dv\int_0^{(t\ell)^\alpha}dr\int_0^{(t\ell)^\alpha}ds\ \ind{u<v} \ind{s<|x-y|^\alpha} \notag\\
		&\hspace{1cm} \exp(-v|tH \cap B^d(y,s^{1/\alpha})|) \exp(-u|tH \cap B^d(x,r^{1/\alpha})|)\\
		C_2(t) &= 2\int_{tH}dx\int_{tH}dy\int_0^1du\int_0^1dv\int_0^{(t\ell)^\alpha}dr\int_0^{(t\ell)^\alpha}ds\ \exp(-v|tH \cap B^d(y,s^{1/\alpha})|) \exp(-u|tH|)\\
	\end{align*}
	It remains to show that $I_2(t) + I_4(t) = A(t) + B(t) + C_1(t) + C_2(t)$. Let us first show that $A(t) + C_1(t) = I_2(t)$. Indeed, denote $R:= B^d(x,r^{1/\alpha})$ and $S:=B^d(y,s^{1/\alpha})$. Then
	\begin{multline}
	\exp(-(v-u)|tH \cap S|)\exp(-u|tH\cap(S\cup R)|)\\ = \exp(-v|tH \cap S|)\exp(-u|tH \cap R|)\exp(u|tH\cap S\cap R|)
	\end{multline}
	and hence
	\begin{multline}
		A(t) + C_1(t) = 2\int_{tH}dx\int_{tH}dy\int_0^1du\int_0^1dv\int_0^{(t\ell)^\alpha}dr\int_0^{(t\ell)^\alpha}ds\ \ind{u<v} \ind{s<|x-y|^\alpha} \\
		\exp(-v|tH \cap S|)\exp(-u|tH \cap R|)\left(\exp(u|tH\cap S\cap R|)-1\right) = I_2(t).
	\end{multline}
	To see that $B(t) + C_2(t) = I_4(t)$, it suffices to perform a change of variables in $B(t)$ by taking $\tilde{v}:=v-u$, $d\tilde{v}=dv$.
\end{proof}

In the following we are going to deal with the different terms in Lemma~\ref{lemSplitVar} in the case $\alpha=\frac{d}{2}$.
\begin{lem}\label{lemI_1}
	In the notation of Lemma~\ref{lemSplitVar} and with $\alpha=\frac{d}{2}$,
	\[
	I_1(t) \geq |H|\kappa_d^{-1} t^d\log(t^d) + O(t^d).
	\]
\end{lem}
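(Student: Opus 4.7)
Since $\alpha = d/2$, we have $s^{1/(2\alpha)} = s^{1/d}$, and the integrand in the definition of $I_1(t)$ is pointwise non-negative (because $|tH \cap B^d(y,s^{1/d})| \leq |tH|$). The plan is to discard all but a convenient subregion of the $(y,s)$-domain on which the intersection volume can be computed exactly. I would restrict to $y$ in the inner parallel set $(tH)_{s^{1/d}} := \{y \in tH : \dist(y, \partial(tH)) > s^{1/d}\}$, and to $s \in [1, (c_0 t)^d]$ for a constant $c_0 > 0$ depending only on $H$. On this subregion the ball $B^d(y, s^{1/d})$ lies inside $tH$, so $|tH \cap B^d(y, s^{1/d})| = \kappa_d s$ exactly.

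By rescaling the Steiner-type inequality \eqref{eqHcond} to $tH$, one has $|tH \setminus (tH)_{s^{1/d}}| \leq \beta_H t^{d-1} s^{1/d}$, hence $|(tH)_{s^{1/d}}| \geq |H|t^d - \beta_H t^{d-1} s^{1/d}$; choosing $c_0 = |H|/(2\beta_H)$ keeps this quantity $\geq \tfrac{1}{2}|H|t^d$ throughout the relevant $s$-range. The inner $v$-integral is explicit:
\[
\int_0^1 \bigl(e^{-v\kappa_d s} - e^{-v|tH|}\bigr)\, dv = \frac{1-e^{-\kappa_d s}}{\kappa_d s} - \frac{1-e^{-|tH|}}{|tH|}.
\]

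Plugging these in, the dominant contribution is
\[
|H|t^d \int_1^{(c_0 t)^d} \frac{1-e^{-\kappa_d s}}{\kappa_d s}\, ds = \kappa_d^{-1}|H|t^d \log((c_0 t)^d) + O(t^d) = |H|\kappa_d^{-1} t^d \log(t^d) + O(t^d),
\]
using finiteness of $\int_1^\infty s^{-1} e^{-\kappa_d s}\, ds$ and that $\log(c_0^d)$ is a constant. All remaining terms—the boundary correction from the Steiner deficit, which after integration is bounded by $\tfrac{\beta_H t^{d-1}}{\kappa_d}\int_1^{(c_0 t)^d} s^{1/d-1}\, ds = O(t^d)$; the $-|tH|^{-1}$ piece of the $v$-integral, whose overall contribution is at most $|H|t^d \cdot (c_0 t)^d / |tH| = O(t^d)$; and the discarded ranges $s \in [0,1]$ and $s \in ((c_0 t)^d, (t\ell)^d]$—are all readily seen to be of order $t^d$ and absorbed into the error term.

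The argument is essentially a direct, careful computation; the one delicate point is calibrating the cut-off $c_0$: small enough that the Steiner bound still yields a constant fraction of $|tH|$ on the chosen $s$-range, yet large enough that $\log((c_0 t)^d) = \log(t^d) + O(1)$ preserves the full logarithmic factor rather than producing a smaller multiple of $\log t$.
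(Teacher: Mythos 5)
Your proof is correct, but it takes a genuinely different route from the paper's. The paper first separates the two pieces of $I_1(t)$: the negative piece (involving $\exp(-v|tH|)$) is disposed of by direct integration, which gives $t^d\ell^d(1-\exp(-t^d|H|))=O(t^d)$; for the positive piece, it uses the universal inequality $|tH\cap B^d(y,s^{1/d})|\leq\kappa_d s$ (so no restriction on $y$ is needed at all), restricts the $v$-range to $[t^{-d},1]$, and integrates — the logarithm then arises from $\int_{t^{-d}}^{1}v^{-1}\,dv$, i.e.\ from the small-$v$ singularity after the substitution $u=t^d v$. You instead shrink the $y$-domain to the inner parallel set $(tH)_{s^{1/d}}$ so that the intersection volume is exactly $\kappa_d s$, keep the full $v$-range, and truncate the $s$-range at $(c_0t)^d$ — the logarithm arises from $\int_1^{(c_0t)^d}s^{-1}\,ds$, i.e.\ from large $s$. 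The paper's route is a little leaner (no Steiner bound, no boundary-deficit estimate), whereas yours computes the intersection volume exactly on the retained region at the cost of the extra bookkeeping for the $\beta_H t^{d-1}s^{1/d}$ correction. Both are valid; your care about calibrating $c_0$ is well placed in that $c_0$ must be a fixed, $t$-independent constant to keep $\log((c_0t)^d)=\log(t^d)+O(1)$, but the specific value $c_0=|H|/(2\beta_H)$ is not actually used downstream (the bound $|(tH)_{s^{1/d}}|\geq|H|t^d-\beta_H t^{d-1}s^{1/d}$ is non-negative anyway, so any fixed constant works). One should strictly replace $c_0$ by $\min(c_0,\ell)$ so that the truncated $s$-range stays inside $[0,(t\ell)^d]$; this is harmless since $|H|\leq\beta_H\ell$ implies $c_0\leq\ell/2$ anyway.
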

\begin{proof}
	First, we show that the second part of the expression $I_1(t)$ is $O(t^d)$. Indeed,
	\[
	\int_{tH}dy \int_0^1 dv \int_0^{(t\ell)^{d}} ds \exp(-v|tH|) = t^d \ell^d (1-\exp(-t^d|H|)) = O(t^d).
	\]
	Note that since $|tH \cap B^d(y,s^{1/d})| \leq \kappa_d s$, the first part of the expression $I_1(t)$ is lower bounded by
	\[
	\int_{tH} dy \int_{t^{-d}}^1 dv \int_0^{(t\ell)^d}ds\  \exp(-v\kappa_d s),
	\]
	which after integrating in $y$ and $s$ and the change of variables $u=t^dv$, is equal to
	\[
	t^d|H|\kappa_d^{-1} \left(\log(t^d) - \int_1^{t^d} du\ u^{-1} \exp(-u\kappa_d \ell^d)\right).
	\]
	Since $\int_1^{\infty} du\ u^{-1} \exp(-u\kappa_d \ell^d) < \infty$, this term is equal to $t^d|H|\kappa_d^{-1} \log(t^d) + O(t^d)$, concluding the proof.
\end{proof}

\begin{lem}\label{lemI_3}
	In the notation of Lemma~\ref{lemSplitVar} and with $\alpha=\frac{d}{2}$, there is a constant $T_0\geq 1$ such that for all $t \geq T_0$, we have
	\[
	I_3(t) \leq |H|\kappa_d^{-1} \frac{\pi}{2} t^d\log(t^d)+O(t^d).
	\]
\end{lem}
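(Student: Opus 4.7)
The plan is to reveal Gaussian-type integrals with leading constant $\pi/2$ via two successive changes of variables, then to show that all boundary and small-parameter corrections are absorbed into $O(t^d)$.

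First I would substitute $\rho = r^{2/d}$ and $\sigma = s^{2/d}$ (recall $\alpha = d/2$), turning the balls $B^d(x,r^{1/\alpha}), B^d(y,s^{1/\alpha})$ into $B^d(x,\rho), B^d(y,\sigma)$, the indicator $\ind{s > |x-y|^{d/2}}$ into $\ind{\sigma > |x-y|}$, and producing a Jacobian $(d/2)^2 \rho^{d/2-1}\sigma^{d/2-1}$; both $\rho$ and $\sigma$ now range over $[0,t\ell]$. By Fubini in $x$ I would then use $\int_{tH}\ind{\sigma > |x-y|}\,dx = |tH\cap B^d(y,\sigma)|$ to decouple one layer, and in the remaining $x$-integrand (which depends on $x$ only through $|tH\cap B^d(x,\rho)|$) I would restrict to the bulk $tH_\epsilon$ where $|tH\cap B^d(x,\rho)| = \kappa_d\rho^d$ for $\rho \leq t\epsilon$; similarly for $y$. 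The boundary strip $tH\setminus tH_\epsilon$ has volume $\leq \beta_H\epsilon\,t^{d-1}$ by~\eqref{eqHcond}, contributing only to the remainder.

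On the bulk, two elementary integrals remain. The $u$-$\rho$ integral $\int_0^v du\int_0^{t\ell}\rho^{d/2-1}e^{-u\kappa_d\rho^d}d\rho$ reduces under $w = u\kappa_d\rho^d$ to $\tfrac{2\sqrt{\pi v}}{d\sqrt{\kappa_d}} + O(1)$ uniformly in $v \in (0,1]$. Inserting this into the outer integral, the main term becomes a prefactor times $|H|t^d$ times
\[
\int_0^1 dv\, \sqrt{v} \int_0^{t\ell} \sigma^{3d/2-1} e^{-v\kappa_d\sigma^d}\,d\sigma,
\]
which under $\tau = v\kappa_d\sigma^d$ equals $\int_0^1 \sqrt{v}\cdot\tfrac{1}{d(v\kappa_d)^{3/2}}\int_0^{v\kappa_d(t\ell)^d}\sqrt{\tau}e^{-\tau}d\tau\,dv$. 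For $v \geq v_0 := (\kappa_d(t\ell)^d)^{-1}$ the inner incomplete Gamma integral converges to $\Gamma(3/2) = \sqrt{\pi}/2$, producing $\int_{v_0}^1 \tfrac{\sqrt{\pi}}{2d\kappa_d^{3/2}} v^{-1}dv = \tfrac{\sqrt{\pi}}{2d\kappa_d^{3/2}}\log(t^d) + O(1)$, while the $v<v_0$ regime contributes only $O(1)$ by the crude bound $\int_0^{t\ell}\sigma^{3d/2-1}d\sigma = O(t^{3d/2})$. Multiplying the accumulated prefactor $\tfrac{d^2}{2}\cdot\tfrac{2\sqrt{\pi}}{d\sqrt{\kappa_d}}\cdot\kappa_d\cdot\tfrac{\sqrt{\pi}}{2d\kappa_d^{3/2}}$ with $|H|t^d\log(t^d)$ gives exactly the claimed leading coefficient $\tfrac{|H|\pi}{2\kappa_d}$.

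The hard part will be organizing the error terms so that none introduces an extra $\log t$. Three types of corrections must be bounded by $O(t^d)$: (i) the boundary strips for $x$ and for $y$, each of volume $O(t^{d-1})$, combined with a remaining integral controlled via Lemma~\ref{lemGeo1} (which provides $|tH\cap B^d(\cdot,\rho)| \geq c_H\kappa_d\rho^d$ uniformly, yielding integrals that grow at most linearly in $t$); (ii) the replacement of $|tH\cap B^d(x,\rho)|$ by $\kappa_d\rho^d$ and the truncation of $\rho,\sigma$ at $t\epsilon$, whose defect contributes exponentially small terms once $u,v \gtrsim t^{-d}$; and (iii) the small-$v$ regime $v < v_0$ just discussed. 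Each piece is individually routine, but the bookkeeping to show collectively they remain $O(t^d)$ without a hidden logarithm is where the main effort goes.
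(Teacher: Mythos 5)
Your plan correctly identifies the structure of the computation — the substitution $\rho=r^{2/d}$, $\sigma=s^{2/d}$ revealing Gaussian-type integrals, the Gamma-function evaluation of the $u$--$\rho$ and $v$--$\sigma$ integrals, and the prefactor $\tfrac{d^2}{2}\cdot\tfrac{2\sqrt\pi}{d\sqrt{\kappa_d}}\cdot\kappa_d\cdot\tfrac{\sqrt\pi}{2d\kappa_d^{3/2}}=\tfrac{\pi}{2\kappa_d}$ is right. But the error control, which you yourself flag as ``the hard part,'' has a genuine gap: a \emph{fixed-width} boundary decomposition of the type you propose cannot yield an $O(t^d)$ remainder.

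First, your boundary bookkeeping is internally inconsistent. If ``$tH_\epsilon$'' means the $\epsilon$-erosion $(tH)_\epsilon$, then $|tH\setminus(tH)_\epsilon|=O(\epsilon\,t^{d-1})$ as you wrote, but then $B^d(x,\rho)\subset tH$ only for $\rho\leq\epsilon$, not $\rho\leq t\epsilon$. If instead ``$tH_\epsilon$'' means $t\cdot H_\epsilon$, then $\rho\leq t\epsilon$ is correct, but the strip volume is $t^d\,|H\setminus H_\epsilon|=O(\epsilon\,t^d)$, not $O(t^{d-1})$. In the first reading the ``constant-volume'' regime $\rho\leq\epsilon$ misses most of the $\rho$-mass whenever $u\lesssim\epsilon^{-d}$, i.e.\ precisely the regime that feeds the $\log$; in the second reading the strip contributes $O(\epsilon\,t^d\log t)$, which is not $O(t^d)$ for any fixed $\epsilon>0$, and taking $\epsilon\to0$ forces $\rho\leq t\epsilon\to0$ relative to the relevant scale, collapsing the main term. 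There is also a logical misstep in the Fubini step: $\int_{tH}\ind{\sigma>|x-y|}\,dx=|tH\cap B^d(y,\sigma)|$ can only be used \emph{after} the $x$-dependence of $\exp(-u|tH\cap B^d(x,\rho)|)$ has been frozen (by restricting to a bulk region where this volume equals $\kappa_d\rho^d$); one cannot integrate out the indicator in $x$ and then ``restrict the remaining $x$-integrand,'' since $x$ is already gone.

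What the paper's proof actually does, and what is missing from your plan, is a \emph{$v$-dependent} strip width: after rescaling so that $v$ ranges over $[\tau_0,t^d]$, it uses the strip $H\setminus H_{2v^{-1/(2d)}}$, whose volume is $O(v^{-1/(2d)})$, together with coordinated $v$-dependent cutoffs $r,s\leq v^{-1/4}$. The shrinking strip width is essential: it is exactly what turns the would-be logarithm in the boundary $v$-integral into a convergent $\int v^{-1-1/(2d)}dv=O(1)$, so that $R_1(t)=O(t^d)$, and the coordinated $r,s$-cutoffs guarantee $B^d(x,r^{2/d})\subset H$, $B^d(y,s^{2/d})\subset H$ on the main region so that the exponent is exactly $\kappa_d r^2$, $\kappa_d s^2$ (not a one-sided bound, which would change the leading constant). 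Without this device, or an equivalent one, the boundary contribution carries the same $\log t$ factor as the bulk, and you cannot simultaneously keep the bulk constant $\kappa_d^{-1}$ and bound the boundary by $O(t^d)$. This is not routine bookkeeping but a genuine additional idea that your proposal would need before it could be completed.
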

See the proof of \cite[Lemma~3.6]{Wade2009} for a similar computation to the one done below.
\begin{proof}
	Start with a change of variables:
	\[
	\begin{cases}
		\tilde{x} = t^{-1}x,\ dx = t^d d\tilde{x} \\
		\tilde{y} = t^{-1}y,\ dy = t^d d\tilde{y} \\
		\tilde{s} = t^{-d/2}s,\ ds = t^{d/2} d\tilde{s} \\
		\tilde{r} = t^{-d/2}r,\ dr = t^{d/2} d\tilde{r} \\
		\tilde{u} = t^du,\ du = t^{-d} d\tilde{u} \\
		\tilde{v} = t^dv, \ dv = t^{-d} d\tilde{v},
	\end{cases}
	\]
	which leads to
	\begin{multline}
		I_3(t) = 2t^d \int_{H}dx \int_{H}dy \int_0^{t^d}dv \int_0^vdu \int_0^{\ell^{d/2}}ds \int_0^{\ell^{d/2}}dr\ \ind{s>|x-y|^{d/2}}\\
		\exp\left(-u|H\cap B^d(x,r^{2/d})|\right) \exp\left(-v|H\cap B^d(y,s^{2/d})|\right).
	\end{multline}
	We can reduce the integration interval of the variable $v$ to $[\tau_0,t^d]$, for some large constant $\tau_0$ and large enough $t$, since the rest term is clearly $O(t^d)$. For any $v \geq \tau_0$, we then have $|H \setminus H_{2v^{-1/(2d)}}| \leq \beta_H 2 v^{-1/(2d)}$ by the discussion at the start of Section~\ref{secONNG}.
	
	We can now split the above expression as follows:
	\begin{align}
		I_3(t) &=%
		% clear O(t^d) rest term
		2t^d \int_0^{\tau_0}dv \int_0^vdu \int_{H}dx \int_{H}dy \int_0^{\ell^{d/2}}ds \int_0^{\ell^{d/2}}dr\ \ind{s>|x-y|^{d/2}}\notag\\
		&\hspace{2cm}\exp\left(-u|H\cap B^d(x,r^{2/d})|\right) \exp\left(-v|H\cap B^d(y,s^{2/d})|\right) \notag\\
		%
		% Rest over x
		&+2t^d \int_{\tau_0}^{t^d}dv \int_0^vdu \int_{H \setminus H_{2v^{-1/(2d)}}}dx \int_{H}dy \int_0^{\ell^{d/2}}ds \int_0^{\ell^{d/2}}dr\ \ind{s>|x-y|^{d/2}}\notag\\
		&\hspace{2cm}\exp\left(-u|H\cap B^d(x,r^{2/d})|\right) \exp\left(-v|H\cap B^d(y,s^{2/d})|\right)\notag\\
		%
		% Rest over s
		&+2t^d \int_{\tau_0}^{t^d}dv \int_0^vdu \int_{H_{2v^{-1/(2d)}}}dx \int_{H}dy \int_{v^{-1/4}}^{\ell^{d/2}}ds \int_0^{\ell^{d/2}}dr \ \ind{s>|x-y|^{d/2}}\notag\\
		&\hspace{2cm}\exp\left(-u|H\cap B^d(x,r^{2/d})|\right) \exp\left(-v|H\cap B^d(y,s^{2/d})|\right)\notag\\
		%
		% Rest over r
		&+2t^d \int_{\tau_0}^{t^d}dv \int_0^vdu \int_{H_{2v^{-1/(2d)}}}dx \int_{H}dy \int_0^{v^{-1/4}}ds \int_{v^{-1/4}}^{\ell^{d/2}}dr \ \ind{s>|x-y|^{d/2}}\notag\\
		&\hspace{2cm}\exp\left(-u|H\cap B^d(x,r^{2/d})|\right) \exp\left(-v|H\cap B^d(y,s^{2/d})|\right) \notag\\
		%
		% Main term
		&+2t^d \int_{\tau_0}^{t^d}dv \int_0^vdu \int_{H_{2v^{-1/(2d)}}}dx \int_{H}dy \int_0^{v^{-1/4}}ds \int_0^{v^{-1/4}}dr \ \ind{s>|x-y|^{d/2}}\notag\\
		&\hspace{2cm}\exp\left(-u|H\cap B^d(x,r^{2/d})|\right) \exp\left(-v|H\cap B^d(y,s^{2/d})|\right)\notag\\
		&= O(t^d) + R_1(t) + R_2(t) + R_3(t) + I_3'(t)
	\end{align}
	Note that in $I_3'(t)$, due to the choice of sets we are integrating over, we have $B^d(x,r^{2/d}) \subset H$ and $B^d(y,s^{2/d}) \subset H$. Hence $I_3'(t)$ is upper bounded by
	\[
	2t^d \int_{1}^{t^d}dv \int_0^vdu \int_0^{\infty}ds \int_0^{\infty}dr \int_{H}dx \int_{B^d(x,s^{2/d})}dy\ \exp\left(-u\kappa_d r^2\right) \exp\left(-v\kappa_d r^2 \right)
	\]
	which is equal to $|H|\kappa_d^{-1} \frac{\pi}{2} t^d\log(t^d)$. Hence
	\[
	I_3'(t) \leq |H|\kappa_d^{-1} \tfrac{\pi}{2} t^d\log(t^d).
	\]
	It remains to show that the three rest terms $R_1(t),R_2(t),R_3(t)$ are $O(t^d)$. To this end, recall from Lemma~\ref{lemGeo1} that there is a constant $c_H>0$ such that $|H \cap B^d(w,s)| \geq c_H s^d$ for any $w \in H$ and $s \leq \ell$. This implies that
	\[
	R_1(t) \leq 2t^d \int_{\tau_0}^{\infty}dv \int_0^vdu \int_0^{\infty}ds \int_0^{\infty}dr \int_{H \setminus H_{2v^{-1/(2d)}}}dx \int_{B^d(x,s^{2/d})}dy\ \exp\left(-uc_H r^2\right) \exp\left(-vc_H s^2\right)
	\]
	which in turn is upper bounded by 
	\[
	t^d \pi \kappa_d c_H^{-2} \beta_H \int_{\tau_0}^\infty v^{-1-1/(2d)} = O(t^d).
	\]
	By the same reasoning, we get
	\begin{align}
	R_2(t) \leq 2|H| \kappa_d t^d \int_{\tau_0}^{\infty}dv \int_0^vdu \int_{v^{-1/4}}^{\infty}ds \int_0^{\infty}dr\ s^2 \exp\left(-uc_H r^2\right) \exp\left(-vc_H s^2\right)
	\intertext{and}
	R_3(t) \leq 2|H| \kappa_d t^d \int_{\tau_0}^{\infty}dv \int_0^vdu \int_0^{\infty}ds \int_{v^{-1/4}}^{\infty}dr\ s^2 \exp\left(-uc_H r^2\right) \exp\left(-vc_H s^2\right).
	\end{align}
	both of which can be shown to be $O(t^d)$.
\end{proof}
\begin{lem}\label{lemI4I5}
	In the notation of Lemma~\ref{lemSplitVar} and with $\alpha=\frac{d}{2}$, one has $I_4(t) = O(t^d)$ and $I_5(t) = O(t^d)$. 
\end{lem}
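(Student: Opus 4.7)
The plan is to observe that neither integrand depends on the variables $y$ or $s$, so those integrations contribute only the elementary factors $|tH| = t^d|H|$ and $(t\ell)^{d/2}$ respectively. Everything then reduces to combining these polynomial-in-$t$ factors with exponential decay of the form $\exp(-v\,t^d|H|)$, which for large $t$ is extremely small unless $v$ is very close to $0$.

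For $I_5(t)$, since the integrand depends only on $u$ and $v$, it factorises as
\[
I_5(t) = |tH|^2 \cdot (t\ell)^d \cdot \left(\int_0^1 \exp(-u\,t^d|H|)\,du\right)^{\!2}.
\]
Using the trivial estimate $\int_0^1 \exp(-ub)\,du \leq 1/b$ with $b = t^d|H|$ gives $I_5(t) \leq t^{2d}|H|^2 \cdot t^d\ell^d \cdot t^{-2d}|H|^{-2} = \ell^d\, t^d$, which is the desired $O(t^d)$ bound.

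For $I_4(t)$, the same $y$ and $s$ integrations pull out a factor of $|tH|\cdot (t\ell)^{d/2}$. The rest reduces to estimating
\[
\int_{tH}\!dx \int_0^{(t\ell)^{d/2}}\!dr \iint_{u+v\geq 1} \exp\!\big(-u|tH\cap B^d(x,r^{2/d})|\big)\,\exp(-v\,t^d|H|)\,du\,dv,
\]
and I will simply drop the first exponential, bounding it by $1$. The key computation is then the inequality
\[
\iint_{\substack{u+v\geq 1 \\ u,v \in [0,1]}} \exp(-vb)\,du\,dv
= \int_0^1 \frac{\exp(-b(1-u))-\exp(-b)}{b}\,du \leq \frac{1}{b^2}
\]
with $b = t^d|H|$, which follows by substituting $w = 1-u$ in the resulting one-dimensional integral. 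Multiplying, the remaining $x,r$ integrals contribute $|tH|\cdot (t\ell)^{d/2}$, and combining all factors yields $I_4(t) \leq 2\, t^{2d}|H|^2 \cdot t^d\ell^d \cdot (t^d|H|)^{-2} = 2\ell^d\, t^d$, again of order $O(t^d)$.

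There is no real obstacle here: the proof is essentially a direct computation once one recognises that neither $y$ nor $s$ appears inside the exponentials. The role of the indicator $\ind{u+v\geq 1}$ is precisely to force the exponential $\exp(-v\,t^d|H|)$ to saturate its $1/b^2$ rather than its $1/b$ bound, which is exactly what gives the second factor of $t^{-d}$ needed to cancel one of the $t^d$ factors coming from the $x$-integration.
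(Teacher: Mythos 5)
Your computation is correct and uses essentially the same mechanism as the paper's proof: integrate out the trivial $y$ and $s$ variables, discard the $x$-dependent exponential in $I_4$ by bounding it by $1$, and exploit the constraint $u+v\geq 1$ to replace a $1/(t^d|H|)$ bound by a $1/(t^d|H|)^2$ bound. The only difference is that the paper first performs the same rescaling change of variables as in Lemma~\ref{lemI_3}, which is unnecessary overhead here since that exponential is dropped anyway; your direct treatment is slightly cleaner.
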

\begin{rem}
	Since $I_4(t) \geq 0$, it is not necessary to calculate the order of this term to find a lower bound of $\var (\fad)$. However, we include the proof for completeness.
\end{rem}
\begin{proof}
	Perform the same change of variables and upper bound as in the proof of Lemma~\ref{lemI_3} to find
	\[
		I_4(t) \leq 2 t^d \int_{H}dx \int_{H}dy \int_0^{t^d}dv \int_0^{t^d}du \int_0^{\ell^{d/2}}ds \int_0^{\ell^{d/2}}dr\ \ind{u+v\geq t^d}\\
		\exp\left(-uc_Hr^2\right) \exp\left(-v|H|\right).
	\]
	Integrating over $x$, $y$, $s$ and $v$ yields the upper bound
	\[
	I_4(t) \leq 2 |H| \ell^{d/2} t^d \int_0^{t^d} du \int_0^{\ell^{d/2}} dr\ \exp(-uc_Hr^2) \left(\exp(-(t^d-u)|H|)-\exp(-t^d|H|)\right).
	\]
	The integrand is bounded by $\exp(-(t^d-u)|H|)$. Introducing the change of variable $\tilde{u} = t^d-u$ yields the upper bound
	\[
	I_4(t) \leq 2 |H| \ell^{d/2} t^d \int_0^{t^d} du \int_0^{\ell^{d/2}} dr\ \exp(-u|H|) \leq 2 \ell^d t^d = O(t^d).
	\]
	As for $I_5(t)$, integrating over all variables yields that
	\[
	I_5(t) = t^d \ell^d (1-\exp(-t|H|))^2 = O(t^d).
	\]
\end{proof}

\begin{lem}[Joint work with Pierre Perruchaud]\label{lemI_2}
	In the notation of Lemma~\ref{lemSplitVar} and with $\alpha=\frac{d}{2}$, for every $\epsilon > 0$, there is a constant $T_0\geq 1$ such that for all $t \geq T_0$,
	\[
	I_2(t) \geq (|H|\kappa_d^{-1}c(d)-\epsilon) t^d\log(t^d) + O(t^d),
	\]
	where
	\[
	c(d) = \int_{\R^d} dz \int_0^\infty dr\ \ind{|z| \geq 1}\\	\left(\left( |B^d(0,r^{2/d}) \cup B^d(z,1)|\right)^{-1} - \kappa_d^{-1} (r^2 + 1)^{-1}\right).
	\]
\end{lem}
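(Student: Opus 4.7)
The strategy is to imitate the bulk-versus-boundary split from the proof of Lemma~\ref{lemI_3}, then perform successive changes of variables that simultaneously extract a $\log(t^d)$ factor and identify $c(d)$ as the residual integral. Since the integrand defining $I_2(t)$ is non-negative (as $e^{uV}\geq 1$), every step that discards a piece of the domain is legitimate for a lower bound. First, I would apply the same rescaling $\tilde x = x/t$, $\tilde y = y/t$, $\tilde s = t^{-d/2}s$, $\tilde r = t^{-d/2}r$, $\tilde u = t^d u$, $\tilde v = t^d v$ used for $I_3(t)$, which pulls out an overall $t^d$ and turns the domains into $H,H,[0,t^d],[0,\ell^{d/2}]$. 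Using $|H\setminus H_\epsilon|\leq \beta_H\epsilon$ together with the Gaussian decay in $\tilde r,\tilde s$ (exactly as in the last part of the proof of Lemma~\ref{lemI_3}), one can restrict the integration to a bulk region in which both balls $B^d(\tilde x,\tilde r^{2/d})$ and $B^d(\tilde y,\tilde s^{2/d})$ lie inside $H$, at the cost of an additive $O(t^d)$ error. On the bulk $|tH\cap B^d(x,r^{2/d})|$ is then replaced by $\kappa_d\tilde s^2\rho^2$ (after the next substitution) and similarly for the $y$-ball.

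The key observation is that the indicator $\{\tilde s<|\tilde x-\tilde y|^{d/2}\}$ and the intersection volume are invariant under a joint dilation of radii by $\tilde s^{2/d}$. I would therefore substitute $\tilde y=\tilde x+\tilde s^{2/d}z$ with $d\tilde y=\tilde s^2\,dz$ (so the indicator becomes $\{|z|>1\}$), then $\tilde r=\tilde s\rho$ with $d\tilde r=\tilde s\,d\rho$, and finally $\bar u=\tilde s^2\tilde u$, $\bar v=\tilde s^2\tilde v$. With $W(\rho,z):=|B^d(0,\rho^{2/d})\cap B^d(z,1)|$, the intersection volume becomes $\tilde s^2 W(\rho,z)$ and $|B^d(\tilde x,\tilde r^{2/d})|=\kappa_d\tilde s^2\rho^2$. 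Tracking the Jacobian factors of $\tilde s$ collapses all $\tilde s$-dependence to $d\tilde s/\tilde s$; the $\tilde x$-integration then trivially contributes the factor $|H|$ modulo the already-absorbed boundary error. The only $t$-dependence left is the upper limit $\bar v\leq t^d\tilde s^2$ coming from the original $v\leq 1$.

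Applying Fubini to perform the $\tilde s$-integration first yields
\[
\int_{\sqrt{\bar v/t^d}\,\wedge\,\ell^{d/2}}^{\ell^{d/2}}\frac{d\tilde s}{\tilde s}=\tfrac{1}{2}\log(t^d)+O_{\bar v}(1),
\]
which exposes the advertised $\log(t^d)$. The residual $(\bar u,\bar v,\rho,z)$-integral is then evaluated in closed form: integrate $\bar v$ via $\int_{\bar u}^\infty e^{-\kappa_d\bar v}\,d\bar v=\kappa_d^{-1}e^{-\kappa_d\bar u}$, then $\bar u$ via $\int_0^\infty(e^{-\bar u A}-e^{-\bar u B})\,d\bar u=A^{-1}-B^{-1}$ with
\[
A=\kappa_d(\rho^2+1)-W(\rho,z)=|B^d(0,\rho^{2/d})\cup B^d(z,1)|,\qquad B=\kappa_d(\rho^2+1),
\]
which is precisely $\kappa_d^{-1}$ times the integrand in the definition of $c(d)$. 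Combining these gives a leading contribution $|H|\kappa_d^{-1}c(d)\,t^d\log(t^d)$, as claimed.

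The main technical obstacle will be delivering the lower bound with an arbitrary $\epsilon>0$ loss in the constant while absorbing all remainders in $O(t^d)$. To this end I will first perform the Fubini and closed-form computation on a large compact truncation $\{\bar u,\bar v\leq M,\ \rho\leq M,\ 1\leq|z|\leq M\}$, on which the bulk approximation is legitimate (pick the bulk threshold $H_{(M+1)s_*^{2/d}}$ with $s_*=s_*(M)$ small so that both balls stay inside $H$) and the Fubini manipulations are justified. Sending $M\to\infty$ recovers $c(d)$ up to $\epsilon$ by dominated convergence, where integrability at infinity is controlled by the identity $A^{-1}-B^{-1}=W/(AB)$ together with the pointwise estimate $0\leq W(\rho,z)\leq\kappa_d\,\ind{|z|<1+\rho^{2/d}}$ and the support bound $W\equiv 0$ for $|z|\geq 1+\rho^{2/d}$. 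Verifying that this bound yields a finite $c(d)$ (via $\int W(\rho,z)\,dz=\kappa_d^2\rho^2$ and $1/(AB)\lesssim(1+\rho^2)^{-2}$) is the most delicate bookkeeping step, and is presumably separated out in the companion Lemma~\ref{lemI_2'}.
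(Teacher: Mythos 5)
Your proposal is correct and follows essentially the same route as the paper's own proof: restrict to a bulk of $tH$ where $|tH\cap B^d(\cdot,\cdot)|$ coincides with the Lebesgue ball volume, pull out $\log(t^d)$ through a scaling change of variables that produces a $\int d\cdot/\cdot$ integral, and identify the residual as $\kappa_d^{-1}c(d)$ via elementary integrations. The differences are only in bookkeeping: you rescale by $t$ first and pivot on $\tilde s$ (landing directly on $c(d)$ via the two exponential integrals in $\bar v,\bar u$), whereas the paper works in the original variables, pivots on $v$ to get a clean $\int_{(\tau_0/(t\tilde\delta))^d}^1 dv/v$ with $v$-free residual, introduces the intermediate constant $c_0(d)$, and shows $c_0(d)=\kappa_d^{-1}c(d)$ by further substitutions — your route is slightly more direct but requires the trickier Fubini swap you correctly flag, since the $\tilde s$-integral's lower limit $\sqrt{\bar v/t^d}$ makes the $O_{\bar v}(1)$ error $-\tfrac12\log\bar v+\text{const}$ (integrable against the residual density, which vanishes like $\bar v^2e^{-\kappa_d\bar v}$), and your statement that the intersection volume is ``invariant'' under the $\tilde s^{2/d}$-dilation is a slip: it scales by $\tilde s^2$, as your subsequent formula $\tilde s^2 W(\rho,z)$ correctly records.
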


\begin{proof}
	Fix $\tilde{\epsilon}>0$ such that $(|H|-\tilde{\epsilon})(c(d)-\tilde{\epsilon}) \geq |H|c(d) - \epsilon$ and let $\tilde{\delta} > 0$ be such that $\beta_H \tilde{\delta} < \tilde{\epsilon}$ and $|H_{\tilde{\delta}}| \geq |H|-\tilde{\epsilon}$ (which is possible by property~\eqref{eqHcond}).
	
	Assume that $x \in tH_{\tilde{\delta}}$, $y\in B^d(x,\frac{t\tilde{\delta}}{2})$, $r \leq (t\tilde{\delta})^{d/2}$ and $s\leq |x-y|^{d/2}$. Then $B^d(x,t\tilde{\delta}) \subset tH$, $B^d(x,r^{2/d}) \subset tH$ and $B^d(y,s^{2/d}) \subset tH$. In the integrals making up $I_2(t)$, we can reduce the intervals of integration to the ones stated here and hence $I_2(t)$ is lower bounded by
	\begin{multline}
		2\int_{tH_{\tilde{\delta}}} dx \int_{B^d(x,\frac{t\tilde{\delta}}{2})} dy \int_0^{(t\tilde{\delta})^{d/2}} dr \int_0^{|x-y|^{d/2}} ds \int_0^1 dv \int_0^v du \\
		\exp(-u\kappa_d r^2) \exp(-v\kappa_d s^2) \left(\exp(u|B^d(0,r^{2/d}) \cap B^d(y-x,s^{2/d})|) - 1 \right).
	\end{multline}
	Now carry out the following changes of variables:
	\[
	\begin{cases}
		\tilde{u} = v^{-1}u,\ d\tilde{u} = v^{-1} du\\
		z = v^{1/d} (y-x),\ dz= v dy\\
		\tilde{s} = v^{1/2} s,\ d\tilde{s} = v^{1/2} ds \\
		\tilde{r} = v^{1/2} r,\ d\tilde{r} = v^{1/2} dr
	\end{cases}
	\]
	and deduce that the above expression is equal to
	\begin{multline}
		2\int_0^1 dv \int_0^1 d\tilde{u} \int_{tH_{\tilde{\delta}}} dx \int_{B^d(0,tv^{1/d}\tilde{\delta}/2)} dz \int_0^{(tv^{1/d}\tilde{\delta})^{d/2}}d\tilde{r} \int_0^{|z|^{d/2}} d\tilde{s} \\
		v^{-1} \exp(-\tilde{u}\kappa_d \tilde{r}^2) \exp(-\kappa_d \tilde{s}^2) \left(\exp(\tilde{u}|B^d(0,\tilde{r}^{2/d}) \cap B^d(z,\tilde{s}^{2/d})|) - 1 \right)
	\end{multline}
	Assume now that $t \geq \tilde{\delta}^{-1}\tau_0=:T_0$ for some $\tau_0>0$ and lower bound this expression by reducing to the integration interval where $v^{1/d} \geq t^{-1}\tilde{\delta}^{-1}\tau_0$. Integrating additionally over $x$, we get the lower bound
	\begin{multline}
		t^d|H_{\tilde{\delta}}|\left(\int_{(t^{-1}\tilde{\delta}^{-1}\tau_0)^d}^1 dv\ v^{-1}\right) 2\int_0^1 du \int_{B^d(0,\tau_0/2)} dz \int_0^{\tau_0^{d/2}} dr \int_0^{|z|^{d/2}} ds \\
		\exp(-u\kappa_d r^2) \exp(-\kappa_d s^2)\left(\exp(u|B^d(0,r^{2/d}) \cap B^d(z,s^{2/d})|) - 1 \right).
	\end{multline}
	For $\tau_0$ large enough, this is lower bounded by
	\[
	t^d (|H|-\tilde{\epsilon})(\log(t^d)-d\log(\tilde{\delta}^{-1}\tau_0)) (c_0(d)-\tilde{\epsilon}) \geq (|H|c_0(d)-\epsilon) t^d \log(t^d) + O(t^d)
	\]
	with
	\begin{multline}
		c_0(d) := 2\int_0^1 du \int_{\R^d} dz \int_0^{\infty} dr \int_0^{|z|^{d/2}} ds \exp(-u\kappa_d r^2) \exp(-\kappa_d s^2) \\ \left(\exp(u|B^d(0,r^{2/d}) \cap B^d(z,s^{2/d})|) - 1 \right).
	\end{multline}
	The last thing that remains to do is to show that $c_0(d) = \kappa_d^{-1} c(d)$. Perform the successive change of variables $\tilde{r} = u^{1/2} r,\ \tilde{z} = u^{1/d} z,\ \tilde{s} = u^{1/2} s$ and $\tilde{u}=u^{-1}$, then integrate over $\tilde{u}$ to get
	\begin{multline}
		c_0(d) = 2\int_{\R^d} dz \int_0^\infty dr \int_0^{|z|^{d/2}} ds\ \kappa_d^{-1} s^{-2}\exp(-\kappa_d s^2) \exp(-\kappa_d r^2) \\   \left(\exp(|B^d(0,r^{2/d}) \cap B^d(z,s^{2/d})|) - 1 \right).
	\end{multline}
	Then change variables $\tilde{z} = s^{-2/d}z$ and $\tilde{r} = s^{-1} r$ and integrate over $s$ to find $c_0(d) = \kappa_d^{-1} c(d)$.
\end{proof}

Combining Lemmas~\ref{lemI_1}, \ref{lemI_3}, \ref{lemI4I5} and \ref{lemI_2}, we have shown that for every $\epsilon>0$, there is a $T_0\geq 1$ such that for all $t\geq T_0$,
\[
\var \left(\fad\right) \geq t^d \log(t^d) \left[|H|\kappa_d^{-1} \left(1+c(d)-\frac{\pi}{2}\right) - \epsilon \right] + O(t^d).
\]
It remains thus to show that for $d \in \N$,
\[
c(d)>\frac{\pi}{2}-1 \approx 0.57,
\]
which will be shown in the following lemma.

\begin{lem}[Joint work with Pierre Perruchaud] \label{lemI_2'}
	In the notation of Lemma~\ref{lemI_2}, for $d \in \N$,
	\[
	c(d)>\frac{\pi}{2}-1 \approx 0.57.
	\]
\end{lem}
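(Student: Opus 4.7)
The starting point is the observation that, by inclusion–exclusion,
\[
|B^d(0,r^{2/d}) \cup B^d(z,1)| = \kappa_d(r^2+1) - V(z,r), \qquad V(z,r) := |B^d(0,r^{2/d}) \cap B^d(z,1)| \geq 0,
\]
so the integrand in the definition of $c(d)$ equals $\frac{V(z,r)}{\kappa_d(r^2+1)[\kappa_d(r^2+1)-V(z,r)]} \geq 0$, and vanishes precisely when the two balls are disjoint, i.e.\ when $r^{2/d} + 1 \leq |z|$. This already reduces the problem to a positive integral supported on the overlap region, and we may treat each dimension separately by evaluating or estimating the corresponding integral on this region.

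For $d=1$, the plan is to partition the $r$-axis for fixed $z \geq 1$ into three pieces: the disjoint piece $r^2 < z-1$ (where the integrand vanishes), the partial overlap piece $z-1 \leq r^2 \leq z+1$ (where $|B^1(0,r^2) \cup B^1(z,1)| = r^2 + z + 1$ and the integrand equals $\frac{1}{r^2+z+1} - \frac{1}{2(r^2+1)}$), and the containment piece $r^2 > z+1$ (where the integrand equals $\frac{1}{2r^2(r^2+1)}$). After the standard substitutions $u = r/\sqrt{z+1}$ and $u = \sqrt{z\pm 1}$, each resulting one-dimensional integral reduces to a combination of $\arctan$ and $\log$ values; evaluating and (after an $r$--$z$ swap plus integration by parts in $z$) summing these terms gives a closed form for $c(1)$ whose numerical value exceeds $\pi/2 - 1 \approx 0.57$.

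For $d=2$, the intersection of two discs admits the classical closed-form area via circular segments, and adaptive numerical quadrature on the two-dimensional integral (in the variables $|z|$ and $r$, after integrating out the angular part of $z$) produces the inequality $c(2) > \pi/2 - 1$ to high precision. For $d \geq 3$ the plan is to restrict to the sub-region where $B^d(z,1)$ is entirely contained in $B^d(0,r^{2/d})$, i.e.\ $r \geq (|z|+1)^{d/2}$, on which the integrand simplifies to $\frac{1}{\kappa_d r^2(r^2+1)}$. Using $\int_a^\infty \frac{dr}{r^2(r^2+1)} = \frac{1}{a} + \arctan a - \frac{\pi}{2} = \frac{1}{a} - \arctan(1/a)$, passing to spherical coordinates for $z$, and substituting $y=(\rho+1)^{d/2}$, one obtains
\[
c(d) \geq d \int_1^\infty \rho^{d-1}\left[\frac{1}{(\rho+1)^{d/2}} - \arctan\frac{1}{(\rho+1)^{d/2}}\right] d\rho,
\]
and the plan is to combine this with a quantitative estimate of the partial-overlap contribution (using that $V(z,r)$ equals $\kappa_d$ minus the measure of a spherical cap for $|z|$ close to $r^{2/d}-1$) to close the gap.

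The main obstacle lies in the case $d \geq 3$: the containment-region lower bound by itself is not large enough to exceed $\pi/2-1$, so one must incorporate a dimension-independent estimate of the partial-overlap contribution. Controlling the cap volume uniformly in $d$ is the delicate part, since spherical caps concentrate sharply as $d$ grows; the argument will likely hinge on showing that this concentration still leaves enough mass in the partial-overlap slab $r^{2/d} \in [|z|-1, |z|+1]$ for each $|z|$ near $1$.
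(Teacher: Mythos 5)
The decomposition into disjoint / partial--overlap / containment regions is legitimate, and your handling of $d=1$ (explicit integration) and $d=2$ (numerical quadrature via circular-segment formulas) is consistent with what the paper actually does. However, your plan for $d\geq 3$ has a genuine and, as you yourself note, unclosed gap. The containment-region lower bound
\[
c(d) \geq d\int_1^\infty \rho^{d-1}\left[\frac{1}{(\rho+1)^{d/2}}-\arctan\frac{1}{(\rho+1)^{d/2}}\right]d\rho
\]
is numerically far too small to reach $\pi/2-1\approx 0.57$; already for $d=3$ a rough estimate gives roughly $0.15$, leaving roughly $0.42$ to be extracted from the partial-overlap slab. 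You acknowledge that "controlling the cap volume uniformly in $d$ is the delicate part" and offer no mechanism for doing it, so the proposal, as written, does not prove the statement for any $d\geq 3$.

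The paper avoids this problem entirely by applying the elementary inequality $\frac{1}{1-x}-1\geq x$ (for $x\in(0,1)$) to the whole integrand, which amounts to lower bounding $\frac{V}{A(A-V)}\geq \frac{V}{A^2}$ with $V=|B^d(0,r^{2/d})\cap B^d(z,1)|$ and $A=\kappa_d(r^2+1)$. A Fubini swap turns $\int_0^\infty \kappa_d^{-2}(r^2+1)^{-2}\int_{|z|\geq 1}V\,dz\,dr$ into $\frac{\pi}{4}-\tilde{c}(d)$, where $\tilde{c}(d)$ involves only the fixed-radius ball-intersection volume $|B^d(0,1)\cap B^d(x,1)|$. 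This shifts the goal to the upper bound $\tilde{c}(d)\leq 1-\frac{\pi}{4}$, which is tractable because $\tilde{c}(d)$ has a closed form in terms of the regularized incomplete beta function $I_x\big(\tfrac{d+1}{2},\tfrac{d+1}{2}\big)$ and an explicit scalar function $g(u)$; a finite numerical check covers $3\leq d\leq 9$, and a monotonicity property of $I_x(\alpha,\alpha)$ in $\alpha$ handles all $d\geq 10$. The crucial structural advantage of this route, which your decomposition misses, is that one no longer needs to track the partial-overlap geometry: the entire overlap contribution is absorbed at once by the linearization, and the remaining bound is uniform-in-$d$ by a single monotonicity lemma rather than by a cap-volume concentration estimate.

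To repair your approach, you would essentially have to rediscover a version of this linearization (or some other way to capture the partial-overlap slab without fighting cap concentration); simply refining the containment-only bound cannot succeed, since that term genuinely decays in $d$ and never reaches the target.
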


\begin{proof}

	We start by showing a lower bound on $c(d)$. Indeed, for all $x \in (0,1)$,
	\[
	\frac{1}{1-x} - 1 = \frac{x}{1-x} \geq x.
	\]
	Hence
	\begin{align}
	&\frac{1}{|B^d(0,r^{2/d}) \cup B^d(z,1)|} - \frac{1}{\kappa_d(r^2+1)}\notag\\
	&= \kappa_d^{-1}(r^2+1)^{-1} \left(\frac{1}{1-\kappa_d^{-1}(r^2+1)^{-1}|B^d(0,r^{2/d}) \cap B^d(z,1)|} - 1\right) \notag\\
	&\geq \kappa_d^{-2}(r^2+1)^{-2} |B^d(0,r^{2/d}) \cap B^d(z,1)|.
	\end{align}
	A lower bound for $c(d)$ is thus given by
	\begin{equation}\label{iP9cdLower}
	 \int_0^\infty dr\ \kappa_d^{-2} (r^2 + 1)^{-2} \int_{\R^d} dz\ \ind{|z| \geq 1} |B^d(0,r^{2/d}) \cap B^d(z,1)|.
	\end{equation}
	Looking only at the inner integral over $z$, note that we can rewrite it as follows:
	\begin{align}
		\int_{\R^d} dz\ \ind{|z| \geq 1} |B^d(0,r^{2/d}) \cap B^d(z,1)| &= \int_{\R^d} dz \int_{\R^d} dx\ \ind{|z| \geq 1} \ind{|x| \leq r^{2/d}} \ind{|x-z| \leq 1} \notag\\
		&= \int_{\R^d} dx\ \ind{|x| \leq r^{2/d}} \int_{\R^d} dz\ \left(\ind{|x-z| \leq 1} - \ind{|x-z| \leq 1} \ind{|z| \leq 1}\right) \notag\\
		&= \int_{B^d(0,r^{2/d})} dx \ \left( \kappa_d - |B^d(0,1) \cap B^d(x,1)| \right).
	\end{align}
	Plugging this into the lower bound \eqref{iP9cdLower}, we obtain
	\[
	\int_0^\infty dr\ \kappa_d^{-2} (r^2 + 1)^{-2} \int_{B^d(0,r^{2/d})} dx \ \left( \kappa_d - |B^d(0,1) \cap B^d(x,1)| \right),
	\]
	which can be written as
	\[
	\int_0^\infty dr\ (r^2 + 1)^{-2} r^2 - \tilde{c}(d) = \frac{\pi}{4} - \tilde{c}(d)
	\]
	with
	\[
	\tilde{c}(d) := \int_0^\infty dr\ \kappa_d^{-2} (r^2 + 1)^{-2} \int_{B^d(0,r^{2/d})} dx \ |B^d(0,1) \cap B^d(x,1)|.
	\]
	Hence $c(d) \geq \frac{\pi}{4} - \tilde{c}(d)$ and to show that $c(d) \geq \frac{\pi}{2} - 1 $, it suffices to show that $\tilde{c}(d) \leq 1-\frac{\pi}{4} \approx 0.215$. First, note that
	\[
	|B^d(0,1) \cap B^d(x,1)| = 2 \kappa_{d-1} \ind{|x| \leq 2} \int_{\frac{|x|}{2}}^1 dy\ (1-y^2)^{\frac{d-1}{2}}.
	\]
	which can be seen by integrating over the $d-1$-dimensional hyperspheres making up the spherical caps of the intersection, or by following the development in \cite{SphericalCaps}.
	Hence $\tilde{c}(d)$ can be written
	\begin{align}
	\tilde{c}(d) &= 2 \kappa_{d-1} \kappa_d^{-2} \int_0^1 dy \int_0^\infty dr \int_{\R^d} dx\ \ind{|x| \leq r^{2/d} \wedge 2y} (r^2 + 1)^{-2} (1-y^2)^{\frac{d-1}{2}} \notag\\
	&= 2 \kappa_{d-1} \kappa_d^{-1} \int_0^1 dy \int_0^\infty dr\ \left(r^{2} \wedge (2y)^d\right) (r^2 + 1)^{-2} (1-y^2)^{\frac{d-1}{2}},
	\end{align}
	where we integrated over $x$ in the second line. For any $u \geq 0$, define
	\[
	g(u) := 2 \int_0^\infty dr\ (r^2 \wedge u^2) (r^2 + 1)^{-2} = \frac{\pi}{2}u^2 + (1-u^2) \arctan(u) - u.
	\]
	Then it can be verified by standard methods that $g(0)=0$, as well as $\lim_{u \rightarrow \infty} g(u) = \frac{\pi}{2}$ and the function is strictly increasing. Moreover, $g'(u) \leq \pi u $, therefore $g(x) \leq \frac{\pi}{2} x^2$.
	
	We now have
	\[
	\tilde{c}(d) = \kappa_{d-1} \kappa_d^{-1} \int_0^1 dy\ (1-y^2)^{\frac{d-1}{2}} g\left((2y)^{d/2}\right).
	\]
	Change variables $u = \frac{1-y}{2}$ to get
	\[
	\tilde{c}(d) = 2^d \kappa_{d-1} \kappa_d^{-1} \int_0^{1/2} du\ u^{\frac{d-1}{2}} (1-u)^{\frac{d-1}{2}} g\left(2^{d/2}(1-2u)^{d/2}\right)
	\]
	and note that
	\begin{equation}\label{Rainbow22}
	2^d \kappa_{d-1} \kappa_d^{-1} \int_0^{x} du\ u^{\frac{d-1}{2}} (1-u)^{\frac{d-1}{2}} = I_x\left(\frac{d+1}{2},\frac{d+1}{2}\right),
	\end{equation}
	where $I_x(a,b)$ is the regularized incomplete beta function (see \cite{RegBeta} for details).
	
	Let us now deal with dimensions $d \in \{3,...,9\}$. Split the integration interval $[0,\frac{1}{2}]$ into intervals $[\frac{i-1}{40},\frac{i}{40}]$ for $i \in \{1,2,...,20\}$ and upper bound $g\left(2^{d/2}(1-2u)^{d/2}\right)$ by $g\left(2^{d/2}(1-\frac{i-1}{10})^{d/2}\right)$ for $u \in (\frac{i-1}{40},\frac{i}{40}]$. We deduce the following bound:
	\[
	\tilde{c}(d) \leq \sum_{i=1}^{20} g\left(\left(2-\frac{i-1}{10}\right)^{d/2}\right) \left(I_{\frac{i}{40}}\left(\frac{d+1}{2},\frac{d+1}{2}\right) - I_{\frac{i-1}{40}}\left(\frac{d+1}{2},\frac{d+1}{2}\right)\right) =: \beta_1(d)
	\]
	This results in the values in Table~\ref{tabBeta1}, all of which are smaller than $1-\frac{\pi}{4} \approx 0.215$.
	\begin{table}
		\centering
	\begin{tabular}{|c|c|}
	\hline
	$d$ & $\beta_1(d)$ \\
	\hline
	3 & 0.203 \\
	4 & 0.175 \\
	5 & 0.150 \\
	6 & 0.128 \\
	7 & 0.110 \\
	8 & 0.094 \\
	9 & 0.081 \\
	\hline
	\end{tabular}	
	\caption{Values of $\beta_1(d)$}	\label{tabBeta1}
	\end{table}

	For $d \geq 10$, take $\theta = 0.32$ and split $\tilde{c}(d)$ as follows:
	\begin{align}
	\tilde{c}(d) &\begin{multlined}[t] {}= 2^d \kappa_{d-1} \kappa_d^{-1} \bigg(\int_0^{\theta} du\ u^{\frac{d-1}{2}} (1-u)^{\frac{d-1}{2}} \underbrace{g\left(2^{d/2}(1-2u)^{d/2}\right)}_{\leq \frac{\pi}{2}} \\ + \int_\theta^{1/2} du\ u^{\frac{d-1}{2}} (1-u)^{\frac{d-1}{2}} \underbrace{g\left(2^{d/2}(1-2u)^{d/2}\right)}_{\leq \pi 2^{d-1}(1-2\theta)^d} \bigg)
	\end{multlined} \notag\\
	&\leq \frac{\pi}{2} I_\theta\left(\frac{d+1}{2},\frac{d+1}{2}\right) + \pi 2^{d-1} (1-2\theta)^d I_{1/2}\left(\frac{d+1}{2},\frac{d+1}{2}\right),
	\end{align}
	where the second line follows by \eqref{Rainbow22}. Since $I_{1/2}\left(\frac{d+1}{2},\frac{d+1}{2}\right) = \frac{1}{2}$, it follows that $\tilde{c}(d)$ is upper bounded by
	\[
	\beta_2(d):= \frac{\pi}{2} \left(I_\theta\left(\frac{d+1}{2},\frac{d+1}{2}\right) + 2^{d-1}(1-2\theta)^d\right).
	\]
	Now $\beta_2(10) \approx 0.208 < 1-\frac{\pi}{4}$. By Proposition~4 in \cite{LatticeBeta}, for any $x \in (0,\frac{1}{2})$, the function $\alpha \mapsto I_x(\alpha,\alpha)$ is decreasing. Hence $\beta_2(d)$ is decreasing in $d$ and thus for all $d \geq 10$, one has $\beta_2(d) \leq \beta_2(10)$.
	
	For dimension $d=1$, it is possible to show that $c(1) = \left(\frac{5}{2}-\sqrt{2}\right) \pi - 2 \sqrt{2} \approx 0.583 > \frac{\pi}{2}-1$. For dimension $d=2$, one can numerically estimate that $c(2) \approx 0.606>\frac{\pi}{2}-1$.

The following python code was used to carry out this estimation with an approximate error of $1.5 \times 10^{-8}$. It can be used to estimate $c(d)$ at other small values of $d$.

\begin{lstlisting}[language=Python]
	import math
	from math import gamma
	import scipy as sc
	import scipy.special
	import scipy.integrate
	import numpy as np
	
	# volume(spherical cap)/volume(unit ball)
	# r: radius, a: base distance to cap, d: dimension
	def capvol(r,a,d): 
		if a>=0:
			return 1/2*(r**d)*sc.special.betainc((d+1)/2,1/2,1-a**2/r**2) # smaller cap
		else:
			return r**d-capvol(r,-a,d) #larger cap
	
	# volume(intersection of balls)/volume(unit ball)
	# x: distance between centres of balls, r1,r2: radii, d: dimension
	def vol(x,r1,r2,d):
		if x >= r1+r2: #no intersection
			return 0
		elif x <= abs(r1-r2): #one ball within the other
			return min(r1,r2)**d
		else:
			c1 = (x**2+r1**2-r2**2)/(2*x) #distances to bases of caps
			c2 = (x**2-r1**2+r2**2)/(2*x)
			return capvol(r1,c1,d)+capvol(r2,c2,d) #sum of both spherical caps
	
	# the integrand
	# r,a: variables, d: dimension
	def integr(r,a,d):
		q1 = 1+a**d-vol(r,1,a,d)
		q2 = 1 + a**d
		return 2*d**2/4*r**(d-1)/a**(d/2+1)*(1/q1-1/q2)
	
	# the constant c(d)
	def cst(d):
		options={'limit':200}
		res = sc.integrate.nquad(lambda a,r:integr(r,a,d),\
			[lambda r:[0,r],[0,np.inf]],opts=[options,options])
		return res #returns the result and the maximal error made
\end{lstlisting}
\end{proof}
\begin{proof}[Proof of Proposition~\ref{propONNGlowerVar2}]
	Combine Lemmas~\ref{lemSplitVar} - \ref{lemI_2'}.
\end{proof}

\subsection{Proof of Theorem~\ref{thmONNG}}

The inequalities~\eqref{eqONNGVarNC} and \eqref{eqONNGVarC} are shown in Propositions~\ref{propONNGVarUpper}, \ref{propONNGlowerVar1} and \ref{propONNGlowerVar2}. In the following, we will write $c$ to indicate the presence of a constant. The value of $c$ might change from line to line, or indeed, within one line.

\noindent We have for $0<\alpha\leq \frac{d}{2}$:
\[
|\fat| \leq (\diam(H)t)^{\alpha} \eta(tH \times [0,1])
\]
and using \eqref{eqDbyL},
\[
|\MalD_{(x,s)} \fat| \leq (\diam(H)t)^{\alpha} (\eta(tH \times [0,1])+1),
\]
hence $\fat \in L^2(\p_\eta) \cap \dom\MalD$.

\subsubsection{Wasserstein distance when $\alpha=\frac{d}{2}$}
{\setlength\parindent{0pt}
We use the bound given in Theorem~\ref{thmWasserCond} with $(\Y,\bar{\lambda}) = (tH \times [0,1],dx \otimes ds)$ and $p=q=2$.

Recall that, combining Propositions~\ref{propONNGcondBounds} and \ref{propONNGlowerVar2}, we have for $r \geq 1$ the following bounds:
\begin{align}
	&\E \left[ \E\big[ |\MalD_{(x,s)} \fad| \big| \eta_{|tH \times [0,s)} \big]^r \right]^{1/r} \lesssim s^{-1/2} \wedge t^{d/2} \label{R1} \\
	&\E \left[ \E \big[ |\DD_{(x,s),(y,u)} \fad| \big| \eta_{|tH \times [0,s \vee u)} \big]^4 \right]^{1/4} \lesssim (s \vee u)^{-1/2} \exp(-c(s \vee u)|x-y|^d),\label{R2}\\
	&\var\left(\fad\right) \gtrsim t^d \log(t), \label{R3}
\end{align}
where for the second line we used the Cauchy-Schwarz inequality to get
\begin{multline}
\E \left[ \E \big[ |\DD_{(x,s),(y,u)} \fad| \big| \eta_{|tH \times [0,s \vee u)} \big]^4 \right]^{1/4} \\ \leq \E \left[ \E \big[ |\DD_{(x,s),(y,u)} \fad| \big| \eta_{|tH \times [0,s \vee u)} \big]^8 \right]^{1/8} \p(\E \big[ |\DD_{(x,s),(y,u)} \fad| \big| \eta_{|tH \times [0,s \vee u)} \big] \neq 0)^{1/2}.
\end{multline}
We start by plugging the bounds \eqref{R1}, \eqref{R2}, \eqref{R3} into $\beta_1$ from Theorem~\ref{thmWasserCond}. Then

\[
\beta_1 \lesssim \left(t^d \log(t)\right)^{-1} \left( \int_{tH}dx \int_0^1 ds \left( \int_{tH}dy \int_0^1 du\ u^{-1/2} (u\vee s)^{-1/2} \exp\left(-c(u\vee s)|x-y|^d\right) \right)^2 \right)^{1/2}.
\]

We now change variables as follows:
\[
\begin{cases}
	\tilde{x} = t^{-1}x,\ dx = t^d d\tilde{x}\\
	\tilde{y} = t^{-1}y,\ dy = t^d d\tilde{y}\\
	\tilde{s} = t^ds,\ ds = t^{-d} d\tilde{s} \\
	\tilde{u} = t^du,\ du = t^{-d} d\tilde{u}
\end{cases}
\]
and deduce that $\beta_1$ is bounded by
\[
c \log(t)^{-1} \left( \int_{H}dx \int_0^\infty ds \left( \int_{H}dy \int_0^\infty du\ u^{-1/2} (u\vee s)^{-1/2} \exp(-c(u\vee s)|x-y|^d) \right)^2 \right)^{1/2}.
\]
This is $O(\log(t)^{-1})$ since the integral is finite.
Indeed, changing variables in $y$ and integrating over $x$, the integral is bounded by
\[
|H| \int_0^\infty ds \left( \int_{B^d(0,2\diam(H))} dz \int_0^\infty du\ u^{-1/2} (u\vee s)^{-1/2} \exp(-c(u\vee s)|z|^d) \right)^2.
\]
Integrating over $z$, this is equal to
\begin{equation}\label{Rainbow23}
c\int_0^\infty ds \left( \int_0^\infty du\ u^{-1/2} (u\vee s)^{-3/2} (1-\exp(-c(u\vee s))) \right)^2.
\end{equation}
Now we use that $1-\exp(-c(u\vee s)) \lesssim 1 \wedge (u\vee s)$ to bound \eqref{Rainbow23} by
\begin{equation}\label{Rainbow24}
c\int_0^\infty ds \left( \int_0^\infty du\ u^{-1/2} (u\vee s)^{-3/2} (1 \wedge (u\vee s)) \right)^2.
\end{equation}
Splitting the integral over $s$ and integrating over $u$, yields that \eqref{Rainbow24} is equal to
\[
c \int_0^1 ds \ (3-\log(s))^2 +c \int_1^\infty ds\ s^{-2} < \infty.
\]
For $\beta_2$, we have
\[
\beta_2 \lesssim (t^d \log(t))^{-1} \left( \int_{tH}dx \int_0^1 ds \left( \int_{tH}dy \int_s^1 du\ u^{-1} \exp(-cu|x-y|^d) \right)^2 \right)^{1/2}.
\]
This term can be dealt with in the same way as with $\beta_1$ and it is $O(\log(t)^{-1})$.

As for $\beta_3$, we get
\[
\beta_3 \lesssim (t^d \log(t))^{-3/2} \int_{tH} dy \int_0^1 du\ (u^{-1/2} \wedge t^{d/2})^3.
\]
Integrating over $u$ and $y$ gives that this is $O(\log(t)^{-3/2})$.

The last term is given by
\[
\beta_4 \lesssim (t^d \log(t))^{-3/2} \int_{tH} dx \int_0^1 ds \int_{tH} dy \int_0^s du\ u^{-1/2} s^{-1} \exp(-cs|x-y|^d).
\]
Proceeding in the same way we dealt with $\beta_1$ yields that $\beta_4=O(\log(t)^{-3/2})$.

Combining our estimates above, we conclude that $\beta_1 + \beta_2 + \beta_3 + \beta_4 = O(\log(t)^{-1})$. \qed}

\subsubsection{Wasserstein and Kolmogorov distance when $\alpha<\frac{d}{2}$}
{\setlength\parindent{0pt}
We use Theorem~\ref{thmWasserNon-Cond.} with $p,q \in (1,2]$ and $\epsilon>0$ such that $2p(\alpha + \epsilon d)<d$ and $(q+1)(\alpha + \epsilon d) < d$. In the following we will show that all terms $\gamma_1,...,\gamma_7$ are $O(t^{d(1/p-1)})$. By Propositions~\ref{propONNGlowerVar1}, \ref{propONNGBounds} and Cauchy-Schwarz inequality, we have for all $r\geq 1$ the following bounds:
\begin{align}
	&\E [|\MalD_{(x,s)} \fat|^r]^{1/r} \lesssim s^{-\alpha/d-\epsilon}\\
	&\E[|\DD_{(x,s),(y,u)}\fat|^r]^{1/r} \lesssim (u \vee s)^{-\alpha/d-\epsilon} \exp\left(-c(u \vee s)|x-y|^d\right)\\
	&\var(\fat) \gtrsim t^d.
\end{align}
Introducing these bounds into $\gamma_1$, we get
\begin{equation}\label{Rainbow25}
\gamma_1 \lesssim t^{-d} \left( \int_{tH} dx \int_0^1 ds \left( \int_{tH} dy \int_0^1 du\ u^{-\alpha/d-\epsilon} (u \vee s)^{-\alpha/d-\epsilon} \exp\left(-c(u \vee s)|x-y|^d\right) \right)^p \right)^{1/p}.
\end{equation}
Using a change of variables, we can bound the integral over $y$ as follows:
\begin{equation}\label{Rainbow26}
\int_{tH} dy\ \exp(-c(u \vee s)|x-y|^d) \leq \int_{\R^d} dz\ \exp(-c(u \vee s)|z|^d) \lesssim (u \vee s)^{-1}.
\end{equation}
Introducing \eqref{Rainbow26} into \eqref{Rainbow25} and integrating over $x$ gives us
\[
\gamma_1 \lesssim t^{d(1/p-1)} \left( \int_0^1 ds \left( \int_0^1 du\ u^{-\alpha/d-\epsilon} (u \vee s)^{-\alpha/d-\epsilon-1} \right)^p \right)^{1/p}.
\]
Integrating over $u$, one sees that these integrals are bounded by
\[
c\int_0^1 ds\ s^{-2p(\alpha/d+\epsilon)} <\infty.
\]
Hence $\gamma_1 = O(t^{d(1/p-1)})$.
The term $\gamma_2$ is bounded by
\[
\gamma_2 \lesssim t^{-d} \left( \int_{tH} dx \int_0^1 ds \left( \int_{tH} dy \int_0^1 du\ (u \vee s)^{-2\alpha/d-2\epsilon} \exp\left(-c(u \vee s)|x-y|^d\right) \right)^p \right)^{1/p}.
\]
This can be treated analogously to the bound on $\gamma_1$ and yields $\gamma_2=O(t^{d(1/p-1)})$.

As for $\gamma_3$, it is bounded by
\[
\gamma_3 \lesssim t^{-d(q+1)/2} \int_{tH}dy \int_0^1 du\ u^{-(q+1)(\alpha/d+\epsilon)}.
\]
This is $O(t^{d(1-q)/2})$. Choose $q = 3-\frac{2}{p}$, then $(q+1)(\alpha + \epsilon d) \leq 2p (\alpha + \epsilon d) <d$ and $q \in (1,2]$, hence the conditions are satisfied. Moreover, $\frac{1-q}{2} = -1+\frac{1}{p}$, thus we find the same rate of convergence.

The term $\gamma_4$ is bounded by
\[
\gamma_4 \lesssim t^{-d} \left( \int_{tH} dx \int_0^1 ds\ s^{-2p(\alpha/d-\epsilon)} \right)^{1/p},
\]
which is clearly $O(t^{d(1/p-1)})$.
For the term $\gamma_5$, we deduce
\[
\gamma_5 \lesssim t^{-d} \left( \int_{tH}dx \int_0^1 ds \int_{tH}dy \int_0^1du \ (u \vee s)^{-2p(\alpha/d+\epsilon)} \exp(-c(u \vee s)|x-y|^d) \right)^{1/p}.
\]
Integrating over $x$ and $y$ as before, we infer
\[
\gamma_5 \lesssim t^{d(1/p-1)} \left(\int_0^1 ds \int_0^1 du\ (u \vee s)^{-2p(\alpha/d+\epsilon)-1} \right)^{1/p},
\]
which is $O(t^{d(1/p-1)})$.
The terms $\gamma_6$ and $\gamma_7$ work similarly:
\[
\gamma_6 \lesssim t^{-d} \left( \int_{tH}dx \int_0^1 ds \int_{tH}dy \int_0^1du \ (u \vee s)^{-p(\alpha/d+\epsilon)} s^{-p(\alpha/d+\epsilon)} \exp(-c(u \vee s)|x-y|^d) \right)^{1/p},
\]
and
\begin{multline}
\gamma_7 \lesssim t^{-d} \bigg( \int_{tH}dx \int_0^1 ds \int_{tH}dy \int_0^1du \ (u \vee s)^{-(\alpha/d+\epsilon)} s^{-(\alpha/d+\epsilon)} \\ u^{-2(\alpha/d+\epsilon)(p-1)} \exp(-c(u \vee s)|x-y|^d) \bigg)^{1/p}
\end{multline}
which can be shown to be $O(t^{d(1/p-1)})$ by the same method. This concludes the proof of Theorem~\ref{thmONNG}. \qed}

\section{Gilbert Graph}\label{secPGil}
Throughout this section, we work in the framework of Section~\ref{secGil} and Theorem~\ref{thmGilbert}. We start with a technical lemma.
\begin{lem}\label{lemPoissonMoment}
	Let $Z$ be a Poisson random variable with intensity $\lambda>0$ and let $r \geq 1$. Then there is a constant $c_r>0$ such that
	\[
	\E [Z^r]^{1/r} \leq c_r (\lambda \vee \lambda^{1/r}).
	\]
\end{lem}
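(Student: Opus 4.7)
The bound exhibits two regimes: when $\lambda \geq 1$ the right-hand side is $c_r \lambda$, matching the fact that a Poisson random variable with large parameter concentrates around $\lambda$; when $\lambda \leq 1$ the right-hand side is $c_r \lambda^{1/r}$, reflecting that $\p(Z\geq 1) \sim \lambda$ so $\E[Z^r] \sim \lambda$. I will treat the two regimes separately, and for each one I will start from the explicit series
\[
\E[Z^r] = \sum_{k=1}^{\infty} k^r \, e^{-\lambda} \frac{\lambda^k}{k!}.
\]

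For the small-$\lambda$ regime ($\lambda \leq 1$), I factor out one power of $\lambda$ and bound $e^{-\lambda} \leq 1$ together with $\lambda^{k-1} \leq 1$:
\[
\E[Z^r] \;\leq\; \lambda \sum_{k=1}^\infty \frac{k^r}{k!} \;=\; \lambda \sum_{j=0}^\infty \frac{(j+1)^{r-1}}{j!} \;=:\; A_r \, \lambda,
\]
where the constant $A_r < \infty$ because the summand decays superexponentially. Taking $r$-th roots yields $\E[Z^r]^{1/r} \leq A_r^{1/r} \lambda^{1/r}$, which is the desired bound since $\lambda \vee \lambda^{1/r} = \lambda^{1/r}$ on $(0,1]$.

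For the large-$\lambda$ regime ($\lambda \geq 1$), I reduce to an integer exponent. Setting $m := \lceil r \rceil$, Hölder's inequality (monotonicity of $L^p$-norms in the probability measure $\p$) gives
\[
\E[Z^r]^{1/r} \;\leq\; \E[Z^m]^{1/m}.
\]
For the integer moment, I invoke the classical Touchard-polynomial representation $\E[Z^m] = \sum_{k=1}^m S(m,k)\, \lambda^k$, where the $S(m,k)$ are Stirling numbers of the second kind. Because $\lambda \geq 1$, one has $\lambda^k \leq \lambda^m$ for all $1\leq k\leq m$, so
\[
\E[Z^m] \;\leq\; \Big(\sum_{k=1}^m S(m,k)\Big) \lambda^m \;=\; B_m \, \lambda^m,
\]
which yields $\E[Z^r]^{1/r} \leq B_m^{1/m}\lambda$, matching $\lambda \vee \lambda^{1/r} = \lambda$ on $[1,\infty)$.

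Setting $c_r := \max\{A_r^{1/r},\, B_{\lceil r \rceil}^{1/\lceil r\rceil}\}$ combines the two regimes. There is no real obstacle here: the only minor subtlety is handling non-integer $r$, which is why I use Hölder to pass to $m = \lceil r \rceil$ before applying the Touchard expansion, and why in the small-$\lambda$ regime I work directly with the series rather than with integer moments (since Jensen in that direction would only give $\lambda^{1/m}$, which is larger than $\lambda^{1/r}$ and therefore the wrong bound).
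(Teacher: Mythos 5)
Your proof is correct, and it takes a genuinely different route from the paper's. Both of you use the Touchard/Stirling expansion $\E[Z^m]=\sum_{k=1}^m S(m,k)\lambda^k$ and bound the coefficient sum by the Bell number $B_m$, but you diverge in how the non-integer exponent $r$ is handled. The paper applies the Bell-number bound at both bracketing integers $p_0=\lfloor r\rfloor$ and $p_1=\lceil r\rceil$ and then interpolates via log-convexity of $L^p$-norms, writing $\tfrac1r=\tfrac{1-\theta}{p_0}+\tfrac{\theta}{p_1}$ and $\norm{Z}_r\leq\norm{Z}_{p_0}^{1-\theta}\norm{Z}_{p_1}^{\theta}$, which lets them recover the exponent $\lambda^{1/r}$ exactly in the small-$\lambda$ regime with a single unified formula. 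You instead split on $\lambda\leq 1$ vs.\ $\lambda\geq 1$: for $\lambda\geq 1$ a one-sided passage to $m=\lceil r\rceil$ via $L^p$-monotonicity is enough (the paper's interpolation is an overkill there), and for $\lambda\leq 1$ you bypass integer moments entirely, pulling one factor of $\lambda$ out of the explicit series $\sum_{k\geq 1} k^r e^{-\lambda}\lambda^k/k!$ and bounding the rest by a convergent $r$-dependent constant. Your correctly identified obstruction --- that Jensen in the small-$\lambda$ regime would give the too-large $\lambda^{1/\lceil r\rceil}$ --- is exactly what the paper's log-convexity step is designed to circumvent. In the end your argument is somewhat more elementary (it avoids the interpolation inequality altogether at the price of an explicit case split), while the paper's yields a marginally cleaner interpolated constant; both give the advertised bound.
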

\begin{proof}
	Let $m \in \N$. As can be found in any standard reference (see e.g. \cite[Proposition~3.3.2]{PT11}),
	\[
	\E Z^m = \sum_{i=1}^m {m \brace i} \lambda^i,
	\]
	where ${m \brace i}$ are the Stirling numbers of second kind. This is bounded by $(\lambda \vee \lambda^m) B_m$, where $B_m = \sum_{i=1}^m {m \brace i}$ is the $m$th Bell number. Hence
	\[
	\E [Z^m]^{1/m} \leq B_m^{1/m} (\lambda \vee \lambda^{1/m}).
	\]
	Now let $r>1$, $r \notin \N$ and define $p_0 := \floor{r}$ and $p_1:=\ceil{r}$. Then take $\theta:= \frac{p_1}{r} \frac{r-p_0}{p_1-p_0} \in (0,1)$ such that we have $\frac{1}{r} = \frac{1-\theta}{p_0} + \frac{\theta}{p_1}$. By log-convexity of $L^p$ norms (see e.g.  \cite{TaoInterpolation} or \cite[Remark~2. p.93]{Bre11}) and the first part of the proof, we have
	\[
	\E [Z^r]^{1/r} \leq \norm{Z}_{p_0}^{1-\theta} \norm{Z}_{p_1}^\theta \leq B_{p_0}^{(1-\theta)/p_0} B_{p_1}^{\theta/p_1} (\lambda \vee \lambda^{1/r}),
	\]
	which provides the desired bound with $c_r := B_{p_0}^{(1-\theta)/p_0} B_{p_1}^{\theta/p_1}$.
\end{proof}
As a next step, we prove bounds on the first and second order add-one costs of $\lat$.
\begin{prop}\label{propGilBounds}
	Let $\alpha>-\frac{d}{2}$ and $r\geq 1$ such that $d+r\alpha>0$. Then $\lat \in \dom \MalD$ and there is a constant $c>0$ such that for all $x,y \in W$ and $t>0$
	\begin{align}
		&\E \left[\left(\MalD_x \lat\right)^{r}\right]^{1/r} \leq c \epsilon_t^\alpha (t\epsilon_t^d)^{1/r} \left( 1 \vee t\epsilon_t^d \right)^{1-1/r} \label{iPGilDx} \\
		&\p(\MalD_x \lat \neq 0) \leq 1 \wedge \kappa_d t \epsilon_t^d \label{iPGilPDx}\\
		&\MalD_{x,y}^{(2)} \lat = \ind{|x-y| < \epsilon_t } |x-y|^\alpha.\label{iPGilDxy}
	\end{align}
\end{prop}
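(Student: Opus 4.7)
My plan starts from the pointwise identity
\[
\MalD_x \lat \;=\; \int_W \ind{|x-y|\leq \epsilon_t}\,|x-y|^\alpha\,\eta^t(dy),
\]
obtained directly from the definition of $\lat$: adding $x$ contributes only the new edges joining $x$ to pre-existing points of $\eta^t$ within distance $\epsilon_t$. Applying $\MalD_y$ to this expression simply adds a Dirac at $y$ to the integrating measure, yielding \eqref{iPGilDxy} as a purely deterministic identity (almost surely, $x,y\notin\eta^t$ and $x\neq y$). For \eqref{iPGilPDx}, note that $\{\MalD_x\lat\neq 0\}$ coincides with $\{\eta^t(B^d(x,\epsilon_t)\cap W)\geq 1\}$; Markov's inequality together with $|B^d(x,\epsilon_t)\cap W|\leq \kappa_d\epsilon_t^d$ produces the stated bound, which is also trivially at most $1$.

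The main work is the moment estimate \eqref{iPGilDx}. I would first treat integer $r\geq 1$ via the classical moment formula for positive Poisson integrals (obtained by differentiating the Laplace functional of $\eta^t$): with $\mu(dy)=t\,dy$ on $W$ and $f(y)=\ind{|x-y|\leq\epsilon_t}|x-y|^\alpha$,
\[
\E\!\left[\bigl(\textstyle\int f\,d\eta^t\bigr)^{r}\right] \;=\; \sum_{\pi\in\mathcal{P}_r}\prod_{B\in\pi}\int f^{|B|}\,d\mu,
\]
where $\mathcal{P}_r$ is the set of partitions of $\{1,\dots,r\}$. Polar coordinates around $x$ and the assumption $d+r\alpha>0$ give $\int f^k\,d\mu\leq c_k\,t\epsilon_t^{k\alpha+d}$ for every $1\leq k\leq r$, so a partition with $m$ blocks contributes at most $C\,\epsilon_t^{r\alpha}\,(t\epsilon_t^d)^m$. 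Summing yields $\sum_{m=1}^r(t\epsilon_t^d)^m\leq r\bigl((t\epsilon_t^d)\vee(t\epsilon_t^d)^r\bigr)$, and extracting the $r$-th root produces the right-hand side of \eqref{iPGilDx} after a short case split on whether $t\epsilon_t^d$ is above or below~$1$.

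For non-integer $r\geq 1$, I would extend by log-convexity of $L^p$-norms exactly as in the proof of Lemma~\ref{lemPoissonMoment}: setting $p_0=\lfloor r\rfloor$, $p_1=\lceil r\rceil$ and $\theta\in[0,1]$ defined by $\frac{1}{r}=\frac{1-\theta}{p_0}+\frac{\theta}{p_1}$, the integer bounds give $\E[(\MalD_x\lat)^r]^{1/r}\leq C\,\epsilon_t^\alpha\, A(p_0)^{1-\theta}A(p_1)^\theta$ with $A(p):=(t\epsilon_t^d)^{1/p}(1\vee t\epsilon_t^d)^{1-1/p}$. One verifies $A(p_0)^{1-\theta}A(p_1)^\theta=A(r)$ by splitting into the sparse regime ($t\epsilon_t^d\leq 1$, where it reduces to the interpolation identity for exponents of $t\epsilon_t^d$) and the dense regime ($t\epsilon_t^d>1$, where $A(p)=t\epsilon_t^d$ is independent of $p$); this verification is the only step that requires any care, but it is elementary. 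Finally, $\lat\in\dom\MalD$ follows from \eqref{iPGilDx} applied with $r=2$, which is legitimate since $2\alpha+d>0$: integrating $\E(\MalD_x\lat)^2$ against $t\,dx$ over the bounded set $W$ is finite, which is condition~\eqref{eqDomDCond}.
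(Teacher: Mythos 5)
Your treatment of \eqref{iPGilDxy} and \eqref{iPGilPDx} is fine (Markov's inequality in place of the exact void probability $1-\exp(-t|B^d(x,\epsilon_t)\cap W|)$ is a harmless simplification), and your integer-$r$ argument via the moment–cumulant formula for Poisson integrals is correct: for integer $r$ with $d+r\alpha>0$, every block size $k\leq r$ satisfies $d+k\alpha\geq d+r\alpha>0$ (this uses $\alpha<0$ only makes larger $k$ worse), so all the one-block integrals $\int f^k\,d\mu$ are finite and of the claimed size. However, your interpolation step for non-integer $r$ has a genuine gap when $\alpha<0$: the log-convexity of $L^p$ norms requires the $p_1=\lceil r\rceil$-th moment of $\MalD_x\lat$ to be finite, i.e.\ $d+\lceil r\rceil\alpha>0$, and this can fail under the hypothesis $d+r\alpha>0$. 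For instance with $d=1$, $\alpha=-0.48$, $r=2.05$ we have $d+r\alpha\approx 0.016>0$ but $d+3\alpha=-0.44<0$, so $\E(\MalD_x\lat)^{\lceil r\rceil}=\infty$ and the interpolation bound is vacuous. The analogy with Lemma~\ref{lemPoissonMoment} does not carry over, because that lemma interpolates moments of a Poisson random variable, which has finite moments of every order, whereas $\MalD_x\lat$ does not when $\alpha<0$.

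This is exactly where the paper's route differs and is decisive. The paper stochastically dominates $\MalD_x\lat$ by $\sum_{i=1}^{M_t}|U_i|^\alpha$ with $M_t\sim\mathrm{Poisson}(\kappa_d t\epsilon_t^d)$ and $U_i$ i.i.d.\ uniform on $B^d(0,\epsilon_t)$, and then Jensen's inequality in the form $(\sum_{i=1}^n a_i)^r\leq n^{r-1}\sum a_i^r$ gives $\E(\MalD_x\lat)^r\leq \E[M_t^r]\,\E|U_1|^{r\alpha}\cdot(\text{const})$. This cleanly separates the Poisson count $M_t$ (all moments finite, so Lemma~\ref{lemPoissonMoment} applies to it without restriction) from the spatial factor, which only ever appears raised to the exact exponent $r\alpha$; the hypothesis $d+r\alpha>0$ is precisely what is needed and nothing more. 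Your argument would be salvaged either by adopting this decomposition, or by replacing the $(\lfloor r\rfloor,\lceil r\rceil)$-interpolation with an interpolation scheme that never exceeds the exponent threshold — for example interpolating between $1$ and an integer $m\leq r$ and handling the remaining fractional gap by Jensen on the Poisson count alone — but as written the non-integer case is not covered.
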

\begin{proof}
	As explained in Section~\ref{secFrame}, to show that $\lat \in \dom \MalD$ it suffices to argue that $\lat \in L^1(\p_{\eta^t})$ and that \eqref{eqDomDCond} holds. By \cite[Theorem~3.1]{TSRGilbert}, it is true that $\lat \in L^1(\p_{\eta_t})$ and the fact that $\MalD_{x} \lat$ is square-integrable follows from \eqref{iPGilDx} with $r=2$ and from the fact that $W$ is bounded. Hence $\lat \in \dom \MalD$ follows once we have shown \eqref{iPGilDx}. 
	
	Since $\MalD_x \lat = \lat(\eta^t+\delta_x)-\lat(\eta^t)$, it is easy to see that
	\begin{equation}\label{iPGil1}
	\MalD_x \lat = \sum_{y \in \eta^t} \ind{|x-y| < \epsilon_t} |x-y|^\alpha.
	\end{equation}
	It now follows that
	\[
	\MalD_{x,y}^{(2)} \lat = \MalD_x \lat(\eta^t+\delta_y) - \MalD_x \lat(\eta^t) = \ind{|x-y| < \epsilon_t } |x-y|^\alpha,
	\]
	which gives \eqref{iPGilDxy}. Since all terms in the sum \eqref{iPGil1} are non-negative, we have $\MalD_x \lat \neq 0$ if and only if $\eta^t(W \cap B^d(x,\epsilon_t)) \neq 0$. Therefore
	\begin{align}
		\p(\MalD_x \lat \neq 0) = \p(\eta^t(W \cap B^d(x,\epsilon_t)) \neq 0) = 1-\exp(-t|W \cap B^d(x,\epsilon_t)|)
	\end{align}
	which is bounded by $1 \wedge t|W \cap B^d(x,\epsilon_t)| \leq 1 \wedge \kappa_d t\epsilon_t^d$. This gives \eqref{iPGilPDx}.
	
	To prove \eqref{iPGilDx}, note first that $\MalD_x \lat$ can be written
	\[
	\MalD_x \lat = \int_{W \cap B^d(x,\epsilon_t)} |x-y|^\alpha \eta^t(dy).
	\]
	This is stochastically dominated by
	\[
	\int_{B^d(x,\epsilon_t)} |x-y|^\alpha \hat{\eta}^t(dy),
	\]
	where $\hat{\eta}^t$ is an $(\R^d,t\, dx)$-Poisson measure. By translation invariance of the law of $\hat{\eta}^t$, this is equal in law to
	\[
	\int_{B^d(0,\epsilon_t)} |y|^\alpha \hat{\eta}^t(dy),
	\]
	which in turn is equal in law to
	\[
	\sum_{i=1}^{M_t}|U_i|^\alpha,
	\]
	where $M_t$ is a Poisson random variable of intensity $\kappa_d t \epsilon_t^d$ and $U_1,U_2,...$ are i.i.d. uniform random variables in $B^d(0,\epsilon_t)$ independent of $M_t$.
	
	By Jensen's inequality,
	\[
	\E \left( \sum_{i=1}^{M_t}|U_i|^\alpha \right)^{r} \leq \E M_t^{r-1} \sum_{i=1}^{M_t}|U_i|^{r\alpha}.
	\]
	By independence, this is equal to
	\[
	\E\left[ M_t^{r-1} \sum_{i=1}^{M_t} \E|U_i|^{r\alpha}\right] = \frac{d}{d+r\alpha} \epsilon_t^{r\alpha} \E M_t^{r}.
	\]
	Using Lemma~\ref{lemPoissonMoment}, we deduce
	\[
	\E \left[M_t^{r}\right]^{1/r} \leq c_{r} \left(\kappa_dt\epsilon_t^d \vee (\kappa_dt\epsilon_t^d)^{1/r}\right).
	\]
	Combining the above bounds leads to
	\[
	\E \left[\left(\MalD_x \lat\right)^{r}\right]^{1/r} \leq c_{r} (1 \vee \kappa_d) \left(\frac{d}{d+r\alpha}\right)^{1/r} \epsilon_t^\alpha \left(t\epsilon_t^d \vee (t\epsilon_t^d)^{1/r}\right),
	\]
	which shows \eqref{iPGilDx}.
\end{proof}
Theorem~3.3 in \cite{TSRGilbert} gives us the following variance asymptotics: for $\alpha > -\frac{d}{2}$,
\[
\var \left(\lat\right) = \left( \sigma_\alpha^{(1)} t^2 \epsilon_t^{2\alpha+d} + \sigma_\alpha^{(2)} t^3 \epsilon_t^{2\alpha+2d}\right) |W| (1+O(\epsilon_t)),
\]
where $\sigma_\alpha^{(1)} = \frac{d\kappa_d}{2(d+2\alpha)}$ and $\sigma_\alpha^{(2)} = \frac{d^2\kappa_d^2}{(\alpha+d)^2}$. Hence for large enough $t$, there is a constant $c>0$ such that
\begin{equation}\label{eqGilVar}
	\var \left(\lat\right) \geq c t^2 \epsilon_t^{2\alpha+d}\left( 1 \vee t \epsilon_t^d\right).
\end{equation}
We are now in a position to prove Theorem~\ref{thmGilbert}.

\begin{proof}[Proof of Theorem~\ref{thmGilbert}]
	We plug the bounds from Proposition~\ref{propGilBounds} and \eqref{eqGilVar} into the terms $\gamma_1,...,\gamma_7$ given in Theorem~\ref{thmWasserNon-Cond.}. Let $p \in (1,2]$ such that $2p\alpha +d>0$.
	
	For the choice of $r=2p$ in \eqref{iPGilDx}, the first term yields
	\[
	\gamma_1 \lesssim \left(t^2 \epsilon_t^{2\alpha+d}\left( 1 \vee t \epsilon_t^d\right)\right)^{-1} \left( \int_W \left( \int_W \epsilon_t^\alpha (t\epsilon_t^d)^{1/(2p)} \left( 1 \vee t\epsilon_t^d \right)^{1-1/(2p)} \ind{|x-y|<\epsilon_t} |x-y|^\alpha tdy \right)^p tdx \right)^{1/p}.
	\]
	Note that
	\begin{equation}\label{Rainbow27}
	\int_W \ind{|x-y|<\epsilon_t} |x-y|^\alpha dy \leq \int_{B^d(x,\epsilon_t)}  |x-y|^\alpha dy = \frac{d\kappa_d}{d+\alpha} \epsilon_t^{d+\alpha}.
	\end{equation}
	Hence $\gamma_1$ is (up to multiplication by a positive constant) bounded by
	\[
	t^{1/p-1} (t\epsilon_t^d)^{1/(2p)} (1 \vee t\epsilon_t^d)^{-1/(2p)} = t^{1/p-1} \left(1 \wedge (t\epsilon_t^d)^{1/(2p)}\right).
	\]
	As for $\gamma_2$, we have
	\[
	\gamma_2 \lesssim \left(t^2 \epsilon_t^{2\alpha+d}\left( 1 \vee t \epsilon_t^d\right)\right)^{-1} \left( \int_W \left( \int_W \ind{|x-y|<\epsilon_t} |x-y|^{2\alpha} tdy \right)^p tdx \right)^{1/p}.
	\]
	By \eqref{Rainbow27}, we deduce
	\[
	\gamma_2 \lesssim t^{1/p-1} \left( 1 \vee t \epsilon_t^d\right)^{-1}.
	\]
	For $\gamma_3$, take $q=3-\frac{2}{p}$. Then $q \in (1,2]$ and $(q+1)\alpha + d>2p\alpha+d>0$. Let $r>\frac{q+1}{2}$ such that $2r\alpha+d>0$. Then, using
	\[
	\E \left(\MalD_{x}\lat\right)^{q+1}  \leq \E \left[\left(\MalD_{x}\lat\right)^{2r}\right]^{\frac{q+1}{2r}} \p(\MalD_{x}\lat \neq 0)^{1-\frac{q+1}{2r}}
	\]
	we deduce
	\[
	\gamma_3 \lesssim \left(t^2 \epsilon_t^{2\alpha+d}\left( 1 \vee t \epsilon_t^d\right)\right)^{1/p-2} \int_W \left(\epsilon_t^\alpha (t\epsilon_t^d)^{1/(2r)} \left( 1 \vee t\epsilon_t^d \right)^{1-1/(2r)}\right)^{4-2/p} \left(1 \wedge t\epsilon_t^d\right)^{1-\frac{4-2/p}{2r}} tdx.
	\]
	Simplifying, we infer
	\[
	\gamma_3 \lesssim t^{1/p-1} (1 \vee (t\epsilon_t^d)^{1/p-1}).
	\]
	With the same method, one can establish the upper bounds
	\begin{align}
	&\gamma_4 \lesssim t^{1/p-1} \left(1 \vee (t\epsilon_t^d)^{1/p-1}\right)\\
	&\gamma_5 \lesssim t^{1/p-1} (t\epsilon_t^d)^{1/p-1} \left(1 \vee t\epsilon_t^d\right)^{-1} \\
	&\gamma_6 \lesssim t^{1/p-1} (t\epsilon_t^d)^{1/p-1} \left(1 \wedge (t\epsilon_t^d)^{1/(2p)}\right)\\
	&\gamma_7 \lesssim t^{1/p-1} (t\epsilon_t^d)^{2/p-1-1/(2p^2)} \left(1 \vee (t\epsilon_t^d)\right)^{-2/p+1+1/(2p^2)}
	\end{align}
	All of these bounds are upper bounded by $t^{1/p-1} \left(1 \vee (t\epsilon_t^d)^{1/p-1}\right)$. If $\alpha>-\frac{d}{4}$, we can choose $p=2$ and recover \eqref{eqGil1}. To show \eqref{eqGil2}, one chooses $1<p<-\frac{d}{2\alpha}$, thus concluding the proof.
\end{proof}

\section{$k$-Nearest Neighbour Graphs}\label{secPkNN}
In this section, we work in the setting of Theorem~\ref{thmkNN}. We start with a technical lemma.
\begin{lem}\label{lemPhiProp}
	Let $\phi:(0,\infty) \rightarrow (0,\infty)$ be a non-increasing function satisfying \eqref{eqkNNC1} for some $r>2$. Then for any $0 \leq q \leq r$, the following two integrals are finite:
	\begin{equation} \label{eqPhiInt}
		\int_0^1 \phi(s)^q s^{d-1} ds < \infty
	\end{equation}
	and for any constant $c>0$,
	\begin{equation}\label{eqPhiexp}
		\int_{\R^d} \phi(|x|)^q \exp(-c|x|^d) dx < \infty.
	\end{equation}
	Moreover, for $x \in \R^d$ and $\mu \subset \R^d$ a finite generic set with respect to $x$, define the following quantity:
	\[
	e(x,\mu) :=%
	\begin{cases*}
		0 & if $\mu = \emptyset$ \\
		\min\{|x-z|:z \in \mu, z\neq x\} & if $\mu \neq \emptyset$,
	\end{cases*}
	\]
	(that is, $e(x,\mu)$ is the length from $x$ to the point of $\mu$ nearest to it, or zero if $\mu$ is empty). Extend the definition of $\phi$ to $[0,\infty)$ by setting $\phi(0)=0$. Then for any $0 < q \leq r$, there is a constant $c>0$ such that for all $x \in tH$, $t\geq 1$,
	\begin{equation}\label{eqPhiE}
		\E \left[\phi(e(x,\eta_{|tH}))^q\right]^{1/q} < c.
	\end{equation}
\end{lem}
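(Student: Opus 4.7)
The plan is to treat the three claims in order, since each builds on the previous.

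For \eqref{eqPhiInt}, I would use the elementary pointwise bound $\phi(s)^q \leq 1 + \phi(s)^r$ valid for $0 \leq q \leq r$ (split according to whether $\phi(s) \leq 1$ or $\phi(s) > 1$, using $q \leq r$ in the second case). Then $\int_0^1 \phi(s)^q s^{d-1} ds \leq \tfrac{1}{d} + \int_0^1 \phi(s)^r s^{d-1} ds$, and the right-hand side is finite by \eqref{eqkNNC1}. For \eqref{eqPhiexp}, I would split $\R^d$ at $\{|x| = 1\}$. Inside the unit ball bound $\exp(-c|x|^d) \leq 1$, switch to polar coordinates, and invoke \eqref{eqPhiInt}. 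Outside the unit ball use monotonicity of $\phi$ to replace $\phi(|x|)$ by the constant $\phi(1) < \infty$, leaving the convergent Gaussian-type tail integral.

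For \eqref{eqPhiE}, write $Y := e(x,\eta_{|tH})$; since $\phi(0) := 0$ we have $\E[\phi(Y)^q] = \E[\phi(Y)^q \ind{Y > 0}]$. The key input is a uniform lower bound $|tH \cap B^d(x,s)| \geq c_H\, s^d$ for all $x \in tH$ and $s \leq t\diam(H)$, with $c_H$ independent of $t$: Lemma~\ref{lemGeo1} gives this with constant $\kappa_d(\epsilon r/\ell)^d$ depending only on the ratio in-radius/diameter, which is scale-invariant, so the bound applies uniformly to the rescaled body $tH$ for every $t \geq 1$. This yields
\[
\p(0 < Y < s) \leq 1 - \exp\bigl(-|tH \cap B^d(x,s)|\bigr) \leq 1 \wedge c_H s^d.
\]
Since $\phi^q$ is non-increasing, the (positive) Stieltjes measure $-d(\phi^q)$ satisfies $\phi(Y)^q = \phi(\infty)^q + \int_Y^\infty (-d(\phi^q)(s))$ almost surely on $\{Y>0\}$; Fubini then gives
\[
\E[\phi(Y)^q \ind{Y>0}] = \phi(\infty)^q \p(Y>0) + \int_0^\infty \p(0<Y<s)(-d(\phi^q)(s)).
\]
Split the last integral at $s = 1$: the tail $\int_1^\infty (-d(\phi^q)) = \phi(1)^q - \phi(\infty)^q$ is finite, and the near-origin contribution $\int_0^1 c_H s^d (-d(\phi^q)(s))$ is handled by integration by parts, reducing (up to a boundary term $s^d \phi(s)^q\big|_{s \to 0^+}$) to a constant multiple of $\int_0^1 s^{d-1} \phi(s)^q\,ds$, which is finite by \eqref{eqPhiInt}.

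The main obstacle is twofold. First is the uniformity in $t \geq 1$ and $x \in tH$: this rests squarely on reading Lemma~\ref{lemGeo1} in a scale-invariant fashion, which is the only non-automatic step. Second is the low regularity of $\phi$: since we have only monotonicity, one must work with the Stieltjes measure $-d(\phi^q)$ rather than an honest derivative, and one must verify that the boundary term $\lim_{s \to 0^+} s^d \phi(s)^q$ vanishes. The latter follows because monotonicity of $\phi$ together with integrability of $\phi(s)^r s^{d-1}$ near $0$ forces $s^d \phi(s)^r \to 0$ (else a sequence $s_n \to 0$ with $s_n^d \phi(s_n)^r \geq \varepsilon$ would give $\int_0^{s_n} \phi^r u^{d-1}\,du \geq \varepsilon/d$, contradicting integrability), and for $q \leq r$ we have $s^d \phi(s)^q \leq s^d \phi(s)^r + s^d$ by the same splitting as in \eqref{eqPhiInt}, so this too tends to $0$. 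Everything else is bookkeeping.
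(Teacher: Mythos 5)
Your treatment of \eqref{eqPhiInt} and \eqref{eqPhiexp} matches the paper in substance: the paper applies Jensen's inequality with the probability density $d\,s^{d-1}\ind{0<s<1}$ where you use the elementary pointwise bound $\phi^q\le 1+\phi^r$, and both split $\R^d$ at $|x|=1$ for the second integral. For \eqref{eqPhiE}, however, your route is genuinely different. The paper writes down the CDF of $Y:=e(x,\eta_{|tH})$ exactly, differentiates to obtain a density $g(a)=\exp(-|tH\cap B^d(x,a)|)\,\tfrac{d}{da}|tH\cap B^d(x,a)|$ on $(0,\infty)$, dominates it by $d\kappa_d\,a^{d-1}\exp(-c_H a^d)$ using the \emph{lower} volume bound from Lemma~\ref{lemGeo1}, and then integrates $\phi^r$ against it, invoking \eqref{eqPhiexp}. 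You instead use a layer-cake representation $\E[\phi(Y)^q\ind{Y>0}]=\phi(\infty)^q\p(Y>0)+\int_0^\infty\p(0<Y<s)\,(-d\phi^q)(s)$, split at $s=1$, and reduce the near-origin piece via Stieltjes integration by parts to $\int_0^1 \phi(s)^q s^{d-1}\,ds$; your check that the boundary term $s^d\phi(s)^q$ vanishes as $s\to 0^+$ is correct. This buys you a small simplification: since $\phi$ is bounded on $[1,\infty)$, the tail of your integral is finite with no decay estimate on $\p(0<Y<s)$ at all, so your argument does not really need any control on the distribution of $Y$ at large scales.

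That brings out the one genuine slip. You assert that the lower bound $|tH\cap B^d(x,s)|\ge c_H s^d$ from Lemma~\ref{lemGeo1} "yields" $\p(0<Y<s)\le 1\wedge c_H s^d$. The implication runs the wrong way: a lower bound on the volume makes $1-\exp(-|tH\cap B^d(x,s)|)$ \emph{larger}, not smaller. What you actually need is the trivial \emph{upper} bound $|tH\cap B^d(x,s)|\le|B^d(x,s)|=\kappa_d s^d$, which gives $\p(0<Y<s)\le 1-\exp(-\kappa_d s^d)\le 1\wedge\kappa_d s^d$; the constant is $\kappa_d$, not $c_H$, and its $t$-independence is automatic. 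Once this is corrected your proof goes through as written and, unlike the paper's argument, does not depend on Lemma~\ref{lemGeo1} at all for \eqref{eqPhiE}.
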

\begin{proof}
	We start by noting that $d s^{d-1}\ind{0<s<1}$ is a probability density. Hence, by Jensen's inequality,
	\[
	\left(d \int_0^1 \phi(s)^q s^{d-1} ds\right)^{1/q} \leq \left(d \int_0^1 \phi(s)^r s^{d-1} ds\right)^{1/r}.
	\]
	This is finite by virtue of condition \eqref{eqkNNC1}, thus yielding \eqref{eqPhiInt}. Using polar coordinates, the integral in \eqref{eqPhiexp} is equal to
	\begin{equation}\label{Rainbow7}
	d \kappa_d \int_0^\infty \phi(s)^q \exp(-cs^d) s^{d-1} ds.
	\end{equation}
	Now use that $\phi$ is non-increasing to infer that, if $s\geq 1$, then $\phi(s) \leq \phi(1)$. Hence \eqref{Rainbow7} is bounded by
	\[
	d \kappa_d \left(\int_0^1 \phi(s)^q s^{d-1} ds + \int_1^\infty \phi(1)^q s^{d-1} \exp(-cs^d) ds\right).
	\]
	The second integral is clearly finite and the first is finite by \eqref{eqPhiInt}, thus implying \eqref{eqPhiexp}. For the bound \eqref{eqPhiE}, note that by Jensen's inequality
	\[
	\E \left[\phi(e(x,\eta_{|tH}))^q\right]^{1/q} \leq \E \left[\phi(e(x,\eta_{|tH}))^r\right]^{1/r},
	\]
	therefore it suffices to show the bound for $q=r$.
	
	Take $x \in tH$ and let us study the distribution of $e(x,\eta_{|tH})$. For any $a \geq 0$, we have
	\[
	G(a) := \p(e(x,\eta_{|tH}) \leq a) = \p(e(x,\eta_{|tH}) = 0) + \p(0<e(x,\eta_{|tH})\leq a).
	\]
	The event $e(x,\eta_{|tH})=0$ happens if and only if $\eta(tH)=0$, whereas $0<e(x,\eta_{|tH})\leq a$ is equivalent to the event that $\eta(tH \cap \bb^d(x,a)) \neq 0$. Hence
	\[
	G(a) = \p(\eta(tH)=0) + \p(\eta(tH \cap \bb^d(x,a)) \neq 0) = \exp(-|tH|) + 1-\exp(-|tH \cap B^d(x,a)|).
	\]
	We can compute the derivative of $1-\exp(-|tH \cap B^d(x,a)|)$ as follows:
	\[
	\frac{d}{da} (1-\exp(-|tH \cap B^d(x,a)|)) = \exp(-|tH \cap B^d(x,a)|) \frac{d}{da} |tH \cap B^d(x,a)|.
	\]
	Note that, by changing variables and moving to polar coordinates, we can rewrite the volume as
	\begin{align}
		|tH \cap B^d(x,a)| &= \int_{\R^d} \ind{y \in tH} \ind{|y-x| < a} dy \notag\\
		&= \int_{\R^d} \ind{x+z \in tH} \ind{|z| < a} dz \notag\\
		&= \int_0^a du \int_{S^{d-1}} d\omega\, u^{d-1} \ind{x+u\omega \in tH},
	\end{align}
	where $S^{d-1}$ is the unit sphere in $\R^d$. Define thus
	\[
	g(a) := \exp(-|tH \cap B^d(x,a)|) \int_{S^{d-1}} a^{d-1} \ind{x+a\omega \in tH} d\omega,
	\]
	then
	\[
	G(a) = \p(e(x,\eta_{|tH}) = 0) + \int_0^a g(u) du.
	\]
	Going back to the bound we want to prove, we now have
	\begin{align}
		\E \left[ \phi(e(x,\eta_{|tH}))^r \right]^{1/r} &= \left(\phi(0) \p(e(x,\eta_{|tH})=0) + \int_0^{\infty} \phi(u)^r g(u) du\right)^{1/r}\notag\\
		&=  \left(\int_0^{\infty} \phi(u)^r g(u) du\right)^{1/r}.
	\end{align}
	We need to show that this integral is bounded by a constant independent of $t$. Since $H$ has non-empty interior, there is a ball $B^d(x_0,\delta) \subset H$, with $x_0 \in H$ and $\delta>0$. By Lemma~\ref{lemGeo1} and rescaling, there is a constant $c_H>0$ depending on $H$ and $d$ such that $|tH \cap B^d(x,a)| \geq c_H a^d$. Thus, upper bounding the indicator by $1$, we find
	\[
	g(a) \leq d \kappa_d a^{d-1} \exp(-c_H a^d).
	\]
	As a consequence
	\[
	\left(\int_0^{\infty} \phi(u)^r g(u) du\right)^{1/r} \leq \left(d \kappa_d \int_0^\infty \phi(u)^r u^{d-1} \exp\left(-c_Hu^d\right) du \right)^{1/r},
	\]
	and the RHS of this inequality is finite by \eqref{eqPhiexp}.
\end{proof}
\begin{prop}\label{propkNNBounds}
	Under the conditions of Theorem\ref{thmkNN}, there are absolute constants $C_2,c_2>0$ such that for any $t\geq1$, any $x,y \in tH$, the following bound holds:
	\begin{equation}
		\p\left(\DD_{x,y}F_t \neq 0\right) < C_2 \exp\left(-c_2|x-y|^d\right).\label{eqkNNP}
	\end{equation}
	For any $1 \leq p \leq \frac{r}{2}$ there is a constant $c_1>0$ such that for any $t\geq1$, any $x,y \in tH$, the following two bounds hold:
	\begin{align}
		&\E \left[|\MalD_x F_t|^{2p}\right]^{1/(2p)} < c_1 \label{eqkNNDx}\\
		&\E\left[|\DD_{x,y} F_t|^{2p}\right]^{1/(2p)} < c_1 \left(\phi(|x-y|)+1\right)\label{eqkNNDxy}
	\end{align}
	Moreover, $F_t \in \dom \MalD$.
\end{prop}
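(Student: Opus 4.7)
The plan is to exploit two key geometric facts: (i) only a bounded (depending on $k$ and $d$) number of points in $\eta_{tH}$ can have $x$ as one of their $k$-nearest neighbours after the addition of $x$, and (ii) a suitable stabilisation radius around $x$ has exponentially decaying tails. This reduces all the estimates to controlling a handful of edge-weights of the form $\phi(|e|)$ where $|e|$ is bounded below by the nearest-neighbour distance $e(x,\eta_{|tH})$ introduced in Lemma~\ref{lemPhiProp}.

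\textbf{Step 1 (cone decomposition and stabilisation radius).} As in Section~\ref{subsecCondExp}, I would fix a covering $C_1(x),\ldots,C_K(x)$ of $\R^d$ by cones of angular radius $\pi/6$ centred at $x$, with $K=K(d)$. By the geometric lemma in \cite[Lemma~3.3]{Pen05}, within each cone $C_i(x)$ only the $k$ points of $\eta_{|tH}\cap C_i(x)$ closest to $x$ can have $x$ as a $k$-nearest neighbour in $\eta_{|tH}+\delta_x$. I would then define $R(x)$ as the maximum over $i$ of the distance from $x$ to the $(k+1)$-th closest point of $\eta_{|tH}$ in $C_i(x)$ (or $\diam(tH)$ if fewer points are available). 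As in Lemma~\ref{lemONNGradius}, $R(x)$ has tails $\p(R(x)>r)\le c_1\exp(-c_2 r^d)$ with constants independent of $t$ and $x$, by the uniform lower bound on $|tH\cap C_i^+(x)\cap B^d(x,r)|$ provided by Lemmas~\ref{lemGeo1} and \ref{lemGeo2}.

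\textbf{Step 2 (bound on $\MalD_x F_t$ and proof of \eqref{eqkNNDx}).} The modifications induced by adding $x$ are of two types: new edges joining $x$ to its $k$-nearest neighbours (at most $kK$ of them), and at most $kK$ edges incident to affected points $y\in\eta_{|tH}$ whose $k$-th nearest neighbour was replaced by $x$. Every such edge either joins $x$ to some point and hence has length $\ge e(x,\eta_{|tH})$, or is an edge whose length can be bounded above by the length of the new edge from $y$ to $x$, which is itself $\ge e(x,\eta_{|tH})$ when one argues cone by cone. Since $\phi$ is decreasing, each contribution is at most $\phi(e(x,\eta_{|tH}))$, so $|\MalD_x F_t|\le 2kK\,\phi(e(x,\eta_{|tH}))$. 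Taking $L^{2p}$-norms with $2p\le r$ and applying \eqref{eqPhiE} yields \eqref{eqkNNDx} with a constant independent of $t$ and $x$.

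\textbf{Step 3 (stabilisation-based support bound \eqref{eqkNNP} and proof of \eqref{eqkNNDxy}).} I would show that $\DD_{x,y}F_t=0$ whenever $|x-y|>3R(x)$: indeed, by the cone argument, adding $y$ outside $\bb^d(x,R(x))$ leaves the set of $k$-nearest neighbours of every point inside $\bb^d(x,R(x))$ unchanged, hence does not modify $\MalD_x F_t$. The exponential bound \eqref{eqkNNP} then follows from the tail of $R(x)$ established in Step~1. For \eqref{eqkNNDxy}, write
\[
|\DD_{x,y}F_t|\le |\MalD_y F_t(\eta_{|tH}+\delta_x)|+|\MalD_y F_t(\eta_{|tH})|.
\]
The second summand is controlled by Step~2 applied at $y$. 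For the first summand, the same cone argument applies after noting that the presence of the additional point $x$ can only shrink the $k$-th nearest neighbour distance of $y$; the only new feature is that the edge $\{x,y\}$ itself may appear among the new edges, contributing $\phi(|x-y|)$. This is exactly the extra $\phi(|x-y|)$ term in \eqref{eqkNNDxy}, while the remaining edges are bounded as in Step~2. Taking $L^{2p}$-norms and applying \eqref{eqPhiE} concludes the bound.

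\textbf{Step 4 (membership in $\dom\MalD$ and main obstacle).} Membership in $\dom\MalD$ follows from \eqref{eqkNNDx} with $p=1$ combined with $|tH|<\infty$, giving \eqref{eqDomDCond}, and from the bound $F_t \le \frac{kK}{2}\int_{tH}\phi(e(x,\eta_{|tH}))\,\eta(dx)$ plus Mecke's formula and \eqref{eqPhiE} to get integrability of $F_t$. The main subtlety, and where care is required, is in Step~2: one must verify that every edge affected by the insertion of $x$ — including the edges that are \emph{removed} because some point $y$ loses one of its old $k$-neighbours in favour of $x$ — has length bounded below by a quantity comparable to $e(x,\eta_{|tH})$. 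A clean way is to observe that any such removed edge $\{y,z\}$ satisfies $|y-z|\ge |y-x|\ge e(x,\eta_{|tH})$, since $z$ was the $k$-th nearest neighbour of $y$ before $x$ replaced it.
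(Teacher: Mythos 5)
Your approach diverges from the paper's in two noteworthy ways: the paper first observes that $\MalD_xF(\mu)\geq 0$ (so that the removed edges contribute negatively and can simply be dropped), and then bounds the number of added edges by the constant $n_{d,k}$, whereas you bound both added \emph{and} removed edges, arguing directly that removed edges $\{y,z\}$ satisfy $|y-z|\geq|y-x|\geq e(x,\eta_{|tH})$. Both are legitimate; the paper's observation is slightly cleaner but your version is equally valid. More significantly, for \eqref{eqkNNP} the paper simply invokes the stabilisation argument of \cite[Theorem~7.1]{LPS14}, while you re-derive it from scratch via a cone decomposition and a radius $R(x)$.

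Here, however, there is a genuine gap. Your claim that $\DD_{x,y}F_t = 0$ whenever $|x-y|>3R(x)$ is not correct as stated: the radius $R(x)$ only controls the configuration of $\eta_{|tH}$ \emph{near $x$}, and says nothing about how isolated $y$ is. It is perfectly possible that $|x-y|>3R(x)$ and yet $y$ has fewer than $k$ points of $\eta_{|tH}$ closer to it than $x$; in that case $x\in N(y,\eta_{|tH}+\delta_y+\delta_x)$, so adding $x$ to $\eta_{|tH}+\delta_y$ creates the edge $\{x,y\}$ (and possibly removes $y$'s old $k$-th neighbour edge), neither of which occurs when adding $x$ to $\eta_{|tH}$. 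Thus $\MalD_xF_t(\eta+\delta_y)\neq\MalD_xF_t(\eta)$ and $\DD_{x,y}F_t\neq0$. Your sentence \emph{``adding $y$ outside $\bb^d(x,R(x))$ leaves the set of $k$-nearest neighbours of every point inside $\bb^d(x,R(x))$ unchanged, hence does not modify $\MalD_xF_t$''} fails at the word \emph{hence}: $\MalD_xF_t(\eta+\delta_y)$ also depends on the $k$-nearest-neighbour set of $y$ itself, which your $R(x)$ does not see. The standard fix (and the one underlying \cite[Lemma~7.2]{LPS14}) is to define the symmetric radius $R(y)$ analogously and prove $\DD_{x,y}F_t\neq 0\Rightarrow |x-y|\leq 3(R(x)\vee R(y))$; then a union bound combined with the tail estimate from your Step~1 applied at both $x$ and $y$ still gives \eqref{eqkNNP}. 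Once this is fixed, your Steps~2, the triangle-inequality argument for \eqref{eqkNNDxy}, and the $\dom\MalD$ verification all go through and match the paper in substance.

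Two minor remarks. In Step~2, \emph{``whose length can be bounded \emph{above} by the length of the new edge''} is the wrong direction -- you want a lower bound on the removed edge-length, which you correctly supply in Step~4; this is just a slip of wording. And the degree-count $2kK$ differs from the paper's $n_{d,k}$ but both are absolute constants, so the discrepancy is harmless.
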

\noindent The proof of \eqref{eqkNNP} reuses arguments from \cite[Theorem~7.1]{LPS14}.
\begin{proof}
	\textbf{Step 1.} We start by showing that for any $x \in \R^d$ and any finite set $\mu \in \R^d$ generic with respect to $x$, we have $\MalD_xF(\mu) \geq 0$.
	
	First, note that
	\[
	F(\mu) = \sum_{e \in k\text{-NN}(\mu)} \phi(|e|),
	\] 
	where we take the sum over all edges $e$ in the $k$-NN graph. Define a `direction' on the graph in the following way:
	\begin{itemize}
		\item If a point $w$ is a nearest neighbour of the point $z$, direct the edge from $z$ to $w$ and write $z \rightarrow w$;
		\item if $z$ and $w$ are reciprocal nearest neighbours, direct the edge both ways and write $z \leftrightarrow w$.
	\end{itemize}
	Upon addition of the point $x$, any of the following scenarios can happen:
	\begin{enumerate}
		\item An edge $z \rightarrow w$ is replaced by $z \rightarrow x$ (or $z \leftrightarrow x$), in which case $|z-w|>|z-x|$;
		\item an edge $z \leftrightarrow w$ is replaced by $z \rightarrow x$ (or $z \leftrightarrow x$) and $w \rightarrow x$ (or $w \leftrightarrow x$), in which case $|z-w| > |z-x| \vee |w-x|$;
		\item an edge $z \leftrightarrow w$ becomes $w \rightarrow z$ and the edge $z \rightarrow x$ (or $z \leftrightarrow x$) is added;
		\item edges $x \rightarrow z$ are added.
	\end{enumerate}
	Let $E_a$ and $E_r$ be the sets of added and removed edges respectively. Every removed edge is replaced by at least one added edge with shorter length. All other added edges increase $\MalD_xF(\mu)$. Since $\phi$ is decreasing, we have thus
	\[
	\MalD_xF(\mu) = \sum_{e \in E_a} \phi(|e|) - \sum_{e \in E_r} \phi(|e|) \geq 0.
	\]
	
	\textbf{Step 2.} We now prove \eqref{eqkNNDx}. Fix $x \in \R^d$ and $\mu \subset \R^d$, a finite set generic with respect to $x$, and consider the $k$-NN built on $\mu$. Define $e(x,\mu)$ as in Lemma~\ref{lemPhiProp}.
	
	Suppose that $\mu \neq \emptyset$ and consider $\MalD_xF(\mu)$. There is a constant $n_{d,k}$ such that for any $k$-NN graph in $\R^d$, no vertex has degree more than $n_{d,k}$. This fact was used in the proof of  \cite[Lemma~7.2]{LPS14} and we refer to the references given therein for more details. When adding $x$ to the graph, all added edges are incident to $x$, and hence we have $|E_a| \leq n_{d,k}$. Every added edge between points $x$ and $y$ must verify $|x-y| \geq e(x,\mu)$, since $e(x,\mu)$ is the minimally possible edge-length for any edge incident to $x$. Since $\phi$ is decreasing, we conclude
	\[
	|\MalD_xF(\mu)| = \MalD_xF(\mu) \leq |E_a| \phi(e(x,\mu)) \leq n_{d,k}\phi(e(x,\mu)).
	\]
	Note that this is well-defined since $e(x,\mu)>0$ if $\mu \neq \emptyset$.
	
	If $\mu = \emptyset$, then $\MalD_xF(\mu) = 0$. Extend the definition of $\phi$ to $[0,\infty)$ by setting $\phi(0)=0$. Then in all cases
	\[
	|\MalD_xF(\mu)| \leq n_{d,k}\phi(e(x,\mu)).
	\]
	We have thus the following bound:	
	\[
	\E \left[|\MalD_xF_t|^{2p}\right]^{1/(2p)} \leq n_{d,k} \E \left[ \phi(e(x,\eta_{|tH}))^{2p}\right]^{1/(2p)},
	\]
	which is bounded by a constant by Lemma~\ref{lemPhiProp}.
	
	\textbf{Step 3.} We now show \eqref{eqkNNDxy}. By Step 2, we have
	\begin{align}
		|\DD_{x,y} F_t| &\leq |\MalD_{x}F(\eta_{|tH} + \delta_x)| + |\MalD_{x}F(\eta_{|tH})| \notag\\
		&\leq n_{d,k} \phi(e(x,\eta_{|tH})) + n_{d,k} \phi(e(x,\eta_{|tH}+\delta_y)).
	\end{align}
	For ease of notation, define the event $A_t := \{\eta(tH) \neq 0\}$. If $\eta(tH)=0$, then $e(x,\eta_{|tH}+\delta_y) = |x-y|$, else $e(x,\eta_{|tH}+\delta_y) = e(x,\eta_{|tH}) \wedge |x-y|$. Hence if we split over both events, we get
	\begin{align}
		\phi(e(x,\eta_{|tH}+\delta_y)) &= \1_{A_t^c} \phi(|x-y|) + \1_{A_t} \phi(e(x,\eta_{|tH}) \wedge |x-y|) \notag\\
		&\leq \1_{A_t^c} \phi(|x-y|) + \1_{A_t} \phi(|x-y|) + \1_{A_t} \phi(e(x,\eta_{|tH})) \notag\\
		&= \phi(|x-y|) + \1_{A_t} \phi(e(x,\eta_{|tH})).
	\end{align}
	Hence we have
	\[
	|\DD_{x,y} F_t| \leq 2 n_{d,k} \phi(e(x,\eta_{|tH})) + n_{d,k} \phi(|x-y|).
	\]
	Lemma~\ref{lemPhiProp} now yields the result.
	
	\textbf{Step 4.} The inequality \eqref{eqkNNP} follows immediately from the argument used in the proof of \cite[Theorem~7.1]{LPS14}, which relies solely on the structure of the graph, and not on the function applied to the edge-lengths.
	
	\textbf{Step 5.} Lastly, we show that $F_t \in \dom \MalD$. As was explained in Section~\ref{secFrame}, it suffices to show that $F_t$ is integrable and $\MalD_{x} F_t$ square integrable. The second fact immediately follows from ~\eqref{eqkNNC1}. By Mecke equation we also have
	\[
	\E F_t = \int_{tH} \int_{tH} \phi(|x-y|) \p(x \in N(y,\eta_{|tH}) \text{ or } y \in N(x,\eta_{|tH})) dxdy,
	\]
	where we recall that $N(y,\eta_{|tH})$ is the set of the $k$ nearest neighbours of $y$ in $\eta_{|tH}$.
	Upper bounding the probability in the integrand by $1$ and changing variables, this is upper bounded by
	\[
	t^d|H| \int_{B^d(0, t\diam(H))} \phi(|z|) dz.
	\]
	Changing to polar coordinates, this is equal to
	\[
	t^d|H| d \kappa_d \int_0^{t \diam(H)} s^{d-1} \phi(s) ds,
	\]
	which is finite by Lemma~\ref{lemPhiProp}.
\end{proof}

\begin{prop}\label{propkNNVar}
	Under the conditions of Theorem~\ref{thmkNN}, there are constants $c_1,c_2>0$ such that
	\[
	c_1 t^d \leq \var(F_t) \leq c_2 t^d.
	\]
\end{prop}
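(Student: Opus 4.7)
The plan is to treat the two bounds separately, using the standard Poincaré inequality for the upper bound and a perturbation/cone construction combined with the lower-variance criterion of \cite[Theorem~5.2]{LPS14} for the lower bound.

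For the upper bound, I would invoke the $L^2$-Poincaré inequality on the Poisson space, which gives $\var(F_t) \leq \E \int_{tH} (\MalD_x F_t)^2 \, dx$. By the bound \eqref{eqkNNDx} of Proposition~\ref{propkNNBounds} applied with $p=1$ (which is admissible since $r>2$), we have $\E[(\MalD_x F_t)^2] \leq c_1^2$ uniformly in $x \in tH$ and $t \geq 1$. Integrating over $tH$ yields $\var(F_t) \leq c_1^2 |H|\, t^d$, which is the desired upper bound.

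For the lower bound, I would mimic the argument used in Proposition~\ref{propONNGlowerVar1}, combined with a construction in the spirit of \cite[Lemma~7.1]{LPS14}. Recall $B^d(y_0,\delta)\subset H$ from the discussion at the start of Section~\ref{secPONNG} (the $k$-NN setting is analogous: $H$ being a convex body with non-empty interior gives such $y_0$ and $\delta$). Partition $\R^d$ into cones $C_1^+(0),\dots,C_K^+(0)$ of sufficiently small angular radius (as in Section~\ref{subsecCondExp}), and for $i=1,\dots,K$ set $V_i := C_i^+(0) \cap B^d(0,1) \setminus B^d(0,\tfrac12)$. For a small parameter $\tau>0$ to be chosen later, define
\[
U_\tau := \left\{ (x, x+\tau z_1, \dots, x+\tau z_{Kk}) :\ x \in tB^d(y_0,\tfrac{\delta}{2}),\ z_j \in V_{\lceil j/k\rceil}\ \forall j \right\} \subset (tH)^{Kk+1},
\]
i.e.\ $k$ points per cone. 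By placing $k$ guards in each cone surrounding $x$ at distance of order $\tau$, one forces the $k$-NN graph, restricted to a neighbourhood of $x$ upon adding these points and $x$, to have a prescribed local structure. Conditionally on $\eta_{tH}$ having no point in $B^d(x,\tau)$, the $k$ nearest neighbours of $x$ in the augmented configuration are exactly determined by the guards, and $\MalD_x F(\text{config}) \geq c \phi(2\tau) - C \phi(\tau/2)$ for an appropriate positive combination of edge contributions (recall from Step~1 of the proof of Proposition~\ref{propkNNBounds} that $\MalD_x F \geq 0$ always; the guards ensure a definite amount of positive contribution, and the rare event that a point of $\eta$ lies in $B^d(x,\tau)$ can be handled by a Markov-type bound as in the ONNG proof). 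This gives Condition~(i) of \cite[Theorem~5.2]{LPS14}: a uniform lower bound $c_1>0$ on $|\E \MalD_x F_t(\text{config})|$ for configurations in $U_\tau$, for $\tau$ small enough depending only on $\phi$, $k$ and $d$. Condition~(ii), the projection-volume bound of order $t^d$, follows from a direct volume estimate exactly as in the proof of Proposition~\ref{propONNGlowerVar1}\,\ref{iP7C2}.

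The main obstacle I expect is the quantitative control of the cone construction: one needs to verify that, with positive probability bounded away from zero uniformly in $t$ and $x$, the $k$ guards per cone are indeed (together with their neighbours) the $k$ nearest neighbours of $x$, so that adding $x$ produces at least one new edge with length in $[\tau/2, \tau]$ whose weight $\phi$-contribution survives after subtracting possibly removed edges. This requires choosing $\tau$ small enough that with high probability $\eta(B^d(x,\tau)\times[0,1]) = 0$, together with the monotonicity and strict positivity of $\phi$ to convert the local geometric statement into a strictly positive lower bound on $\E \MalD_x F$. Once this is in place, \cite[Theorem~5.2]{LPS14} yields $\var(F_t) \geq c\, t^d$, completing the proof.
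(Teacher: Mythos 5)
Your upper bound is exactly the paper's: $L^2$-Poincaré combined with the uniform bound~\eqref{eqkNNDx} at $p=1$. No issues there.

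For the lower bound your overall strategy matches the paper's — surround $x$ with a finite configuration of ``guard'' points at scale $\tau$, invoke \cite[Theorem~5.2]{LPS14}, and verify the two conditions (a pointwise lower bound on $|\E\MalD_xF|$ over the configuration set, and a projection-volume estimate of order $t^d$) — but the central inequality you write down, $\MalD_x F(\text{config}) \geq c\phi(2\tau)-C\phi(\tau/2)$, is problematic. Since $\phi$ is decreasing, $\phi(\tau/2)>\phi(2\tau)$, so this right-hand side is negative unless $c$ dominates $C$, and you give no reason why it would. More importantly, you don't need this cancellation argument at all, and your appeal to a ``Markov-type bound as in the ONNG proof'' is a detour. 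In the ONNG case $\MalD$ can be negative, so one must decompose over the bad event and subtract its contribution; that is precisely the complication which disappears for $k$-NN because of Step~1 of Proposition~\ref{propkNNBounds}. The cleaner (and correct) route is: on the event $\{\eta(B^d(x,\tau))=0\}$, the guard construction forces every point outside $B^d(x,\tau/2)$ (including the guards themselves) to already have at least $k$ neighbours closer than $x$, so \emph{nothing} connects to $x$, no edge is removed, and the only change upon adding $x$ is the creation of $k$ new edges of length at most $\tau$; hence $\MalD_xF \geq k\phi(\tau)$ on this event. On the complement you simply use $\MalD_xF\geq 0$, so $\E\MalD_xF \geq k\phi(\tau)\exp(-\kappa_d\tau^d) > 0$ directly — there is no subtraction and no Markov bound. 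Your formula suggests you were anticipating removed edges of length $\sim\tau/2$ contributing negatively, which does not happen on the good event.

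One further point to double-check in your construction: you place $k$ guards per cone, but the condition you actually need (cf.\ \eqref{iP10Points} in the paper) is that every point $y$ outside $B^d(x,\tau/2)$ — \emph{including each guard $\tilde w_i$ itself} — has at least $k$ \emph{other} guards closer than $x$. If $y$ is itself a guard in cone $C_i$, then only $k-1$ other guards sit in that cone, so you may come up one short unless you either place $k+1$ guards per cone or argue that guards from neighbouring cones pick up the slack. The paper sidesteps this by using a covering of the annulus by balls of radius $1/4$ with $k+1$ points per ball, which makes the counting immediate. The cone version can surely be made to work, but as stated it has a gap in this count.
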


\begin{proof}%PROOF10
	The upper bound immediately follows from Poincaré inequality (\eqref{eqPPoinNoTime} with $p=2$) and \eqref{eqkNNDx} with $p=1$. For the lower bound, we use \cite[Theorem~5.2]{LPS14} and a reasoning similar to what was done in the proof of \cite[Lemma~7.2]{LPS14}.
	
	Assume there are points $w_1,...,w_m \in \R^d$ with $\frac{1}{2} < |w_i|<1$ for $i \in \{1,...,m\}$ such that for all $y \in \R^d$ with $|y|\geq \frac{1}{2}$,
	\begin{equation}\label{iP10Points}
		|\{i \in \{1,...,m\}: |w_i-y| < |y|\}| \geq k+1.
	\end{equation}

	This means that for any point $y$ outside $B^d\left(0,\frac{1}{2}\right)$, there are at least $k+1$ points among the $\{w_1,...,w_m\}$ which are closer to $y$ that the origin.
	
	Now assume that there is $\tau>0$ and $x\in tH$ such that $B^d(x,\tau) \in tH$. Define $\tilde{w}_i:= x + \tau w_i$. Consider the collection of points
	\[
	\mathcal{U}:= \eta_{|tH} + \sum_{i=1}^m \delta_{\tilde{w}_i}
	\]
	and let us evaluate $\MalD_{x}F(\mathcal{U})$.
	
	First, note that by \eqref{iP10Points}, all points outside $B^d(x,\tau/2)$ have at least $k$ points closer than $x$ among the $\tilde{w}_1,...,\tilde{w}_m$, including these points themselves. Therefore any points in $\mathcal{U}$ connecting to $x$ must be within $B^d(x,\tau/2)$.
	
	Assume $\eta(B^d(x,\tau))=0$. Then the only edges added when adding $x$ are those from $x$ to its $k$ nearest neighbours among $\tilde{w}_1,...,\tilde{w}_m$. Since $|x-\tilde{w}_i| \leq \tau$ for any $i$, we have:
	\[
	\ind{\eta(B^d(x,\tau))=0} \MalD_xF(\mathcal{U}) \geq \ind{\eta(B^d(x,\tau))=0} k \phi(\tau).
	\]
	As shown in Step 1 of the proof of Proposition~\ref{propkNNBounds}, we have $\MalD_xF(\mathcal{U}) \geq 0$ and
	\begin{equation}\label{iP10Dxbound}
	\left|\E \MalD_xF(\mathcal{U}) \right| \geq \E \left[\ind{\eta(B^d(x,\tau))=0} k \phi(\tau)\right] = k \phi(\tau) \exp(-\kappa_d \tau^d).
	\end{equation}
	
	We now find a set of $(x,\tilde{w}_1,...,\tilde{w}_m)$ for which this bound is true.
	
	Let $\tau>0$ be such that there is a ball $B^d(x_0,2\tau) \subset H$. For any $x \in B^d(tx_0, t\tau)$ we have $B^d(x,\tau) \subset tH$.
	
	The closure of the set $B^d(0,1)\setminus B^d\left(0,\frac{1}{2}\right)$ is compact and can be covered by balls of radius $\frac{1}{4}$. Setting $k+1$ points into the interior of each intersection of one such ball with the annulus $B^d(0,1)\setminus B^d\left(0,\frac{1}{2}\right)$ gives a collection of points $\{z_1,...,z_m\}$. We claim that this collection satisfies \eqref{iP10Points}. Indeed, any point inside $B^d(0,1)\setminus B^d\left(0,\frac{1}{2}\right)$ will be in one ball of the covering and thus there are at least $k+1$ points from $\{z_1,...,z_m\}$ at a distance of at most $\frac{1}{2}$. For any point $y$ outside $B^d(0,1)$, there is a point $z \in \partial B^d(0,1)$ such that $|y-z| = |y|-1$ and this point has $k+1$ points $z_i$ among $\{z_1,...,z_m\}$ that are less than $\frac{1}{2}$ away. For any such $z_i$, one has $|y-z_i| \leq |y|-1+\frac{1}{2}$.
	
	Given a choice of $z_1,...,z_m$, property \eqref{iP10Points} still holds if we slightly perturb the $z_i$: there is an $\epsilon > 0$ such that the collection of points $\{z_1+y_1,...,z_m+y_m\}$ satisfies \eqref{iP10Points} for any $y_1,...,y_m \in B^d(0,\epsilon)$.
	
	The bound \eqref{iP10Dxbound} is true for any $(x,\tilde{w}_1,...,\tilde{w}_m) \in U$, where
	\[
	U := \left\{ (z,z+\tau(z_1+y_1),...,z+\tau(z_m+y_m)): z \in B^d(tx_0,t\tau),y_1,...,y_m \in B^d(0,\epsilon) \right\}.
	\]
	By \cite[Theorem~5.2]{LPS14} and a development analogous to the one in the proof of \cite[Theorem~5.3]{LPS14}, we find that for some constant $c>0$,
	\[
	\var(F_t) \geq c |U| = c \kappa_d \tau^d (\kappa_d \epsilon^d)^m t^d,
	\]
	which yields the desired bound.
\end{proof}

\begin{proof}[Proof of Theorem~\ref{thmkNN}]
	For the rest of the proof, all constants will be referred to as $c$, to simplify notation. Take $p \in (1,2]$ such that $p < \frac{r}{2}$, where $r$ is given by the condition \eqref{eqkNNC1}.
	
	Let us start with a bound on $\gamma_1$. We use that
	\[
	\E \left[|\DD_{x,y} F_t |^{2p}\right]^{1/(2p)} \leq \p\left(\DD_{x,y} F_t \neq 0 \right)^{1/(2p)-1/r} \E \left[|\DD_{x,y} F_t |^{r}\right]^{1/r}
	\]
	and the bounds in \ref{propkNNBounds} and \ref{propkNNVar} to conclude that
	\[
	\gamma_1 \lesssim t^{-d} \left(\int_{tH} \left( \int_{tH} \exp(-c|x-y|^d) (\phi(|x-y|)+1) dy\right)^p dx\right)^{1/p}.
	\]
	After changing variables and extending the domain of integration to $\R^d$, the inner integral is upper bounded by
	\[
	\int_{\R^d} \exp(-c|y|^d) (\phi(|y|)+1) dy,
	\]
	which is finite by Lemma~\ref{lemPhiProp}. We deduce that
	\[
	\gamma_1 \lesssim t^{d(1/p-1)}.
	\]
	The terms $\gamma_2,\gamma_4,\gamma_5,\gamma_6,\gamma_7$ can be shown to be $O(t^{d(1/p-1)})$ by applying the same strategy.
	
	For $\gamma_3$, take $q = 3-\frac{2}{p}$. Then $q+1<2p<r$ and $\E |\MalD_y F_t|^{q+1} \leq \E \left[|\MalD_y F_t|^{r}\right]^{(q+1)/r}$ and we have
	\[
	\gamma_3 \lesssim t^{-d(2-1/p)} \int_{tH} \E \left[|\MalD_y F_t|^{r}\right]^{(q+1)/r} dx \lesssim t^{d(1/p-1)}.
	\]
	To show that in particular this bound is true for the function $\phi(x)=x^{-\alpha}$ with $0<\alpha<\frac{d}{2}$, it suffices to check condition \eqref{eqkNNC1}. Indeed, let $2<r<\frac{d}{\alpha}$. Then
	\[
	\int_0^1 s^{-\alpha r} s^{d-1} ds < \infty
	\]
	and the bound on the speed of convergence holds for any $p< \frac{r}{2}< \frac{d}{2\alpha}$, with $p \in (1,2]$.
\end{proof}

\section{Radial Spanning Tree}\label{secPRad}
In this section, we work in the setting of Theorem~\ref{thmRST} and we call finite sets $\mu \subset \R^d$ generic only if $\mu$ is generic with respect to $0$.

\begin{prop}\label{propRSTBounds}
	There are absolute constants $c_1,C_1>0$ (independent of $\phi$) such that for any $t \geq 1$ and any $x,y \in tH$, the following bound holds:
	\begin{equation}
		\p\left(\DD_{x,y} F_t \neq 0\right) \leq C_1 \exp(-c_1|x-y|^d). \label{eqRSTp}
	\end{equation}
	Moreover, for any $1 \leq p \leq \frac{r}{2}$, there are constants $c_2,C_2>0$ such that for any $t \geq 1$ and any $x,y \in tH$, the following bounds hold:
	\begin{align}
		&\E \left[ |\MalD_{x}F_t|^{2p} \right]^{1/(2p)} \leq C_2 \left(1+\phi(|x|) \exp(-c_2 |x|^d)\right) \label{eqRSTDx}\\
		&\E \left[ |\DD_{x,y} F_t |^{2p} \right]^{1/(2p)} \leq C_2 \left(\phi(|x-y|) \exp(-c_2|x-y|^d) + 1 \right) \label{eqRSTDxy}
	\end{align}
	We also have that $F_t \in \dom \MalD$.
\end{prop}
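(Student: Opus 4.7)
The plan is to combine the decomposition of $\MalD_x F(\mu)$ into a self-edge contribution and a reconnection sum with a cone-based stabilization argument analogous to the one used for the ONNG (Lemmas~\ref{lemGeo1}-\ref{lemONNGradius} and Propositions~\ref{propONNGcondBounds}-\ref{propONNGBounds}). Starting point: for finite generic $\mu \subset \R^d$ and $x \in tH$,
\[
\MalD_x F(\mu) = \phi(g(x,\mu+\delta_x)) + \sum_{\substack{w \in \mu\\ |w|>|x|}} \bigl[\phi(|w-x|) - \phi(g(w,\mu))\bigr] \mathbf{1}\{w \to x \text{ in } \mu+\delta_x\},
\]
and every summand is non-negative since $\phi$ is decreasing and $|w-x|<g(w,\mu)$ on the reconnection event. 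This immediately gives $\MalD_x F(\mu) \geq 0$ and reduces the task to bounding the two contributions separately.

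First I would introduce cones $C_1^+(x),\ldots,C_K^+(x)$ of angular radius $\pi/6$ covering $\R^d$ and define a stabilization radius
\[
\rho(x) := \max_{i=1,\ldots,K} \inf\bigl\{|x-z|:\ z \in \eta \cap C_i^+(x) \cap B^d(0,|x|) \cap tH\bigr\},
\]
with the convention that the infimum is $\diam(tH)$ when the set is empty. Lemma~\ref{lemGeo2}-type geometric estimates, combined with the hypothesis $B^d(0,\epsilon)\subset H$, show that $|C_i^+(x) \cap B^d(x,r) \cap B^d(0,|x|) \cap tH| \gtrsim r^d$ uniformly in $x \in tH$ and $r \leq |x|$, whence $\p(\rho(x)>r) \lesssim \exp(-cr^d)$ and all moments of $\rho(x)$ are uniformly bounded. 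The standard cone argument from \cite[Lemma~3.3]{Pen05} then forces any reconnecting $w$ to satisfy $|x-w| \leq \rho(x)$, and the number of such $w$ in each cone $C_i^+(x) \cap B^d(x,\rho(x))$ is controlled by the Poisson count, yielding as in the proof of Proposition~\ref{propONNGBounds} a bound on the reconnection sum of the form $\rho(x)^{\text{power}}$ times a Poisson-count moment, all of which have uniformly bounded $L^{2p}$-norms by combining Lemma~\ref{lemPoissonMoment} with the decay of $\rho(x)$.

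The delicate term is the self-edge $\phi(g(x,\mu+\delta_x))$. On the event $B_x := \{\eta(tH \cap B^d(0,|x|)) \neq 0\}$, $g(x,\mu+\delta_x)$ is dominated by the distance from $x$ to the nearest Poisson point in a cone pointing toward $0$, which has exponentially decaying tails and whose $\phi$-moments are uniformly bounded via a variant of Lemma~\ref{lemPhiProp}. On the complementary event $B_x^c$, $g(x,\mu+\delta_x) = |x|$ deterministically, contributing $\phi(|x|)$ weighted by $\p(B_x^c) \leq \exp(-|tH \cap B^d(0,|x|)|) \lesssim \exp(-c|x|^d)$ (again using $B^d(0,\epsilon) \subset H$). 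This split produces exactly the $C_2(1+\phi(|x|)\exp(-c_2|x|^d))$ shape of~\eqref{eqRSTDx}. The bound~\eqref{eqRSTDxy} then follows from $|\DD_{x,y}F_t| \leq |\MalD_y F(\eta_{|tH})| + |\MalD_y F(\eta_{|tH}+\delta_x)|$, together with the fact that adding $x$ inserts at most one new edge incident to $y$ of length $|x-y|$, producing the extra $\phi(|x-y|)\exp(-c_2|x-y|^d)$ summand. For the stabilization bound~\eqref{eqRSTp}, one checks geometrically (via the same cone argument) that if $|x-y|>3(\rho(x) \vee \rho(y))$ then neither $x$ can affect the local structure at $y$ nor vice versa, so $\DD_{x,y}F_t = 0$; the exponential tail on $\rho$ then gives \eqref{eqRSTp}. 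Finally, $F_t \in \dom \MalD$ is obtained from integrability of $F_t$ (via Mecke and Lemma~\ref{lemPhiProp}, mirroring Step~5 in the proof of Proposition~\ref{propkNNBounds}) together with $L^2$-integrability of $\MalD F_t$ from \eqref{eqRSTDx} at $p=1$.

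The main obstacle I anticipate is keeping the constants in the stabilization estimate $\p(\rho(x)>r) \lesssim \exp(-cr^d)$ uniform in $x \in tH$ and $t\geq 1$, particularly when $x$ is close to $\partial B^d(0,|x|) \cap \partial(tH)$ or when $|x|$ is of order $\epsilon$ so that the relevant intersection $C_i^+(x) \cap B^d(0,|x|) \cap tH$ is small: the argument must use the inner ball $B^d(0,\epsilon) \subset H$ to prevent degeneration. A secondary nuisance is that $g(w,\mu)$ appearing inside the reconnection sum is not simply a nearest-neighbour distance (because of the constraint $|\cdot|<|w|$), so controlling $\E[\phi(g(w,\mu))^{2p}]$ uniformly requires a small adaptation of Lemma~\ref{lemPhiProp} where the ball $tH \cap B^d(w,a)$ is replaced by $tH \cap B^d(w,a) \cap B^d(0,|w|)$; the same inner-ball volume lower bound ensures this adaptation goes through.
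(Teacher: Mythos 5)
Your overall architecture is close to the paper's, but the proof of~\eqref{eqRSTDxy} as written has a genuine gap. Applying the triangle inequality $|\DD_{x,y}F_t| \leq |\MalD_y F(\eta_{|tH})| + |\MalD_y F(\eta_{|tH}+\delta_x)|$ without first fixing an ordering of $|x|$ and $|y|$ yields a bound involving $\phi(|y|)\exp(-c|y|^d)$ (from the self-edge term $\phi(g(y,\cdot))$ on the event that $tH\cap B^d(0,|y|)\cap B^d(y,|y|)$ is Poisson-vacant), and this is \emph{not} controlled by the right-hand side of~\eqref{eqRSTDxy}: if $|y|$ is small and $|x-y|$ is large, $\phi(|y|)\exp(-c|y|^d)$ can be arbitrarily large while $1 + \phi(|x-y|)\exp(-c_2|x-y|^d)$ stays bounded. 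The paper's proof avoids this via a symmetric decomposition of $\DD_{x,y}F$ that, after taking WLOG $|x|\leq|y|$, exploits the \emph{exact cancellation} $\phi(g(x,\mu+\delta_y)) = \phi(g(x,\mu))$ (since adding a radially farther point cannot change the radial neighbour of $x$), so the problematic self-edge term never appears; what remains is $\phi(|x-y|)\mathbf{1}\{|x-y|<|y|\}\mathbf{1}\{\eta(tH\cap B^d(0,|y|)\cap B^d(y,|x-y|))=0\}$ plus a reconnection sum with uniformly bounded moments. You could instead take WLOG $|x|\geq|y|$ and apply your triangle inequality to $\MalD_x$ rather than $\MalD_y$; then $\phi(|x|)\exp(-c|x|^d)\leq\phi(|x-y|/2)\exp(-c'|x-y|^d)$, which is a slightly weaker bound than stated but would in fact still suffice for the application — but this choice must be made explicitly, and as proposed it is not.

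Two other, more minor, observations. First, the paper's moment bounds~\eqref{eqRSTDx} do not require the cone-based stabilization radius at all: it bounds the self-edge term via the split on $\{\mu\cap B^d(0,|x|)\cap B^d(x,|x|)=\emptyset\}$, and bounds each reconnection summand by $\phi(|x-w|)\leq\phi(e(x,\mu))$, where $e(x,\mu)$ is the \emph{unconstrained} nearest-neighbour distance (whose $\phi$-moments Lemma~\ref{lemPhiProp} already controls); the constrained quantity $g(w,\mu)$ never needs to be estimated. Your cone machinery is more elaborate than necessary there, though it would also work. Second, for~\eqref{eqRSTp} the paper simply cites the stabilization argument from~\cite{ST14rst}, which depends only on the graph structure; your plan to rederive it from scratch via a cone argument of the ONNG type is plausible but involves substantially more work, and you should be aware that this result is available as a black box.
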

The proof relies on and reuses some arguments from the proofs of Lemmas~4.1 and 4.2 in \cite{ST14rst}.

\begin{proof}
	The bound \eqref{eqRSTp} follows immediately from the proof of Lemma~4.2 in \cite{ST14rst}. Indeed, the proof given in \cite{ST14rst} does not depend on the chosen functional, but only on the structure of the graph.
	
	Next, we establish that for any $x \in \R^d$ and $\mu \subset \R^d$ finite and generic with respect to $x$, we have $\MalD_{x} F(\mu) \geq 0$. Indeed, it is true that
	\begin{equation}\label{Rainbow8}
	\MalD_{x} F(\mu) = \phi(g(x,\mu)) + \sum_{y \in \mu} \left(\phi(g(y,\mu + \delta_x)) - \phi(g(y,\mu))\right).
	\end{equation}
	Since $0 < g(y,\mu + \delta_x) \leq g(y,\mu)$ and $\phi$ is non-increasing, the above expression is non-negative.
	
	For the next part, define $e(x,\mu)$ as we did in Lemma~\ref{lemPhiProp} and extend $\phi$ by setting $\phi(0)=0$. Then note that the summand on the RHS of \eqref{Rainbow8} is zero, unless $y$ connects to $x$, in which case $g(y,\mu +\delta_x)=|x-y|$. It follows that
	\[
	0 \leq \MalD_{x} F(\mu) \leq \phi(g(x,\mu)) + \sum_{y \in \mu} \ind{y \rightarrow x \text{ in } \mu + \delta_x} \phi(|x-y|).
	\]
	If $\mu$ is non-empty, then $|x-y| \geq e(x,\mu)$, hence the second term on the RHS of \eqref{Rainbow8} is upper bounded by
	\[
	\phi(e(x,\mu)) \sum_{y \in \mu} \ind{y \rightarrow x \text{ in } \mu + \delta_x},
	\]
	a bound that continues to hold if $\mu = \emptyset$.
	
	As for the first term on the RHS of \eqref{Rainbow8}, if $\mu \cap B^d(0,|x|) \cap B^d(x,|x|)$ is empty, then the term is equal to $\phi(|x|)$. If not, then $g(x,\mu) \geq e(x,\mu)$ and it is upper bounded by $\phi(e(x,\mu))$. Hence we deduce the bound
	\begin{equation}\label{Rainbow33}
	\MalD_{x} F(\mu) \leq \phi(|x|) \ind{\mu \cap B^d(0,|x|) \cap B^d(x,|x|) \neq \emptyset} + \phi(e(x,\mu)) \left(1 + \sum_{y \in \mu} \ind{y \rightarrow x \text{ in } \mu + \delta_x} \right).
	\end{equation}
	It follows by Hölder's and Minkowski's inequalities that
	\begin{multline}\label{iP12Bound}
	\E \left[ \left(\MalD_{x} F(\mu)\right)^{2p} \right]^{1/(2p)} \leq \phi(|x|) \p(\eta(tH \cap B^d(0,|x|) \cap B^d(x,|x|))=0)^{1/(2p)}\\%
	+ \E \left[\phi(e(x,\eta_{|tH}))^r\right]^{1/r} \E \bigg[ \bigg(1 + \sum_{y \in \mu} \ind{y \rightarrow x \text{ in } \mu + \delta_x} \bigg)^m\bigg]^{1/m},
	\end{multline}
	where $m = \ceil{\frac{2pr}{r-2p}}$.
	
	We start with the first term on the RHS of \eqref{iP12Bound}. We know that
	\[
	\p(\eta(tH \cap B^d(0,|x|) \cap B^d(x,|x|))=0) = \exp(-|tH \cap B^d(0,|x|) \cap B^d(x,|x|)|).
	\]
	Note that $B^d(\frac{x}{2},\frac{|x|}{2}) \subset B^d(0,|x|) \cap B^d(x,|x|)$, therefore
	\[
	\left|tH \cap B^d(0,|x|) \cap B^d(x,|x|)\right| \geq \left|tH \cap B^d\left(\frac{x}{2},\frac{|x|}{2}\right)\right|.
	\]
	Since $H$ is non-empty, there is a ball $B^d(x_0,\delta) \subset H$ with $x_0 \in H$ and $\delta>0$. By scaling and use of Lemma~\ref{lemGeo1}, we conclude that there is a constant $c_H>0$ depending on $H$ and $d$ such that
	\[
	|tH \cap B^d\left(\tfrac{x}{2},\tfrac{|x|}{2}\right)| \geq c_H |x|^d.
	\]
	Hence
	\begin{equation}\label{Rainbow34}
	\phi(|x|) \p\left(\eta(tH \cap B^d(0,|x|) \cap B^d(x,|x|))=0\right)^{1/(2p)} \leq \phi(|x|) \exp\left(-\tfrac{c_H}{2p}|x|^d\right).
	\end{equation}
	For the second term on the RHS of \eqref{iP12Bound}, we use that $\E \left[\phi(e(x,\eta_{|tH}))^r\right]^{1/r}$ is bounded by a constant independent of $t$ and $x$, by Lemma~\ref{lemPhiProp}. For the other part of the second term in \eqref{iP12Bound}, one can easily adapt the argument in the proof of \cite[Lemma~4.1]{ST14rst} to show that for any $m \in \N$,
	\[
	\E \left[ \left(1 + \sum_{y \in \mu} \ind{y \rightarrow x \text{ in } \mu + \delta_x} \right)^m\right]^{1/m}
	\]
	is uniformly bounded by a constant. This concludes the proof of \eqref{eqRSTDx}.
	
	Now we prove \eqref{eqRSTDxy}. For $x,y \in \R^d$ and $\mu \subset \R^d$ finite generic with respect to $x,y$, we have
	\begin{align}
		\DD_{x,y} F(\mu) &= \phi(g(x,\mu + \delta_y)) + \sum_{z \in \mu} \left(\phi(|x-z|) - \phi(g(z,\mu + \delta_y))\right) \ind{z \rightarrow x \text{ in } \mu + \delta_y + \delta_x}\notag\\
		&+ \left(\phi(|x-y|) - \phi(g(y,\mu))\right) \ind{y \rightarrow x \text{ in } \mu + \delta_y + \delta_x} \notag\\
		&-\phi(g(x,\mu)) - \sum_{z \in \mu} \left(\phi(|x-z|) - \phi(g(z,\mu))\right) \ind{z \rightarrow x \text{ in } \mu + \delta_x}. \label{Rainbow31}
	\end{align}
	This expression is in fact symmetric in $x$ and $y$ since the operator $\DD_{x,y}$ is symmetric. Assume without loss of generality that $|x| \leq |y|$. Then $x$ cannot connect to $y$ and hence $\phi(g(x,\mu + \delta_y)) = \phi(g(x,\mu))$. The point $y$ will connect to $x$ if and only if $|x-y| < |y|$ and $\mu \cap B^d(0,|y|) \cap B^d(y,|x-y|)=\emptyset$, thus
	\begin{multline}\label{Rainbow28}
	\left(\phi(|x-y|) - \phi(g(y,\mu))\right) \ind{y \rightarrow x \text{ in } \mu + \delta_y + \delta_x} \\= \phi(|x-y|) \ind{|x-y| < |y|} \ind{\mu \cap B^d(0,|y|) \cap B^d(y,|x-y|)=\emptyset}.
	\end{multline}
	Moreover, a point $z$ that connects to $x$ in $\mu + \delta_y + \delta_x$ also connects to $x$ in $\mu+\delta_x$ and in this case $\phi(|x-z|) \geq \phi(g(z,\mu+\delta_y))$. We deduce that
	\begin{equation}\label{Rainbow29}
		\sum_{z \in \mu} \left(\phi(|x-z|) - \phi(g(z,\mu + \delta_y))\right) \ind{z \rightarrow x \text{ in } \mu + \delta_y + \delta_x} \leq \sum_{z \in \mu} \phi(|x-z|) \ind{z \rightarrow x \text{ in } \mu + \delta_x}.
	\end{equation}
	Lastly, if $z$ connects to $x$ in $\mu + \delta_x$, then $\phi(|x-z|) \geq \phi(g(z,\mu))$, thus
	\begin{equation}\label{Rainbow30}
		\sum_{z \in \mu} \left(\phi(|x-z|) - \phi(g(z,\mu))\right) \ind{z \rightarrow x \text{ in } \mu + \delta_x} \leq \sum_{z \in \mu} \phi(|x-z|) \ind{z \rightarrow x \text{ in } \mu + \delta_x}.
	\end{equation}
	Combining \eqref{Rainbow28}, \eqref{Rainbow29} and \eqref{Rainbow30}, we deduce from \eqref{Rainbow31} that
	\begin{equation}\label{Rainbow32}
	\left|\DD_{x,y} F_t\right| \leq \phi(|x-y|) \ind{|x-y| < |y|} \ind{\eta(tH \cap B^d(0,|y|) \cap B^d(y,|x-y|))=0} + 2 \sum_{z \in \eta_{|tH}} \phi(|x-z|) \ind{z \rightarrow x \text{ in } \eta}.
	\end{equation}
	The proof of \eqref{eqRSTDx} shows that the $2p$th moment of the second term is uniformly bounded. When $|x-y| < |y|$, then $B^d\left(y(1-\frac{|x-y|}{2|y|}),\frac{|x-y|}{2}\right)$ is included in the intersection $B^d(0,|y|) \cap B^d(y,|x-y|)$. By Lemma~\ref{lemGeo1}, and rescaling, we have thus for some constant $c_H>0$ that
	\begin{multline}
	\p(\eta(tH \cap B^d(0,|y|) \cap B^d(y,|x-y|))=0) \\= \exp(-|tH \cap B^d(0,|y|) \cap B^d(y,|x-y|)|) \leq \exp(-c_H |x-y|^d).
	\end{multline}
	The bound on the $2p$th moment of the first term in \eqref{Rainbow32} follows.
	
	The fact that $\MalD_{x} F_t$ is square-integrable follows from the bound \eqref{eqRSTDx} and Lemma~\ref{lemPhiProp}. To show that $F_t \in \dom \MalD$, it suffices to show that $F_t \in L^1(\p_\eta)$, as was explained in Section~\ref{secFrame}. We apply Mecke equation and the fact that $g(x,\eta_{|tH} + \delta_x) = g(x,\eta_{|tH})$ to deduce that
	\[
	\E F_t = \int_{tH} \E \phi(g(x,\eta_{|tH})) dx.
	\]
	As shown in the discussion leading to \eqref{Rainbow33}, this is bounded by
	\[
	\int_{tH} \left(\phi(|x|)\p\left(\eta(tH \cap B^d(0,|x|) \cap B^d(x,|x|))=0\right) + \phi(e(x,\mu))\right) dx.
	\]
	By \eqref{Rainbow34} and Lemma~\ref{lemPhiProp}, this integral is finite.
\end{proof}

\begin{prop}\label{propRSTvar}
	Under the conditions above, there are constants $c_1,c_2>0$ such that for all $t \geq 1$ large enough
	\[
	c_1 t^d \leq \var F_t \leq c_2 t^d.
	\]
\end{prop}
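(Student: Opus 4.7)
The proof splits into an upper bound and a matching lower bound, which can be handled independently.

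For the upper bound, the plan is to apply the $L^2$ Poincaré inequality \eqref{eqPPoinNoTime} (case $p=2$), which gives
\[
\var F_t \leq \int_{tH} \E\!\left[(\MalD_x F_t)^2\right] dx,
\]
and combine it with the moment estimate \eqref{eqRSTDx} of Proposition~\ref{propRSTBounds} specialised to $p=1$:
\[
\E\!\left[(\MalD_x F_t)^2\right] \leq 2 C_2^2\!\left(1 + \phi(|x|)^2 \exp(-2 c_2 |x|^d)\right).
\]
The constant term integrates to $2 C_2^2 |H| t^d$, and the exponentially weighted term is bounded uniformly in $t$ by $\int_{\R^d} \phi(|z|)^2 \exp(-2 c_2 |z|^d)\,dz$, which is finite by inequality \eqref{eqPhiexp} of Lemma~\ref{lemPhiProp} applied with $q=2 \leq r$.

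For the lower bound, the plan is to invoke \cite[Theorem~5.2]{LPS14} with $K=1$, following the template of Propositions~\ref{propONNGlowerVar1} and \ref{propkNNVar}. The key pointwise observation (extracted from the proof of Proposition~\ref{propRSTBounds}) is that $\MalD_x F(\mu) \geq \phi(g(x,\mu))$ for any finite generic $\mu$, since all the extra terms in the telescoping expression for the add-one cost are non-negative by monotonicity of $\phi$. Pick $y_0 \in H$ with $|y_0| = \epsilon/2$, fix a small $\tau \in (0,\epsilon/16)$, set $A_t := B^d(t y_0, t\epsilon/8) \subset tH$ (so that $|A_t| \simeq t^d$ and $|x| \geq t\epsilon/4$ for all $x \in A_t$), and define
\[
U_\tau := \left\{ (x, \tilde w) \in (tH)^2 : x \in A_t,\ \tilde w \in B^d\!\left(x - \tau\tfrac{x}{|x|}, \tfrac{\tau}{4}\right)\right\}.
\]
For every $(x,\tilde w) \in U_\tau$, one checks that $|\tilde w| \leq |x| - 3\tau/4 < |x|$ and $|x-\tilde w| \leq 5\tau/4$, so $\tilde w$ is an admissible radial neighbour of $x$ at distance at most $5\tau/4$; consequently $g(x, \eta_{|tH}+\delta_{\tilde w}) \leq 5\tau/4$ deterministically, and the above pointwise bound yields
\[
\left|\E\,\MalD_x F(\eta_{|tH} + \delta_{\tilde w})\right| \geq \phi(5\tau/4) > 0,
\]
which is the first hypothesis of \cite[Theorem~5.2]{LPS14}. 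The second hypothesis---lower bounds on the Lebesgue measure of projections of subsets of $U_\tau$ of at least half its total volume---follows verbatim from the product-structure argument used in the proof of Proposition~\ref{propONNGlowerVar1}, together with $|U_\tau| = |A_t|\cdot \kappa_d(\tau/4)^d \simeq t^d$ and the fact that the fibres of both coordinate projections have a positive volume independent of $t$ and of the other coordinate.

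The main obstacle is not analytical but geometric bookkeeping: one must choose $(\tau, y_0, A_t)$ so that the entire construction fits inside $tH$ for every $t \geq 1$, and so that $|x|$ is bounded away from zero uniformly in $x \in A_t$ and $t \geq 1$ (the latter being needed to keep the direction $x/|x|$ well-defined and to place $\tilde w$ with $|\tilde w| < |x|$). The assumption $B^d(0,\epsilon) \subset H$ makes this possible: the choice $|y_0| = \epsilon/2$ leaves enough room on either side of $y_0$ inside $H$ so that $A_t$ has measure of order $t^d$ while staying far from both the origin and the boundary of $tH$. Once this geometric setup is in place, all remaining computations are direct analogues of those already carried out in Propositions~\ref{propONNGlowerVar1} and \ref{propkNNVar}.
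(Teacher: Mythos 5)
Your proof is correct and follows essentially the same route as the paper's: Poincaré inequality plus the moment bound \eqref{eqRSTDx} for the upper bound, and \cite[Theorem~5.2]{LPS14} with $K=1$ for the lower bound, using the pointwise observation $\MalD_x F(\mu)\geq\phi(g(x,\mu))$. The only (cosmetic) difference is the choice of the test set: the paper takes $U=\{(x,z):x\in tH\setminus B^d(0,4\delta),\ z\in B^d((1-\delta/|x|)x,\delta)\}$ while you confine $x$ to the ball $A_t\subset tB^d(0,\epsilon)$, which has the small advantage of making $U_\tau\subset(tH)^2$ manifestly obvious, whereas the paper's construction leaves the inclusion $z\in tH$ unchecked for $x$ near $\partial(tH)$; either way the Lebesgue measure is of order $t^d$ and the product-structure argument for the projections goes through unchanged.
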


\begin{proof}
	The upper bound follows from \eqref{eqRSTDx} with $p=1$ and similar integration arguments as in the proof of Proposition~\ref{propRSTBounds}. For the lower bound, we use Theorems~5.2 and 5.3 in \cite{LPS14} again.
	
	Recall that $B^d(0,\epsilon) \subset H$ and let $\delta < \frac{\epsilon}{4}$. Fix $t \geq 1$ and define the set
	\[
	U:= \{ (x,z) : x \in tH \setminus B^d(0,4\delta), z \in B^d((1-\tfrac{\delta}{|x|})x,\delta) \}.
	\]
	Note that $B^d\left(\left(1-\frac{\delta}{|x|}\right)x,\delta\right) \subset B^d(0,|x|) \cap B^d(x,|x|)$, therefore any $z \in B^d((1-\frac{\delta}{|x|})x,\delta)$ verifies $|x-z|<|x|$. Moreover, $|x-z| \leq \left|x-z-\frac{\delta}{|x|}x\right| + \left|\frac{\delta}{|x|}x\right| \leq 2\delta$.
	
	Now for $(x,z) \in U$, consider $\MalD_x F(\eta_{|tH} + \delta_z)$. As can be seen from \eqref{Rainbow8}, we have
	\[
	\MalD_x F(\eta_{|tH} + \delta_z) \geq \phi(g(x,\eta_{|tH}+\delta_z)).
	\]
	By our choice of $z$, we have $g(x,\eta_{|tH}+\delta_z) \leq |x-z| \leq 2\delta$. As $\phi$ is decreasing, it follows that $\phi(g(x,\eta_{|tH}+\delta_z)) \geq \phi(2\delta)$. By \cite[Theorem~5.2]{LPS14},
	\[
	\var(F_t) \geq \frac{\phi(2\delta)^2}{4^3 \cdot 2} \min_{\emptyset \neq J \subset \{1,2\}} \inf_{\substack{V \subset U \\ \lambda(V) \geq \lambda(U)/8}} \lambda^{(|J|)} (\Pi_J(V)),
	\]
	where $\lambda$ is the Lebesgue measure.
	
	By the reasoning in the proof of \cite[Theorem~5.3]{LPS14}, this quantity is lower bounded by $\lambda(U)$ up to multiplication by a constant. We have
	\[
	\lambda(U) = \kappa_d \delta^d |tH \setminus B^d(0,4\delta)| = \kappa_d \delta^d (t^d |H| - \kappa_d(4\delta)^d),
	\]
	which is of order $t^d$ for $t$ large enough.
\end{proof}

In the development below, most constants will be called $c$ for convenience. These constants are by no means the same and can (and will) change from line to line or indeed, within lines.

\begin{proof}[Proof of Theorem~\ref{thmRST}]
	We apply the bounds found in Propositions~\ref{propRSTBounds} and ~\ref{propRSTvar} to the terms $\gamma_1$,...,$\gamma_7$ in Theorem~\ref{thmWasserNon-Cond.}. Let $p \in (1,2]$ such that $2p<r$ and choose $s>0$ such that $2p<s<r$. Then by Hölder's inequality and \eqref{eqRSTDxy} and \eqref{eqRSTp},
	\[
	\E \left[|\DD_{x,y} F_t|^{2p}\right]^{1/(2p)} \lesssim \exp(-c|x-y|^d)\E \big[|\DD_{x,y} F_t|^{s}\big]^{1/s} \lesssim \exp(-c|x-y|^d) (1+\phi(|x-y|)).
	\]
	With this bound, we find that $\gamma_1$ is bounded by
	\[
	\gamma_1 \lesssim t^{-d} \left(\int_{tH} \left(\int_{tH} \left(1 + \phi(|x|)\exp(-c|x|^d)\right) \exp(-c|x-y|^d) \left(1+\phi(|x-y|)\right) dy\right)^p  dx\right)^{1/p}.
	\]
	Using Lemma~\ref{lemPhiProp}, we find $\gamma_1=O(t^{d(1/p-1)})$. The same way, we have $\gamma_2=O(t^{d(1/p-1)})$. For $\gamma_3$, take $q=3-\frac{2}{p}$, then $q+1 < 2p < s$ and $\E |\MalD_y F_t|^{q+1} \leq \E \left[|\MalD_y F_t|^{s}\right]^{(q+1)/s}$. It follows that
	\[
	\gamma_3 \lesssim t^{-d(2-1/p)} \int_{tH} (1+\phi(|x|)\exp(-c|x|^d))^{q+1} dx,
	\]
	which is of order $O(t^{d(1/p-1)})$. The terms $\gamma_4,\gamma_5$ and $\gamma_6$ work much the same. For $\gamma_7$ we use the simplified version proposed in Remark~\ref{remGamma7} and get
	\[
	\gamma_7 \lesssim t^{-d} \left(\int_{tH} \int_{tH} \exp(-c|x-y|^d) (\phi(|x-y|)+1) (1+\phi(|x|) \exp(-c|x|^d))^{2p-1}dxdy\right)^{1/p}.
	\]
	By the same methods as applied before, this is $O(t^{d(1/p-1)})$.
	
	The claim that the bound \eqref{eqRST} is true in particular for $\phi(s) = s^{-\alpha}$ with $0<\alpha<\frac{d}{2}$ follows from the discussion at the end of the proof of Theorem~\ref{thmkNN}.
\end{proof}

\bibliographystyle{alpha}
\bibliography{bibliography.bib}

%\printbibliography

\end{document}